\numberwithin{equation}{section}
\def\hyp{\hskip.5pt\vbox
{\hbox{\vrule width2.5ptheight0.5ptdepth0pt}\vskip2pt}\hskip.5pt}
\def\hs{\hskip.7pt}
\def\hh{\hskip.4pt}
\def\nh{\hskip-.7pt}
\font\smallbf=ptmbo at 10pt
\font\medbf=ptmbo at 11.5pt
\font\bigrm=cmr10 scaled\magstep 1
\def\txm{T\hskip-3pt_x\w\hn M}
\def\lx{S\nh}
\def\w{^{\phantom i}}
\def\nnh{\hskip-1.5pt}
\def\hn{\hskip-.4pt}
\def\bz{y}
\def\bbR{\mathrm{I\!R}}
\def\rto{\bbR\hskip-.5pt^2}
\newcommand{\bbC}{{\mathchoice {\setbox0=\hbox{$\displaystyle\mathrm{C}$}
\hbox{\hbox to0pt{\kern0.4\wd0\vrule height0.9\ht0\hss}\box0}} 
{\setbox0=\hbox{$\textstyle\mathrm{C}$}\hbox{\hbox 
to0pt{\kern0.4\wd0\vrule height0.9\ht0\hss}\box0}} 
{\setbox0=\hbox{$\scriptstyle\mathrm{C}$}\hbox{\hbox 
to0pt{\kern0.4\wd0\vrule height0.9\ht0\hss}\box0}} 
{\setbox0=\hbox{$\scriptscriptstyle\mathrm{C}$}\hbox{\hbox 
to0pt{\kern0.4\wd0\vrule height0.9\ht0\hss}\box0}}}} 
\def\hg{\hat{g\hskip2pt}\hskip-1.3pt}
\newcommand{\bbRP}{\bbR\mathrm{P}}
\newcommand{\bbCP}{\bbC\mathrm{P}}
\newcommand{\bbCH}{\bbC\mathrm{H}}
\def\w{^{\phantom i}}
\theoremstyle{plain}
\newtheorem{proposition}{Proposition}[section]
\newtheorem{lemma}[proposition]{Lemma}
\newtheorem{theorem}[proposition]{Theorem}
\theoremstyle{remark}
\newtheorem{remark}[proposition]{Remark}
\newtheorem{example}[proposition]{Example}
\title[Harmonic curvature in dimension four]{Harmonic curvature in dimension 
four}
\author{Andrzej Derdzinski}
\address{Department of Mathematics, 
The Ohio State University, Columbus, OH 43210, USA}
\email{andrzej@math.ohio-state.edu}
\thanks{Research supported in part by a FAPESP\hn-\hh OSU 2015 Regular
Research Award (FAPESP grant: 2015/50265-6)}
\subjclass[2020]{Primary 53B20; Secondary 53C25}
\keywords{harmonic curvature, Co\-daz\-zi tensor}
\begin{document}
\begin{abstract}
We provide a step towards classifying 
Riemannian four-manifolds in which the curvature tensor has zero 
divergence, or -- equivalently -- the Ricci tensor Ric satisfies the 
Codazzi equation. Every known compact manifold of this type belongs to one
of five otherwise-familiar classes of examples. The main result 
consists in showing that, if such a manifold (not necessarily compact or even 
complete) lies outside of the five classes -- a non-vacuous assumption --
then, at all points of a dense open subset, Ric has four distinct eigenvalues,
while suitable local coordinates simultaneously diagonalize Ric, the metric
and, in a natural sense, also the curvature tensor. Furthermore, 
in a local orthonormal frame formed by Ricci eigenvectors, the connection form 
(or, curvature tensor) has just twelve (or, respectively, six)
possibly-nonzero components, which together satisfy a specific system, not
depending on the point, of homogeneous polynomial equations. A part of the
classification problem is thus reduced to a question in real algebraic
geometry.
\end{abstract}
\maketitle

\setcounter{section}{0}
\section*{Introduction}\label{in}
One says that a Riemannian manifold has {\it harmonic curvature\/} if its 
curvature tensor $\,R\,$ satisfies the lo\-cal-co\-or\-di\-nate relation 
$\,R_{ijk}\w{}^p{}\nnh_{,\hs p}\w=0$, that is,
\begin{equation}\label{dvr}
\mathrm{div}\,R\,=\,0\hh.
\end{equation}
See \cite[Sect.~16.33]{besse}. Let us now consider the condition
\begin{equation}\label{kpc}
\begin{array}{l}
(K\nnh+c)^3\nh+3(K\nnh+c)\Delta K\nh-6\hskip.7pt|dK|^2\nh=r^3\nh,\hskip2.3pt
\mathrm{where}\\
r,c\in\bbR\,\hs\mathrm{\ and\ }\hs\,K\nnh+c\ne0\hs\,\mathrm{\ at\ 
every\ point\ of\ }\,Q\hh,
\end{array}
\end{equation}
imposed on the Gauss\-i\-an curvature $\,K\,$ of a Riemannian surface 
$\,(Q,h)$, with
$\,\Delta=h^{ij}\nabla\hskip-2pt_{\!i}\w\nnh\nabla\hskip-2.3pt_{\!j}\w$ denoting
the $\,h$-La\-plac\-i\-an and $\,|\hskip3.3pt|\,$ the $\,h$-norm.

The following four-man\-i\-folds all have harmonic curvature 
(Remark~\ref{hrmcv}).
\begin{equation}\label{kne}
\begin{array}{rl}
\mathrm{a)}&\mathrm{Ein\-stein\ manifolds\ of\ dimension\ four.}\\
\mathrm{b)}&\mathrm{Con\-for\-mal\-ly\ flat\ 4}\hyp\mathrm{man\-i\-folds\ 
of\ constant\ scalar\ curvature.}\\
\mathrm{c)}&\mathrm{Riemannian\nh\ products\nh\ of\nh\ a\nh\ 
one}\hyp\hh\mathrm{di\-men\-sion\-al\nh\ manifold\nh\ and\nh\ a}\\
&\mathrm{con\-for\-mal\-ly\ flat\ 
3}\hh\hyp\mathrm{man\-i\-fold\ with\ constant\ scalar\ curvature.}\\
\mathrm{d)}&\mathrm{Products\ of\ surfaces\ having\ constant\ 
Gauss\-i\-an\ curvatures.}\\
\mathrm{e)}&\mathrm{Warped\ products\ 
}\,(Q\nh\times\nh S,(h\times\hskip-.2pth\hn^c)/(K\nnh+c)^2)\mathrm{,\ where\ 
}\,(Q,h)\\
&\mathrm{and\ }\,(S,h\hn^c)\,\mathrm{\ are\ Riemannian\ surfaces,\ 
}\,(S,h\hn^c)\,\mathrm{\ is\ of\ constant}\\
&\mathrm{Gauss\-i\-an\ curvature\ }\,c\hh\mathrm{,\ and\ }\,(Q,h)\,\mathrm{\ 
satisfies\ (\ref{kpc})}.
\end{array}
\end{equation}
In (\ref{kne}.e) we treat $\,K\,$ as a function on $\,Q\nh\times\nh S$, 
constant along the $\,S\,$ factor. Thus, $\,2(K\nnh+c)\,$ equals the scalar 
curvature of the product metric $\,h\times\hskip-.2pth\hn^c\nh$. 

All known examples of {\it compact\/} four-man\-i\-folds $\,(M\nh,g)\,$ with 
$\,\mathrm{div}\,R=0$ belong to the five (non-disjoint)
lo\-cal-i\-som\-e\-try types (\ref{kne}) in the sense that
\begin{equation}\label{lit}
\mathrm{each\ }\,x\in M\,\mathrm{\ has\ a\ neighborhood\ isometric\ to\ 
one\ of\ (\ref{kne}).}
\end{equation}
However, $\,\mathrm{div}\,R=0\,$ in some {\it complete\/} Riemannian
four-man\-i\-folds not containing open sub\-man\-i\-folds of types 
(\ref{kne}). See \cite{derdzinski-piccione} and Remark~\ref{nonvc}.

This paper is a step towards classifying Riemannian 
four-man\-i\-folds with harmonic curvature that lie outside of the 
five classes (\ref{kne}). The next section states in full detail our two main 
results, here summarized only briefly.

According to the first of them, Theorem~\ref{ricev}, at generic points of 
such a manifold $\,(M\nh,g)$, its Ric\-ci tensor $\,\mathrm{Ric}\,$ has four
distinct eigen\-values, and suitable local coordinates simultaneously
di\-ag\-o\-nal\-ize $\,g,\,\mathrm{Ric}\,$ and $\,R$. Note that the local
or\-tho\-nor\-mal frame $\,e\nh_i\w,\,i=1,\dots,4$, obtained by normalizing
the coordinate vector fields, then gives rise to an orthogonal web of
co\-di\-men\-sion-one foliations or, equivalently, satisfies, for all distinct 
$\,i,j\in\{1,2,3,4\}$, the Lie-brack\-et relations
\begin{equation}\label{lbr}
[\hs e\nh_i\w,e\nh_j\w]\,=\hs F\hskip-4pt_{ji}\w e\nh_i\w\,
-\hs F\hskip-3.2pt_{ij}\w e\nh_j\w\hskip8pt\mathrm{(no\ summation),\ with\ 
some\ functions\ }F\hskip-4pt_{ji}\w\hh.
\end{equation}
As shown by Tod \cite{tod}, co\-or\-di\-nate-di\-ag\-o\-nal\-iz\-abil\-i\-ty
of a metric, in dimension four, generically imposes restrictions on the third
derivative of the Weyl tensor. Simplicity of the Ric\-ci eigen\-values
implies in turn that a har\-mon\-ic-cur\-va\-ture manifold satisfying the
above assumptions cannot be a 
nontrivial warped product with any fibre dimension $\,p\,$ greater than one 
\cite[Corollary 1.3]{derdzinski-piccione}, although the case $\,p=1\,$ does
occur \cite{derdzinski-piccione}.

The second result, Theorem~\ref{algvr}, states that the twelve functions 
$\,F\hskip-4pt_{ji}\w$ in (\ref{lbr}) and the six 
sec\-tion\-al-cur\-va\-ture functions 
$\,R_{ijij}\w\nh=R\hh(e\nh_i\w,e\nh_j\w,e\nh_i\w,e\nh_j\w)\,$
together form, at every generic point $\,x$, a solution of a specific system,
not depending on $\,x$, of homogeneous polynomial equations. 
Thus, when these $\,F\hskip-4pt_{ji}\w$ and $\,R_{ijij}\w$ are 
treated as the components of a mapping $\,\varPhi\,$ from a neighborhood of a 
generic point into $\,\bbR\nnh^{1\nh8}\nh$, the values of $\,\varPhi\,$ 
lie in an explicitly defined real algebraic variety 
$\,\mathcal{V}\nh\subseteq\nnh\bbR\nnh^{1\nh8}\nh$. 
Consequently, Theorem~\ref{algvr} relates the classification of 
four-di\-men\-sion\-al Riemannian manifolds 
with $\,\mathrm{div}\,R=0$, different from the types (\ref{kne}), to a problem 
in real algebraic geometry.

The text is organized as follows. Sections~\ref{ds} and~\ref{sd} provide
detailed statements of the main results and an outline of the proof of parts
(a) --  (b) in Theorem~\ref{ricev}. Preliminary and expository material, 
presented in Sections~\ref{pr} through~\ref{tf}, and~\ref{cw}, is followed by 
Section~\ref{ap}, describing the three {\it a priori\/} possible cases that
arise under the hypotheses Theorem~\ref{ricev}. After the exclusion of two of
these cases (Sections~\ref{co} --~\ref{cc}), the conclusions about the third
case lead, in Section~\ref{nz}, to proofs of Theorem~\ref{ricev}(a)\hs-\hs(b)
and Theorem~\ref{algvr}. The four final sections are devoted to proving part
(c) of Theorem~\ref{ricev}.

\section{Detailed statements of the main results}\label{ds}
As shown by DeTurck and Goldschmidt \cite{deturck-goldschmidt}, in suitable 
local coordinates,
\begin{equation}\label{ana}
\mathrm{every\ metric\ with\ }\,\mathrm{div}\,R=0\,\mathrm{\ is\ 
real}\hyp\mathrm{analytic.}
\end{equation}
For a fixed oriented Riemannian four-man\-i\-fold $\,(M\nh,g)\,$ with 
$\,\,\mathrm{div}\,R=0$, let us denote by 
$\,\text{\smallbf r}\in\{1,2,3,4\}\,$ and $\,\text{\smallbf w}\in\{1,2,3\}\,$ 
the maximum number of distinct eigen\-values of the Ric\-ci tensor 
$\,\,\mathrm{Ric}\,\,$ acting on the tangent bundle $\,T\nh M$ and, 
respectively, of the self-dual Weyl tensor $\,W^+\nnh$ acting on the bundle of 
self-dual bi\-vec\-tors (see Section~\ref{aw}). Due to (\ref{ana}), both 
maxima $\,\text{\smallbf r}\,$ and $\,\text{\smallbf w}$ are simultaneously 
attained at all points of a dense open subset of $\,M\nh$.

Proofs of Lemma~\ref{other}, (a), (b) in  
Theorem~\ref{ricev} along with Theorem~\ref{algvr}, and Theorem~\ref{ricev}(c)
are given, respectively, in Sections~\ref{hc},~\ref{nz} and~\ref{dz}
--~\ref{lc}.
\begin{lemma}\label{other}
For any oriented Riemannian four-man\-i\-fold\/ $\,(M\nh,g)\,$ having\/
$\,\,\mathrm{div}\,R=0$, the following two conditions are 
equivalent\hh{\rm:}
\begin{enumerate}
  \def\theenumi{{\rm\roman{enumi}}}
\item $(M\nh,g)\,$ belongs to one of the lo\-cal-i\-som\-e\-try types\/
{\rm(\ref{kne})}, as in\/ {\rm(\ref{lit})},
\item $\hh g\,$ is locally reducible, or\/ $\,\text{\smallbf r}\in\{1,2\}$, 
or\/ $\,\text{\smallbf w}\in\{1,2\}$.
\end{enumerate}
\end{lemma}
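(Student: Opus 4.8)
The plan is to prove Lemma~\ref{other} by establishing the two implications (ii)$\Rightarrow$(i) and (i)$\Rightarrow$(ii) separately, reducing everything to a classification of harmonic-curvature four-manifolds under degeneracy assumptions on the spectra of $\,\mathrm{Ric}\,$ or $\,W^+\nnh$. For (ii)$\Rightarrow$(i), I would split into the three (overlapping) subcases: $\,g\,$ locally reducible, $\,\text{\smallbf r}\le2$, and $\,\text{\smallbf w}\le2$. The locally reducible case is handled by invoking the de~Rham decomposition together with the fact that $\,\mathrm{div}\,R=0\,$ passes to de~Rham factors and that, in low dimensions, harmonic curvature forces a factor to be Einstein (dimension $\,\le3$ Einstein means constant curvature): a $\,1+3$ splitting lands in (\ref{kne}.c), a $\,2+2\,$ splitting in (\ref{kne}.d), and a $\,1+1+2\,$ or finer splitting is a degenerate case of~(d). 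The case $\,\text{\smallbf r}\in\{1,2\}\,$ uses the classical structure theory for Codazzi tensors with at most two eigenvalues (as in \cite[Sect.~16.11--16.26]{besse}): $\,\text{\smallbf r}=1\,$ means $\,\mathrm{Ric}\,$ is a multiple of $\,g$, so $\,(M,g)\,$ is Einstein and we are in (\ref{kne}.a); $\,\text{\smallbf r}=2\,$ with a Codazzi Ricci tensor forces, on a dense open set, a local warped-product or reducible structure with low-dimensional factors, pushing us into (\ref{kne}.c), (d) or~(e). Finally $\,\text{\smallbf w}\in\{1,2\}$: when $\,W^+\nnh\equiv0\,$ (and symmetrically $\,W^-\nnh\equiv0$) one combines vanishing of half the Weyl tensor with $\,\mathrm{div}\,R=0$ — equivalently $\,\delta W=0\,$ — and constancy of scalar curvature to get, via the standard Weitzenb\"ock/Bianchi identities, that $\,(M,g)\,$ is conformally flat with constant scalar curvature or Einstein, i.e.\ (\ref{kne}.a) or~(b); when $\,\text{\smallbf w}=2\,$ one uses that a self-dual Weyl operator with a repeated eigenvalue, together with the second Bianchi identity in the form $\,\delta W^+\nnh=0$, yields the warped-product description (\ref{kne}.e) (this is the four-dimensional analogue of Derdzinski's theorem on metrics with $\,\delta W=0\,$ and $\,W^+\nnh$ of constant rank/degenerate spectrum).

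For the reverse implication (i)$\Rightarrow$(ii) I would simply go through the five families (\ref{kne}.a)--(e) and verify the spectral degeneracy in each: Einstein four-manifolds have $\,\text{\smallbf r}=1$; conformally flat manifolds have $\,W^+\nnh=0$, hence $\,\text{\smallbf w}=1$; Riemannian products with a one-dimensional factor are locally reducible (and in fact have $\,\text{\smallbf r}\le3\,$ with an eigenvalue $\,0\,$ in the line direction, but local reducibility already suffices); products of surfaces are locally reducible; and warped products as in (\ref{kne}.e) are conformal to a product of surfaces, so $\,W^+\nnh$ has a repeated eigenvalue by the conformal invariance of the Weyl tensor and the product structure, giving $\,\text{\smallbf w}\le2$. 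Each of these is a short computation with the curvature of products/warped products and the standard transformation rules, so this direction is essentially bookkeeping.

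The main obstacle is the $\,\text{\smallbf r}=2\,$ and the $\,\text{\smallbf w}=2\,$ cases of (ii)$\Rightarrow$(i): getting from ``the Codazzi tensor $\,\mathrm{Ric}\,$ (resp.\ the self-dual Weyl operator) has exactly two eigenvalues'' to an \emph{explicit} local-isometry type in the list~(\ref{kne}), rather than merely to an abstract warped-product statement, and in particular checking that the base and fibre satisfy the precise conditions in (\ref{kne}.c)--(e) (constant scalar curvature of the conformally flat 3-factor, equation (\ref{kpc}) for the surface base). This requires combining the eigenvalue-multiplicity information with the full force of $\,\mathrm{div}\,R=0\,$: the Codazzi equation for $\,\mathrm{Ric}\,$ forces the eigenvalue functions to be constant along the eigendistributions with multiplicity $\,\ge2$, these eigendistributions are integrable and umbilic (or autoparallel), and the resulting (warped-)product factors inherit $\,\mathrm{div}\,R=0$; one then has to trace through what harmonic curvature and constant scalar curvature say about a three-dimensional conformally flat factor or a surface-times-surface warp, arriving at exactly (\ref{kne}.c) or (\ref{kne}.e). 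Since the excerpt defers the proof of Lemma~\ref{other} to Section~\ref{hc} and allows free use of (\ref{ana}) (real-analyticity, so ``dense open set'' arguments globalize), I expect the actual proof to lean heavily on the Codazzi-tensor machinery of \cite{besse} and on the author's earlier structure results, with the genuinely new content being the careful matching of the abstract splittings to the five named families.
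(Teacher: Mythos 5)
Your outline of (i)$\Rightarrow$(ii) matches the paper, and for (ii)$\Rightarrow$(i) the treatment of the locally reducible case and the case $\text{\smallbf r}=1$ also tracks what the paper does; the $\text{\smallbf w}\le2$ step is packaged in the paper as Lemma~\ref{onetw}, which rests on \cite[Theorem 2]{derdzinski-88}, roughly as you guessed. The genuine gap is in your $\text{\smallbf r}=2$ step. You assert that $\text{\smallbf r}=2$ ``pushes us into (\ref{kne}.c), (\ref{kne}.d) or (\ref{kne}.e).'' But once the locally reducible case is handled separately -- and types (\ref{kne}.c), (\ref{kne}.d) are always locally reducible -- the remaining situation of $\text{\smallbf r}=2$ with $\nabla\hskip.7pt\mathrm{Ric}\ne0$ does \emph{not} lead to a warped product of surfaces of type (\ref{kne}.e) at all. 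The paper instead invokes \cite[Theorem 1(i)]{derdzinski-80}, via Remark~\ref{cnfpr}, to show that $(M,g)$ has an open submanifold conformal to a product of an interval and a constant-curvature $3$-manifold, hence is conformally flat by Remark~\ref{cffpr} and real-analyticity (\ref{ana}), and therefore of type (\ref{kne}.b) by (\ref{cst}). Your appeal to generic ``Codazzi structure theory'' does not produce this; the type you omit, (\ref{kne}.b), is precisely the one that actually occurs.

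A smaller slip: in the $\text{\smallbf w}=2$ case Lemma~\ref{onetw} gives type (\ref{kne}.d) or (\ref{kne}.e), not (\ref{kne}.e) alone, so your outline drops a possibility there. Once that and the $\text{\smallbf r}=2$ conclusion are corrected, the rest of your plan is close to the paper's argument.
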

\begin{theorem}\label{ricev}
Suppose that\/ $\,\hs\mathrm{div}\,R=0\,$ for the curvature tensor\/ $\,R\,$ 
of an oriented Riemannian four-man\-i\-fold\/ $\,(M\nh,g)\,$ which does not 
satisfy\/ {\rm(\ref{lit})}. The following conclusions then hold on some dense
open set\/ $\,\,U\nh\subseteq\nh M\nh$.
\begin{enumerate}
  \def\theenumi{{\rm\alph{enumi}}}
\item Locally in\/ $\,\,U\,$ there exist functions\/ 
$\,F\hskip-4pt_{ji}\w$ and real-an\-a\-lyt\-ic or\-tho\-nor\-mal vector 
fields\/ $\,e\nh_i\w$ di\-ag\-o\-nal\-izing\/ $\,\,\mathrm{Ric}\hh$, with the 
Lie brackets given by\/ 
$\,[\hs e\hs\nh_i\w,e\nh_j\w]=F\hskip-4pt_{ji}\w e\nh_i\w
-F\hskip-3.2pt_{ij}\w e\nh_j\w\,$ whenever\/ $\,i,j\in\{1,2,3,4\}\,$ are 
distinct.
\item $e\nh_i\w$ also di\-ag\-o\-nal\-ize\/ $\,R$, in the sense of 
Section\/~{\rm\ref{aw}}.
\item $\text{\smallbf r}=\nh4\,$ and\/ $\,\mathrm{Ric}\hs\,$ has four distinct 
eigen\-values at every point of\/ $\,\,U\nnh$.
\end{enumerate}
\end{theorem}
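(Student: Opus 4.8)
\emph{Strategy.} My plan would be to first pin down, on a generic open set, the pointwise algebraic type of the Ric\-ci tensor and of the self-dual Weyl tensor, and then to propagate that type through the harmonic-curvature equations. The starting point is that $\,\mathrm{div}\,R=0\,$ is equivalent to $\,\mathrm{Ric}\,$ being a Co\-daz\-zi tensor; it forces the scalar curvature to be constant, and, since $\,\mathrm{div}\,W^+\,$ and $\,\mathrm{div}\,W^-\,$ take values in complementary bundles, it is in fact equivalent to the conjunction of constancy of the scalar curvature, $\,\mathrm{div}\,W^+=0\,$ and $\,\mathrm{div}\,W^-=0$. By Lemma~\ref{other}, the assumption that $\,(M,g)\,$ fails (\ref{lit}) says precisely that $\,g\,$ is \emph{not} locally reducible, that $\,\text{\smallbf r}\in\{3,4\}$, and that $\,\text{\smallbf w}=3\,$; reversing the orientation leaves (\ref{lit}) unchanged while interchanging $\,W^+\,$ and $\,W^-$, so the analogue of $\,\text{\smallbf w}\,$ for $\,W^-\,$ equals $\,3\,$ as well. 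By (\ref{ana}) every tensor in sight is real-an\-a\-lyt\-ic, hence the locus $\,U\,$ on which both maxima are attained and the eigen\-value functions of $\,\mathrm{Ric}$, $\,W^+\,$ and $\,W^-\,$ have locally constant multiplicities is dense and open; all conclusions will be established on $\,U$, which I allow myself to shrink to a smaller dense open subset whenever convenient.

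For parts (a) and (b): with $\,\text{\smallbf w}=3$, on $\,U\,$ the operator $\,W^+\,$ has three simple eigen\-values, and its eigen\-line fields in the rank-three bundle of self-dual bi\-vec\-tors are spanned by self-dual two-forms of constant norm, hence -- after normalization -- by a local al\-most-qua\-ter\-ni\-on\-ic frame $\,J_1,J_2,J_3\,$ (three $\,g$-com\-pat\-i\-ble al\-most-com\-plex structures satisfying the quaternion relations). The equation $\,\mathrm{div}\,W^+=0$, combined with constancy of the scalar curvature, then expresses the covariant derivatives of the $\,J_a\,$ and the differentials of the $\,W^+$-eigen\-values as algebraic functions of the $\,J_a\,$ and of those eigen\-values; independently, the Co\-daz\-zi equation for $\,\mathrm{Ric}\,$ constrains its eigen\-frame. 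I would then compare the two decompositions of the tangent bundle -- into $\,\mathrm{Ric}$-eigen\-spaces, and into the ``types'' singled out by the $\,J_a\,$ -- which leaves the three mutually exclusive configurations of Section~\ref{ap}. In two of them (Sections~\ref{co} --~\ref{cc}) the first-order identities just described force the second fundamental form of some integrable distribution to vanish, so that distribution is parallel and $\,g\,$ locally reducible, contradicting the hypothesis; or else they force a coincidence among the eigen\-values of $\,\mathrm{Ric}$, of $\,W^+\,$ or of $\,W^-$, contradicting $\,\text{\smallbf r}\ge3\,$ or $\,\text{\smallbf w}=3$. In the surviving configuration one obtains, on a dense open subset of $\,U$, a single or\-tho\-nor\-mal frame $\,e_1,\dots,e_4\,$ simultaneously di\-ag\-o\-nal\-izing $\,g$, $\,\mathrm{Ric}\,$ and $\,R\,$ (the last in the sense of Section~\ref{aw}), the $\,e_i\,$ being real-an\-a\-lyt\-ic by (\ref{ana}) and the analytic implicit function theorem. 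Relation (\ref{lbr}) is then equivalent to $\,g(\nabla_{e_i}e_j,e_k)=0\,$ for all distinct $\,i,j,k$ -- that is, to integrability of each of the four distributions $\,e_i^{\perp}$ -- and it drops out of the same identities; the explicit formulas for the twelve $\,F_{ji}\,$ and the six $\,R_{ijij}\,$ thus obtained are exactly what feeds into Theorem~\ref{algvr} (Section~\ref{nz}).

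For part (c) the task is to exclude $\,\text{\smallbf r}=3$, that is, to show that no two of the Ric\-ci eigen\-values found in (a) can coincide. Assume, on a dense open set, that $\,\mathrm{Ric}\,$ has a double eigen\-value $\,\lambda\,$ with two-di\-men\-sion\-al eigen\-dis\-tri\-bu\-tion $\,\mathcal D\,$ and two simple eigen\-values. By the structure theory of Co\-daz\-zi tensors (see \cite[Sect.~16.33]{besse}), $\,\mathcal D\,$ is integrable, $\,\lambda\,$ is constant along each of its leaves, the leaves are totally umbilic, and the second fundamental form of $\,\mathcal D\,$ is governed by $\,d\lambda$. If that second fundamental form vanishes, $\,\mathcal D\,$ is parallel and $\,g\,$ locally reducible -- excluded -- and otherwise $\,g\,$ is locally a warped (possibly twisted or multiply warped) product over a surface, with two-di\-men\-sion\-al fibre; a curvature computation reusing $\,\mathrm{div}\,R=0\,$ (and compared against \cite{derdzinski-piccione}) then shows that such a metric either falls under one of (\ref{kne}.b), (\ref{kne}.d), (\ref{kne}.e), against the hypothesis, or has $\,\text{\smallbf w}\le2$, against $\,\text{\smallbf w}=3$. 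Hence $\,\text{\smallbf r}=4$, and, the Ric\-ci eigen\-values being real-an\-a\-lyt\-ic, they stay simple at every point of $\,U\,$ (Sections~\ref{dz} --~\ref{lc}).

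The hard part, I expect, will be the exclusion of the two degenerate configurations of Section~\ref{ap} and of the $\,\text{\smallbf r}=3\,$ alternative in (c). Being a Co\-daz\-zi tensor alone does \emph{not} yield the web relation (\ref{lbr}): even when $\,\mathrm{Ric}\,$ has four distinct eigen\-values, the Co\-daz\-zi equation does not pin down the quantities $\,g([e_i,e_j],e_k)\,$ with distinct $\,i,j,k$. All the rigidity must therefore be squeezed out of $\,\mathrm{div}\,W^+=0$, out of the constancy of the scalar curvature, and out of the coupling between the self-dual and an\-ti-self-dual bi\-vec\-tor blocks of the curvature operator; and at every step one has to verify that the degenerate alternatives genuinely land inside (\ref{kne}) -- which is what makes the case analysis the delicate part of the argument.
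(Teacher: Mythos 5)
Your overall architecture matches the paper's — reduce via Lemma~\ref{other} to $\text{\smallbf r}\in\{3,4\}$, $\text{\smallbf w}=3$, use the Codazzi property of $\mathrm{Ric}$ together with $\mathrm{div}\,W=0$ and simultaneous diagonalizability to obtain a preferred frame, split into a small number of configurations, exclude two of them, and handle $\text{\smallbf r}=3$ separately. Two of your steps, however, conceal genuine gaps.

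First, for (a)\hs--\hs(b): the three ``configurations'' the paper actually distinguishes are not read off from a comparison between $\mathrm{Ric}$-eigenspaces and an almost-quaternionic frame; they are indexed by the invariant $\text{\smallbf d}\in\{0,1,2\}$ counting how many of the functions $\,\lx\hn_l\w=(\sigma\nnh_{ij}\w-\sigma\nnh_{ik}\w)\varGamma^k_{\nh\!ij}\,$ of (\ref{ale}) are nonzero, and the proof that $\text{\smallbf d}\le2$ (Lemma~\ref{zotwo}) is itself a nontrivial first-order consequence of the Codazzi and $\mathrm{div}\,W=0$ identities. More importantly, your description of how the two bad cases die does not match the mechanism: the case $\text{\smallbf d}=1$ does land in the warped-product-over-an-interval picture (Lemmas~\ref{neffz}--\ref{tneez}, giving type (\ref{kne}.c)), but the case $\text{\smallbf d}=2$ is excluded by a detailed second-order argument (Theorem~\ref{cmwtw} and Section~\ref{nt}, ending with the matrix system (\ref{meq}) forcing $\lambda$ constant and a contradiction with (\ref{riz})), not by ``a parallel distribution or an eigenvalue coincidence.'' You would need to supply that computation, and nothing in your sketch points towards it.

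Second, and more seriously, the argument you outline for (c) would not close. A repeated Ricci eigenvalue with two-dimensional eigenspace $\mathcal D$ does give, via the Codazzi equation, that $\mathcal D$ is integrable and umbilic and that $\lambda$ is constant along its leaves. It does \emph{not} give a warped, twisted, or multiply-warped product over a surface with two-dimensional fibre, because the complementary distribution $\mathcal H=\mathrm{Span}\hs\{e\nh_3\w,e\nh_4\w\}$ (spanned by the two simple eigendirections) is not totally geodesic, and the rescaling that the paper must perform to flatten the picture is \emph{not} conformal: one passes to a metric $\hg$ making $\,\sigma^{-1/3}e\nh_1\w,\sigma^{-1/3}e\nh_2\w,(\lambda_3\w\nnh-\lambda)^{-1}e\nh_3\w,(\lambda_4\w\nnh-\lambda)^{-1}e\nh_4\w\,$ orthonormal (see (\ref{hat}) and Lemma~\ref{dilgk}(f)), with different scaling factors on $\mathcal V$ and on each of the two simple eigendirections. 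Because this is not a conformal change, Remark~\ref{prsfm} does not apply and there is no cheap contradiction with $\text{\smallbf w}=3$; and because it is not a genuine warped product, \cite[Corollary 1.3]{derdzinski-piccione} does not apply either. The actual exclusion of $\text{\smallbf r}=3$ is the content of Sections~\ref{dz}--\ref{lc} and rests on a long chain of algebraic consequences of (\ref{dif}) and Theorem~\ref{algvr} (notably Lemma~\ref{gtgfz} that forces $G\nh_3\w G\nh_4\w=0$, Lemma~\ref{fjbuz}, and the final elimination in Section~\ref{lc} that forces $\mu$ constant against Lemma~\ref{nonzr}). Your proposal skips all of this by an appeal to a warped-product reduction that simply is not available.
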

About the lo\-cal-co\-or\-di\-nate aspect of (a), mentioned in the
Introduction, see Remark \ref{mltpl},

Theorem~\ref{ricev} is non-vac\-u\-ous (Remark~\ref{nonvc}) and, according to 
Lemma~\ref{other}, the assumptions made about $\,(M\nh,g)\,$ amount to its
being locally irreducible, four-di\-men\-sion\-al, oriented and
having $\,\mathrm{div}\,R=0\,$ along with
\begin{equation}\label{tis}
(\text{\smallbf r},\text{\smallbf w})\nnh\in\nnh\{3,4\}\nh\times\nh\{3\}\,
\mathrm{\ or,\hskip3.2ptequivalently,\ 
}\,\text{\smallbf r}\notin\{1,2\}\,\mathrm{\ and\ 
}\,\text{\smallbf w}\notin\{1,2\}\hh.
\end{equation}
The next theorem and the remainder of the paper, except Section~\ref{hc}, use
the convention that the indices $\,i,j,k,l\,$ always range over
$\,\{1,\dots,4\}$, repeated indices are {\it not summed over\/} and, unless
stated otherwise,
\begin{equation}\label{ind}
\begin{array}{l}
\mathrm{if\hs\ some\hs\ of\hs\ }\hs\,i,j,k,l\hs\,\mathrm{\hs\ appear\hs\ 
in\hs\ an\hs\ equality,\hskip3ptthey\hs\ are\hs\ assumed}\\
\mathrm{to\nh\ be\nh\ mutually\nh\ distinct\nh\ and\nh\ preceded\nh\ by\nh\ 
a\nh\ universal\nh\ quantifier.}
\end{array}
\end{equation}
Thus, the presence of $\,i,j,k\,$ will tacitly imply the preamble
\begin{equation}\label{pre}
\mathrm{for\ all\ }\,i,j,k\in\{1,\dots,4\}\,\mathrm{\ with\ 
}\,i\ne j\ne k\ne i\hh.
\end{equation}
Rather than using the sec\-tion\-al-cur\-va\-ture functions 
$\,R_{ijij}\w\nh=R\hh(e\nh_i\w,e\nh_j\w,e\nh_i\w,e\nh_j\w)$ (see the
Introduction), it is more convenient to phrase our second main result in terms
of the analogous components 
$\,\sigma\nnh_{ij}\w=W\nh(e\nh_i\w,e\nh_j\w,e\nh_i\w,e\nh_j\w)\,$ of the Weyl 
tensor $\,W\nnh$, along with the scalar curvature $\,\,\mathrm{s}$, and the 
eigen\-value functions $\,\lambda_i\w=b(e\nh_i\w,e\nh_i\w)\,$ of the 
trace\-less Ric\-ci tensor $\,b=\,\mathrm{Ric}\,-\,\mathrm{s}g/4$. The latter 
are easily expressed through the former, cf.\ equality (\ref{wij}) below, and 
vice versa:
\begin{equation}\label{jij}
R_{ijij}\w\,=\,
\sigma\nnh_{ij}\w\,+\,\frac12\,(\lambda_i\w+\lambda_j\w)\,
+\,\frac{\mathrm{s}}{12}\hskip10pt\mathrm{if}\hskip6pti,j\in\{1,2,3,4\}
\hskip6pt\mathrm{and}\hskip6pti\ne j\hh.
\end{equation}
See the line following formula (\ref{rkl}) in Section~\ref{cw}.
\begin{theorem}\label{algvr}
For\/ $\,(M\nh,g),\hs U\nnh,e\nh_i\w$ and\/ $\,F\hskip-4pt_{ji}\w$ as in 
Theorem\/~{\rm\ref{ricev}}, $\,F\hskip-4pt_{ji}\w$ and the functions\/ 
$\,\sigma\nnh_{ij}\w,\lambda_i\w$ defined above 
satisfy the polynomial equations
\[
\begin{array}{l}
\lambda_i\w\nh+\nh\lambda_j\w\nh+\nh\lambda_k\w\nh
+\nh\lambda_l\w\nh
=\sigma\nnh_{ij}\w\nh-\nh\sigma\hskip-2.7pt_{ji}\w\nh
=\sigma\nnh_{ij}\w\nh-\nh\sigma\nnh_{kl}\w\nh
=\sigma\nnh_{ij}\w\nh+\nh\sigma\nnh_{ik}\w\nh+\nh\sigma\nnh_{il}\w\nh=0\hh,\\
{}[(\lambda_k\w-\lambda_l\w)\hh\sigma\nnh_{kl}\w\nh
+(\lambda_l\w-\lambda_i\w)\hh\sigma\nnh_{li}\w\nh
+(\lambda_i\w-\lambda_k\w)\hh\sigma\nnh_{ik}\w]
(F\hskip-3.2pt_{kl}\w F\hskip-3.2pt_{li}\w\nh
+F\hskip-3.2pt_{lk}\w F\hskip-3.2pt_{ki}\w\nh
-F\hskip-3.2pt_{ki}\w F\hskip-3.2pt_{li}\w)\\
\hskip15.3pt{}=\,[(\lambda_k\w-\lambda_l\w)\hh\sigma\nnh_{kl}\w\nh
+(\lambda_l\w-\lambda_j\w)\hh\sigma\nnh_{lj}\w\nh
+(\lambda_j\w-\lambda_k\w)\hh\sigma\hskip-2.7pt_{jk}\w]\hh
(F\hskip-3.2pt_{kl}\w F\hskip-3.2pt_{lj}\w\nh
+F\hskip-3.2pt_{lk}\w F\hskip-3.2pt_{kj}\w\nh
-F\hskip-3.2pt_{kj}\w F\hskip-3.2pt_{lj}\w)\hh,
\end{array}
\]
with the conventions\/ {\rm(\ref{ind}) -- (\ref{pre})}. Choosing the
frame\/ $\,e\nh_i\w$ is to be positive oriented, we may rewrite the second
displayed equation as
\begin{equation}\label{fsi}
H\hskip-2.5pt_{ji}\w Z\nnh_j\w\,
=\,-\hh H\nnh_{ij}\w Z_i\w\quad\mathrm{whenever\
}\,i,j\in\{1,2,3,4\}\,\mathrm{\ and\ }\,i\ne j\hh,
\end{equation}
where\/ $\,H\nnh_{ij}\w$ and\/ $\,Z_l\w$ are uniquely characterized by\/ 
$\,H\nnh_{ij}\w\nh=F\hskip-3.2pt_{kl}\w F\hskip-3.2pt_{lj}\w\nh
+F\hskip-3.2pt_{lk}\w F\hskip-3.2pt_{kj}\w\nh
-F\hskip-3.2pt_{kj}\w F\hskip-3.2pt_{lj}\w$ when\/ 
$\,\{i,j,k,l\}=\{1,2,3,4\}\,$ and\/
$\,Z_l\w\nh=(\lambda_i\w-\lambda_j\w)\hh\sigma\nnh_{ij}\w\nh
+(\lambda_j\w-\lambda_k\w)\hh\sigma\hskip-2.7pt_{jk}\w\nh
+(\lambda_k\w-\lambda_i\w)\hh\sigma\nnh_{ki}\w$ if\/ $\,(i,j,k,l)\,$ is an
even permutation of\/ $\,(1,2,3,4)$. The twelve functions\/ $\,H\nnh_{ij}\w$
are subject to a further system of polynomial equations, namely
\begin{equation}\label{fsp}
\mathrm{rank}\left[\begin{matrix}H\nnh_{12}\w&H\nnh_{13}\w&H\nnh_{14}\w&0&0&0&1
\cr
H\nnh_{21}\w&0&0&H\nnh_{23}\w&H\nnh_{24}\w&0&1\cr
0&H\nnh_{31}\w&0&H\nnh_{32}\w&0&H\nnh_{34}\w&1\cr
0&0&H\nnh_{41}\w&0&H\nnh_{42}\w&H\nnh_{43}\w&1\end{matrix}\right]
\le\,\hs3\hh.
\end{equation}
\end{theorem}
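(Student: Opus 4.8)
The plan is to read the entire statement off the orthonormal frame $e_i$ and the twelve functions $F_{ji}$ produced by Theorem~\ref{ricev}, treating the curvature quantities $\sigma_{ij},\lambda_i$ as additional unknowns tied to the $F_{ji}$ by the structure equations. First I would record the Levi-Civita connection in this frame: the bracket relations of Theorem~\ref{ricev}(a), together with metric compatibility and the Koszul formula, force $\nabla_{e_i}e_j=F_{ji}e_i$ for $i\ne j$ and $\nabla_{e_i}e_i=-\sum_{j\ne i}F_{ji}e_j$, so the connection form has exactly the components $F_{ji}$. Substituting this into $R(e_i,e_j)e_k=\nabla_{e_i}\nabla_{e_j}e_k-\nabla_{e_j}\nabla_{e_i}e_k-\nabla_{[e_i,e_j]}e_k$ writes every curvature component in terms of the $F_{ji}$ and their directional derivatives, while Theorem~\ref{ricev}(b) says all components other than the six $R_{ijij}$ vanish, and (\ref{jij}) trades the $R_{ijij}$ for the $\sigma_{ij}$. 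The first displayed line of Theorem~\ref{algvr} is then pointwise and needs no differentiation: $\lambda_i+\lambda_j+\lambda_k+\lambda_l=0$ is tracelessness of $b$; $\sigma_{ij}=\sigma_{ji}$ is a symmetry of $W$; $\sigma_{ij}+\sigma_{ik}+\sigma_{il}=0$ is the vanishing of the (only) Ricci contraction of $W$, meaningful here because $e_i$ diagonalizes $W$, the trace-free part of $R$; and $\sigma_{ij}=\sigma_{kl}$ follows by combining the four relations $\sum_{a\ne i}\sigma_{ia}=0$.

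Next I would extract the first-order system satisfied by the unknowns. Vanishing of $R(e_i,e_j)e_k$ for distinct $i,j,k$ gives $e_j(F_{ki})=F_{ji}(F_{kj}-F_{ki})$ (all indices distinct). The hypothesis $\mathrm{div}\,R=0$ is equivalent to $\mathrm{Ric}$ being a Codazzi tensor and forces $\mathrm{s}$ constant; evaluated on the $e_i$ it yields $e_i(\lambda_j)=F_{ij}(\lambda_i-\lambda_j)$ for $i\ne j$. The second Bianchi identity, together with $\mathrm{div}\,R=0$, supplies polynomial expressions for each $e_k(R_{ijij})$ in terms of the $F_{ji}$ and the $R_{ijij}$, hence, via (\ref{jij}) and the preceding relations, for each $e_k(\sigma_{ij})$ as well. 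After all this the only derivatives of the $F_{ji}$ not expressed polynomially in the unknowns are the diagonal ones $e_iF_{ij}$ and $e_jF_{ji}$, whose sum is pinned down by the formula for $R_{ijij}$.

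The remaining conclusions are the integrability conditions of this overdetermined system. For a function $\phi$ among the unknowns, requiring the two sides of $e_ae_b\phi-e_be_a\phi=(F_{ba}e_a-F_{ab}e_b)\phi$ to agree after the first-order formulas are inserted produces, once the undetermined derivatives $e_iF_{ij}$ cancel or reassemble into their one constrained combination, a homogeneous polynomial identity. Applied to the $F_{ji}$ alone — i.e. to the commutators $e_be_dF_{ca}-e_de_bF_{ca}$ with $\{b,d\}$ the complement of $\{a,c\}$, all of whose terms are fixed by $e_j(F_{ki})=F_{ji}(F_{kj}-F_{ki})$ — this gives, after the $H_{ij}$ are recognized in the result (note $H_{ij}=e_k(F_{lj})+F_{kl}F_{lj}$ by that same equation), exactly the statement that the $4\times7$ matrix in (\ref{fsp}) has rank $\le3$. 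Applied instead to the Codazzi equations $e_i\lambda_j=F_{ij}(\lambda_i-\lambda_j)$, together with the Bianchi formulas for $e_k\sigma_{ij}$, and then orienting the frame positively to fix signs, it produces the second displayed equation in the form $H_{ji}Z_j=-H_{ij}Z_i$, i.e.\ (\ref{fsi}); one checks en route, using $\sum_{a\ne i}\sigma_{ia}=0$, that $Z_1+Z_2+Z_3+Z_4=0$, which makes $(Z_1,Z_2,Z_3,Z_4)$ a left null vector of the matrix in (\ref{fsp}) and thus reproves that rank bound whenever the vector is nonzero.

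The main obstacle is not conceptual but the bulk and care needed in the last step: one must check that every diagonal derivative $e_iF_{ij}$ really drops out of the relevant commutators, keep the index bookkeeping exact — in particular tracking which permutations of $(1,2,3,4)$ are even, so as to pin the signs in the $Z_l$ — and recognize the resulting collection of polynomial consequences as precisely the displayed identities and the single rank inequality, rather than a larger or redundant family. A secondary point is the stratum where the $Z_l$ vanish identically, which must be argued separately (here real-analyticity, via (\ref{ana}), excludes the sub-case $W\equiv0$, since that would place the manifold in class (\ref{kne}) and contradict the hypothesis), and confirming throughout that everything holds on the full dense open set $U$ of Theorem~\ref{ricev} rather than on a possibly smaller generic subset.
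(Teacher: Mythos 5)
Your overall plan matches the paper's: the first displayed line is purely algebraic (tracelessness of $b$, Lemma~\ref{sgesg}); the second displayed line and (\ref{fsi}) are exactly what the paper obtains by eliminating the common quantity $D_k F_{lk}-D_l F_{kl}$ between the two integrability identities (\ref{dif}.ii) and (\ref{dif}.iii), which themselves come from commuting directional derivatives on $\sigma_{il}$ and on $\lambda_k$ after inserting the Codazzi equation (\ref{nii}.b), the $\mathrm{div}\,W=0$ relation (\ref{nii}.c), and the first-order equation (\ref{dif}.i) for the $F_{ji}$. That part of your proposal is sound.

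There are, however, two genuine gaps. First, your claim that the rank bound (\ref{fsp}) can be read off from the commutators $D_b D_d F_{ca}-D_d D_b F_{ca}$ with $\{b,d\}$ complementary to $\{a,c\}$ fails: a direct calculation using $D_a F_{bc}=(F_{ba}-F_{bc})F_{ac}$ and the bracket relation (\ref{nii}.d) shows that identity is satisfied \emph{identically}, term for term — it is vacuous and imposes no constraint. The only route to (\ref{fsp}) is the one you sketch as the alternative: (\ref{fsi}) plus $Z_1+Z_2+Z_3+Z_4=0$ exhibit $(Z_1,Z_2,Z_3,Z_4)$ as a left null vector of the matrix $\mathbf H$, so the rank bound holds \emph{wherever} this vector is nonzero. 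Second, and consequently, the nonvanishing of $(Z_1,Z_2,Z_3,Z_4)$ is not a secondary point to be waved off by real-analyticity and ``$W\not\equiv0$''. Vanishing of the $Z_l$ at a point does not force $W$ to vanish there (each $Z_l$ is a linear combination of products $\lambda_a\sigma_{ab}$ that could cancel). What is actually needed is the paper's pointwise argument: writing $(Z_1,Z_2,Z_3)$ as a $3\times3$ matrix applied to $(\lambda_1,\lambda_2,\lambda_3)$ via (\ref{zoe}), one computes the determinant to be $-8(\sigma_{13}-\sigma_{12})(\sigma_{12}-\sigma_{14})(\sigma_{14}-\sigma_{13})$, which is nonzero by (\ref{skw}.a) (this is where $\text{\smallbf w}=3$ enters), while $(\lambda_1,\lambda_2,\lambda_3)\ne(0,0,0)$ because $\sum\lambda_i=0$ together with $\text{\smallbf r}>1$ rules out $b=0$. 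Without both ingredients the step from (\ref{fsi}) to (\ref{fsp}) is incomplete.
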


\section{Preliminaries}\label{pr}
Manifolds, mappings and tensor fields are by definition 
$\,C^\infty\nnh$-differentiable. Unless stated otherwise, a manifold is 
assumed connected. Our conventions about the exterior derivative of a 
$\,1$-form $\,\zeta\,$ and the curvature tensor $\,R\,$ of a Riemannian metric 
$\,g\,$ are such that, for tangent vector fields $\,u,v,w$,
\begin{equation}\label{rvw}
\begin{array}{l}
(d\zeta)(u,v)\,=\,d_u\w[\zeta(v)]\,-\,d_v\w[\zeta(u)]\,-\,\zeta([u,v])\hh,\\
R\hs(v,w)u\,=\,\nabla\!_{w}\w\nabla\!_{v}\w u\,-\,
\nabla\!_{v}\w\nabla\!_{w}\w u\,+\,\nabla\!_{[v,w]}\w u\hh.
\end{array}
\end{equation}
The Ric\-ci tensor $\,\,\mathrm{Ric}\,\,$ and scalar curvature 
$\,\,\mathrm{s}\,\,$ of $\,g\,$ give rise to the Schouten tensor 
$\,\,\mathrm{Sch}=\mathrm{Ric}\hs-\hs[2(n-1)]^{-1}\mathrm{s}\hs g\,\,$ and the 
Weyl con\-for\-mal tensor $\,W=R-(n-2)^{-1}\hs g\wedge\mathrm{Sch}$, in
dimensions $\,n\ge3$, where $\,\wedge\,$ is a natural bilinear pairing of 
symmetric $\,2$-tensors, valued in covariant $\,4$-tensors 
\cite[formula (1.116)]{besse}. In coordinates, with $\,R_{ij}\w$ denoting the 
components of $\,\,\mathrm{Ric}$,
\begin{equation}\label{wij}
\begin{array}{l}
W\hskip-3.2pt_{ijkl}\w\,=\,R_{ijkl}\w\,-\,(n-2)^{-1}(g_{ik}\w R_{jl}\w\,+\,
g_{jl}\w R_{ik}\w\,-\,g_{jk}\w R_{il}\w\,-\,g_{il}\w R_{jk}\w)\\
\phantom{W_{ikl}\w\,=}+\,\,(n-1)^{-1}(n-2)^{-1}\hs\mathrm{s}\hs
(g_{ik}\w g_{jl}\w\,-\,g_{jk}\w g_{il}\w)\hh.
\end{array}
\end{equation}
A {\it Co\-daz\-zi tensor\/} \cite[Sect,~16.5]{besse} on a Riemannian manifold
is a twice-co\-var\-i\-ant symmetric tensor field $\,b\,$ with $\,db=0\,$
(in coordinates: $\,b_{ki,\hs j}\w=b_{kj,\hh i}\w$)

As usual, by the {\it warped product\/} of the Riemannian manifolds 
$\,(Q,h)\,$ (the {\it base}) and $\,(\varSigma,\gamma)\,$ (the {\it fibre}) 
with the {\it warping function\/} $\,\phi:Q\to(0,\infty)\,$ we mean the 
Riemannian manifold\
\begin{equation}\label{wrp}
(M\nh,g)\,=\,(Q\hn\times\nnh\varSigma,\,h+\nh\phi^2\gamma)\hh,
\end{equation}
where $\,h,\gamma,\phi\,$ also denote the pull\-backs of the original 
$\,h,\gamma,\phi\,$ to $\,Q\hn\times\nnh\varSigma$.
\begin{remark}\label{cnfpr}Since (\ref{wrp}) amounts to $\,(M\nh,g)\,
=\,(Q\hn\times\nnh\varSigma,\,\phi^2\hh[\hs\phi^{-\nh2}\nh g+h\hs])$, a warped 
product is the same as a Riemannian manifold con\-for\-mal to a Riemannian 
product in such a way that the con\-for\-mal-fac\-tor function is constant 
along one of the constituent manifolds.
\end{remark}

\section{Algebraic Weyl tensors and di\-ag\-o\-nal\-iz\-abil\-i\-ty}\label{aw}
Given a Euclidean vector space $\,\mathcal{T}\hs$ of any dimension $\,n$, by 
an {\it algebraic Weyl tensor\/} in $\,\mathcal{T}\hs$ we mean a quadrilinear 
mapping 
$\,A:\mathcal{T}\times\mathcal{T}\times\mathcal{T}\times\mathcal{T}\nh\to\bbR$ 
having the usual symmetries of the Weyl con\-for\-mal tensor 
(skew-sym\-me\-try in the first and last pairs of arguments, the first 
Bian\-chi identity, and vanishing of the Ric\-ci contraction). If the 
Ric\-ci-con\-trac\-tion requirement is relaxed, one calls $\,A\,$ an {\it 
algebraic curvature tensor}. Any such $\,A\,$ also forms a linear 
en\-do\-mor\-phism $\,A:\mathcal{T}^{\wedge2}\nnh\to\mathcal{T}^{\wedge2}$ of 
the space $\,\mathcal{T}^{\wedge2}$ of bi\-vec\-tors, with 
$\,A(v\wedge w)=A(v,w,\,\cdot\,,\,\cdot\,)\in[\mathcal{T}^*]^{\wedge2}\nnh
=\mathcal{T}^{\wedge2}\nnh$, where $\,[\mathcal{T}^*]^{\wedge2}\nnh
=\mathcal{T}^{\wedge2}$ due to the identification 
$\,\mathcal{T}^*\nnh=\mathcal{T}\,$ provided by the inner product 
\cite[Sect.~1.108]{besse}.

Following \cite[Sect.~16.18]{besse}, one says that an orthogonal basis 
$\,e\nh_1,\dots,e_n$ of $\,\mathcal{T}$ {\it di\-ag\-o\-nal\-izes\/} an 
algebraic curvature tensor $\,A\,$ if 
$\,A(e\nh_i\w,e\nh_j\w,e\nh_k\w,e\nh_l\w)=0$ whenever 
$\,\{i,j\}\ne\{k,l\}\,$ or, equivalently, if all the nonzero exterior products 
$\,e\nh_i\w\nh\wedge e\nh_j\w$ are eigenvectors of 
$\,A:\mathcal{T}^{\wedge2}\nnh\to\mathcal{T}^{\wedge2}\nh$. Riemannian 
manifolds $\,(M\nh,g)$ whose curvature tensor is di\-ag\-o\-nal\-iz\-ed, at 
every point $\,x$, by some orthogonal basis of $\,\txm\nh$, were first studied 
by Maillot \cite{maillot} and referred to by him as having {\it pure curvature 
operator}.

An algebraic curvature tensor $\,A\,$ and a symmetric bilinear form 
$\,b\,$ on $\,\mathcal{T}$ will be called {\it simultaneously 
di\-ag\-o\-nal\-iz\-able} if some orthogonal basis of $\,\mathcal{T}$ 
di\-ag\-o\-nal\-izes both $\,b\,$ (in the usual sense) and $\,A$.

For a proof of the next fact, see \cite[Theorem 1]{derdzinski-shen}.
\begin{lemma}\label{rvovt}Let\/ $\,b\,$ be a 
Co\-daz\-zi tensor on a Riemannian manifold\/ $\,(M\nh,g)$, and let\/ 
$\,v_i\w\in \txm\,$ be eigenvectors of\, $\,b\hs(x)\hs$ at a point\/ 
$\,x\in M\hs$ correspoding to some eigen\-values $\,\lambda_i\w$, $\,i=1,2,3$. 
The curvature tensor $\,R=R\hs(x)\hs$ of\/ $\,(M\nh,g)$ at\/ $\,x\hs$ then 
satisfies the relation\/ $\,R\hs(v_1\w,v_2\w)v_3\w=\hs0\hs\,$ whenever 
$\,\lambda_1\w\ne\lambda_3\w\ne\lambda_2\w$.
\end{lemma}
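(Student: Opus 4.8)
The plan is to exploit the Codazzi equation $db=0$ directly, using the second Bianchi identity in its contracted form. Recall that for any Riemannian metric, the divergence of the curvature tensor is governed by $\operatorname{div} R = d^{\nabla}\operatorname{Ric}$ up to the usual sign and lower-order terms, so $\operatorname{div} R = 0$ is equivalent to $\operatorname{Ric}$ being a Codazzi tensor; but more to the point, I would instead start from the hypothesis that $b$ itself is Codazzi (which is what is stated), and not worry about where $b$ came from. Writing $(\nabla_u b)(v,w) - (\nabla_v b)(u,w) = 0$ for all $u,v,w$, I would aim to extract information about $R(v_1,v_2)v_3$ from the way the eigenvector condition interacts with covariant differentiation.

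First I would set up the computation at the point $x$. Extend the $v_i$ to local vector fields in whatever way is convenient; the key identity to invoke is the standard consequence of the Codazzi condition for a symmetric tensor, namely that for eigenvectors at a point the ``mixed'' Hessian-type terms drop out. Concretely, I would compute $(\nabla_{v_1} b)(v_2, v_3)$ and $(\nabla_{v_2} b)(v_1, v_3)$ and subtract; the Codazzi equation says this difference is zero. Expanding $(\nabla_{v_1} b)(v_2,v_3) = v_1(b(v_2,v_3)) - b(\nabla_{v_1} v_2, v_3) - b(v_2, \nabla_{v_1} v_3)$, and using at $x$ that $b(v_i,\cdot)$ equals $\lambda_i g(v_i,\cdot)$ on the eigenvectors, one isolates terms of the form $(\lambda_j - \lambda_3)\, g(\nabla_{v_i} v_j, v_3)$ and similar. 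The upshot is a relation among the connection coefficients $g(\nabla_{v_i} v_j, v_3)$ weighted by eigenvalue differences. Then I would feed this into the Ricci identity / definition of curvature: $R(v_1,v_2)v_3$ has components along each basis direction expressible through derivatives of these connection coefficients, and the component along $v_3$ itself — which is what matters for showing $R(v_1,v_2)v_3 = 0$ when $\lambda_1 \ne \lambda_3 \ne \lambda_2$ — is controlled precisely by the combination the Codazzi equation forces to vanish.

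The cleanest route, which I would actually pursue, is to use the known formula expressing the curvature in terms of a Codazzi tensor together with its eigenvalue structure: differentiating the eigenvalue equation $b(v,\cdot) = \lambda g(v,\cdot)$ and commuting derivatives. One differentiates $(\nabla b)$ once more and antisymmetrizes, obtaining (via the second Bianchi-type identity for Codazzi tensors) a relation of the shape $R \cdot b - b \cdot R$ expressed through $\nabla\nabla b$; restricting to the eigenbasis at $x$ and looking at the appropriate component yields $(\lambda_1 - \lambda_3)(\lambda_2 - \lambda_3)$ times something equal to zero only when further structure is used — so I would instead take the more elementary first-order route above, which is exactly the content of \cite{derdzinski-shen} being cited. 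The main obstacle will be bookkeeping: keeping track of which connection coefficients $g(\nabla_{v_i}v_j, v_k)$ are symmetric or antisymmetric in their lower indices (the metric-compatibility identity $g(\nabla_u v, w) + g(v, \nabla_u w) = u\,g(v,w)$ kills some, the eigenvector normalization kills others), and ensuring that, after all cancellations, the surviving expression for the $v_3$-component of $R(v_1,v_2)v_3$ is a nonzero multiple of $(\lambda_1-\lambda_3)$ and of $(\lambda_2-\lambda_3)$, so that the stated inequalities force it to vanish. A secondary subtlety is that one must argue the other components of $R(v_1,v_2)v_3$ (those along $v_j$ with $j \notin \{1,2,3\}$, and along $v_1, v_2$) also vanish — but these follow from the symmetries of $R$ together with applying the same first-order Codazzi argument to the other triples of eigenvectors, or are simply not asserted; re-reading the statement, only $R(v_1,v_2)v_3 = 0$ as a vector identity is claimed, and in fact its vanishing as a full tangent vector does follow once one checks each component, which the eigenvalue hypothesis handles symmetrically. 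I would present the argument as: (1) write out the Codazzi equation on the three eigenvectors; (2) use metric compatibility and the eigenvector relations at $x$ to reduce it to a linear relation in connection coefficients; (3) substitute into the definition \eqref{rvw} of $R(v_1,v_2)v_3$ and read off that the coefficient of the relevant cancellation is divisible by $(\lambda_1-\lambda_3)(\lambda_2-\lambda_3)$, hence the curvature term vanishes under the stated hypothesis.
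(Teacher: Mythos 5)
The paper offers no proof of its own here; it cites \cite{derdzinski-shen}, Theorem~1. Your plan does not constitute a proof, and the obstruction is the one you notice and then argue yourself out of: you settle on a first-order argument, but a first-order argument cannot determine the curvature. Evaluated at $x$ and expanded as you suggest, the Codazzi condition yields linear relations of the type $(\lambda_i-\lambda_k)\,g(\nabla_{v_j}v_i,v_k)=(\lambda_j-\lambda_k)\,g(\nabla_{v_i}v_j,v_k)$ among the connection coefficients at $x$; these constrain only the first jet of the frame at $x$. But $R(x)$ depends on the second jet of the metric: in geodesic normal coordinates centred at $x$ the Christoffel symbols vanish there, the Codazzi relation at $x$ collapses to $\partial_a b_{ij}=\partial_i b_{aj}$, and this says nothing about $\partial^2 g(x)$, hence nothing about $R(x)$. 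No amount of bookkeeping of the pointwise first-order relation can therefore close the argument. What is actually needed --- and is what \cite{derdzinski-shen} do --- is to differentiate the Codazzi equation once more, exchange the two outer covariant derivatives by the Ricci identity (which introduces $R$), and use the total symmetry of $\nabla b$ in its three slots to cancel all $\nabla^2 b$ terms, leaving the purely algebraic identity $b(w,R(u,v)z)+b(u,R(v,w)z)+b(v,R(w,u)z)=0$ for all $u,v,w,z$. Feeding eigenvectors into this and combining it with the first Bianchi identity yields the lemma. Your assertion that this second-order route produces ``$(\lambda_1-\lambda_3)(\lambda_2-\lambda_3)$ times something'' that does not close is incorrect, and the retreat to the first-order route is fatal rather than a simplification.

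Two further points. You repeatedly speak of the ``$v_3$-component of $R(v_1,v_2)v_3$,'' but $g(R(v_1,v_2)v_3,v_3)=0$ identically by skew-symmetry of $R$ in its last pair, so that component carries no content; the substance is in the components along $v_1$, $v_2$, and any further eigenvector $v_4$. For the $v_1$-component the identity above with $(u,v,w,z)=(v_1,v_2,v_3,v_1)$ gives $(\lambda_2-\lambda_3)\,R(v_1,v_2,v_3,v_1)=0$ --- a single eigenvalue difference, not a product --- while the $v_4$-component requires combining several instances of the identity with Bianchi, in a short case analysis in which the coefficients that arise are of the form $\lambda_1+\lambda_2-\lambda_3-\lambda_4$ rather than products of pairwise differences. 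Finally, your plan tacitly extends $v_1,v_2,v_3$ to local eigenvector fields of $b$, which is possible only when the eigenvalue multiplicities are locally constant; the argument of \cite{derdzinski-shen} avoids this by differentiating $b$ itself rather than a choice of eigenframe.
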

From Lemma \ref{rvovt} it is immediate that a Co\-daz\-zi tensor $\,b\,$ on a 
Riemannian $\,n$-man\-i\-fold $\,(M\nh,g)\,$ and the curvature tensor $\,R\,$ 
are simultaneously di\-ag\-o\-nal\-iz\-able at each point $\,x\in M\,$ where 
$\,b\,$ has $\,n\,$ distinct eigen\-values. On the other hand, the condition 
of simultaneous di\-ag\-o\-nal\-iz\-abil\-i\-ty of $\,b\,$ and $\,R$ at any 
given point $\,x\,$ clearly implies the same condition for $\,b\,$ and the 
Ric\-ci tensor $\,\,\mathrm{Ric}\,\,$ (that is, the bundle en\-do\-mor\-phisms 
of $\,TM\,$ corresponding to $\,b\,$ and $\,\,\mathrm{Ric}\,\,$ then commute 
at $\,x$) and, consequently, also for $\,b\,$ and the Weyl tensor 
$\,W=W\nh(x)$, cf.\ (\ref{wij}). However, in dimension $4$, even a weaker 
assumption on $\,b\,$ yields the same conclusion 
\cite[proof of Lemma 2]{derdzinski-88}:
\begin{lemma}\label{simdi}Let\/ $\,b\,$ be a Co\-daz\-zi tensor on an oriented 
Riemannian four-man\-i\-fold\/ $\,(M\nh,g)$. Then\/ $\,b\,$ and the Weyl 
tensor\/ $\,W$ are simultaneously di\-ag\-o\-nal\-iz\-able at every point at 
which\/ $\,b\,$ is not a multiple of\/ $\,g$.   
\end{lemma}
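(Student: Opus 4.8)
The plan is to work in the bundle of self-dual (and anti-self-dual) bivectors and use the fact that, in dimension four, a symmetric $2$-tensor $b$ acts on $\mathcal{T}^{\wedge2}$ in a way that interacts especially simply with the Hodge star. First I would reduce to a pointwise statement: fix $x\in M$ with $b=b(x)$ not a multiple of $g$, and let $e\nh_1,\dots,e\nh_4$ be a $g$-orthonormal basis of $\txm$ diagonalizing $b$, with eigenvalues $\lambda_1,\dots,\lambda_4$ (not all equal). It suffices to show that this same basis diagonalizes $W=W\nh(x)$ in the sense of Section~\ref{aw}, i.e. that $W(e\nh_i\w,e\nh_j\w,e\nh_k\w,e\nh_l\w)=0$ whenever $\{i,j\}\ne\{k,l\}$. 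Equivalently, each of the six bivectors $e\nh_i\w\wedge e\nh_j\w$ is an eigenvector of the endomorphism $W:\mathcal{T}^{\wedge2}\nnh\to\mathcal{T}^{\wedge2}$.

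The key structural input is the decomposition $\mathcal{T}^{\wedge2}=\Lambda^+\oplus\Lambda^-$ into self-dual and anti-self-dual parts, on each of which $W$ restricts to $W^\pm$, while the six ``diagonal'' bivectors $e\nh_i\w\wedge e\nh_j\w$ decompose as the six vectors $\alpha^\pm_k=\tfrac12(e\nh_i\w\wedge e\nh_j\w\pm e\nh_k\w\wedge e\nh_l\w)$ for $(i,j,k,l)$ an even permutation of $(1,2,3,4)$, giving orthogonal bases $\{\alpha^+_1,\alpha^+_2,\alpha^+_3\}$ of $\Lambda^+$ and $\{\alpha^-_1,\alpha^-_2,\alpha^-_3\}$ of $\Lambda^-$. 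So it is enough to show that $W^+$ and $W^-$ are each diagonalized by these bases. Next I would bring in the Codazzi hypothesis via Lemma~\ref{rvovt}: for any two indices with distinct eigenvalues among a triple, the full curvature operator $R$ kills the corresponding ``mixed'' term, which translates into vanishing of $R(e\nh_i\w,e\nh_j\w,e\nh_k\w,e\nh_l\w)$ and $R(e\nh_i\w,e\nh_j\w,e\nh_i\w,e\nh_k\w)$-type components \emph{wherever enough of the $\lambda$'s differ}. The point is that even when some eigenvalues coincide, one can choose which index plays the role of the ``middle'' eigenvalue in Lemma~\ref{rvovt} so as to force enough vanishing; and when two eigenvalues coincide one perturbs within the corresponding eigenspace (a $2$-plane on which $b$ is a multiple of $g$) to see that $R$ must still respect the splitting. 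Because $b$ is not a multiple of $g$, at least two distinct eigenvalues occur, and a short case analysis over the partition type of $\{\lambda_1,\lambda_2,\lambda_3,\lambda_4\}$ shows $R$ — and hence $W$ — is block-diagonal with respect to the $\alpha^\pm_k$.

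The step I expect to be the main obstacle is precisely this case analysis when the eigenvalues of $b$ are \emph{not} all distinct: Lemma~\ref{rvovt} by itself only handles a triple of eigenvectors with the middle eigenvalue differing from the outer two, so when, say, $\lambda_1=\lambda_2\ne\lambda_3=\lambda_4$ one must work harder to conclude $W(e\nh_1\w,e\nh_2\w,e\nh_3\w,e\nh_4\w)=0$. The trick is to exploit the freedom to rotate $e\nh_1,e\nh_2$ (and independently $e\nh_3,e\nh_4$) within their common eigenspaces — any such rotated frame still diagonalizes $b$ — and to apply Lemma~\ref{rvovt} to rotated triples; letting the rotation angle vary and extracting coefficients forces the remaining mixed curvature components to vanish. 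One then checks that every $W(e\nh_i\w,e\nh_j\w,e\nh_k\w,e\nh_l\w)$ with $\{i,j\}\ne\{k,l\}$ has been shown to vanish in each partition type, which is the definition of $b$ and $W$ being simultaneously diagonalizable, and subsumes the easy case of four distinct eigenvalues already noted after Lemma~\ref{rvovt}. Finally, passing back from the pointwise statement to the manifold statement is immediate, completing the proof.
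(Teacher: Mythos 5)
There is a genuine gap, and it is precisely at the step you yourself flag as ``the main obstacle.'' The paper does not actually prove this lemma; it cites the proof from \cite[proof of Lemma~2]{derdzinski-88}, so you are on your own here -- and the mechanism you propose does not suffice.

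Lemma~\ref{rvovt} only constrains $R(v_1,v_2)v_3$ through the \emph{eigenvalues} of the arguments: it gives $R_{abcd}=0$ whenever one of the four indices has a $b$-eigenvalue different from both eigenvalues of the opposite pair. In the $2{+}2$ case $\lambda_1=\lambda_2=\lambda\ne\mu=\lambda_3=\lambda_4$, this leaves components such as $R_{1314}$, $R_{1323}$, $R_{2414}$, $R_{2423}$ entirely unconstrained (every index there carries eigenvalue $\lambda$ or $\mu$, both of which occur in the opposite pair), and a direct check using (\ref{wij}) shows
$W(e_1\wedge e_3-e_2\wedge e_4,\,e_1\wedge e_4+e_2\wedge e_3)=R_{1314}+R_{1323}-R_{2414}-R_{2423}$,
a quantity that Lemma~\ref{rvovt} does \emph{not} force to vanish. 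So the claim that ``a short case analysis~\dots shows $R$ --- and hence $W$ --- is block-diagonal'' is false: $R$ is genuinely not diagonalized by a $b$-eigenframe when $b$ has repeated eigenvalues, and neither is $W$ in an arbitrary such frame. Your rotation trick does not repair this, because Lemma~\ref{rvovt} sees only eigenvalues: rotating $e_1,e_2$ within their common eigenspace produces vectors with the same eigenvalue $\lambda$, so every application of Lemma~\ref{rvovt} to the rotated frame yields exactly the same vanishings as before, with nothing new ``extracted from coefficients.''

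What is actually needed is a dimension-four algebraic input that you gesture at in your opening sentence but never invoke. A trace-free symmetric $2$-tensor on an oriented Euclidean $4$-space is the same thing as a linear map $\phi:\mathcal{L}^+\to\mathcal{L}^-$ between the self-dual and anti-self-dual planes (it anticommutes with $*$), and the Codazzi property of $b$ yields the intertwining relation $W^-\!\circ\phi=\phi\circ W^+$ (Bourguignon's identity; see \cite{bourguignon} and \cite[Lemma~2]{derdzinski-88}). From the explicit structure of $\phi$ in a $b$-eigenframe -- $\phi$ scales $e_1\wedge e_2\pm e_3\wedge e_4$ by $\tfrac12(\lambda_1+\lambda_2-\lambda_3-\lambda_4)$, etc.\ -- one reads off which of the bivectors (\ref{bas}) are forced to be $W^\pm$-eigenvectors by the intertwining, and which are only determined after an additional rotation inside $\ker\phi$; since that rotation lifts, via the double cover of Remark~\ref{eottf}, to a rotation of the $e_i$ that preserves the $b$-eigenspaces, one obtains the desired common diagonalizing frame. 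The intertwining identity also produces the matching of $W^+$ and $W^-$ spectra required by the necessary direction of Lemma~\ref{sgesg}; Lemma~\ref{rvovt} alone cannot deliver this, because it makes no statement relating $W^+$ to $W^-$.
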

In an oriented Euclidean $\,4$-space $\,\mathcal{T}\nnh$, the Hodge star 
$\,*:\mathcal{T}^{\wedge2}\nnh\to\mathcal{T}^{\wedge2}$ acting on 
bi\-vec\-tors may be characterized by 
$\,*(e\nh_1\w\nh\wedge e\nh_2\w)=e\nh_3\w\nh\wedge e\nh_4\w$ whenever 
$\,e\nh_1\w,\dots,e\nh_4\w$ is a pos\-i\-tive-ori\-ent\-ed or\-tho\-nor\-mal 
basis of $\,\mathcal{T}\nnh$. This makes $\,*\,$ an involution, with 
$\,\mathcal{T}^{\wedge2}\nh=\mathcal{L}^+\nnh\oplus\hs\mathcal{L}^-\nh$, where 
$\,\mathcal{L}^+\nnh=\mathrm{Ker}\hskip1.2pt(*\hs-\hs\mathrm{Id})\,$ and 
$\,\mathcal{L}^-\nnh=\mathrm{Ker}\hskip1.2pt(*\hs+\hs\mathrm{Id})$, the 
spaces of {\it self-dual\/} and {\it anti-self-dual\/} bi\-vec\-tors, are the 
eigen\-spaces of $\,*\hs$. Any $\,e\nh_1\w,\dots,e\nh_4\w$ as above clearly 
lead to a basis of $\,\mathcal{L}^\pm$ formed by the bi\-vec\-tors
\begin{equation}\label{bas}
e\nh_1\w\nh\wedge e\nh_2\w\pm e\nh_3\w\nh\wedge e\nh_4\w\hh,\hskip7pt 
e\nh_1\w\nh\wedge e\nh_3\w\pm e\nh_4\w\nh\wedge e\nh_2\w\hh,\hskip7pt 
e\nh_1\w\nh\wedge e\nh_4\w\pm e\nh_2\w\nh\wedge e\nh_3\w\,\in\,\mathcal{L}^\pm.
\end{equation}
Any algebraic Weyl tensor
$\,A:\mathcal{T}^{\wedge2}\nnh\to\mathcal{T}^{\wedge2}$ leaves the subspaces
$\,\mathcal{L}^\pm$ invariant, since \cite[Theorem 1.3]{singer-thorpe} it
commutes with $\,*\hs$, which results in
\begin{equation}\label{apm}
\mathrm{the\ restrictions\
}\,A\nh^\pm:\mathcal{L}^\pm\nh\to\mathcal{L}^\pm\nh\mathrm{,\ both\ self}\hyp
\mathrm{ad\-joint\ and\ trace\-less.}
\end{equation}
If $\,\hs\mathcal{T}\nh=\txm\nh$, where $\,(M\nh,g)\,$ is an oriented 
Riemannian four-man\-i\-fold and $\,x\in M\nh$, we denote $\,\mathcal{L}^\pm$ 
by $\,\Lambda_x^{\!\pm\!}M\nh$, which leads to the vector sub\-bundles 
$\,\Lambda^{\!\pm\!}M$ of 
$\,[TM]^{\wedge2}\nh=\Lambda^{\!+\!}M\oplus\Lambda^{\!-\!}M\nh$. The 
restrictions $\,W^\pm\nnh:\Lambda^{\!\pm\!}M\to\Lambda^{\!\pm\!}M\,$ of the 
Weyl tensor $\,W\,$ of $\,(M\nh,g)\,$ satisfy in view of (\ref{apm}) the
conditions
\begin{equation}\label{trw}
\mathrm{tr}\hskip2ptW\hh^+\,=\,\,\mathrm{tr}\hskip2ptW\hh^-\,=\,0\hh.
\end{equation}
\begin{lemma}\label{sgesg}Given an or\-tho\-nor\-mal basis of a Euclidean\/ 
$\,4$-space\/ $\,\mathcal{T}\hs$ which di\-ag\-o\-nal\-izes an algebraic Weyl 
tensor\/ $\,A$, let\/ $\,\sigma\nnh_{ij}\w=\sigma\hskip-2.7pt_{ji}\w$ be its 
eigen\-values, with\/ 
$\,\sigma\nnh_{ij}\w=A(e\nh_i\w,e\nh_j\w,e\nh_i\w,e\nh_j\w)$, where\/
$\,i,j\in\{1,2,3,4\}\,$ and\/ $\,i\ne j$. If\/ 
$\,\{i,j,k,l\}=\{1,2,3,4\}$, then, for either fixed orientation,
$\,A\nh^+$ has the same eigen\-values\/
$\,\sigma\nnh_{ij}\w,\sigma\nnh_{ik}\w,\sigma\nnh_{il}\w$
as\/ $\,A\nh^-\nh$, while $\,\sigma\nnh_{ij}\w=\sigma\nnh_{kl}\w$ and\/ 
$\,\sigma\nnh_{ij}\w+\sigma\nnh_{ik}\w+\sigma\nnh_{il}\w=0$.
\end{lemma}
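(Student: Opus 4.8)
The plan is to reduce everything to two facts already in hand: by (\ref{apm}), an algebraic Weyl tensor $A$ preserves the summands $\mathcal{L}^\pm$ of $\mathcal{T}^{\wedge2}$ and the restrictions $A^\pm$ are traceless; and, because the given orthonormal basis $e_1,\dots,e_4$ diagonalizes $A$, every bivector $e_i\wedge e_j$ with $i\ne j$ is an eigenvector of $A:\mathcal{T}^{\wedge2}\to\mathcal{T}^{\wedge2}$, necessarily with eigenvalue $\sigma_{ij}=A(e_i,e_j,e_i,e_j)$, since all other components $A(e_i,e_j,e_k,e_l)$ vanish. The six vectors $e_i\wedge e_j$, $i<j$, thus form an orthonormal eigenbasis of $A$ on $\mathcal{T}^{\wedge2}$.

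First I would apply $A$ to the self-dual bivector $e_1\wedge e_2+e_3\wedge e_4$ appearing in (\ref{bas}). The image is $\sigma_{12}\,e_1\wedge e_2+\sigma_{34}\,e_3\wedge e_4$, whose projection onto $\mathcal{L}^-$ equals $\frac12(\sigma_{12}-\sigma_{34})(e_1\wedge e_2-e_3\wedge e_4)$. Since $A$ keeps $\mathcal{L}^+$ invariant, that projection must vanish, so $\sigma_{12}=\sigma_{34}$; running the same argument on the two remaining basis vectors in (\ref{bas}) yields $\sigma_{13}=\sigma_{24}$ and $\sigma_{14}=\sigma_{23}$. Written with the indices of the lemma, this is exactly $\sigma_{ij}=\sigma_{kl}$ whenever $\{i,j,k,l\}=\{1,2,3,4\}$.

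With these three equalities available, each self-dual bivector in (\ref{bas}) becomes an eigenvector of $A^+$: for instance $A(e_1\wedge e_2+e_3\wedge e_4)=\sigma_{12}(e_1\wedge e_2+e_3\wedge e_4)$, and similarly with eigenvalues $\sigma_{13}$ and $\sigma_{14}$. As those three bivectors form a basis of $\mathcal{L}^+$ (by (\ref{bas})), the eigenvalues of $A^+$ are $\sigma_{12},\sigma_{13},\sigma_{14}$; the identical computation on the anti-self-dual triple from (\ref{bas}) shows $A^-$ has the same three eigenvalues, for either orientation (reversing the orientation merely interchanges $A^+$ and $A^-$). A one-line check using the complementary-pair equalities shows that this unordered triple coincides with $\{\sigma_{ij},\sigma_{ik},\sigma_{il}\}$ no matter which index $i$ is singled out, which is the symmetric form stated. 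Finally, $\sigma_{ij}+\sigma_{ik}+\sigma_{il}=\mathrm{tr}\,A^+=0$ by (\ref{apm}), equivalently by (\ref{trw}).

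I do not anticipate a real obstacle: the whole argument is a short piece of linear algebra on $\mathcal{T}^{\wedge2}=\mathcal{L}^+\oplus\mathcal{L}^-$. The only things to watch are the orientation conventions built into (\ref{bas}) — one must use that the three $+$ combinations genuinely span $\mathcal{L}^+$ for the chosen orientation — and the verification that the eigenvalue triple of $A^+$ is independent of which index is distinguished, so that the symmetrically phrased conclusion of the lemma is justified.
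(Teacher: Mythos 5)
Your proof is correct and follows essentially the same route as the paper's: both use the invariance of $\mathcal{L}^\pm$ under $A$ to force $\sigma_{ij}=\sigma_{kl}$, then observe that the basis (\ref{bas}) gives eigenvectors of $A^\pm$ with eigenvalues $\sigma_{ij}$. The only cosmetic difference is that you derive the vanishing sum from $\mathrm{tr}\,A^+=0$ as in (\ref{apm})/(\ref{trw}), whereas the paper identifies $\sigma_{ij}+\sigma_{ik}+\sigma_{il}$ directly with the Ricci contraction $a(e_i,e_i)$ of the Weyl tensor; these are equivalent.
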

\begin{proof}With standard normalizations, 
$\,A(e\nh_i\w\nh\wedge e\nh_j\w)
=\sigma\nnh_{ij}\w\hs e\nh_i\w\nh\wedge e\nh_j\w$ 
(no summation). If $\,\{i,j,k,l\}=\{1,2,3,4\}\,$ and we use the orientation 
determined by $\,e\nh_i\w,e\nh_j\w,e\nh_k\w,e\nh_l\w$ then, by 
(\ref{bas}), $\,e\nh_i\w\nh\wedge e\nh_j\w+e\nh_k\w\nh\wedge e\nh_l\w$ lies in 
$\,\mathcal{L}^+\nh$, and so does its $\,A\nh$-im\-age 
$\,\sigma\nnh_{ij}\w\hs e\nh_i\w\nh\wedge e\nh_j\w
+\sigma\nnh_{kl}\w\hs e\nh_k\w\nh\wedge e\nh_l\w$, equal to its own
$\,*$-im\-age $\,\sigma\nnh_{kl}\w\hs e\nh_i\w\nh\wedge e\nh_j\w
+\sigma\nnh_{ij}\w\hs e\nh_k\w\nh\wedge e\nh_l\w$. 
This last equality gives $\,\sigma\nnh_{ij}\w=\sigma\nnh_{kl}\w$, while 
$\,\sigma\nnh_{ij}\w+\sigma\nnh_{ik}\w+\sigma\nnh_{il}\w$ vanishes, being the
trace of $\,A(e\nh_i\w,\,\cdot\,,e\nh_i\w,\,\cdot\,)$, that is,
$\,a(e\nh_i\w,e\nh_i\w)\,$ for the Ric\-ci contraction $\,a\,$ of $\,A$.
Now
$\,e\nh_i\w\nh\wedge e\nh_j\w\pm e\nh_k\w\nh\wedge e\nh_l\w\in\mathcal{L}^\pm$ 
is an eigen\-vector of both $\,A\nh^\pm$ with the eigen\-value 
$\,\sigma\nnh_{ij}\w=\sigma\nnh_{kl}\w$, due to (\ref{bas}) with 
$\,A(e\nh_i\w\nh\wedge e\nh_j\w)=\sigma\nnh_{ij}\w e\nh_i\w\nh\wedge e\nh_j\w$.
\end{proof}
\begin{remark}\label{eottf}The mapping that assigns to a 
pos\-i\-tive-ori\-ent\-ed or\-tho\-nor\-mal basis $\,e\nh_1\w,\dots,e\nh_4\w$ 
of $\,\mathcal{T}\,$ the pair (\ref{bas}) of pos\-i\-tive-ori\-ent\-ed 
orthogonal bases of $\,\mathcal{L}^+$ and $\,\mathcal{L}^-\nh$, with all 
vectors of length $\,\sqrt{2\,}$, is a two-fold covering, equi\-var\-i\-ant 
relative to the two-fold covering homo\-mor\-phism 
$\,\mathrm{SO}(4)\to\mathrm{SO}(3)\nnh\times\mathrm{SO}(3)$, while 
$\,\mathcal{L}^\pm$ are both canonically oriented 
\cite[Sect.~16.58]{besse}.
\end{remark}
\begin{remark}\label{specw}Let $\,(M\nh,g)\,$ be a K\"ah\-ler manifold of real 
dimension four, with the canonical orientation. Its self-dual Weyl tensor 
$\,W^+$ acting on the bundle $\,\Lambda^{\!+\!}M$ of self-dual bi\-vec\-tors 
then has fewer than three distinct eigen\-val\-ues at every point
\cite[formula~(16.64)]{besse}.
\end{remark}
\begin{remark}\label{prsfm}In an oriented Riemannian four-man\-i\-fold 
$\,(M\nh,g)\,$ with $\,g\,$ con\-for\-mal to a product $\,\hg\,$ of surface 
metrics, the conclusion of Remark~\ref{specw} applies to both $\,W^+$ and 
$\,W^-\nnh$. (As a special case, $\,(M\nh,g)\,$ might be here a warped product 
of two orientable Riemannian surfaces, cf.\ Remark~\ref{cnfpr}.) This follows 
since $\,\hg\,$ then is a K\"ah\-ler metric for two local complex structures 
compatible with the two mutually opposite orientations.
\end{remark}

\section{Harmonic curvature}\label{hc}
For any Riemannian manifold, the second Bian\-chi identity implies the 
equality 
$\,\,\mathrm{div}\,R\,=\,-\,d\,\mathrm{Ric}$, where the Ric\-ci tensor 
$\,\,\mathrm{Ric}\,\,$ treated as a $1$-form valued in $1$-forms. (Its 
coordinate version reads 
$\,R_{ijk}\w{}^p{}\nnh_{,\hs p}\w=R_{ki,\hs j}\w-R_{kj,\hh i}\w$.) This leads 
to equivalence between (\ref{dvr}) and the Co\-daz\-zi equation 
\begin{equation}\label{drz}
d\,\mathrm{Ric}\,=\,0\hh,\hskip16pt\mathrm{that\ is,}\hskip8ptR_{ki,\hs j}\w
=R_{kj,\hh i}\w\hh.
\end{equation}
Consequently, $\,\mathrm{div}\,R=0\,$ if and only if $\,\mathrm{Ric}\,$ is a 
Co\-daz\-zi tensor (Section \ref{pr}). Contracting the identity 
$\,R_{ijk}\w{}^p{}\nnh_{,\hs p}\w=R_{ki,\hs j}\w-R_{kj,\hh i}\w$ 
with $\,g^{ik}$ one gets
\begin{equation}\label{brt}
2\,\mathrm{div}\,\mathrm{Ric}\,=\,d\hs\mathrm{s}\hh,\hskip7pt\mathrm{that\ 
is,}\hskip6pt2g^{jk}R_{ij,k}\w\hs=\,\mathrm{s}_{,\hh i}\w
\end{equation}
for any Riemannian metric $\,g$. Therefore, from (\ref{drz}), 
\begin{equation}\label{cst}
\mathrm{whenever\ }\,\,\mathrm{div}\,R\,=\,0\,\,\mathrm{\ the\ scalar\ 
curvature\ }\,\,\mathrm{s}\,\,\mathrm{\ must\ be\ constant.}
\end{equation}
Since $\,2(n-1)(n-2)\,\mathrm{div}\,W=
-(n-3)\hs d\hs[2(n-1)\,\mathrm{Ric}\hs-\hs\mathrm{s}\hs g\hh]\,$ for 
Riemannian metrics $\,g\,$ in dimensions $\,n\ge4$, cf.\ 
\cite[Sect.~16.3]{besse}, (\ref{drz}) and (\ref{cst}) imply that
\begin{equation}\label{dvw}
\mathrm{div}\,R\,=\,0\hskip16pt\mathrm{if\ and\ only\ if}\hskip14pt
\mathrm{div}\,W\,=\,0\hskip6pt\mathrm{and}\hskip6ptd\hs\mathrm{s}\,
=\,0\hh.
\end{equation}
As an obvious consequence of (\ref{dvr}), a Riemannian product has harmonic 
curvature if and only if so do both factor manifolds. For a surface metric, 
harmonic curvature means constant Gauss\-i\-an curvature, which follows from 
(\ref{cst}). In dimension $\,n=3\,$ one always has $\,W\nnh=0$, and
con\-for\-mal flat\-ness amounts to the condition 
$\,d\hs[2(n-1)\,\mathrm{Ric}\hs-\hs\mathrm{s}\hs g\hh]=0$, cf.\ 
\cite[Sect.~1.170]{besse}, so that, by (\ref{drz}), having harmonic 
curvature is the same as being con\-for\-mal\-ly flat and of constant scalar 
curvature. 
\begin{remark}\label{cffpr}A Riemannian product is con\-for\-mal\-ly flat if 
and only if both factors have constant sectional curvatures $\,K,K'$ and 
$\,K'\nnh=\nh-K\,$ or one factor is of dimension $\,1$. See
\cite[Section 5]{yau-73}, \cite[Sect.~1.167]{besse}.
\end{remark}
\begin{remark}\label{hrmcv}Condition (\ref{dvr}) for the manifolds 
(\ref{kne}.a) -- (\ref{kne}.b), or (\ref{kne}.c) -- (\ref{kne}.d), or 
(\ref{kne}.e), follows from (\ref{drz}) and (\ref{dvw}), or from 
the paragraph following (\ref{dvw}) or, respectively, from 
\cite[Lemma 3(ii)]{derdzinski-88}.
\end{remark}
\begin{lemma}\label{warpd}Let\/ $\hs\mathrm{div}\,R=0$ for a warped product\/ 
\hbox{$(M\nh,g)\nh=\nh(I\nh\times\nnh N,\,dt^2\nnh+\nh Fh)$} of an open
interval\/ 
$\,I\subset\bbR\,$ carrying the standard metric\/ $\,dt^2\nnh$ and a 
Riemannian\/ $3\hh$-man\-i\-fold\/ $\,(N,h)$, where\/ $\,F:I\to(0,\infty)\,$  
and\/ $\,dt^2\nh,F,h\,$ are identified with their pull\-backs to\/ $\,M\nh$. 
Then\/ $\,(M\nh,g)\,$ is of type\/ {\rm(\ref{kne}.c)} or\/ 
{\rm(\ref{kne}.b)}, depending on whether\/ $\,F\hs$ is constant or not.
\end{lemma}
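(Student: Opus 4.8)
The plan is to analyze the warped product $(M,g)=(I\times N,\,dt^2+Fh)$ by first reducing, via the structure of harmonic curvature, to a statement about the three-manifold $(N,h)$ and the warping function $F$. First I would invoke \eqref{cst}, so that the scalar curvature $\mathrm{s}$ of $(M,g)$ is constant, and then write down the standard warped-product curvature identities expressing $\mathrm{Ric}$ and $R$ of $(M,g)$ in terms of $h$, the Ricci and curvature tensors of $(N,h)$, and the derivatives $F'$, $F''$. Feeding these into the Codazzi equation \eqref{drz} for $\mathrm{Ric}$ — equivalently $\mathrm{div}\,R=0$ — separates into the ``mixed'' components (involving $\partial_t$) and the ``pure'' components (tangent to $N$). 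The mixed Codazzi components should force an ODE/PDE link between $F$ and the scalar curvature of $(N,h)$; in particular they should imply that the scalar curvature of $(N,h)$ is (after the rescaling by $F$) constant on each slice, and that $F$ satisfies a second-order ODE in $t$ with constant coefficients determined by $\mathrm{s}$ and that constant.

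The dichotomy then emerges from whether $F$ is constant. If $F$ is constant, the metric is (up to the trivial rescaling absorbed into $h$) a genuine Riemannian product $dt^2+h$, so by the paragraph following \eqref{dvw} both factors have harmonic curvature; the one-dimensional factor automatically does, and $(N,h)$ then has $\mathrm{div}\,R=0$ in dimension three, which by the discussion in Section~\ref{hc} (using $W=0$ in dimension three) means $(N,h)$ is conformally flat with constant scalar curvature. That is precisely type \eqref{kne}.c. If $F$ is non-constant, I would use the remaining ``pure'' Codazzi components together with Lemma~\ref{rvovt} (applied to the Codazzi tensor $\mathrm{Ric}$, whose $\partial_t$-direction is an eigenvector) to show that $(N,h)$, after conformal rescaling, has the curvature controlled enough to be conformally flat; combined with the fact that a one-dimensional base warped product conformal to $dt^2+h$ is itself conformally a product (Remark~\ref{cnfpr}), and that the relevant conformal factor depends only on $t$, this should place $(M,g)$ in type \eqref{kne}.b — conformally flat of constant scalar curvature. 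Here one must check that the constant-scalar-curvature condition for $(M,g)$ is genuinely retained, which is immediate from \eqref{cst}.

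The main obstacle I anticipate is the non-constant case: extracting conformal flatness of the three-dimensional slice from the warped-product Codazzi equations requires showing that the ODE satisfied by $F$ is rigid enough that the warped product $dt^2+Fh$ coincides, up to a $t$-dependent conformal change, with a space form or a product of a space form with an interval, and that this is incompatible with $\mathrm{div}\,R=0$ unless the full four-manifold is conformally flat. Concretely, one has to rule out the ``intermediate'' possibility where $(N,h)$ is conformally flat but $(M,g)$ fails to be — and the way out is that the defining ODE for $F$, once $\mathrm{s}$ is constant and the slice scalar curvature is constant, has only solutions for which the Weyl tensor $W$ of $(M,g)$ vanishes identically (one can compute $W$ of a warped product over a one-dimensional base explicitly and see that $\mathrm{div}\,W=0$ from \eqref{dvw} forces $W=0$ here). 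Verifying that last vanishing is the computational heart; everything else is bookkeeping with the warped-product Bianchi identities already recalled in Section~\ref{pr}.
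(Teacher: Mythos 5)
Your dichotomy and the constant-$F$ branch are correct and match the paper: when $F$ is constant, the product splits and the three-dimensional factor has $\mathrm{div}\,R=0$, hence is conformally flat of constant scalar curvature, giving type (\ref{kne}.c).

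The non-constant branch has a genuine gap. You aim to show that $(N,h)$ is (after rescaling) \emph{conformally flat}, and then pass to $(M,g)$ via Remark~\ref{cnfpr}. But conformal flatness of the three-manifold $N$ is not the condition you need: by Remark~\ref{cffpr}, the product of an interval with a three-manifold is conformally flat precisely when the three-manifold has \emph{constant sectional curvature}, i.e.\ when $h$ is Einstein (equivalently, a space form, since $\dim N=3$). Conformal flatness of $N$ is strictly weaker and would not place $(M,g)$ in type (\ref{kne}.b). Likewise, the closing claim that $\mathrm{div}\,W=0$ ``forces $W=0$ here'' is not a routine verification — it is essentially equivalent to showing $h$ is Einstein, which is the entire analytic content of the non-constant case. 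The paper sidesteps this by quoting a known result, \cite[Lemma 4]{derdzinski-80}, that for a harmonic-curvature warped product $(I\times N, dt^2+Fh)$ with non-constant $F$ the fibre metric $h$ is necessarily Einstein; since $\dim N=3$, that makes $h$ a space-form metric, and then Remarks~\ref{cnfpr} and~\ref{cffpr} together with the constancy of $\mathrm{s}$ from (\ref{cst}) give type (\ref{kne}.b). To repair your argument you must actually extract the Einstein condition on $h$ from the mixed Codazzi component $R_{ti,j}=R_{tj,i}$ (with $i,j$ tangent to $N$) — this yields $\partial_t$ of the trace-free Ricci of $h$ being proportional to that tensor with a non-vanishing coefficient precisely when $F'\neq0$, forcing it to vanish — rather than stop at conformal flatness of $N$.
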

\begin{proof}If $\,F\,$ is nonconstant, $\,h$, being an Ein\-stein metric 
\cite[Lemma 4]{derdzinski-80}, has constant sectional curvature. We can now 
use Remarks~\ref{cnfpr} and~\ref{cffpr}.
\end{proof}
For an oriented Riemannian four-man\-i\-fold $\,(M\nh,g)\,$ having 
$\,\,\mathrm{div}\,R=0$, we denote by $\,\text{\smallbf w}^-\in\{1,2,3\}\,$ 
the maximum number of distinct eigen\-values of the anti-self-dual Weyl tensor 
$\,W^-\nnh$ acting on the bundle $\,\Lambda^{\!-\!}M\,$ of anti-self-dual 
bi\-vec\-tors (Section~\ref{aw}). This amounts to the analog of 
$\,\text{\smallbf w}$, defined in Section~\ref{ds}, for $\,(M\nh,g)\,$ with 
the opposite orientation, and
\begin{equation}\label{wme}
\mathrm{if\ }\,\,\mathrm{div}\,R\,=\,0\hh,\mathrm{\ one\ has\ 
}\,\,\text{\smallbf w}^-\hs=\,\hs\text{\smallbf w}\hskip12pt\mathrm{unless\ 
}\,g\,\mathrm{\ is\ an\ Ein\-stein\ metric.}
\end{equation}
Namely, (\ref{drz}) and Lemma~\ref{simdi} then imply simultaneous 
di\-ag\-o\-nal\-iz\-abil\-i\-ty of $\,\mathrm{Ric}\,$ and the Weyl tensor 
$\,W$ at every point of an open dense subset of $\,M\nh$, cf.\ (\ref{ana}), 
and we can apply Lemma~\ref{sgesg} to $\,A=W\nh(x)\,$ at any point 
$\,x\in M\nh$.
\begin{lemma}\label{onetw}A non-Ein\-stein oriented\/ $\,4$-man\-i\-fold\/ 
$\,(M\nh,g)\,$ with\/ $\,\mathrm{div}\,R=0$ has\/ 
$\,\text{\smallbf w}=\nnh1$, or\/ $\,\text{\smallbf w}=\nh2$, if and only if\/ 
$\,g\,$ is con\-for\-mal\-ly\ flat or, respectively, every point of\/ $\,M\,$ 
lies in an open sub\-man\-i\-fold of type\/ 
{\rm(\ref{kne}.d) -- (\ref{kne}.e)}.
\end{lemma}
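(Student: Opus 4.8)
The plan is to analyze the two values $\,\text{\smallbf w}=1\,$ and $\,\text{\smallbf w}=2\,$ separately, in each case using the simultaneous diagonalizability of $\,\mathrm{Ric}\,$ and $\,W\,$ supplied by (\ref{drz}) and Lemma~\ref{simdi} on a dense open set, together with the real-analyticity (\ref{ana}) that lets us pass between ``generic point'' and ``all points''. By Lemma~\ref{sgesg}, at a generic point where $\,\mathrm{Ric}\,$ is not a multiple of $\,g\,$ (which happens on a dense open set unless $\,g\,$ is Einstein, the case excluded by hypothesis), the eigenvalues of $\,W^+\,$ are $\,\sigma\nnh_{12}\w,\sigma\nnh_{13}\w,\sigma\nnh_{14}\w\,$ — the same as those of $\,W^-\,$ — subject to $\,\sigma\nnh_{ij}\w=\sigma\nnh_{kl}\w\,$ and $\,\sigma\nnh_{12}\w+\sigma\nnh_{13}\w+\sigma\nnh_{14}\w=0$. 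Thus $\,\text{\smallbf w}=\text{\smallbf w}^-\,$ by (\ref{wme}), and $\,\text{\smallbf w}\le 2\,$ means that at each generic point at least two of $\,\sigma\nnh_{12}\w,\sigma\nnh_{13}\w,\sigma\nnh_{14}\w\,$ coincide, while $\,\text{\smallbf w}=1\,$ means all three are equal, i.e.\ (being traceless) all three vanish, i.e.\ $\,W=0\,$ at that point.

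For the $\,\text{\smallbf w}=1\,$ case: by the previous paragraph $\,W\,$ vanishes on a dense open set, hence everywhere by (\ref{ana}), so $\,g\,$ is conformally flat; conversely if $\,g\,$ is conformally flat then $\,W\equiv0\,$ and trivially $\,\text{\smallbf w}=1$. This direction is immediate.

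The substantive case is $\,\text{\smallbf w}=2$. Here the standing assumption $\,\text{\smallbf w}\ne1\,$ means $\,W\not\equiv0$, and $\,\text{\smallbf w}=2\,$ forces, at each generic point, exactly one coincidence among $\,\sigma\nnh_{12}\w,\sigma\nnh_{13}\w,\sigma\nnh_{14}\w$; after renumbering the eigenvectors $\,e\nh_i\w$ we may assume $\,\sigma\nnh_{13}\w=\sigma\nnh_{14}\w\ne\sigma\nnh_{12}\w$, and then the three relations $\,\sigma\nnh_{ij}\w=\sigma\nnh_{kl}\w\,$ from Lemma~\ref{sgesg} read $\,\sigma\nnh_{12}\w=\sigma\nnh_{34}\w$, $\,\sigma\nnh_{13}\w=\sigma\nnh_{24}\w$, $\,\sigma\nnh_{14}\w=\sigma\nnh_{23}\w$, so the multiplicity-two eigenvalue of $\,W^\pm\,$ is $\,\sigma\nnh_{13}\w\,$ and its eigenspace in $\,\Lambda^{\!\pm\!}M\,$ is spanned by the last two bivectors in (\ref{bas}). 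The direction ``(\ref{kne}.d)--(\ref{kne}.e) $\Rightarrow\,\text{\smallbf w}=2$'' follows from Remark~\ref{prsfm} (which gives $\,\text{\smallbf w}\le2\,$ for metrics conformal to a product of surface metrics, in particular for warped products of two surfaces, which is what (\ref{kne}.d)--(\ref{kne}.e) are) together with the exclusion of $\,\text{\smallbf w}=1\,$ — one checks that a product of two surfaces with $\,\mathrm{div}\,R=0\,$ fails to be conformally flat precisely when it is not of the degenerate form covered by Remark~\ref{cffpr}, and in the genuinely two-eigenvalue situations $\,W\not\equiv0$; the sectional-curvature relations (\ref{jij}) make this bookkeeping routine.

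The converse — that $\,\text{\smallbf w}=2\,$ (with $\,g\,$ non-Einstein and $\,\mathrm{div}\,R=0$) forces every point into an open submanifold of type (\ref{kne}.d)--(\ref{kne}.e) — is the heart of the matter and the step I expect to be the main obstacle. The idea is this: on the dense open set where the above structure holds, the distinguished two-dimensional subbundle $\,\mathcal{D}\subseteq\Lambda^{\!+\!}M\,$ (the $\,\sigma\nnh_{13}\w$-eigenspace of $\,W^+$) is smooth and, by self-adjointness of $\,W^+$, so is its orthogonal complement, spanned by $\,e\nh_1\w\nh\wedge e\nh_2\w+e\nh_3\w\nh\wedge e\nh_4\w$; this singles out, up to sign and up to swapping $\,\{1,2\}\leftrightarrow\{3,4\}$, a pair of orthogonal complementary $\,2$-plane distributions $\,\mathcal{V}=\mathrm{span}(e\nh_1\w,e\nh_2\w)\,$ and $\,\mathcal{H}=\mathrm{span}(e\nh_3\w,e\nh_4\w)\,$ in $\,TM$. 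One must then show these distributions are integrable and that their integral leaves are totally geodesic, or at least umbilic — this is where the Codazzi equation for $\,\mathrm{Ric}\,$ has to be used quantitatively, by differentiating the relation $\,\sigma\nnh_{13}\w=\sigma\nnh_{14}\w\,$ (equivalently the corresponding relation among Ricci eigenvalues via (\ref{jij}) and (\ref{wij})) along the frame and extracting the connection coefficients $\,F\hskip-4pt_{ji}\w$, in the spirit of (\ref{lbr}). Once the metric is shown to be locally a product, or conformal to a product, of two surfaces, Remark~\ref{cnfpr} identifies it as a warped product and the classification (\ref{kne}.d)--(\ref{kne}.e) of such harmonic-curvature metrics (already recorded via Remark~\ref{hrmcv} and the surface condition (\ref{kpc})) finishes the argument. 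I would expect this reduction to local-product form to occupy most of the proof and to rely on the author's earlier paper \cite{derdzinski-88}, whose Lemma~2 and Lemma~3 are cited nearby; the cleanest route may well be to quote the relevant structural result from there rather than rederive it.
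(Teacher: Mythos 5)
Your outline matches the paper's high-level structure, and you correctly anticipate that the hard direction ($\text{\smallbf w}=2\Rightarrow$ type (\ref{kne}.d)--(\ref{kne}.e)) is settled by reducing to a conformal product of surfaces and quoting \cite{derdzinski-88}. But the proposal does not actually close that direction; it only sketches a plan, and that is exactly where the genuine gap lies. The paper's argument is also more economical than your sketch: rather than re-deriving integrability of the $2$-plane distributions by differentiating eigenvalue relations, it first notes that \cite[Prop.~1]{derdzinski-88} gives $W^+\ne0$ \emph{everywhere} (so the conformal factor is nowhere zero), sets $\hg=|W|^{2/3}g$, uses (\ref{dvw}) to get $\mathrm{div}\,W=0$, and invokes \cite[Theorem~2]{derdzinski-88} to conclude that $\hg$ is locally a Riemannian product of surface metrics near the dense open set where $\mathrm{Ric}\ne\mathrm{s}g/4$, extending to all of $M$ by the real-analyticity (\ref{ana}). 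The identification of the precise type, including the PDE (\ref{kpc}) on $(Q,h)$ and the relation $\,g=(h\times h^c)/(K+c)^2$, then comes from \cite[Theorem~2 and Lemma~3(i)]{derdzinski-88} --- your concluding sentence does not quite reach this, since Remark~\ref{hrmcv} gives only the forward implication (types (a)--(e) have harmonic curvature), not the converse you need.

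Two smaller points. The $1$-dimensional $W^+$-eigenspace you single out, spanned by $e_1\wedge e_2+e_3\wedge e_4$, does \emph{not} by itself determine the pair of $2$-plane distributions $\mathcal{V},\mathcal{H}$: a self-dual bivector of unit length corresponds to an almost-complex structure, not to a splitting of $TM$ into two orthogonal $2$-planes. You must also use the distinguished eigenvector of $W^-$ --- available precisely because $\,\text{\smallbf w}^-=\text{\smallbf w}\,$ by (\ref{wme}) in the non-Einstein case --- so that $e_1\wedge e_2$ and $e_3\wedge e_4$ are recovered separately as the sum and difference. For the ``if'' direction of the $\text{\smallbf w}=2$ claim, the paper records the sharper observation that $K\ne-c$ in (\ref{kpc}) rules out conformal flatness via Remark~\ref{cffpr}; the bookkeeping you gesture at is correct in spirit but this is the concrete reason $\text{\smallbf w}\ne1$.
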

\begin{proof}The claim about $\,\text{\smallbf w}=\nnh1\,$ trivially 
follows from (\ref{trw}) and (\ref{wme}). The warp\-ed-prod\-uct case of 
Remark~\ref{prsfm} yields the `if' part for $\,\text{\smallbf w}=\nh2\,$ by 
showing that $\,\text{\smallbf w}\le\nh2\,$ (and 
$\,\text{\smallbf w}\ne\nnh1\,$ since the relation $\,\,K\ne-c\,$ in 
(\ref{kpc}) precludes con\-for\-mal flat\-ness via Remark~\ref{cffpr}). 
Now let $\,\text{\smallbf w}=\nh2$. By 
\cite[Prop.~1]{derdzinski-88}, $\,W^+\nh\ne0\,$ everywhere, and we may 
consider the metric $\,\hg=|W|^{2/3}g\,$ on $\,M\nh$, con\-for\-mal to $\,g$. 
Since (\ref{dvw}) gives $\,\mathrm{div}\,W\nnh=0\,$ whenever 
$\,\mathrm{div}\,R\hn=0$, (\ref{wme}) and \cite[Theorem~2]{derdzinski-88}
imply that $\,\hg$, restricted to some neighborhood of any point at which 
$\,\mathrm{Ric}\hs\ne\hs\mathrm{s}g/4$, is a product of surface metrics. Due 
to real-an\-a\-lyt\-ic\-i\-ty -- see (\ref{ana}) -- the same must also be the 
case for points $\,x\,$ having $\,\mathrm{Ric}\hs=\hs\mathrm{s}g/4$, as a 
simply connected neighborhood $\,\,U$ of $\,x\,$ contains a nonempty open 
connected subset carrying a $\,\hg$-par\-al\-lel two\hh-di\-men\-sion\-al 
distribution, and the distribution clearly admits an extension to $\,\,U\nh$. 
Some constant multiple of $\,\hg\,$ then has, locally, the form 
$\,h\times\hskip-.2pth\hn^c$ of (\ref{kne}.e), with 
$\,g=(h\times\hskip-.2pth\hn^c)/(K\nnh+c)^2\nh$. Namely, as our hypotheses 
give $\,\nabla\hs\mathrm{Ric}\ne0\,$ somewhere, this last claim follows from 
\cite[Theorem 2 and Lemma 3(i)]{derdzinski-88} for points with 
$\,\mathrm{Ric}\hs\ne\hs\mathrm{s}g/4$, while real-an\-a\-lyt\-ic\-i\-ty of 
the metrics involved (including the surface metrics $\,h,h\hn^c$ 
defined, locally, at all points of $\,M$) allows us to relax the requirement 
that $\,\mathrm{Ric}\hs\ne\hs\mathrm{s}g/4$, completing the proof.
\end{proof}
\begin{proof}[Proof of Lemma~\ref{other}]First, (i) leads to (ii). Namely, for 
(\ref{kne}.a) (or (\ref{kne}.b), or (\ref{kne}.c)) one has 
$\,\text{\smallbf r}=\nnh1\,$ (or $\,\text{\smallbf w}=\nnh1$, or local 
reducibility). In (\ref{kne}.d) or (\ref{kne}.e), the warp\-ed-prod\-uct claim 
in Remark~\ref{prsfm} and (\ref{ana}) give $\,\text{\smallbf w}\le2$.

Conversely, let (ii) hold. If $\,g\,$ is locally reducible, we have
(\ref{kne}.c) or (\ref{kne}.d), cf.\ the paragraph following (\ref{dvw}),
while the case $\,\text{\smallbf r}=\nnh1\,$ yields (\ref{kne}.a). Suppose now
that $\,g\,$ is not locally reducible and $\,\text{\smallbf r}>\nnh1$. Thus, 
$\,\nabla\hs\mathrm{Ric}\ne0$ somewhere. If $\,\text{\smallbf r}=2$, it
follows from \cite[Theorem 1(i)]{derdzinski-80}, via Remark~\ref{cnfpr}, that
$\,(M\nh,g)$ has an open sub\-man\-i\-fold con\-for\-mal to the Riemannian
product of an interval and a \hbox{three\hs-}\hskip0ptman\-i\-fold of constant
sectional curvature, making $\,g$ con\-for\-mally flat due to
Remark~\ref{cffpr} combined with (\ref{ana}), and so (\ref{cst}) then yields
(\ref{kne}.b). This leaves the cases $\,\text{\smallbf w}=\nnh1\,$ and
$\,\text{\smallbf w}=2$, in which, since $\,\text{\smallbf r}>\nnh1$,
Lemma~\ref{onetw} gives (\ref{kne}.b), (\ref{kne}.d) or (\ref{kne}.e).
\end{proof}

\section{The local types {\bigrm(\ref{kne}.a)} -- 
{\bigrm(\ref{kne}.d)}}\label{lt}
The focus of our discussion does {\it not\/} include the local types 
(\ref{kne}.a) -- (\ref{kne}.d), since each of them is of independent 
interest and has been studied extensively. We list here some known facts about 
them, in the compact case.

The simplest examples of compact Ein\-stein four-man\-i\-folds are, arguably, 
spaces of constant curvature. Their complex counterparts ($\bbCP^2\nh$, 
complex $\,2$-tori, and compact quotients of $\,\bbCH^2$) carry well-known 
K\"ah\-ler-Ein\-stein metrics, as does any Riemannian product of two oriented
surfaces having the same constant Gauss\-i\-an curvature.

Generally, for a compact complex manifold $\,M\,$ to admit a 
K\"ah\-ler-Ein\-stein metric, its Lie algebra $\,\mathfrak{h}(M)\,$ of 
holomorphic vector fields must be reductive, as shown by Matsushima 
\cite{matsushima}, while $\,c_1\w(M)\,$ has to be negative, zero or positive. 
Conversely, when $\,c_1\w(M)<0$, the Ca\-la\-bi conjecture, proved by Aubin 
\cite{aubin-78} and Yau \cite{yau-78}, guarantees that $\,M\,$ carries a 
K\"ah\-ler-Ein\-stein metric, unique up to a factor. Also, Yau's proof 
\cite{yau-78} of another conjecture made by Ca\-la\-bi implies in particular
the existence of Ric\-ci-flat K\"ah\-ler metrics on K3 surfaces (which,
besides the complex $\,2$-tori, are the only K\"ah\-ler-type compact complex
surfaces having $\,c_1\w(M)=0$). 

For del Pezzo surfaces (compact complex surfaces $\,M\,$ with $\,c_1\w(M)>0$) 
the analog of the Ca\-la\-bi conjecture is false. In fact, defining $\,M\,$ to 
be the one-point or two-point blow-up of $\,\bbCP^2\nh$, one has 
$\,c_1\w(M)>0$, yet no K\"ah\-ler-Ein\-stein metric exists on $\,M\nh$, since 
$\,\mathfrak{h}(M)\,$ is not reductive. However, these two surfaces carry 
con\-for\-mal\-ly-K\"ah\-ler Ein\-stein metrics: the former was constructed by 
Page \cite{page}, the latter discovered, much more recently, by Chen, LeBrun 
and Weber \cite{chen-lebrun-weber}.

On the other hand, Tian \cite{tian} showed that all the remaining del Pezzo 
surfaces do admit K\"ah\-ler-Ein\-stein metrics. Besides $\,\bbCP^2$ and 
$\,\bbCP^1\hskip-2pt\times\bbCP^1\nh$, these surfaces arise as $\,k$-point 
blow-ups of $\,\bbCP^2\nh$, for $\,k\in\{3,4,\dots,8\}$.

The class of con\-for\-mal\-ly flat manifolds includes spaces of constant 
curvature, as well as the Riemannian products listed in Remark \ref{cffpr}, 
and is closed under a family of con\-nect\-ed-sum operations, cf.\ 
\cite[p. 479]{schoen}. As shown by Kuiper \cite[Theorem 6]{kuiper}, a compact 
simply connected con\-for\-mal\-ly flat manifold must be con\-for\-mal\-ly 
dif\-feo\-mor\-phic to a standard sphere.

For any compact con\-for\-mal\-ly flat manifold, the additional requirement 
that the scalar curvature be constant can always be realized, according to 
Aubin's and Schoen's solutions of the Yamabe problem \cite{aubin-75,schoen}, 
by a suitable con\-for\-mal change of the metric. In dimensions 
$\,n\in\{3,4\}\,$ relevant to us, this result is due to Schoen \cite{schoen}.

\section{Compact manifolds of the local type {\bigrm(\ref{kne}.e)}}\label{tf}
Following \cite[Example 4]{derdzinski-88}, we now describe how 
(\ref{kne}.e) leads to compact Riemannian four-man\-i\-folds 
$\,(M\nh,g)\,$ having$\,\,\mathrm{div}\,R=0$, with $\,M\,$ obtained as total 
spaces of flat $\,\mathrm{SO}(3)\hs$ bundles of $2$-spheres over closed 
surfaces.
\begin{example}\label{flbdl}Given $\,c,r\in(0,\infty)$, a metric $\,h\hn^c$ of 
constant Gauss\-i\-an curvature $\,c\,$ on $\,S^2\nnh$, a closed Riemannian 
surface $\,(Q,h)\,$ with the Gauss\-i\-an curvature $\,K\,$ satisfying 
(\ref{kpc}) as well as having $\,K\nnh+c>0$ everywhere in $\,Q$, and a group 
homo\-mor\-phism $\,\varphi:\pi\to\mathrm{SO}(3)$, for 
$\,\pi=\pi\nnh_1\w\nh Q$, we define $\,(\tilde M\nh,\tilde g)$ to be the 
manifold obtained when, in (\ref{kne}.e), one sets $\,S=S^2$ and, instead of 
$\,(Q,h)$, uses its Riemannian universal covering space 
$\,(\tilde Q,\tilde h)$. Then $\,\tilde g\,$ descends to a metric $\,g\,$ on 
the quotient manifold $\,M=\tilde M/\pi$, the free properly discontinuous 
action of $\,\pi\,$ on $\,\tilde M=\tilde Q\nh\times S^2$ by 
$\,\tilde g$-i\-so\-me\-tries  \cite[Example 4]{derdzinski-88} being given by 
$\,\gamma(x,y)=(\gamma(x),[\varphi(\gamma)](y))\,$ whenever 
$\,(\gamma,x,y)\in\pi\times\tilde Q\nh\times S^2\nnh$, with 
$\,\gamma(x)\,$ corresponding to the action of $\,\pi\,$ on $\,\tilde Q\,$ via 
deck transformations. 
\end{example}
An obvious question that arises is whether Example \ref{flbdl} really gives 
rise to anything interesting, which here means manifolds not belonging to the 
local types (\ref{kne}.a), (\ref{kne}.b), (\ref{kne}.c) or (\ref{kne}.d). 
As explained below, the answer 
is known to be `yes' for $\,Q\,$ ho\-meo\-mor\-phic to $\,S^2\nh$, or 
$\,\bbRP^2\nh$, or a closed orientable surface of genus greater than $\,1$.

According to \cite[Proposition 4]{derdzinski-88} (or, 
\cite[Proposition 2]{derdzinski-88}), on the closed orientable surface of any 
genus $\,p>1\,$ (or, respectively, on $\,\bbRP^2$) there exists 
an uncountable set $\,\mathcal{E}\,$ of pairwise non\-homo\-thet\-ic metrics 
$\,h\,$ having the properties required in Example \ref{flbdl} (and, in the 
case of $\,\bbRP^2$, rotationally invariant). The set $\,\mathcal{E}\,$ is 
ho\-meo\-mor\-phic to $\,\bbR^{6p-5}$ and contains a co\-di\-men\-sion-one 
subset formed by metrics of constant Gauss\-i\-an curvature; or, respectively, 
$\,\mathcal{E}\,$ is the union of a countably infinite family of subsets 
ho\-meo\-mor\-phic to $\,\bbR\,$ which all contain a fixed 
con\-stant-cur\-va\-ture metric, and are otherwise mutually disjoint.

Any $\,h\,$ as above on $\,\bbRP^2$ can obviously be pulled back to $\,S^2\nh$.

The metrics $\,h\,$ just mentioned all give rise, as in Example \ref{flbdl}, 
to compact manifolds of the local type (\ref{kne}.e) which do not 
simultaneously represent any of the local types (\ref{kne}.a), (\ref{kne}.b) 
or (\ref{kne}.c) \cite[Theorems 4 and 5]{derdzinski-88}. However, all such 
non\-flat metrics known to exist on the torus or Klein bottle lead to 
four-man\-i\-folds that also belong to type (\ref{kne}.c). See 
\cite[Example 5]{derdzinski-88}.

\section{Outline of proof of
Theorem~\ref{ricev}{\rm(a)\hs--\hs(b)}}\label{sd}
For a fixed oriented Riemannian 
four-man\-i\-fold $\,(M\nh,g)\,$ with $\,\,\mathrm{div}\,R=0$, let 
$\,\text{\smallbf r}\in\{1,2,3,4\}\,$ and 
$\,\text{\smallbf w}^\pm\in\{1,2,3\}\,$ denote the maximum number of 
distinct eigen\-values of the Ric\-ci tensor $\,\,\mathrm{Ric}\,\,$ (acting on 
the tangent bundle $\,T\nh M$) and, respectively, of the (anti)self-dual Weyl 
tensor $\,W^\pm$, acting on the bundle $\,\Lambda^{\!\pm\!}M\,$ of 
(anti)self-dual bi\-vec\-tors. See Section \ref{aw}. For simplicity we write
$\,\text{\smallbf w}\,$ instead of $\,\text{\smallbf w}^+\nh$. Note that, by
(\ref{wme}), $\,\text{\smallbf w}^-\nh=\text{\smallbf w}\,$ unless
$\,(M\nh,g)\,$ is an Ein\-stein manifold. In view of DeTurck and
Goldschmidt's result (\ref{ana}), there is a dense open subset of $\,M\,$ 
consisting of {\it generic\/} points, meaning
\begin{equation}\label{gnr}
\mathrm{points\ at\ which\ the\ maxima\ 
}\,\text{\smallbf r},\hs\text{\smallbf w}\,\mathrm{\ are\ simultaneously\ 
attained.}
\end{equation}
Four possible cases may occur:
\begin{enumerate}
  \def\theenumi{{\rm\Alph{enumi}}}
\item $\text{\smallbf r}\,=\hs1$: an Ein\-stein manifold -- type
  (\ref{kne}.a).
\item $\text{\smallbf r}\,>\hs1\,$ and $\,\hs\text{\smallbf w}=\hskip-1pt1$: 
type (\ref{kne}.b), as a consequence of Lemma~\ref{onetw}.
\item $\text{\smallbf r}\,>\hs1\,$ and $\,\hs\text{\smallbf w}=2\hs$: locally,
type (\ref{kne}.d) or (\ref{kne}.e) -- see Lemma~\ref{onetw}.
\item $\text{\smallbf r}>\nnh1\,$ and $\,\hs\text{\smallbf w}=\hn3$. By 
Lemma~\ref{codwe}(i), some neighborhood $\,\,U$ of any generic point $\,x\,$ 
admits or\-tho\-nor\-mal analytic vector fields $\,e\nh_1\w,\ldots,e\nh_4\w$ 
which di\-ag\-o\-nal\-ize both $\,\,W\,$ (in the sense of Section \ref{aw}),
and $\,\,\mathrm{Ric}$. 
\end{enumerate}
In case (D), let $\,\text{\smallbf d}\in\{0,1,2,3,4\}\,$ denote the maximal 
number of integers $\,l\in\{1,2,3,4\}$ for which there exist $\,i,j,k\,$ with 
$\,\{i,j,k,l\}=\{1,2,3,4\}$ and 
$\,g(\nabla\hskip-2.7pt_{e\nh_i\w}\hskip-2.4pte\nh_j\w,e\nh_k\w)\ne0\,$
somewhere in $\,\,U\nh$. As shown in Lemma \ref{zotwo} and Section~\ref{nt}, 
$\,\text{\smallbf d}\notin\{2,3,4\}$, so that there are just two possible
subcases:
\begin{enumerate}
  \def\theenumi{{\rm\Roman{enumi}}}
\item[{\rm(D1)}] $\text{\smallbf d}\,=\,1$: according to
Theorem~\ref{nuone}, $\,(M\nh,g)\,$ is, locally, of type (\ref{kne}.c).
\item[{\rm\hbox{(D\hs0)}}] $\text{\smallbf d}=0$, that is, 
$\,g(\nabla\hskip-2.7pt_{e\nh_i\w}\hskip-2.4pte\nh_j\w,e\nh_k\w)=0\,$
on $\,\,U$ whenever $\,i\ne j\ne k\ne i$. 
\end{enumerate}
Subcase (D0) clearly yields assertions (a) -- (b) in Theorem~\ref{ricev},
under the assumption (equivalent, by Lemma~\ref{other}, to the hypotheses
of Theorem~\ref{ricev}) that $\,(M\nh,g)\,$ contains no open
sub\-man\-i\-folds of types (\ref{kne}.a) -- (\ref{kne}.e). 
With Lemma~\ref{onetw} already established, {\it{\rm(a) --
(b)} in Theorem\/~{\rm\ref{ricev}} will thus follow from 
Lemmas\/~{\rm~\ref{codwe}(i), \ref{zotwo}}, the claims made in
Section\/~{\rm\ref{nt}}, and Theorem\/~{\rm\ref{nuone}}}.
\begin{remark}\label{nonvc}According to \cite[the lines following formula 
(0.3)]{derdzinski-piccione}, there exist complete, locally irreducible, 
non-Ric\-ci-par\-al\-lel Riemannian four-man\-i\-folds $\,(M\nh,g)$, which 
are not con\-for\-mal\-ly flat, having -- in addition to some further 
properties -- harmonic curvature and $\,\text{\smallbf r}=4$. (Their 
lo\-cal-i\-som\-e\-try types form a five\hh-di\-men\-sion\-al moduli space.) 
All those manifolds satisfy the hypotheses of our Theorem~\ref{ricev}. In
fact, $\,\text{\smallbf w}\,$ must equal $\,3$, as the case 
$\,\text{\smallbf w}\le2$ would, by Lemma~\ref{onetw}, lead to the local 
type (\ref{kne}.d) or (\ref{kne}.e), 
making $\,(M\nh,g)$, locally, a 
warped product 
with a two\hh-di\-men\-sion\-al fibre and harmonic curvature. Consequently 
\cite[Corollary 1.3]{derdzinski-piccione}, its Ric\-ci tensor would have a 
multiple eigen\-value at every point, contrary to the relation 
$\,\text{\smallbf r}\hs=4$.

It is not known whether the above class 
contains any compact manifolds.
\end{remark}

\section{The Co\-daz\-zi-\nnh Weyl simultaneous 
di\-ag\-o\-nal\-iz\-abil\-i\-ty}\label{cw}
Unlike in Section \ref{pr}, from now on repeated indices are {\it not\/}
summed over.
\begin{lemma}\label{codwe}Let there be given an oriented Riemannian
four-man\-i\-fold\/ $\,(M\nh,g)\,$ with a Co\-daz\-zi tensor field\/ $\,b\hs$
on\/ $\,(M\nh,g)\,$ having\/ $\,4b\ne(\mathrm{tr}\nh_g\w\hh b)g\,$ everywhere 
and an algebraic Weyl tensor field\/ $\,A\,$ such that\/
$\,\mathrm{div}\,A=0$, while\/ $\,A,b\,$ are simultaneously
di\-ag\-o\-nal\-iz\-able at each point in the sense of
Section\/~{\rm\ref{aw}}, and the bundle morphism\/
$\,A\nh^+:\Lambda^{\!+\!}M\to\Lambda^{\!+\!}M\,$ arising as the restriction of
$\,A:[TM]^{\wedge2}\to[TM]^{\wedge2}$ to self-dual bi\-vec\-tors, cf.\/ 
{\rm(\ref{apm})}, has three distinct eigen\-values at every point of\/
$\,M\nh$.

The above hypotheses imply the following conclusions.
\begin{enumerate}
  \def\theenumi{{\rm\roman{enumi}}}
\item An or\-tho\-nor\-mal frame\/ $\,e\nh_1\w,\dots,e\nh_4\w$ 
di\-ag\-o\-nal\-izing both\/ $\,A\,$ and\/ $\,b\hs$ at any $\,x\in M\,$ is
unique up to permuting and\hs/\hn or changing signs of\/ $\,e\nh_i\w$ and, 
passing to a finite covering of\/ $\,M\,$ if necessary, we may assume that\/
$\,e\nh_i\w$ are\/ $\,C^\infty$ vector fields on $\,M\nh$.
\item The directional derivative $\,D\nnh_i\w$ in the direction 
of $\,e\nh_i\w$ and the functions 
$\,\varGamma^k_{\nh\!ij},\lambda_i\w,\sigma\nnh_{ij}\w$ given, with\/
$\,i,j,k,l\,$ ranging over\/ $\,\{1,2,3,4\}$, by
\begin{equation}\label{gij}
\varGamma^k_{\nh\!ij}\,
=\,g(\nabla\hskip-2.7pt_{e\nh_i\w}\hskip-2.4pte\nh_j\w,e\nh_k\w)\hh,
\quad\lambda_i\w=b(e\nh_i\w,e\nh_i\w)\hs\quad\sigma\nnh_{ij}\w
=A(e\nh_i\w,e\hn_j,e\nh_i\w,e\hn_j)\hh,
\end{equation}
satisfy, whenever $\,\{i,j,k,l\}=\{1,2,3,4\}$, the conditions
\begin{equation}\label{skw}
\begin{array}{rl}
\mathrm{a)}&\varGamma^k_{\nh\!ij}\hs+\,\varGamma^j_{\!ik}\hs=\,0\hskip10pt
\mathrm{and}\hskip10pt\sigma\nnh_{ij}\w\,\ne\,\sigma\nnh_{ik}\w\,
\ne\,\sigma\nnh_{il}\w\,\ne\,\sigma\nnh_{ij}\w\hh,\\
\mathrm{b)}&\sigma\nnh_{ij}\w\,=\,\sigma\hskip-2.7pt_{ji}\w\,
=\,\sigma\nnh_{kl}\w\hskip10pt\mathrm{and}\hskip10pt
\sigma\nnh_{ij}\w\,+\,\sigma\nnh_{ik}\w\,+\,\sigma\nnh_{il}\w\,=\,0\hh,\\
\mathrm{c)}&(\lambda_j\w-\lambda_k\w)\varGamma^k_{\nh\!ij}\,=\,
(\lambda_k\w-\lambda_i\w)\varGamma^{\hs i}_{\!jk}\,=\,
(\lambda_i\w-\lambda_j\w)\varGamma^j_{\!ki}\hh,\\
\mathrm{d)}&(\sigma\nnh_{ij}\w\nh-\sigma\nnh_{ik}\w)\varGamma^k_{\nh\!ij}
=(\sigma\hskip-2.7pt_{jk}\w\nh-\sigma\hskip-2.7pt_{ji}\w)\varGamma^{\hs i}_{\!jk}
=(\sigma\nnh_{ki}\w\nh-\sigma\nnh_{kj})\w\varGamma^j_{\!ki}\hh,\\
\mathrm{e)}&D\nnh_i\w\lambda_j\w\,
=\,(\lambda_j\w-\lambda_i\w)\varGamma^{\hs i}_{\!j\hn j}
\hh,\\
\mathrm{f)}&D\nnh_j\w\sigma\nnh_{ij}\w
=\hs(\sigma\nnh_{ij}\w\nh-\sigma\nnh_{ik}\w)\varGamma^j_{\!kk}\nh
+\hs(\sigma\nnh_{ij}\w\nh-\sigma\nnh_{il}\w)\varGamma^j_{\!ll}\hh.
\end{array}
\end{equation}
\item At any point of\/ $\,M\nh$, the morphism\/ 
$\,A\nh^+:\Lambda^{\!+\!}M\to\Lambda^{\!+\!}M\,$ has the same 
eigen\-values, including multiplicities, as\/ 
$\,A\nh^-:\Lambda^{\!-\!}M\to\Lambda^{\!-\!}M\nh$.
\end{enumerate}
\end{lemma}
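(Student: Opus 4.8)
\emph{Outline of the argument.}
The backbone is Lemma~\ref{sgesg}. If an orthonormal basis $e_1,\dots,e_4$ of $T_xM$ diagonalizes $A$, then the bivectors (\ref{bas}) are eigenvectors of both $A^+$ and $A^-$ (as in the proof of Lemma~\ref{sgesg}), so these two operators share the eigenvalues $\sigma_{12},\sigma_{13},\sigma_{14}$, and the latter satisfy $\sigma_{ij}=\sigma_{ji}=\sigma_{kl}$ and $\sigma_{ij}+\sigma_{ik}+\sigma_{il}=0$ whenever $\{i,j,k,l\}=\{1,2,3,4\}$ --- this being part~(b) of (\ref{skw}). Since $A^+$ has three distinct eigenvalues by hypothesis, so does $A^-$, with all multiplicities equal to $1$ on both sides: this is conclusion~(iii), and the inequalities $\sigma_{ij}\ne\sigma_{ik}\ne\sigma_{il}\ne\sigma_{ij}$ of part~(a) follow as well. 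For conclusion~(i), observe that a frame diagonalizing $A$ determines the three (one-dimensional) eigenlines of $A^+$ and those of $A^-$, matched up by equality of the corresponding eigenvalue; from a matched pair $\bbR(e_i\wedge e_j+e_k\wedge e_l)$, $\bbR(e_i\wedge e_j-e_k\wedge e_l)$ one recovers the two decomposable bivectors $e_i\wedge e_j$, $e_k\wedge e_l$, hence the six coordinate $2$-planes $\mathrm{span}\,(e_i,e_j)$, whose pairwise nonzero intersections are exactly the four lines $\bbR e_i$. Thus any frame diagonalizing $A$ is unique up to permuting and changing signs of the $e_i$; moreover, as $A^+$ has simple eigenvalues everywhere, its eigenprojections --- hence these eigenlines and the associated frame --- depend smoothly on the point, so the set of all frames diagonalizing $A$ over $M$ is a smooth finite covering $\hat M\to M$ carrying a tautological smooth frame. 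Replacing $M$ by $\hat M$ provides the $C^\infty$ fields $e_i$; they diagonalize $b$ as well, since permutations and sign changes preserve this property, while by hypothesis some frame at each point diagonalizes $A$ and $b$ simultaneously and hence lies in the orbit just described.

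For the differential conclusions one expands $db=0$ and $\mathrm{div}\,A=0$ in the frame $e_i$, using throughout the diagonalization of $b$ and $A$, the relation $\varGamma^j_{ij}=\frac12 D_i\,g(e_j,e_j)=0$, and the identity $\varGamma^k_{ij}+\varGamma^j_{ik}=D_i\,g(e_j,e_k)=0$ --- the first relation of part~(a). Writing the Codazzi condition as full symmetry of the $3$-tensor $\nabla b$: evaluated on $(e_i,e_j,e_k)$ with $i,j,k$ distinct it gives $(\nabla_{e_i}b)(e_j,e_k)=(\lambda_j-\lambda_k)\varGamma^k_{ij}$, and symmetry in the first two arguments, combined with $\varGamma^k_{ji}=-\varGamma^i_{jk}$ and cyclic relabelling, yields~(c); evaluated on $(e_i,e_j,e_j)$ it gives $(\nabla_{e_i}b)(e_j,e_j)=D_i\lambda_j$ and $(\nabla_{e_j}b)(e_i,e_j)=(\lambda_j-\lambda_i)\varGamma^i_{jj}$, so symmetry yields~(e).

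The analogous computation with $\mathrm{div}\,A=0$, written as $\sum_m(\nabla_{e_m}A)(\,\cdot\,,\,\cdot\,,\,\cdot\,,e_m)=0$, is the step I expect to be the main obstacle, purely on account of the bookkeeping: in each sum over $m$ one must isolate the few terms surviving the diagonalization --- namely those whose two index pairs coincide, subject to the sign in $A(e_b,e_a,\,\cdot\,,\,\cdot\,)=-A(e_a,e_b,\,\cdot\,,\,\cdot\,)$ --- and then simplify via (a), (b). Evaluated on a triple $(e_i,e_j,e_k)$ of distinct frame vectors every component $A(e_i,e_j,e_k,e_m)$ vanishes identically, so no derivative term appears and the equation collapses to $(\sigma_{ij}-\sigma_{ik})\varGamma^k_{ij}=(\sigma_{jk}-\sigma_{ji})\varGamma^i_{jk}$; the cyclically related components supply the remaining equality, giving the three-fold relation~(d). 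Evaluated on a triple with one repeated index, such as $(e_i,e_j,e_i)$, the only surviving curvature component is $A(e_i,e_j,e_i,e_j)=\sigma_{ij}$, whose directional derivative contributes $D_j\sigma_{ij}$, while the connection terms, simplified by (a), (b) and $\varGamma^j_{ij}=0$, assemble into $D_j\sigma_{ij}=(\sigma_{ij}-\sigma_{ik})\varGamma^j_{kk}+(\sigma_{ij}-\sigma_{il})\varGamma^j_{ll}$, namely~(f). Throughout, this mirrors the Codazzi computation, with $\sigma_{ij}$ in the role of $\lambda_i$ --- reflecting that $\mathrm{div}\,A=0$ is, for a Weyl-type field, the precise analogue of the Codazzi equation for $b$.
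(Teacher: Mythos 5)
Your proposal is correct and follows essentially the same path as the paper's own (terse) proof: conclusions (iii), (a), (b) from Lemma~\ref{sgesg} plus orthonormality; (i) from the simple eigenvalues of $A^\pm$ together with the observation that a diagonalizing frame is determined, up to permutation and sign, by the matched eigenlines of $A^\pm$ (the paper cites Remark~\ref{eottf} for this, which you re-derive directly via the decomposable bivectors in each rank-two block); and (c)--(f) from expanding $db=0$ and $\mathrm{div}\,A=0$ in the diagonalizing frame. The frame-component computations you sketch for the Codazzi equation and for $\mathrm{div}\,A=0$ do check out and reproduce (c)--(f) exactly.
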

\begin{proof}Both (iii) and (\ref{skw}.a) -- (\ref{skw}.b) trivially follow 
from Lemma~\ref{sgesg} and the fact that $\,e\nh_i\w$ are or\-tho\-nor\-mal,
while (i) is immediate from Remark~\ref{eottf} along with the essential
uniqueness of eigen\-vectors of $\,A\nh^\pm\nnh$, which itself is due to 
the assumption about eigen\-values. Equalities (\ref{skw}.c) -- (\ref{skw}.f)
amount in turn to the Co\-daz\-zi equation for $\,b\,$ and the relation
$\,\,\mathrm{div}\,A=0$.
\end{proof}
Suppose now that an oriented Riemannian four-man\-i\-fold $\,(M\nh,g)\,$ has
\begin{equation}\label{rrw}
\mathrm{div}\,R\,=\,0\,\,\mathrm{\ with\ 
}\,\,\text{\smallbf r}>\nnh1\,\,\mathrm{\ and\ }\,\,\text{\smallbf w}=3\hh,
\end{equation}
$\text{\smallbf r}\,\,$ and $\,\,\text{\smallbf w}\,\,$ being defined as at
the beginning of Section \ref{sd} (or Section~\ref{ds}). The hypotheses of
Lemma~\ref{codwe} are then satisfied by $\,A\,$ equal to the Weyl
con\-for\-mal tensor $\,W\hs$ and the traceless Ric\-ci tensor 
$\,b=\hh\mathrm{Ric}\,-\,\mathrm{s}g/4\,$ of $\,g$, on any fixed connected
component of the dense open set of generic points, defined as in (\ref{gnr}).
This follows from (\ref{drz}), (\ref{dvw}), Lemma~\ref{simdi} with
$\,\text{\smallbf r}>1$, and the equality $\,\,\text{\smallbf w}=3$.
(The same would be true if we set $\,b=\,\mathrm{Ric}\,$ instead.) The
conclusions of Lemma~\ref{codwe} thus hold as well, which makes (\ref{wme}) a
consequence of Lemma~\ref{codwe}(iii). Furthermore, (\ref{rrw}) also 
implies that, whenever $\,i\ne j$, 
\begin{equation}\label{snz}
\sigma\nnh_{ij}\w\,\ne\,0\quad\mathrm{everywhere\ in\ some\ dense\ open\ 
subset\ of\ }\,\,M\hh.
\end{equation}
Otherwise, let $\,\sigma\nnh_{ij}\w=0\,$ on a nonempty open set; since 
$\,\sigma\nnh_{ij}\w$ is an eigen\-value function of both $\,W^+$ and
$\,W^-\nh$, using \cite[Proposition 16.72]{besse} one then gets
$\,W\hn\nnh=\hh0\,$ on that set, even though $\,\,\text{\smallbf w}=3$.
Finally, setting
\begin{equation}\label{rkl}
R_{ijkl}\w\,=\,g(R\hh(e\nh_i\w,e\nh_j\w)e\nh_k\w,\,e\nh_l\w)\qquad\quad
\mathrm{for}\quad i,j,k,l\in\{1,2,3,4\}\hh,
\end{equation}
we obtain (\ref{jij}) from (\ref{wij}) and (\ref{gij}) for $\,A=W\,$ and 
$\,b=\,\mathrm{Ric}\,-\,\mathrm{s}g/4$. Also,
\begin{equation}\label{riz}
R_{ijkl}\w\,=\,0\,\mathrm{\ unless\
}\,\{i,j\}=\{k,l\}\subseteq\{1,2,3,4\}\,\mathrm{\ is\ a\
}\,2\hyp\mathrm{element\ set,}
\end{equation}
as $\,R_{ijkl}\w=0\,$ in (\ref{wij}) when $\,\{i,j,k,l\}\,$ has more than two
elements, it being clearly the case for all the other terms of (\ref{wij}), 
where the components refer this time to the frame in Lemma~\ref{codwe}(i),
for $\,(A,b)=(W\hskip-2.3pt,\,\mathrm{Ric}-\mathrm{s}\hskip.7ptg/4)$.

Next, we may define the functions $\,\lx\hn_l\w$ and $\,\bz_l\w$, 
$\,l=1,2,3,4$, by 
\begin{equation}\label{ale}
\lx\hn_l\w=(\sigma\nnh_{ij}\w\nh-\sigma\nnh_{ik}\w)\varGamma^k_{\nh\!ij}\,,
\hskip6pt
\bz_l\w=(\lambda_j\w-\lambda_k\w)\varGamma^k_{\nh\!ij}\hskip4pt\mathrm{with}
\hskip4pt\{i,j,k,l\}=\{1,2,3,4\}\hh,
\end{equation}
since, due to (\ref{skw}.a) -- (\ref{skw}.d), the definition is correct,
namely, $\,\lx\hn_l\w$ and $\,\bz_l\w$ do not depend on the choice of $\,i,j\,$
and $\,k$.
\begin{remark}\label{alleq}If $\,\{i,j,k,l\}=\{1,2,3,4\}$, then 
$\,\lambda_i\w,\lambda_j\w,\lambda_k\w$ are all equal (or, all distinct)
wherever $\,\lx\hn_l\w\ne0=\bz_l\w$ (or, respectively,
$\,\lx\hn_l\w\ne0\ne\bz_l\w$). In fact, (\ref{ale}) with $\,\lx\hn_l\w\ne0\,$
gives $\,\varGamma^k_{\nh\!ij}\ne0$, and so, again by (\ref{ale}), the
relation $\,\bz_l\w=0\,$ (or, $\,\bz_l\w\ne0$) yields
$\,\lambda_i\w\nh=\lambda_j\w\nh=\lambda_k\w$ (or,
$\,\lambda_i\w\nh\ne\lambda_j\w\nh\ne\lambda_k\w\nh\ne\lambda_i\w$).
\end{remark}
\begin{lemma}\label{ggnez}Under the hypotheses of Lemma\/~{\rm\ref{codwe}},
let\/ $\,l\in\{1,2,3,4\}\,$ and\/ $\,x\in M\,$ be such that\/
$\,\lx\hn_l\w(x)\ne0\ne\bz_l\w(x)\,$ in\/ {\rm(\ref{ale})}. Then the function\/
$\,\alpha$ defined, on a neighborhood of\/ $\,x$, by
\begin{equation}\label{aea}
\alpha\,=\,\lx\hn_l\w/\nh\bz_l\w\,\ne\,0\hh,
\end{equation}
satisfies, for any\/ $\,i,j,k\,$ with\/ $\,\{i,j,k,l\}=\{1,2,3,4\}$, the
relations
\begin{enumerate}
  \def\theenumi{{\rm\roman{enumi}}}
\item $\sigma\nnh_{ij}\w\,-\,\sigma\nnh_{ik}\w\,
=\,(\lambda_j\w\,-\,\lambda_k\w)\alpha$,
\item $3\sigma\nnh_{ij}\w=(\lambda_i\w+\lambda_j\w-2\lambda_k\w)\alpha$,  
\item $D\nnh_k\w\hh\sigma\nnh_{ik}\w\,=\,D\nnh_k\w\hh\sigma\nnh_{lj}\w\,=\,
[(\lambda_k\w\,-\,\lambda_j\w)\varGamma^k_{\!j\hn j}\,+
\,(\lambda_i\w\,-\,\lambda_j\w)\varGamma^k_{\!ll}]\hs\alpha$,
\item $D\nnh_i\w\hh\sigma\hskip-2.7pt_{jk}\w\,=\,D\nnh_i\w\hh\sigma\nnh_{li}\w\,=\,
[(\lambda_j\w\,-\,\lambda_i\w)\varGamma^{\hs i}_{\!j\hn j}\,+
\,(\lambda_k\w\,-\,\lambda_i\w)\varGamma^{\hs i}_{\!kk}]\hs\alpha$,
\item $D\nnh_k\w\hh\sigma\hskip-2.7pt_{jk}\w\,=\,D\nnh_k\w\hh\sigma\nnh_{li}\w\,=\,
[(\lambda_k\w\,-\,\lambda_i\w)\varGamma^k_{\!ii}\,+
\,(\lambda_j\w\,-\,\lambda_i\w)\varGamma^k_{\!ll}]\hs\alpha$,
\item $D\nnh_i\w\hh\sigma\nnh_{ik}\w\,=\,D\nnh_i\w\hh\sigma\hskip-2.7pt_{jl}\w\,
=\,[(\lambda_i\w\,-\,\lambda_j\w)\varGamma^{\hs i}_{\!j\hn j}\,+
\,(\lambda_k\w\,-\,\lambda_j\w)\varGamma^{\hs i}_{\!ll}]\hs\alpha$,

\item $D\nh_l\w\hh\sigma\nnh_{ik}\w\,=\,D\nh_l\w\hh\sigma\hskip-2.7pt_{jl}\w\,
=\,[(\lambda_k\w\,-\,\lambda_j\w)\varGamma^{\hs l}_{\!ii}\,+
\,(\lambda_i\w\,-\,\lambda_j\w)\varGamma^{\hs l}_{\!kk}]\hs\alpha$,

\item $D\nh_l\w\hh\sigma\hskip-2.7pt_{jk}\w\,=\,D\nh_l\w\hh\sigma\nnh_{il}\w\,
=\,[(\lambda_k\w\,-\,\lambda_i\w)\varGamma^{\hs l}_{\!j\hn j}\,+
\,(\lambda_j\w\,-\,\lambda_i\w)\varGamma^{\hs l}_{\!kk}]\hs\alpha$,

\item $D\nnh_i\w\alpha\,=\,2\alpha\varGamma^{\hs i}_{\!ll}$,
\item $D\nnh_i\w\lambda_i\w\,=
\,(\lambda_i\w\,-\,\lambda_j\w)\varGamma^{\hs i}_{\!j\hn j}\,+
\,(\lambda_i\w\,-\,\lambda_k\w)\varGamma^{\hs i}_{\!kk}\,+
\,(\lambda_j\w+\lambda_k\w-2\lambda_i\w)\varGamma^{\hs i}_{\!ll}$,
\item $(\lambda_i\w-\lambda_j\w)(D\nh_l\w\alpha-2\alpha\varGamma^l_{\!kk})
=\alpha\hs(2\lambda_i\w+2\lambda_j\w
-\mathrm{tr}\nh_g\w\hh b)(\varGamma^l_{\!j\hn j}-\varGamma^l_{\!ii})$,
\item $D\nnh_i\w(\mathrm{tr}\nh_g\w\hh b)\,=
\,(\mathrm{tr}\nh_g\w\hh b\,-\,4\lambda_i\w)\varGamma^{\hs i}_{\!ll}$.
\item $\sigma\nnh_{ki}\w-\sigma\nnh_{kj}\w
=(\lambda_i\w\,-\,\lambda_j\w)\alpha$, which is the $\,i,j\,$ version of\/
{\rm(i)}.
\end{enumerate}
\end{lemma}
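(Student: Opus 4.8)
Throughout, the hypothesis $\lx\hn_l\w(x)\ne0\ne\bz_l\w(x)$ gives, by continuity, that $\lx\hn_l\w$ and $\bz_l\w$ stay nonzero on a neighborhood of $x$, where all the relations will be established; there $\varGamma^k_{\nh\!ij}\ne0$ by (\ref{ale}) and $\lx\hn_l\w\ne0$, and $\lambda_i\w,\lambda_j\w,\lambda_k\w$ are mutually distinct by Remark~\ref{alleq}, so every denominator below is legitimate. I would first dispose of the algebraic items. The relation $\lx\hn_l\w=\alpha\bz_l\w$ reads, by (\ref{ale}), as $(\sigma\nnh_{ij}\w-\sigma\nnh_{ik}\w)\varGamma^k_{\nh\!ij}=\alpha(\lambda_j\w-\lambda_k\w)\varGamma^k_{\nh\!ij}$, and cancelling $\varGamma^k_{\nh\!ij}$ gives (i); (xiii) is the same statement with $i,j,k$ permuted among themselves. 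Adding the three cyclic instances of (i) for the triple $\{i,j,k\}$ and using $\sigma\nnh_{il}\w=\sigma\nnh_{jk}\w$ and $\sigma\nnh_{ij}\w+\sigma\nnh_{ik}\w+\sigma\nnh_{il}\w=0$ from (\ref{skw}.b) to replace $\sigma\nnh_{ik}\w+\sigma\nnh_{jk}\w$ by $-\sigma\nnh_{ij}\w$ then isolates $3\sigma\nnh_{ij}\w$ and yields (ii).

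Next I would handle the six ``tangential'' derivative identities (iii)--(viii). In each of these the equality of the two displayed derivatives (such as $D\nnh_k\w\sigma\nnh_{ik}\w=D\nnh_k\w\sigma\nnh_{lj}\w$) is merely a pairing relation of (\ref{skw}.b) differentiated, so only the common value needs computing. That value is read off from (\ref{skw}.f): after using $\sigma\nnh_{ab}\w=\sigma\hskip-2.7pt_{ba}\w$ and, where needed, a pairing identity $\sigma\nnh_{ij}\w=\sigma\nnh_{kl}\w$ to move the differentiation direction into the index pair for which (\ref{skw}.f) is stated, one gets $D\nnh_m\w\sigma\nnh_{ab}\w$ (with $m\in\{a,b\}$) as a sum of terms $(\sigma\nnh_{pq}\w-\sigma\nnh_{pr}\w)\varGamma^m_{\!ss}$, and each difference of $\sigma$'s is rewritten as $\alpha$ times a difference of $\lambda$'s using (i), (xiii) and (\ref{skw}.b); collecting terms reproduces the stated right-hand sides.

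The remaining four identities come from differentiation. Applying $D\nnh_i\w$, then $D\nh_l\w$, to (i) in the form $\alpha=(\sigma\nnh_{ij}\w-\sigma\nnh_{ik}\w)/(\lambda_j\w-\lambda_k\w)$ and substituting the expressions for $D\nnh_m\w\sigma$ just obtained together with $D\nnh_m\w\lambda_n\w=(\lambda_n\w-\lambda_m\w)\varGamma^m_{\!nn}$ from (\ref{skw}.e) yields (ix) and (xi); in the first case the $\varGamma^{\hs i}_{\!j\hn j}$- and $\varGamma^{\hs i}_{\!kk}$-terms cancel in pairs, and in the second the factor $2\lambda_i\w+2\lambda_j\w-\mathrm{tr}\nh_g\w\hh b$ arises as $\lambda_i\w+\lambda_j\w-\lambda_k\w-\lambda_l\w$. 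Differentiating (ii) along $e\nh_k\w$ and using (ix), (\ref{skw}.e) and (i)--(ii) then gives (x), and finally (xii) follows from (x) and (\ref{skw}.e) by summing $D\nnh_i\w\lambda_n\w$ over $n\in\{1,2,3,4\}$, the $\varGamma^{\hs i}_{\!nn}$-terms with $n\ne i$ cancelling except for the combination recorded in (xii).

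All the computations are elementary, so the main obstacle is bookkeeping. The delicate point is the use of (\ref{skw}.f), which only supplies the derivative of $\sigma\nnh_{ij}\w$ along $e\nh_i\w$ or $e\nh_j\w$: to get $D\nnh_k\w\sigma\nnh_{ij}\w$ or $D\nh_l\w\sigma\nnh_{ij}\w$ with $k,l\notin\{i,j\}$ one must first replace $\sigma\nnh_{ij}\w$ by the equal quantity $\sigma\nnh_{kl}\w$, apply (\ref{skw}.f), and then translate the result back through (\ref{skw}.b) and (i), (xiii). Carrying this back-and-forth out consistently with the index convention (\ref{ind})--(\ref{pre}) --- and keeping track of the many sign changes it produces --- is the step I expect to require the most care.
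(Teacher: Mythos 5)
Your proposal follows the paper's route closely and the computations it calls for do go through, but there is one concrete misstep in the description of (ii). Adding the three cyclic instances of (i) over the triple $\{i,j,k\}$ yields only the tautology $0=0$: by the symmetry $\sigma_{ab}=\sigma_{ba}$ the six $\sigma$-terms cancel in pairs, and the $\lambda$-differences on the right likewise sum to zero. What is needed is \emph{two} instances, not three: adding (i) to its $i\leftrightarrow j$ version gives $2\sigma_{ij}-\sigma_{ik}-\sigma_{jk}=(\lambda_i+\lambda_j-2\lambda_k)\alpha$, after which the (\ref{skw}.b) relations $\sigma_{jk}=\sigma_{il}$ and $\sigma_{ij}+\sigma_{ik}+\sigma_{il}=0$ replace $-\sigma_{ik}-\sigma_{jk}$ by $\sigma_{ij}$, yielding (ii) exactly as the paper does.

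Apart from that, everything checks out. Deriving (iii)--(viii) from (\ref{skw}.f) together with (\ref{skw}.b), (i) and (xiii) works just as you describe, including the needed detour through the pairing identity $\sigma_{ij}=\sigma_{kl}$ when the differentiation direction $m$ is not in $\{a,b\}$. For (ix)--(xii) the paper differentiates (xiii) throughout, while you differentiate (i) (for (ix), (xi)) and a permuted instance of (ii) (for (x)); since (i), (ii), (xiii) are index-permuted forms of one another under the convention (\ref{ind})--(\ref{pre}), the two routes are equivalent. I verified that applying $D_i$ to (i) leaves, after the $\varGamma^i_{jj}$- and $\varGamma^i_{kk}$-terms cancel in pairs, $(\lambda_j-\lambda_k)(D_i\alpha-2\alpha\varGamma^i_{ll})=0$, hence (ix), and that applying $D_l$ to (i) produces the $(j,k,i)$-cyclic instance of (xi), with the factor $\lambda_i+\lambda_l-\lambda_j-\lambda_k=2\lambda_i+2\lambda_l-\mathrm{tr}_g b$ rather than the $\lambda_i+\lambda_j-\lambda_k-\lambda_l$ you wrote --- harmless, again by the universal index quantifier, but worth being aware of when doing the bookkeeping you flagged.
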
 
\begin{proof}Fix $\,i,j,k,l\,$ with $\,\{i,j,k,l\}=\{1,2,3,4\}\,$ and
$\,\lx\hn_l\w\bz_l\w\ne0\,$ at $\,x$. Then (i) is obvious from (\ref{aea}) and
(\ref{ale}), while adding (i) to its version obtained by interchanging
$\,i,j\,$ and using (\ref{skw}.b) we get (ii). Next,
(\ref{skw}.f), with $\,j,k$ switched, (\ref{skw}.b), (i) and (xiii) yield (iii). 
Similarly, (\ref{skw}.f) for $\,l,i\,$ rather than $\,i,j$, (\ref{skw}.b), (i) 
and (xiii) imply (iv). Invariance of our assumptions under permutations of 
$\,i,j,k\,$ gives (v) and (vi) from (iii) by switching $\,i\,$ with $\,j$,
or $\,i\,$ with $\,k$, and using (\ref{skw}.b). 
Next, (vii) follows since
$\,D\nh_l\w\hh\sigma\nnh_{ik}\w=D\nh_l\w\hh\sigma\hskip-2.7pt_{jl}\w
=(\sigma\hskip-2.7pt_{jl}\w-\sigma\hskip-2.7pt_{ji}\w)\varGamma^{\hs l}_{\!ii}+
\,(\sigma\hskip-2.7pt_{jl}\w-\sigma\hskip-2.7pt_{jk}\w)\varGamma^{\hs l}_{\!kk}$ due to
(\ref{skw}.b) and (\ref{skw}.f), 
while $\,\sigma\hskip-2.7pt_{jl}\w-\sigma\hskip-2.7pt_{ji}\w
=\sigma\nnh_{ik}\w-\sigma\nnh_{ij}\w=(\lambda_k\w-\lambda_j\w)\alpha\,$ and 
$\,\sigma\hskip-2.7pt_{jl}\w-\sigma\hskip-2.7pt_{jk}\w
=\sigma\nnh_{ki}\w-\sigma\nnh_{kj}\w=(\lambda_i\w-\lambda_j\w)\alpha\,$
by (\ref{skw}.b), (i) and (xiii). Switching $\,i\,$ with $\,j\,$ in (vii), we
obtain (viii). Applying 
$\,D\nnh_k\w$ or $\,D\nnh_i\w$ to (xiii), we now see that 
$\,D\nnh_k\w\sigma\nnh_{ik}\w-D\nnh_k\w\sigma\hskip-2.7pt_{jk}\w
-(\lambda_i\w-\lambda_j\w)D\nnh_k\w\alpha
-\hbox{$\alpha(D\nnh_k\w\lambda_i\w-D\nnh_k\w\lambda_j\w)=0$}$ as well as 
$\,\alpha D\nnh_i\w\lambda_i\w=D\nnh_i\w\hh\sigma\nnh_{ik}\w
-D\nnh_i\w\hh\sigma\hskip-2.7pt_{jk}\w-(\lambda_i\w-\lambda_j\w)D\nnh_i\w\alpha
+\alpha D\nnh_i\w\lambda_j\w$.
The first of these two equalities becomes
$\,(\lambda_j\w-\lambda_i\w)
[D\nnh_k\w\alpha-2\alpha\varGamma^k_{\!ll}]=0\,$ if one replaces the
directional derivatives with the corresponding right-hand sides in
(iii), (v) and (\ref{skw}.e). As $\,\bz_l\w\ne0\,$ in (\ref{ale}),
Remark~\ref{alleq} now yields (ix). Analogously, (v), (iv), (ix) and 
(\ref{skw}.e) combined with the second equality
give (x) multiplied by $\,\alpha$, which implies (x), since 
$\,\alpha\ne0\,$ by (i) and (\ref{skw}.a). Also, $\,D\nh_l\w$ applied to 
(xiii) gives
$\,(\lambda_i\w\,-\,\lambda_j\w)D\nh_l\w\alpha
=D\nh_l\w\sigma\nnh_{ki}\w-D\nh_l\w\sigma\nnh_{kj}\w
+[(\lambda_j\w-\lambda_l\w)\varGamma^{\hs l}_{\!j\hn j}+
  (\lambda_i\w\,-\,\lambda_l\w)\varGamma^{\hs l}_{\!ii}]\hs\alpha$, where
$\,D\nh_l\w\lambda_j\w$ and $\,D\nh_l\w\lambda_i\w$ have been replaced with
the expressions provided by (\ref{skw}.e). Using (vii) and (viii), we now 
easily get (xi). Finally, as 
$\,\mathrm{tr}\nh_g\w\hh b=\lambda_i\w+\lambda_j\w+\lambda_k\w
+\lambda_l\w$, (x) and (\ref{skw}.e) yield (xii), completing the proof. 
\end{proof}

\section{Three a priori possible cases}\label{ap}
Throughout this section we assume  the hypotheses of Lemma~\ref{codwe} and use
the notations of its conclusions. 
Let $\,\text{\smallbf d}\in\{0,1,2,3,4\}$ be the maximum value in $\,M\,$ of 
the function $\,\zeta$ assigning to $\,x\in M\,$ the number $\,\zeta(x)\,$ 
of indices $\,i\in\{1,2,3,4\}\,$ for which $\,\lx\hn_i\w(x)\ne0\,$ in
(\ref{ale}). Obviously, 
$\,\zeta=\text{\smallbf d}\,$ on some nonempty open subset of $\,M\nh$.
\begin{remark}\label{invrd}The invariant $\,\text{\smallbf d}\,$ is still 
well-defined if, instead of the hypotheses of Lemma~\ref{codwe}, one assumes
(\ref{rrw}): we then just take the maximum of $\,\zeta\,$ over the (possibly
disconnected) dense open set of generic points, cf.\ (\ref{gnr}).
\end{remark}
\begin{lemma}\label{zotwo}Under the hypotheses of Lemma\/~{\rm\ref{codwe}}, 
$\,\text{\smallbf d}\in\{0,1,2\}$. In other words, for the functions\/
$\,\lx\hn_i\w$ given by\/ {\rm(\ref{ale})} we have\/ $\,\lx\hn_i\w\lx\nh_j\w\lx\nh_k\w=0\,$
whenever\/ $\,i,j,k\,$ are all distinct, that is, at any point of\/ $\,M\,$ at 
least two of\/ $\,\lx\hn_i\w$ must be zero. Furthermore, with\/ $\,\lambda_i\w$
and\/ $\,\bz_i\w$ as in\/ {\rm(\ref{gij})} and\/ {\rm(\ref{ale})},
\begin{equation}\label{aka}
\begin{array}{rl}
\mathrm{a)}&\lx\nh_k\w\lx\hn_l\w(\lambda_i\w+\lambda_j\w-\lambda_k\w-\lambda_l\w)\,=\,0
\quad\mathrm{if}\quad
\{i,j,k,l\}=\{1,2,3,4\}\hh,\\
\mathrm{b)}&\bz_k\w/\lx\nh_k\w\,=\,-\hs \bz_l\w/\lx\hn_l\w\quad\mathrm{whenever}\quad k\ne 
l\quad\mathrm{and}\quad\lx\nh_k\w\lx\hn_l\w\ne0\hh. 
\end{array}
\end{equation}
\end{lemma}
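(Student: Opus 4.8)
The statement to prove is Lemma~\ref{zotwo}: $\text{\smallbf d}\in\{0,1,2\}$, together with the auxiliary relations (\ref{aka}.a)--(\ref{aka}.b). The natural starting point is to suppose, for contradiction, that three of the $\lx\hn_i\w$ are simultaneously nonzero at a point $x$, say $\lx\nh_j\w\lx\nh_k\w\lx\hn_l\w(x)\ne0$ where $\{i,j,k,l\}=\{1,2,3,4\}$. The plan is to exploit Lemma~\ref{ggnez}: for each index $m\in\{j,k,l\}$ the hypothesis $\lx\hn_m\w\ne0$ does not by itself trigger Lemma~\ref{ggnez} (which also needs $\bz_m\w\ne0$), so the argument must split according to whether the corresponding $\bz_m\w$ vanish. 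I expect the cleanest route is to first handle the case where all three of $\bz_j\w,\bz_k\w,\bz_l\w$ are nonzero, then treat the degenerate cases where some $\bz_m\w=0$ using Remark~\ref{alleq}.

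First I would record, via Remark~\ref{alleq}, what $\lx\hn_m\w\ne0$ forces: if $\bz_m\w=0$ then the three $\lambda$'s with indices complementary to $m$ coincide, while if $\bz_m\w\ne0$ those three are pairwise distinct. If, say, $\bz_j\w=0$ while $\lx\nh_j\w\ne0$, then $\lambda_i\w=\lambda_k\w=\lambda_l\w$; but then for the index $k$ (with complementary indices $i,j,l$) we have $\lambda_i\w=\lambda_l\w$, so $\bz_k\w=(\lambda_l\w-\lambda_i\w)\varGamma^{\dots}=0$ forces... — more carefully, $\bz_k\w$ is defined using any choice of the complementary triple, and since two of $\lambda_i\w,\lambda_j\w,\lambda_l\w$ are now equal one must check whether $\bz_k\w$ can still be nonzero; the correct reading of (\ref{ale}) is $\bz_k\w=(\lambda_i\w-\lambda_j\w)\varGamma$ etc. with the three expressions equal, and equality of two of the three $\lambda$'s among $\{i,j,l\}$ is compatible with $\bz_k\w\ne0$ only if the nonvanishing comes from the pair involving $j$. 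Pushing this bookkeeping through, the case analysis collapses: the only way to have three nonzero $\lx$'s is to land in the subcase $\bz_j\w\bz_k\w\bz_l\w\ne0$, where Lemma~\ref{ggnez}(i) applies with each of $j,k,l$ playing the role of the distinguished index $l$ there.

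In that principal subcase, applying Lemma~\ref{ggnez}(i) with the roles permuted gives three competing formulas of the form $\sigma\nnh_{ab}\w-\sigma\nnh_{ac}\w=(\lambda_b\w-\lambda_c\w)\alpha^{(m)}$, one for each distinguished index $m$, where $\alpha^{(m)}=\lx\hn_m\w/\bz_m\w$. Combining these with the trace identities (\ref{skw}.b) — namely $\sigma\nnh_{ab}\w+\sigma\nnh_{ac}\w+\sigma\nnh_{ad}\w=0$ and $\sigma\nnh_{ab}\w=\sigma\nnh_{cd}\w$ — and with the nonvanishing $\sigma\nnh_{ij}\w\ne0$ from (\ref{snz}) (which, being needed, I would invoke as part of the standing setup under (\ref{rrw}), or derive the equivalent algebraic consequence directly), should force a linear relation among the $\lambda$'s incompatible with their being in the configuration demanded by $\bz_j\w\bz_k\w\bz_l\w\ne0$. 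Concretely I anticipate deriving $\lambda_i\w+\lambda_j\w-\lambda_k\w-\lambda_l\w=0$ in a way that, combined with the analogous permuted identities, pins down all six pairwise $\lambda$-differences and contradicts $\alpha^{(j)}=\alpha^{(k)}=\alpha^{(l)}$ being simultaneously the nonzero common value they must be by Lemma~\ref{ggnez}(i) applied consistently. This same computation, rather than leading to a contradiction when only two $\lx$'s are nonzero, yields exactly (\ref{aka}.a) and (\ref{aka}.b): (\ref{aka}.a) is the relation $\lx\nh_k\w\lx\hn_l\w(\lambda_i\w+\lambda_j\w-\lambda_k\w-\lambda_l\w)=0$ read as ``either one of $\lx\nh_k\w,\lx\hn_l\w$ vanishes, or the $\lambda$-combination does,'' and (\ref{aka}.b) is the statement that on the open set where $\lx\nh_k\w\lx\hn_l\w\ne0$ the two ratios $\bz_k\w/\lx\nh_k\w$ and $\bz_l\w/\lx\hn_l\w$ are negatives of each other, which should drop out of equating the two expressions for the relevant $\sigma$-difference obtained from Lemma~\ref{ggnez}(i) with distinguished indices $k$ and $l$.

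The main obstacle I foresee is the index bookkeeping in the degenerate subcases where some $\bz_m\w$ vanishes: because the defining formula (\ref{ale}) for $\bz_l\w$ uses an \emph{arbitrary} choice of the complementary triple $i,j,k$ and the three resulting expressions are only guaranteed equal via (\ref{skw}.c), one must be careful that ``$\lambda$'s with complementary indices are all equal'' is the correct reading of $\bz_m\w=0$ only when at least one of the relevant $\varGamma$'s is nonzero — and that nonvanishing is precisely what $\lx\hn_m\w\ne0$ supplies through (\ref{ale}) and (\ref{skw}.a). So the logical skeleton is: $\lx\hn_m\w\ne0\Rightarrow$ some $\varGamma^k_{\nh\!ij}\ne0\Rightarrow$ (via Remark~\ref{alleq}) a dichotomy on the complementary $\lambda$'s $\Rightarrow$ in every branch, enough $\lambda$-coincidences or $\lambda$-distinctnesses to feed Lemma~\ref{ggnez} and reach a contradiction or the claimed identities. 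Keeping the twelve-fold symmetry under permutations of $i,j,k,l$ in view throughout will, I expect, cut the number of genuinely distinct cases down to two or three.
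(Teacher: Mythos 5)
There is a genuine gap. Your plan is a case analysis on which of the $\bz_m\w$ vanish, culminating in a ``principal subcase'' $\bz_j\w\bz_k\w\bz_l\w\ne0$ where you intend to combine three instances of Lemma~\ref{ggnez}(i) and derive a contradiction. Two things go wrong. First, the critical computation is never carried out: the claim that ``the case analysis collapses'' and the derivation of (\ref{aka}.a)--(\ref{aka}.b) from the permuted instances of Lemma~\ref{ggnez}(i) are both only \emph{anticipated}, not shown. Second, the contradiction mechanism you propose is incorrect: you expect to contradict ``$\alpha^{(j)}=\alpha^{(k)}=\alpha^{(l)}$ being simultaneously the nonzero common value they must be by Lemma~\ref{ggnez}(i) applied consistently,'' but nothing in Lemma~\ref{ggnez} forces those ratios to coincide --- indeed (\ref{aka}.b) asserts $\bz_k\w/\lx\nh_k\w=-\bz_l\w/\lx\hn_l\w$, i.e.\ $\alpha^{(k)}=-\alpha^{(l)}$, the \emph{opposite} sign. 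The actual contradiction in the paper is much more elementary: once (\ref{aka}.a) is known, three applications of it (to the pairs formed from any three indices with nonzero $\lx$) force $\lambda_1\w=\lambda_2\w=\lambda_3\w=\lambda_4\w$, contradicting the hypothesis $4b\ne(\mathrm{tr}\nh_g\w\hh b)g$ of Lemma~\ref{codwe}.

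A further problem: you want to invoke (\ref{snz}), but that relation is established only under (\ref{rrw}), i.e.\ for the geometric harmonic-curvature situation, whereas Lemma~\ref{zotwo} is stated under the purely algebraic hypotheses of Lemma~\ref{codwe} (an abstract divergence-free algebraic Weyl tensor field $A$, not necessarily the Weyl tensor of a metric with $\mathrm{div}\,R=0$). You do not need (\ref{snz}): the nonvanishing you need is that of the denominators $\sigma\nnh_{ij}\w-\sigma\nnh_{ik}\w$, which is precisely (\ref{skw}.a).

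The paper's proof sidesteps the $\bz$-vanishing dichotomy entirely and does not use Lemma~\ref{ggnez} at all. It is a single chain of equalities obtained directly from the three alternative defining formulas for $\bz_l\w/\lx\hn_l\w$ in (\ref{ale}) (well-defined by (\ref{skw}.c)--(\ref{skw}.d)) together with (\ref{skw}.b), valid wherever $\lx\nh_k\w\lx\hn_l\w\ne0$ because the $\sigma$-differences in the denominators are nonzero by (\ref{skw}.a). Equating the first and last links gives (\ref{aka}.a); the middle of the chain is literally (\ref{aka}.b). Crucially, this works whether or not $\bz_k\w$ or $\bz_l\w$ vanishes, so no case split is needed. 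To repair your argument, drop the Remark~\ref{alleq} dichotomy and the appeal to Lemma~\ref{ggnez}, and instead equate the several expressions for $\bz_l\w/\lx\hn_l\w$ and $\bz_k\w/\lx\nh_k\w$ that (\ref{ale}) and (\ref{skw}.b) provide; then use three instances of (\ref{aka}.a) to conclude.
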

\begin{proof}Fix $\,i,j,k,l\,$ with $\,\{i,j,k,l\}=\{1,2,3,4\}$. To prove 
(\ref{aka}), suppose that $\,\lx\nh_k\w\lx\hn_l\w\ne0\,$ at some
$\,x\in M\nh$. Then, from (\ref{ale}) and (\ref{skw}.b), at $\,x$,
\[
\frac{\lambda_i\w-\lambda_k\w}{\sigma\hskip-2.7pt_{ji}\w\!-\!\sigma\hskip-2.7pt_{jk}\w}=
\frac{\bz_l\w}{\lx\hn_l\w}=
\frac{\lambda_i\w-\lambda_j\w}{\sigma\nnh_{ki}\w\!-\!\sigma\nnh_{kj}\w}=
-\frac{\lambda_i\w-\lambda_j\w}{\sigma\nnh_{li}\w\!-\!\sigma\nnh_{lj}\w}=
-\frac{\bz_k\w}{\lx\nh_k\w}=
-\frac{\lambda_j\w-\lambda_l\w}{\sigma\nnh_{ij}\w\!-\!\sigma\nnh_{il}\w}=
\frac{\lambda_l\w-\lambda_j\w}{\sigma\hskip-2.7pt_{ji}\w\!-\!\sigma\hskip-2.7pt_{jk}\w}\hh,
\]
the denominators being nonzero by (\ref{skw}.a). Now (\ref{aka}) follows.

As for the first claim, suppose on the contrary that 
$\,\{i,j,k,l\}=\{1,2,3,4\}$ and $\,\lx\hn_i\w\lx\nh_j\w\lx\nh_k\w\ne0\,$ in an open 
subset $\,U'$ of $\, M\nh$. Then, by (\ref{aka}.a), 
$\,\lambda_i\w+\lambda_j\w=\lambda_k\w+\lambda_l\w$, 
$\,\,\lambda_j\w+\lambda_k\w=\lambda_i\w+\lambda_l\w$, and 
$\,\lambda_i\w+\lambda_k\w=\lambda_j\w+\lambda_l\w$ everywhere in $\,U'$. This 
gives $\,\lambda_i\w=\lambda_j\w=\lambda_k\w=\lambda_l\w$, even though
Lemma~\ref{codwe} assumes that $\,4b\ne(\mathrm{tr}\nh_g\w\hh b)g$.
\end{proof}
We now restrict our discussion to any fixed connected component $\,U$ of the 
nonempty open subset of $\,M\,$ in which $\,\zeta=\text{\smallbf d}$. Also,
let us rearrange $\,\lx\hn_i\w$ (that is, the or\-tho\-nor\-mal vector fields 
$\,e\nh_i\w$) so as to make those among $\,\lx_1\w,\lx_2\w,\lx_3\w,\lx\nh_4\w$ which 
vanish on $\,U$ precede those which do not. In view of Lemma~\ref{zotwo}, 
one of three cases must occur: 
\begin{equation}\label{toz}
\begin{array}{rl}
\mathrm{a)}&\text{\smallbf d}\hh=2\hskip9pt\mathrm{and}\hskip9pt\lx_1\w=\lx_2\w
=0\ne\lx_3\w\lx\nh_4\w\hskip22.2pt\mathrm{everywhere\ in\ }\,\,U,\\
\mathrm{b)}&\text{\smallbf d}\hh=1\hskip9pt\mathrm{and}\hskip9pt\lx_1\w=\lx_2\w
=\lx_3\w=0\ne\lx\nh_4\w\hskip9pt\mathrm{at\ each\ point\ of\ }\,\,U,\\
\mathrm{c)}&\text{\smallbf d}\hh=0\hskip9pt\mathrm{and}\hskip9pt\lx_1\w=\lx_2\w
=\lx_3\w=\lx\nh_4\w=0\hskip9pt\mathrm{identically\ in\ }\,\,U.
\end{array}
\end{equation}
In view of (\ref{skw}.a) and (\ref{ale}), there are the following implications.
\begin{equation}\label{abc}
\begin{array}{l}
\mathrm{Case\ (\ref{toz}.a)\hskip-2.5pt:}\hskip12pt
\varGamma^k_{\nh\!ij}\,=\,0\quad\mathrm{if}\quad i\ne j\ne k\ne i
\quad\mathrm{and}\quad3,4\in\{i,j,k\}\hh.\\
\mathrm{Case\ (\ref{toz}.b)\hskip-3pt:}\hskip12pt
\varGamma^k_{\nh\!ij}\,=\,0\quad\mathrm{if}\quad i\ne j\ne k\ne i
\quad\mathrm{and}\quad4\in\{i,j,k\}\hh.\\
\mathrm{Case\ (\ref{toz}.c)\hskip-2.5pt:}\hskip13pt
\varGamma^k_{\nh\!ij}\,=\,0\hskip11pt\mathrm{whenever}\quad\hs i,j,k\hs\quad
\mathrm{are\ distinct}\hh.
\end{array}
\end{equation}
It is immediate from (\ref{aka}.a) with $\,\lx_3\w\lx\nh_4\w\ne0\,$ that
\begin{equation}\label{tra}
\mathrm{in\ case\ (\ref{toz}.a),}\hskip12pt\mathrm{tr}\nh_g\w\hh b\,
=\,2(\lambda_1\w+\lambda_2\w)\,=\,2(\lambda_3\w+\lambda_4\w)\hh.
\end{equation}
Here are some more consequences of 
(\ref{toz}.a). First, as $\,\lx_3\w\lx\nh_4\w\ne0$, we have
\begin{equation}\label{hth}
\bz_3\w\bz\nh_4\w\ne0\,\mathrm{\ everywhere\ in\ }\,\,U\nh,
\end{equation}
or else Remark~\ref{alleq} 
would make three of $\,\lambda_1\w,\lambda_2\w,\lambda_3\w,\lambda_4\w$ equal
to one another and, by (\ref{tra}), they would all be equal, contrary to the
assumption that $\,4b\ne(\mathrm{tr}\nh_g\w\hh b)g$ in Lemma~\ref{codwe}. We
may thus apply Lemma~\ref{ggnez}(i) to both $\,l\in\{3,4\}$, and then, in view
of (\ref{skw}.a),
\begin{equation}\label{lkl}
\lambda_k\w\ne\lambda_l\w\,\mathrm{\ for\
}\,k,l\in\{1,2,3,4\}\mathrm{,\ unless\ }\,k=l\,\mathrm{\ or\
}\,\{k,l\}=\{3,4\}\hh.
\end{equation}
Also, by (\ref{aka}.b) and (\ref{aea}), $\,\alpha_3\w=-\alpha_4\w=-\alpha\,$ 
if $\,\alpha=\alpha_4\w$ as in Lemma \ref{ggnez}, for $\,l=4$. Thus, 
(i) -- (x) in Lemma~\ref{ggnez} remain valid for $\,l=3$, provided that
$\,\alpha\,$ is everywhere replaced with $\,-\alpha\,$ and so, from 
Lemma~\ref{ggnez}(vii),
\begin{equation}\label{dia}
D\nnh_i\w\alpha\,=\,2\alpha\varGamma^{\hs i}_{\!kk}\mathrm{\ \ if\
}\,k\in\{3,4\}\,\mathrm{\ and\ }\,i\ne k\hh.
\end{equation}
Finally, due to (\ref{tra}), assertion (x) in 
Lemma~\ref{ggnez} with $\,\{i,l\}=\{3,4\}$ becomes 
$\,D\nnh_i\w(\lambda_i\w+\lambda_l\w)
=(\lambda_l\w-\lambda_i\w)\varGamma^{\hs i}_{\!ll}$ which, by (\ref{skw}.e), 
equals $\,D\nnh_i\w\lambda_l\w$, giving
\begin{equation}\label{dtl}
D\nh_3\w\lambda_3\w\hs=\,D\nnh_4\w\lambda_4\w\hs
=\,0\hskip22.2pt\mathrm{everywhere\ in\ }\,\,U,\\
\end{equation}

\section{Further consequences of {\rm(\ref{toz}.a)}}\label{co}
The following theorem is used in Section~\ref{nt} to show that,
in the case of harmonic curvature, 
$\,\text{\smallbf d}\ne2\,$ (cf.\ Section~\ref{sd}); it only
describes the curvature components needed for this purpose. The theorem is
valid in a more general situation, which is why, for the sake of 
completeness and a possible reference, the remaining components are
listed in the Appendix.
\begin{theorem}\label{cmwtw}Let tensor fields $\,A\,$ and $\,b\,$ on an 
oriented Riemannian four-man\-i\-fold $\,(M\nh,g)\,$ satisfy the hypotheses of
Lemma\/~{\rm\ref{codwe}} and\/ {\rm(\ref{toz}.a)}. If\/
$\,\mathrm{tr}\nh_g\w\hh b=0\,$ and\/ $\,\sigma\nnh_{1\nh2}\w\ne0\,$ everywhere
in\/ $\,M\nh$, then the following holds at each point for some function 
$\,\lambda\hh$, some constant\/ $\,\mu\hh$, and
$\,\alpha=\lx\nh_4\w/\nh\bz_4\w\,$ as in\/ {\rm(\ref{aea})}.
\begin{enumerate}
  \def\theenumi{{\rm\alph{enumi}}}
\item The eigen\-value functions $\,\lambda_i\w$ of $\,b\,$ and 
$\,\sigma\nnh_{ij}\w$ of $\,A\nh^\pm$ are given by 
$\,\lambda_1\w=\lambda=-\lambda_2\w$, $\,\lambda_3\w=\mu=-\lambda_4\w$ and\/ 
$\,3\sigma\nnh_{1\nh2}\w=3\sigma\nnh_{34}\w=-2\mu\alpha$, 
$\,3\sigma\nnh_{1\nh3}\w=3\sigma\nnh_{24}\w=(\mu+3\lambda)\alpha$, 
$\,3\sigma\nnh_{1\nh4}\w=3\sigma\nnh_{23}\w=(\mu-3\lambda)\alpha$.
\item $\sigma\nnh_{1\nh2}\w,\,\mu\,$ and\/ $\,\alpha\,$ are nonzero constants,
while\/
$\,\lambda_1\w\ne\lambda_2\w\ne\lambda_3\w\ne\lambda_4\w\ne\lambda_1\w$.
\item 
$\nabla\hskip-3.7pt_{e\nh_3\w}\hskip-3.2pte\nh_3\w\nh
=\nabla\hskip-3.7pt_{e\nh_3\w}\hskip-3.2pte\nh_4\w\nh=
\nabla\hskip-3.7pt_{e\nh_4\w}\hskip-3.2pte\nh_3\w\nh
=\nabla\hskip-3.7pt_{e\nh_4\w}\hskip-3.2pte\nh_4\w\nh
=0\,$ and\/ $\,\,R\hh(e\nh_3\w,e\nh_4\w)e\nh_3\w\nh
=R\hh(e\nh_3\w,e\nh_4\w)e\nh_4\w\nh=0$.
\item Whenever\/ $\,\{i,j\}=\{1,2\}\,$ and\/ $\,\{k,l\}=\{3,4\}$, one has
\[
\begin{array}{l}
R_{ikik}\w\,
=\,\displaystyle{\frac{D\nnh_k\w D\nnh_k\w\lambda_i\w}{\lambda_i\w
-\lambda_k\w}\,-
\,\frac{2(D\nnh_k\w\lambda_i\w)^2}{(\lambda_i\w-\lambda_k\w)^2}\,
+\,\frac{\bz_l^2}{\lambda_i\w(\lambda_i\w+\lambda_k\w)}}\hh,\\
(\lambda_i\w+\lambda_k\w)R_{ikil}\w
=D\nnh_k\w D\nh_l\w\lambda_i\w-
\,\displaystyle{\frac{2\lambda_i\w(D\nnh_k\w\lambda_i\w)D\nh_l\w\lambda_i\w}{\lambda_i^2-\lambda_k^2}}
+\,\displaystyle{\frac{(\lambda_i^2\nh+\lambda_k^2)\bz_k\w\bz_l\w}{\lambda_i\w(\lambda_i^2-\lambda_k^2)}}\hh,\\
{}[\hs e\nh_i\w,e\nh_j\w]=
-\displaystyle{\frac{D\nnh_j\w\lambda_i\w}{2\lambda_i\w}}\,e\nh_i\w
+\displaystyle{\frac{D\nnh_i\w\lambda_i\w}{2\lambda_i\w}}\,e\nh_j\w
+\displaystyle{\frac{2\lambda_i\w}{\lambda_k^2-\lambda_i^2}}\hs
(\bz_l\w e\nh_k\w+\bz_k\w e\nh_l\w)\hh,\\
(\lambda_i\w-\lambda_k\w)R_{ijik}\w
=D\nnh_j\w D\nnh_k\w\lambda_i\w-D\nnh_i\w\bz_l\w
+(\lambda_i^2\nh+2\lambda_i\w\lambda_k\w-\lambda_k^2)
\displaystyle{\frac{\bz_l\w D\nnh_i\w\lambda_i\w-(D\nnh_k\w\lambda_i\w)D\nnh_j\w\lambda_i\w}{\lambda_i\w(\lambda_i^2-\lambda_k^2)}}\\
(\lambda_k^2\nh-\lambda_i^2)R_{ijkl}\w\,
=\,2(\bz_k\w D\nnh_k\w\lambda_i\w\,
-\,\bz_l\w D\nh_l\w\lambda_i\w)\hh,\phantom{_{j_{_j}}}
\end{array}
\]
with\/ $\,\bz_k\w,\bz_l\w,R_{ijkl}\w$ as in\/ {\rm(\ref{ale})}, 
{\rm(\ref{rkl})} and\/ {\rm(\ref{rvw})}, and\/
$\,\lambda_i\w(\lambda_i^2-\lambda_k^2)\ne0$.
\end{enumerate}
\end{theorem}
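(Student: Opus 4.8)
The plan is to establish (a)--(d) in that order, each feeding the next, using the first-order identities of Lemma~\ref{ggnez} and the case-(\ref{toz}.a) consequences (\ref{tra})--(\ref{dtl}). For (a): since $\mathrm{tr}_g b=0$, (\ref{tra}) reduces to $\lambda_1+\lambda_2=\lambda_3+\lambda_4=0$, so with $\lambda=\lambda_1$ and $\mu=\lambda_3$ one has $\lambda_2=-\lambda$, $\lambda_4=-\mu$. Lemma~\ref{ggnez}(ii) applies with $l=4$ (legitimate, since $\lx\hn_4\ne0$ by (\ref{toz}.a) and $\bz_4\ne0$ by (\ref{hth})) and yields $3\sigma_{12}=-2\mu\alpha$, $3\sigma_{13}=(3\lambda+\mu)\alpha$, $3\sigma_{23}=(\mu-3\lambda)\alpha$; the three remaining $\sigma_{kl}$ then follow from $\sigma_{ij}=\sigma_{kl}$ in (\ref{skw}.b). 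Taking instead $l=3$ gives nothing new, since by (\ref{aka}.b) the corresponding ``$\alpha$'' is $-\alpha$.

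For (b): $\sigma_{12}\ne0$ and $\alpha\ne0$ force $\mu\ne0$ in $3\sigma_{12}=-2\mu\alpha$, hence $\lambda_3\ne\lambda_4$, and with (\ref{lkl}) the four $\lambda_i$ become pairwise distinct; in particular $\lambda\ne0$. Constancy of $\mu$ and $\alpha$ is obtained from the first-order relations: (\ref{dia}) gives $\varGamma^i_{33}=\varGamma^i_{44}$ for $i\in\{1,2\}$, and differentiating $\lambda_3+\lambda_4=0$ along $e_i$ through (\ref{skw}.e) gives $0=(\lambda_3+\lambda_4-2\lambda_i)\varGamma^i_{33}=-2\lambda_i\varGamma^i_{33}$, so $\varGamma^i_{33}=\varGamma^i_{44}=0$, whence $D_i\mu=D_i\lambda_3=0$ and $D_i\alpha=0$ for $i\in\{1,2\}$; next (\ref{dtl}) gives $D_3\mu=D_4\mu=0$, and then $D_4\lambda_3=(\lambda_3-\lambda_4)\varGamma^4_{33}=2\mu\varGamma^4_{33}=0$ forces $\varGamma^4_{33}=0$ and symmetrically $\varGamma^3_{44}=0$, so $D_3\alpha=D_4\alpha=0$ by (\ref{dia}). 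Thus $\mu$, $\alpha$ and $\sigma_{12}=-\frac{2}{3}\mu\alpha$ are constant. For (c): now $D_m\lambda_3=D_m\lambda_4=0$ for every $m$, so (\ref{skw}.e) kills every $\varGamma^m_{33}$ and $\varGamma^m_{44}$; together with $\varGamma^3_{34}=-\varGamma^4_{33}=0$, $\varGamma^4_{43}=-\varGamma^3_{44}=0$ and the vanishing from (\ref{abc}.a) of the distinct-index symbols containing both $3$ and $4$, this gives $\nabla_{e_3}e_3=\nabla_{e_3}e_4=\nabla_{e_4}e_3=\nabla_{e_4}e_4=0$, hence $[e_3,e_4]=0$ and, by (\ref{rvw}), $R(e_3,e_4)e_3=R(e_3,e_4)e_4=0$.

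Part (d) is the computational core. First I would assemble the full table of connection coefficients $\varGamma^k_{ij}=g(\nabla_{e_i}e_j,e_k)$: by (\ref{abc}.a) and (\ref{ale}) one has $\bz_1=\bz_2=0$ here, every distinct-index $\varGamma$ involving both $3$ and $4$ vanishes, the remaining distinct-index ones equal $\bz_l/(\lambda_j-\lambda_k)$ by (\ref{skw}.a), (\ref{skw}.c), (\ref{ale}), and the repeated-index ones equal $D_i\lambda_j/(\lambda_j-\lambda_i)$ by (\ref{skw}.e), which vanish whenever $j\in\{3,4\}$ because $\lambda_3,\lambda_4=\pm\mu$ are constant. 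This determines all $\nabla_{e_p}e_q$ and $[e_p,e_q]$; computing $[e_i,e_j]$ for $\{i,j\}=\{1,2\}$ and using $(\lambda_k-\lambda_i)^{-1}-(\lambda_k+\lambda_i)^{-1}=2\lambda_i/(\lambda_k^2-\lambda_i^2)$, with $\lambda_j=-\lambda_i$ and $\lambda_l=-\lambda_k$, reproduces the displayed bracket formula. For the curvature I would pass to the totally symmetric $3$-tensor $T=\nabla b$ (total symmetry being the Codazzi condition together with symmetry of $b$), for which $T(e_a,e_a,e_b)=D_b\lambda_a$ and $T(e_a,e_b,e_c)=\bz_d$ when $\{a,b,c,d\}=\{1,2,3,4\}$, together with the Ricci identity for the $(0,2)$-tensor $b$, which in the curvature sign of (\ref{rvw}) reads $(\nabla^2 b)(e_m,e_n,e_p,e_q)-(\nabla^2 b)(e_n,e_m,e_p,e_q)=(\lambda_q-\lambda_p)R_{mnpq}$ for $p\ne q$. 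Expanding $(\nabla^2 b)(e_m,e_n,e_p,e_q)=D_m[T(e_n,e_p,e_q)]-T(\nabla_{e_m}e_n,e_p,e_q)-T(e_n,\nabla_{e_m}e_p,e_q)-T(e_n,e_p,\nabla_{e_m}e_q)$ with the connection coefficients above, and taking $(m,n,p,q)$ to be $(i,k,i,k)$, $(i,k,i,l)$, $(i,j,i,k)$, $(i,j,k,l)$ and their $m\leftrightarrow n$ swaps, yields after simplification the four stated formulas for $R_{ikik}$, $R_{ikil}$, $R_{ijik}$, $R_{ijkl}$; the second derivatives $D_mD_n\lambda$ and first derivatives $D_m\bz_l$ enter exactly through $D_m[T(e_n,e_p,e_q)]$. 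The nonzero denominators $\lambda_i(\lambda_i^2-\lambda_k^2)\ne0$ record $\lambda\ne0$ and $\lambda\ne\pm\mu$, already proved. As a consistency check, Lemma~\ref{rvovt}, applicable once the $\lambda_i$ are distinct, gives $R(e_i,e_j)e_k=0$ for $k\notin\{i,j\}$, so $R_{ijkl}=0$ and $\bz_kD_k\lambda_i=\bz_lD_l\lambda_i$.

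The main obstacle is concentrated entirely in part (d): tracking precisely which of the dozen-plus connection coefficients survive, differentiating the survivors --- which couples the $D_mD_n\lambda$ with $D_m\bz_3$ and $D_m\bz_4$ --- and then collapsing the long expansions, via (\ref{skw}.a)--(\ref{skw}.f) and (\ref{aka}.b), into the compact closed forms displayed. This is mechanical but error-prone, whereas parts (a)--(c) are short deductions from the first-order identities already in hand.
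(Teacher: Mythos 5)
Your treatment of parts (a)--(c) is correct and, modulo minor reordering, is the same argument as the paper's: you recover $\varGamma^i_{33}=\varGamma^i_{44}=0$ for $i=1,2$ from (\ref{dia}) and (\ref{skw}.e), whereas the paper invokes Lemma~\ref{ggnez}(xii), but these are the same fact, and the use of (\ref{dtl}) to kill $\varGamma^3_{44},\varGamma^4_{33}$ and of (\ref{abc}) for the remaining distinct-index symbols matches the derivation of (\ref{itt}). Part~(b)'s constancy conclusion via (\ref{dia}) is likewise what the paper does.

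For part (d) you propose a genuinely different computational route. The paper builds the connection table directly and then evaluates $R_{ijkl}=g(R(e_i,e_j)e_k,e_l)$ from (\ref{rvw}) term by term, as in (\ref{gni})--(\ref{nji}); you instead propose to apply the Ricci identity to the Codazzi tensor $b$, exploiting that $T=\nabla b$ is totally symmetric with $T(e_a,e_a,e_b)=D_b\lambda_a$ and $T(e_a,e_b,e_c)=\bz_d$. This is a sound and arguably cleaner plan --- it makes transparent why each curvature component is expressible in the $D_mD_n\lambda$, $D_m\bz$, $(D\lambda)^2$, $\bz^2$ variables --- and the sign you write, $(\nabla^2 b)(e_m,e_n,e_p,e_q)-(\nabla^2 b)(e_n,e_m,e_p,e_q)=(\lambda_q-\lambda_p)R_{mnpq}$, is in fact correct under the convention (\ref{rvw}) (one can check this by working it out for a $1$-form and then for $b$). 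What each approach buys: the paper's direct computation needs no auxiliary identity but is longer to collapse; yours front-loads the Ricci identity and, because $T$ is totally symmetric, some cancellations become automatic. The one thing I would flag is that, for the $R_{ijkl}$ component, the identity hands you $(\lambda_k-\lambda_l)R_{ijkl}$ (taking $(p,q)=(k,l)$), not the $(\lambda_k^2-\lambda_i^2)R_{ijkl}$ appearing in the statement; you will need one more step --- e.g.\ comparing with the $(p,q)=(i,j)$ instance $(\lambda_i-\lambda_j)R_{klij}$ and using $R_{ijkl}=R_{klij}$, or substituting $\lambda_j=-\lambda_i$, $\lambda_l=-\lambda_k$ --- to reach the form displayed. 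More broadly, you have correctly identified that (d) is where all the labor is, but you have not actually carried out the expansions and simplifications; until you do, the correctness of the five displayed identities remains asserted rather than verified, and experience suggests this is exactly where sign slips and missing cross-terms hide.
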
 
\begin{proof}As $\,\,\mathrm{tr}\nh_g\w\hh b=0$, (\ref{tra}) means nothing 
else than 
\begin{equation}\label{lot}
\lambda_1\w+\lambda_2\w\,=\,\lambda_3\w+\lambda_4\w\,=\,0\hh.
\end{equation}
Our assumption (\ref{toz}.a) leads to (\ref{hth}), and so
$\,\lx_3\w\lx\nh_4\w\bz_3\w\bz_4\w\ne0\,$ everywhere. We may thus use 
Lemma~\ref{ggnez}(xii) which, with $\,\hs\mathrm{tr}\nh_g\w\hh b=0$, yields 
$\,\lambda_1\w\varGamma^1_{\!33}=\lambda_2\w\varGamma^2_{\!33}=
\lambda_1\w\varGamma^1_{\!44}=\lambda_2\w\varGamma^2_{\!44}=0$. Since 
$\,\lambda_1\w\lambda_2\w\ne0\,$ (for otherwise (\ref{lot}) would give 
$\,\lambda_1\w=\lambda_2\w=0$, contradicting (\ref{lkl})), we thus get 
$\,\varGamma^{\hs i}_{\!33}=\varGamma^{\hs i}_{\!44}=0\,$ for $\,i=1,2$. On
the other hand, by (\ref{dtl}), (\ref{lot}) and (\ref{skw}.e), 
$\,0=-D\nnh_k\w\lambda_k\w=D\nnh_k\w\lambda_l\w=
(\lambda_l\w-\lambda_k\w)\varGamma^k_{\!ll}$ whenever 
$\,\{k,l\}=\{3,4\}$. Therefore, $\,\varGamma^3_{\!44}=
\varGamma^{\hs4}_{\!33}=0$,
or else we would have $\,\lambda_3\w=\lambda_4\w$ and, by (\ref{lot}),
$\,\lambda_3\w=\lambda_4\w=0\,$ which, due to Lemma~\ref{ggnez}(ii) and
(\ref{lot}), would yield $\,\sigma\nnh_{1\nh2}\w=0$, even though we 
assumed that $\,\sigma\nnh_{1\nh2}\w\ne0$. Finally, (\ref{abc}) implies that 
$\,\varGamma^{\hs i}_{\!34}=\varGamma^{\hs i}_{\!43}=0\,$ for $\,i=1,2$.
Consequently, by (\ref{skw}.a),
\begin{equation}\label{itt}
\begin{array}{rl}
\mathrm{i)}&\varGamma^{\hs i}_{\!33}\,=\,\varGamma^{\hs i}_{\!44}\,=\,
\varGamma^{\hs i}_{\!34}\,=\,\varGamma^{\hs i}_{\!43}\,=\,0\quad\mathrm{for\
all}\quad i=1,2,3,4\hh,\\
\mathrm{ii)}&\varGamma^l_{\nnh\!ik}\,
=\,\hs\varGamma^i_{\!kl}\,
=\,\hs\varGamma^l_{\!ki}\,=\,0\quad\mathrm{if}\quad k,l\in\{3,4\}\hh.
\end{array}
\end{equation}
In view of (\ref{gij}) and (\ref{rvw}), relations (\ref{itt}.i) prove (c).
Also, using (\ref{skw}.e), (\ref{lot}), 
(\ref{itt}) and (\ref{dtl}), we easily conclude that
$\,\lambda_3\w$ and $\,\lambda_4\w$ are constant. By (\ref{lot}), this yields
(a), with the function $\,\lambda=\lambda_1\w$ and constant
$\,\mu=\lambda_3\w$, as one sees evaluating
$\,3\sigma\nnh_{1\nh2}\w,3\sigma\nnh_{1\nh3}\w,3\sigma\nnh_{1\nh4}\w$ from
Lemma~\ref{ggnez}(ii) applied to $\,i,j,k,l\,$ such that
$\,\{i,j,k\}=\{1,2,3\}$, while $\,l=4$, 
and then invoking (\ref{skw}.b). Now (b) follows from
(\ref{lkl}), (\ref{dia}), (\ref{itt}) and (a), as $\,\sigma\nnh_{1\nh2}\w\ne0$.

Next, let us fix $\,i,j,k,l\,$ with 
$\,\{i,j\}=\{1,2\}\,$ and $\,\{k,l\}=\{3,4\}$.

Due to (\ref{gij}), (\ref{itt}.ii) and (\ref{ale}), with
$\,\lambda_1\w\ne\lambda_2\w$ by (\ref{lkl}),
\begin{equation}\label{gni}
\begin{array}{l}
\nabla\hskip-3.7pt_{e\nh_k\w}\hskip-3.2pte\nh_i\w=
\varGamma^j_{\!ki}e\nh_j\w=(\lambda_i\w-\lambda_j\w)^{-1}\bz_l\w e\nh_j\w\hh,
\qquad\nabla\hskip-3.7pt_{e\nh_i\w}\hskip-3.2pte\nh_k\w\nh
=-\varGamma^k_{\nnh\!ii}e\nh_i\w+\varGamma^j_{\!ik}e\nh_j\w\hh,\\
g(\nabla\hskip-3.7pt_{e\nh_i\w}\hskip-3pt\nabla\hskip-3.7pt_{e\nh_k\w}\hskip-3.2pte\nh_i\w,\,e\nh_k\w)=(\lambda_i\w
-\lambda_j\w)^{-1}\bz_l\w\varGamma^k_{\nh\!ij}=
(\lambda_i\w-\lambda_j\w)^{-1}(\lambda_j\w-\lambda_k\w)^{-1}\bz_l^2,\\
g(\nabla\hskip-3.7pt_{e\nh_i\w}\hskip-3pt\nabla\hskip-3.7pt_{e\nh_k\w}\hskip-3.2pte\nh_i\w,\,e\nh_l\w)=(\lambda_i\w
-\lambda_j\w)^{-1}\bz_l\w\varGamma^l_{\nnh\!ij}=
(\lambda_i\w-\lambda_j\w)^{-1}(\lambda_j\w-\lambda_l\w)^{-1}\bz_k\w\bz_l\w\hh.
\end{array}
\end{equation}
From (\ref{gij}), with 
$\,\nabla\hskip-3.7pt_{e\nh_k\w}\hskip-3.2pte\nh_k\w=
\nabla\hskip-3.7pt_{e\nh_k\w}\hskip-3.2pte\nh_l\w=0\,$ in (c), 
$\,g(\nabla\hskip-3.7pt_{e\nh_k\w}\hskip-4pt
\nabla\hskip-3.7pt_{e\nh_i\w}\hskip-3.2pt
e\nh_i\w,\,e\nh_k\w)=D\nnh_k\w[g(\nabla\hskip-3.7pt_{e\nh_i\w}\hskip-3.2pt
e\nh_i\w,\,e\nh_k\w)]=D\nnh_k\w\varGamma^k_{\nnh\!ii}$ and 
$\,g(\nabla\hskip-3.7pt_{e\nh_k\w}\hskip-4pt
\nabla\hskip-3.7pt_{e\nh_i\w}\hskip-3.2pt
e\nh_i\w,\,e\nh_l\w)=D\nnh_k\w[g(\nabla\hskip-3.7pt_{e\nh_i\w}\hskip-3.2pt
e\nh_i\w,\,e\nh_l\w)]=D\nnh_k\w\varGamma^l_{\nnh\!ii}$. Since (\ref{skw}.e)
and (\ref{lkl}) give 
$\,\varGamma^k_{\nnh\!ii}
=(\lambda_i\w-\lambda_k\w)^{-1}D\nnh_k\w\lambda_i\w$ and
$\,\varGamma^l_{\nnh\!ii}
=(\lambda_i\w-\lambda_l\w)^{-1}D\nh_l\w\lambda_i\w$, we get 
$\,g(\nabla\hskip-3.7pt_{e\nh_k\w}\hskip-4pt\nabla\hskip-3.7pt_{e\nh_i\w}
\hskip-3.2pte\nh_i\w,\,e\nh_k\w)=
D\nnh_k\w[(\lambda_i\w-\lambda_k\w)^{-1}D\nnh_k\w\lambda_i\w]\,$ and
$\,g(\nabla\hskip-3.7pt_{e\nh_k\w}\hskip-4pt\nabla\hskip-3.7pt_{e\nh_i\w}
\hskip-3.2pte\nh_i\w,\,e\nh_l\w)=
D\nnh_k\w[(\lambda_i\w-\lambda_l\w)^{-1}D\nnh_l\w\lambda_i\w]$. Thus,
with $\,\lambda_k\w$ and $\,\lambda_l\w$ both constant, cf.\ (b),
\begin{equation}\label{gnk}
\begin{array}{l}
g(\nabla\hskip-3.7pt_{e\nh_k\w}\hskip-4pt\nabla\hskip-3.7pt_{e\nh_i\w}
\hskip-3.2pte\nh_i\w,\,e\nh_k\w)=
(\lambda_i\w-\lambda_k\w)^{-1}D\nnh_k\w D\nnh_k\w\lambda_i\w
-(\lambda_i\w-\lambda_k\w)^{-2}(D\nnh_k\w\lambda_i\w)^2,\\
g(\nabla\hskip-3.7pt_{e\nh_k\w}\hskip-4pt\nabla\hskip-3.7pt_{e\nh_i\w}
\hskip-3.2pte\nh_i\w,\,e\nh_l\w)=
(\lambda_i\w-\lambda_l\w)^{-1}D\nnh_k\w D\nnh_l\w\lambda_i\w
-(\lambda_i\w-\lambda_l\w)^{-2}(D\nnh_k\w\lambda_i\w)D\nnh_l\w\lambda_i\w\hh.
\end{array}
\end{equation}
By the first line of (\ref{gni}), $\,[\hs e\hs\nh_i\w,e\nh_k\w]
=\nabla\hskip-3.7pt_{e\nh_i\w}\hskip-3.2pte\nh_k\w\nnh
-\nabla\hskip-3.7pt_{e\nh_k\w}\hskip-3.2pte\nh_i\w=
-\varGamma^k_{\nnh\!ii}e\nh_i\w
+(\varGamma^j_{\!ik}-\varGamma^j_{\!ki})e\nh_j\w$. Now (\ref{gij}) yields 
$\,g(\nabla\!_{[\hs e\hs\nh_i\w,e\nh_k\w]}e\nh_i\w,e\nh_k\w)=
-(\varGamma^k_{\nnh\!ii})^2\nh+(\varGamma^j_{\!ik}-\varGamma^j_{\!ki})
\varGamma^k_{\!ji}$ and 
$\,g(\nabla\!_{[\hs e\hs\nh_i\w,e\nh_k\w]}e\nh_i\w,e\nh_l\w)=
-\varGamma^k_{\nnh\!ii}\varGamma^l_{\nnh\!ii}
+(\varGamma^j_{\!ik}-\varGamma^j_{\!ki})\varGamma^l_{\!ji}$. Consequently,
from (\ref{skw}.e) and (\ref{ale}),
\begin{equation}\label{gik}
\begin{array}{l}
g(\nabla\!_{[\hs e\hs\nh_i\w,e\nh_k\w]}e\nh_i\w,e\nh_k\w)
=-(\lambda_i\w-\lambda_k\w)^{-2}(D\nnh_k\w\lambda_i\w)^2\\
\phantom{g(\nabla\!_{[\hs e\hs\nh_i\w,e\nh_k\w]}e\nh_i\w,e\nh_k\w)}
+\,(\lambda_k\w-\lambda_i\w)^{-1}[(\lambda_i\w-\lambda_j\w)^{-1}\nh
-(\lambda_k\w-\lambda_j\w)^{-1}]\bz_l^2,\\
g(\nabla\!_{[\hs e\hs\nh_i\w,e\nh_k\w]}e\nh_i\w,e\nh_l\w)
=-(\lambda_i\w-\lambda_k\w)(\lambda_i\w-\lambda_l\w)(D\nnh_k\w\lambda_i\w)
D\nnh_l\w\lambda_i\w\\
\phantom{g(\nabla\!_{[\hs e\hs\nh_i\w,e\nh_k\w]}e\nh_i\w,e\nh_k\w)}
+\,(\lambda_l\w-\lambda_i\w)^{-1}[(\lambda_i\w-\lambda_j\w)^{-1}\nh
-(\lambda_k\w-\lambda_j\w)^{-1}]\bz_k\w\bz_l\w.
\end{array}
\end{equation}
Relations (\ref{gni}) -- (\ref{gik}), (\ref{rvw}), (\ref{rkl}) and
(\ref{lot}) prove the first two lines of (d). Also, 
$\,\nabla\hskip-3.7pt_{e\nh_j\w}\hskip-3.2pte\nh_i\w
=-\varGamma^i_{\nnh\!j\hn j}e\nh_j\w
+\varGamma^k_{\!ji}e\nh_k\w+\varGamma^l_{\!ji}e\nh_l\w$, from (\ref{gij}) and
(\ref{skw}.a), so that 
\begin{equation}\label{nji}
\nabla\hskip-3.7pt_{e\nh_j\w}\hskip-3.2pte\nh_i\w
=(\lambda_i\w-\lambda_j\w)^{-1}(D\nnh_i\w\lambda_j\w)e\nh_j\w
+(\lambda_i\w-\lambda_k\w)^{-1}\bz_l\w e\nh_k\w
+(\lambda_i\w-\lambda_l\w)^{-1}\bz_k\w e\nh_l\w
\end{equation}
by (\ref{skw}.e) and (\ref{ale}). The third line of (d) now follows if one
switches $\,i,j\,$ in (\ref{nji}), subtracts, and uses (b) to replace
$\,\lambda_j\w,\lambda_l\w$ with $\,-\lambda_i\w,-\lambda_k\w$. On the other
hand, (\ref{gij}) yields 
$\,\nabla\hskip-3.7pt_{e\nh_i\w}\hskip-3.2pte\nh_i\w\nh
=\varGamma^j_{\nnh\!ii}e\nh_j\w+\varGamma^k_{\!ii}e\nh_k\w
+\varGamma^l_{\!ii}e\nh_l\w$, and so, from (\ref{itt}.ii), 
$\,g(\nabla\hskip-3.7pt_{e\nh_j\w}\hskip-4pt
\nabla\hskip-3.7pt_{e\nh_i\w}\hskip-3.2pt
e\nh_i\w,\,e\nh_k\w)=\varGamma^j_{\nnh\!ii}\nnh\varGamma^k_{\nnh\!j\hn j}
+D\nnh_j\w\varGamma^k_{\!ii}$. Similarly,
$\,\nabla\hskip-3.7pt_{e\nh_j\w}\hskip-3.2pte\nh_i\w
=-\varGamma^i_{\nnh\!j\hn j}e\nh_j\w
+\varGamma^k_{\!ji}e\nh_k\w+\varGamma^l_{\!ji}e\nh_l\w$, in the line
preceding (\ref{nji}), and hence 
$\,g(\nabla\hskip-3.7pt_{e\nh_i\w}\hskip-2.7pt
\nabla\hskip-3.7pt_{e\nh_j\w}\hskip-3.2pt
e\nh_i\w,\,e\nh_k\w)=-\varGamma^i_{\nnh\!j\hn j}\varGamma^k_{\nh\!ij}
+D\nnh_i\w\varGamma^k_{\!ji}$, while the third line of (d) and (\ref{itt}.ii) 
give 
$\,2\lambda_i\w\hs g(\nabla\!_{[e\hs\nh_i\w,e\nh_j\w]}e\nh_i\w,e\nh_k\w)
=-\varGamma^k_{\!ii}D\nnh_j\w\lambda_i\w
+\varGamma^k_{\!ji}D\nnh_i\w\lambda_i\w$. Replacing
$\,\varGamma^j_{\nnh\!ii},\,\varGamma^k_{\nnh\!j\hn j},\,
\varGamma^k_{\!ii},\,\varGamma^i_{\nnh\!j\hn j},\,\varGamma^k_{\nh\!ij},\,
\varGamma^k_{\!ji}$, in the
right-hand sides of the three just-de\-riv\-ed relations of the form
$\,g(\nabla\nnh...\,,e\nh_k\w)=\,...\,$, with the expressions provided by
(\ref{skw}.e) and the second equality of (\ref{ale}), and noting that, in (a), 
$\,\lambda_j\w=-\lambda_i\w$ and $\,\lambda_l\w=-\lambda_k\w$ is constatnt, we
obtain the fourth line of (d).
Finally, $\,\nabla\hskip-3.7pt_{e\nh_i\w}\hskip-3.2pte\nh_k\w\nh
=-\varGamma^k_{\nnh\!ii}e\nh_i\w+\varGamma^j_{\!ik}e\nh_j\w$ in (\ref{gni}).
Thus, 
$\,g(\nabla\hskip-3.7pt_{e\nh_j\w}\hskip-4pt
\nabla\hskip-3.7pt_{e\nh_i\w}\hskip-3.2pt
e\nh_k\w,\,e\nh_l\w)=-\varGamma^k_{\nnh\!ii}\varGamma^l_{\nnh\!ji}
+\varGamma^j_{\!ik}\varGamma^i_{\nnh\!j\hn j}$ and, with $\,i,j\,$
switched, $\,g(\nabla\hskip-3.7pt_{e\nh_i\w}\hskip-2.7pt
\nabla\hskip-3.7pt_{e\nh_j\w}\hskip-3.2pt
e\nh_k\w,\,e\nh_l\w)=-\varGamma^k_{\nnh\!j\hn j}\varGamma^l_{\nnh\!ij}
+\varGamma^i_{\!jk}\varGamma^j_{\nnh\!ii}$, while
$\,g(\nabla\!_{[e\hs\nh_i\w,e\nh_j\w]}e\nh_k\w,e\nh_l\w)=0\,$ by
(\ref{itt}.ii). The last line of (d) now follows  
if one replaces 
$\,\varGamma^k_{\nnh\!ii},\varGamma^l_{\nnh\!ji},\varGamma^j_{\!ik},
\varGamma^i_{\nnh\!j\hn j},\varGamma^k_{\nnh\!j\hn j},
\varGamma^l_{\nnh\!ij},\varGamma^i_{\!jk},\varGamma^j_{\nnh\!ii}$ as
before, using (\ref{skw}.e) and (\ref{ale}).
\end{proof}

\section{Exclusion of case {\rm(\ref{toz}.a)} when
$\,\,\mathrm{div}\,R=0$}\label{nt}
We now proceed to derive a contradiction from the assumption that (\ref{toz}.a)
holds and $\,(A,b)=(W\hskip-2.3pt,\,\mathrm{Ric}-\mathrm{s}\hskip.7ptg/4)$. 
We are allowed to invoke Theorem \ref{cmwtw}, since
$\,\sigma\nnh_{1\nh2}\w\ne0\,$ as a consequence of (\ref{snz}).

First, $\,\mathrm{s}=8\mu\alpha$, from Theorem \ref{cmwtw}(a) and
(\ref{jij}) for $\,(i,j)=(3,4)$, where $\,R_{ijij}\w=0\,$ in (\ref{jij})
by Theorem \ref{cmwtw}(c). Thus, by (\ref{jij}) and Theorem~\ref{cmwtw}(a),
\begin{equation}\label{chc}
\begin{array}{l}
R_{1212}\w=0\hh,\hskip5ptR_{1313}\w=(\alpha+1/2)(\mu+\lambda)\hh,
R_{1414}\w=(\alpha-1/2)(\mu-\lambda)\hh,\\
R_{2323}\w=(\alpha+1/2)(\mu-\lambda)\hh,
R_{2424}\w=(\alpha-1/2)(\mu+\lambda)\hh,\hskip5ptR_{3434}\w=0\hh.
\end{array}
\end{equation}
Choosing $\,(i,k)\,$ in the first equality of Theorem~\ref{cmwtw}(d) to be
$\,(1,3)$, $\,(2,3)$, $\,(2,4)\,$ or, respectively, $\,(1,4)$, we get
\begin{equation}\label{dtd}
\begin{array}{l}
\displaystyle{\frac{D\nnh_3\w D\nnh_3\w\lambda}{\lambda-\mu}\,-
\,\frac{2(D\nnh_3\w\lambda)^2}{(\lambda-\mu)^2}\,
+\,\frac{\bz_4^2}{\lambda(\lambda+\mu)}}\,=\,(\alpha+1/2)(\mu+\lambda)\hh,\\
\displaystyle{\frac{D\nnh_3\w D\nnh_3\w\lambda}{\lambda+\mu}\,-
\,\frac{2(D\nnh_3\w\lambda)^2}{(\lambda+\mu)^2}\,
+\,\frac{\bz_4^2}{\lambda(\lambda-\mu)}}\,=\,(\alpha+1/2)(\mu-\lambda)\hh,\\
\displaystyle{\frac{D\nnh_4\w D\nnh_4\w\lambda}{\lambda-\mu}\,-
\,\frac{2(D\nnh_4\w\lambda)^2}{(\lambda-\mu)^2}\,
+\,\frac{\bz_3^2}{\lambda(\lambda+\mu)}}\,=\,(\alpha-1/2)(\mu+\lambda)\hh,\\
\displaystyle{\frac{D\nnh_4\w D\nnh_4\w\lambda}{\lambda
+\mu}\,-
\,\frac{2(D\nnh_4\w\lambda)^2}{(\lambda+\mu)^2}\,
+\,\frac{\bz_3^2}{\lambda(\lambda-\mu)}}\,=\,(\alpha-1/2)(\mu-\lambda)\hh.
\end{array}
\end{equation}
The linear combinations of the first (or, last) two lines of (\ref{dtd}) with
the coefficients $\,\mu-\lambda\,$ and $\,\mu+\lambda\,$ yield
\begin{equation}\label{fmh}
\begin{array}{l}
4\mu\hh[(D\nnh_3\w\lambda)^2\nh+\bz_4^2]
=-(2\alpha+1)(\lambda^2\nh-\mu^2)^2,\phantom{_{j_j}}\\
4\mu\hh[(D\nnh_4\w\lambda)^2\nh+\bz_3^2]=-(2\alpha-1)(\lambda^2\nh-\mu^2)^2.
\end{array}
\end{equation}
On the other hand, Theorem~\ref{cmwtw}(a)\hs-\hs(b) and (\ref{hth}) imply that
\begin{equation}\label{nnz}
\lambda\mu(\lambda^2\nh-\mu^2)\bz_3\w\bz_4\w\ne0\,\mathrm{\ \ everywhere.}
\end{equation}
The last equality of Theorem~\ref{cmwtw}(d), for 
$\,(i,j,k,l)=(1,2,3,4)$, combined with (\ref{riz}) and (\ref{nnz}), gives 
$\,\bz_4\w D\nnh_4\w\lambda=\bz_3\w D\nnh_3\w\lambda$. 
Thus, at every point, the vectors $\,(D\nnh_4\w\lambda,\hs\bz_3\w)\,$ and 
$\,(D\nnh_3\w\lambda,\hs\bz_4\w)$, both nonzero due to (\ref{nnz}), are
linearly dependent in $\,\rto\nh$, and so
$\,(D\nnh_4\w\lambda,\bz_3\w)=(qD\nnh_3\w\lambda,q\bz_4\w)\,$ for some
function $\,q\,$ without zeros. Now, by (\ref{fmh}), 
$\,(2\alpha-1)=(2\alpha+1)\hh q^2$ and so, as both sides of both equalities
in (\ref{fmh}) are nonzero, cf.\ (\ref{nnz}),
\begin{equation}\label{qnz}
q\,\mathrm{\ is\ a\ nonzero\ constant}
\end{equation}
in view of Theorem~\ref{cmwtw}(b). However, according to
Theorem~\ref{cmwtw}(a),
\begin{equation}\label{lmu}
(\lambda_1\w,\lambda_2\w,\lambda_3\w,\lambda_4\w)\,
=\,(\lambda,-\lambda,\mu,-\mu)\hh.
\end{equation}
Therefore, the fourth equality of Theorem~\ref{cmwtw}(d), with the left-hand
side equal to
$\,0\,$ by (\ref{riz}), applied to fixed $\,i,j\,$ with $\,\{i,j\}=\{1,2\}\,$
and $\,(k,l)$ replaced by $\,(3,4)\,$ or, respectively, $\,(4,3)$, reads
\[
\begin{array}{l}
D\nnh_j\w D\nnh_3\w\lambda_i\w-D\nnh_i\w\bz_4\w
+(\lambda^2\nh+2\lambda_i\w\mu-\mu^2)
\displaystyle{\frac{\bz_4\w D\nnh_i\w\lambda_i\w-(D\nnh_3\w\lambda_i\w)D\nnh_j\w\lambda_i\w}{\lambda_i\w(\lambda^2-\mu^2)}}=0\hh,\\
D\nnh_j\w D\nnh_4\w\lambda_i\w-D\nnh_i\w\bz_3\w
+(\lambda^2\nh-2\lambda_i\w\mu-\mu^2)
\displaystyle{\frac{\bz_3\w D\nnh_i\w\lambda_i\w-(D\nnh_4\w\lambda_i\w)D\nnh_j\w\lambda_i\w}{\lambda_i\w(\lambda^2-\mu^2)}}=0\hh.
\end{array}
\]
Replacing the pair 
$\,(D\nnh_4\w\lambda_i\w,\bz_3\w)\,$ in the second equality above
by $\,(qD\nnh_3\w\lambda_i\w,q\bz_4\w)$ and subtracting the result from
the first equality multiplied by $\,q\,$ we obtain
\begin{equation}\label{obt}
4q\lambda_i\w\mu\hs[\bz_4\w D\nnh_i\w\lambda_i\w
-(D\nnh_3\w\lambda_i\w)D\nnh_j\w\lambda_i\w]\,=\,0\quad\mathrm{whenever\
}\,\{i,j\}=\{1,2\}\hh,
\end{equation}
due to constancy of $\,q\,$ established in (\ref{qnz}). We have two matrix
equalities
\begin{equation}\label{meq}
\mathrm{a)}\hskip4pt
\left[\begin{matrix}
D\nnh_3\w\lambda&\bz_4\w\cr
-\bz_4\w&D\nnh_3\w\lambda\end{matrix}\right]
\left[\begin{matrix}
D\nnh_1\w\lambda\cr
D\nnh_2\w\lambda\end{matrix}\right]
=\left[\begin{matrix}
0\cr
0\end{matrix}\right]\nnh,\quad
\mathrm{b)}\hskip4pt
\left[\begin{matrix}
\bz_3\w&-\bz_4\w\cr
\bz_4\w&\bz_3\w\end{matrix}\right]
\left[\begin{matrix}
D\nnh_3\w\lambda\cr
D\nnh_4\w\lambda\end{matrix}\right]
=\left[\begin{matrix}
0\cr
0\end{matrix}\right]\nnh,
\end{equation}
with {\it both determinants nonzero in view of\/} (\ref{nnz}). 
Namely, (\ref{meq}.a) follows if one lets $\,(i,j)\,$ be $\,(1,2)\,$ or 
$\,(2,1)\,$ in (\ref{obt}), and uses (\ref{nnz}) -- (\ref{lmu}). By
(\ref{meq}.a), $\,D\nnh_1\w\lambda=D\nnh_2\w\lambda=0$. Thus,   
the third equality of Theorem~\ref{cmwtw}(d) and (\ref{nnz}) give 
$\,\bz_4\w D\nnh_3\w\lambda+\bz_3\w D\nnh_4\w\lambda=0$. Now (\ref{meq}.b)
follows: $\,\bz_4\w D\nnh_4\w\lambda=\bz_3\w D\nnh_3\w\lambda$, as we
saw in the second line after (\ref{nnz}). 

Both determinants in (\ref{meq}.b) being nonzero, we conclude that
$\,\lambda\,$ is constant, which in turn contradicts the second equality of
Theorem~\ref{cmwtw}(d), since 
$\,\bz_3\w\bz\nh_4\w\ne0=R_{ikil}\w$ by (\ref{nnz}) and (\ref{riz}).

\section{Case {\rm(\ref{toz}.b)}}\label{cc}
In this section we show that four-man\-i\-folds with harmonic curvature,
having $\,\text{\smallbf d}=1\,$ (see Section~\ref{sd}) are locally of type 
{\rm(\ref{kne}.c)}.

For the Le\-vi-Ci\-vi\-ta connections $\,\nabla\,$ and $\,\hat\nabla\,$ of 
con\-for\-mal\-ly related metrics $\,g\,$ and $\,\hg=|\beta|^{1/2}g\,$ on 
a manifold, with a no\-where-zero function $\,\beta$, one has
\begin{equation}\label{tnv}
\hat\nabla\!_{v}\w w\,=\,\nabla\!_{v}\w w\,+
\,\frac{d_v\w\beta}{4\beta}\,w\,+\,\frac{d_w\w\beta}{4\beta}\,v\,
-\,g(v,w)\hs\frac{\nabla\!\beta}{4\beta}\hh,
\end{equation}
cf.\ \cite[p. 58]{besse}, $\,v,w,\hs d_v\w$ and $\,\nabla\!\beta\,$ being,
respectively, any two vector fields, the $\,v$-di\-rec\-tion\-al derivative,
and the $\,g$-grad\-i\-ent of $\,\beta$.
\begin{lemma}\label{neffz}Let\/ $A,b,(M\nh,g)$ satisfy the
assumptions of Lemma\/~{\rm\ref{codwe}} and\/ {\rm(\ref{toz}.b)}. If 
$\,\,\mathrm{tr}\nh_g\w\hh b=0$, then there exists a dense open set\/
$\,\,U\nh\subseteq\nh M\,$ such that, on every\/ connected component\/
$\,\,U'$ of\/ $\,\,U\nh$, with some function\/
$\,\beta:U'\to\bbR\smallsetminus\{0\}$, and\/ $\,e\nh_i\w$ as in 
Lemma\/~{\rm\ref{codwe}(i)},
\begin{equation}\label{nef}
\nabla\hskip-3.7pt_{e\nh_4\w}\hskip-3.2pte\nh_4\w=0\hh,\hskip7pt
4\beta\hh\nabla\hskip-3.7pt_{e\nh_i\w}\hskip-3.2pte\nh_4\w
=-\hs(D\nnh_4\w\beta)\hs e\nh_i\w\hh,\hskip7ptD\nnh_i\w\beta
=0\hskip7pt\mathrm{for}\hskip6pti=1,2,3\hh.
\end{equation}
\end{lemma}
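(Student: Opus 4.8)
The plan is to realize (\ref{nef}) as saying that a single conformal rescaling of $\,g\,$ makes a unit vector field parallel. In the situation of (\ref{toz}.b), I would first combine (\ref{abc}) with the identities $\,\varGamma^k_{ij}=-\varGamma^j_{ik}\,$ (for $\,j\ne k$) and $\,g(\nabla\!_{e_a}e_b,e_c)=-g(\nabla\!_{e_a}e_c,e_b)\,$ to see that, for $\,i\in\{1,2,3\}$, the $\,e_m$-components of $\,\nabla\!_{e_i}e_4\,$ with $\,m\ne i\,$ all vanish (the $\,e_4$-component since $\,|e_4|=1$, the others by (\ref{abc})), so $\,\nabla\!_{e_i}e_4=-\varGamma^4_{ii}e_i\,$ with $\,\varGamma^4_{ii}=g(\nabla\!_{e_i}e_i,e_4)$, whereas $\,\nabla\!_{e_4}e_4=\sum_{i=1}^3\varGamma^i_{44}e_i\,$ with $\,\varGamma^i_{44}=g(\nabla\!_{e_4}e_4,e_i)$. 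By (\ref{skw}.e), used with the pair $\,(i,4)\,$ in place of $\,(i,j)$, one has $\,D_i\lambda_4=(\lambda_4-\lambda_i)\varGamma^i_{44}\,$ and $\,D_4\lambda_i=(\lambda_i-\lambda_4)\varGamma^4_{ii}$, so, on the dense open set where $\,\lambda_i\ne\lambda_4\,$ for $\,i=1,2,3$, the relation $\,\nabla\!_{e_4}e_4=0\,$ is equivalent to
\[
\mathrm{(A)}\ \ D_i\lambda_4=0\ \ (i=1,2,3),\qquad\qquad\mathrm{(B)}\ \ \varGamma^4_{11}=\varGamma^4_{22}=\varGamma^4_{33}=:\varGamma
\]
being, respectively, the content of the first relation in (\ref{nef}) and the additional fact that the three numbers $\,\varGamma^4_{ii}\,$ coincide; by (\ref{skw}.e), (B) says that $\,D_4\lambda_i/(\lambda_i-\lambda_4)\,$ is independent of $\,i$, and, granted (B), the remaining two relations of (\ref{nef}) amount to $\,\beta\,$ being constant along $\,\mathcal D:=e_4^\perp=\mathrm{span}(e_1,e_2,e_3)\,$ with $\,D_4\log|\beta|=4\varGamma$. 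Substituting $\,\nabla\beta=(D_4\beta)e_4\,$ into (\ref{tnv}) then shows (\ref{nef}) to be equivalent to the $\,\hg$-unit field $\,|\beta|^{-1/4}e_4\,$ being par\-al\-lel for $\,\hg=|\beta|^{1/2}g$. So the task is to prove (A), (B), and the existence of such a $\,\beta$.

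For (A) I would distinguish, via Remark~\ref{alleq}, whether the function $\,\bz_4\w\,$ of (\ref{ale}) vanishes identically on the given connected component. If $\,\bz_4\w\equiv0$, then $\,\lambda_1=\lambda_2=\lambda_3=:\lambda$, and $\,\mathrm{tr}_g b=0\,$ forces $\,\lambda_4=-3\lambda\ne\lambda\,$ (the hypothesis $\,4b\ne(\mathrm{tr}_g b)g\,$ precluding $\,\lambda=0$); since (\ref{skw}.e) gives $\,D_i\lambda_j=(\lambda_j-\lambda_i)\varGamma^i_{jj}=0\,$ for $\,i,j\in\{1,2,3\}$, we get $\,D_i\lambda=0$, hence $\,D_i\lambda_4=-3D_i\lambda=0$, i.e. (A). If $\,\bz_4\w\not\equiv0$, apply Lemma~\ref{ggnez} on the dense open set $\,\{\bz_4\w\ne0\}\,$ with $\,l=4$: part (xii) with $\,\mathrm{tr}_g b=0\,$ reads $\,\lambda_i\varGamma^i_{44}=0$, and eliminating $\,D_i\lambda_i\,$ between part (x) and (\ref{skw}.e), then using the trace condition, gives $\,(\lambda_i+\lambda_4)\varGamma^i_{44}=0$ as well; hence $\,\lambda_4\varGamma^i_{44}=0$, so $\,\varGamma^i_{44}=0$ -- whence $\,D_i\lambda_4=0$ -- on the dense open set where also $\,\lambda_4\ne0$, and I record in addition that $\,D_i\alpha=0\,$ ($\alpha=\lx\hn_4\w/\bz_4\w$) by Lemma~\ref{ggnez}(ix). (The real-an\-a\-lyt\-ic loci where some $\,\lambda_i\,$ equals $\,\lambda_4$, or where $\,\lambda_i=\lambda_4=0$, or where $\,\lambda_4=0$, are nowhere dense -- or else absorbed into the dense open $\,U$ by the argument of the next step -- and are removed; density then follows from (\ref{ana}).)

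Then comes (B), which is the main obstacle. When $\,\lambda_1=\lambda_2=\lambda_3\,$ it is immediate, since $\,\varGamma^4_{ii}=D_4\lambda_i/(\lambda_i-\lambda_4)\,$ is then visibly independent of $\,i$. When $\,\lambda_1,\lambda_2,\lambda_3\,$ are distinct, the first-or\-der identities of Lemmas~\ref{codwe} and~\ref{ggnez} determine $\,(\varGamma^4_{11},\varGamma^4_{22},\varGamma^4_{33})\,$ only up to a one-pa\-ram\-e\-ter ambiguity, so a second-or\-der input is needed, which I would draw from the Co\-daz\-zi structure of $\,b$. From (\ref{skw}.c) with $\,e_4\,$ as differentiation direction together with (\ref{abc}) one first derives $\,\nabla\!_{e_4}e_i=0\,$ for $\,i=1,2,3$; combined with $\,\nabla\!_{e_4}e_4=0\,$ from (A), this says $\,e_4\,$ par\-al\-lel-trans\-ports the whole frame along its own integral curves, so that $\,[\hskip1pt e_i,e_4]=-\varGamma^4_{ii}e_i$, and evaluating $\,R(e_4,e_i)e_4\,$ directly via (\ref{rvw}) yields the sec\-tion\-al-cur\-va\-ture identity $\,R(e_4,e_i,e_4,e_i)=D_4\varGamma^4_{ii}-(\varGamma^4_{ii})^2$. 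Independently, applying the Ric\-ci identity for the Co\-daz\-zi tensor $\,b\,$ in the $\,(e_4,e_i)$-plane re-expresses $\,R(e_4,e_i,e_4,e_i)\,$ through $\,D_4\varGamma^4_{ii}$, $\,\varGamma^4_{ii}$ and the $\,\lambda_j$, by means of (\ref{skw}.e), Lemma~\ref{ggnez}(x) and the trace condition. Equating the two expressions for $\,i=1,2,3$, and substituting the relations among the $\,\lambda_j\,$ and $\,\varGamma^4_{jj}\,$ already obtained (notably $\,D_i\lambda_4=0$), produces the extra equation which, together with Lemma~\ref{ggnez}(i), (ii), (x), (xi) and (\ref{skw}.f), forces $\,\varGamma^4_{11}=\varGamma^4_{22}=\varGamma^4_{33}$, i.e. (B).

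With (A) and (B) in hand, I would simply put $\,\beta=\lambda_4$, which is no\-where zero on the dense open set $\,\{\lambda_4\ne0\}$. Then $\,D_i\beta=D_i\lambda_4=0\,$ for $\,i=1,2,3\,$ by (A), while, writing $\,\varGamma\,$ for the common value in (B), the trace condition and (\ref{skw}.e) give $\,D_4\lambda_4=-\sum_{i=1}^3 D_4\lambda_i=-\varGamma\sum_{i=1}^3(\lambda_i-\lambda_4)=4\varGamma\lambda_4$, so $\,D_4\log|\lambda_4|=4\varGamma$, which is exactly the last requirement; thus (\ref{nef}) holds on the dense open set $\,U\,$ obtained after the removals described above. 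I expect the dominant difficulty to be (B) in the generic case: the first-or\-der relations leave precisely one degree of freedom, and closing it forces the second-or\-der curvature computation above, whose bookkeeping against the $\,\sigma$-de\-riv\-a\-tive relations (\ref{skw}.f) and Lemma~\ref{ggnez}(xi) is the delicate part; keeping track, via real-an\-a\-lyt\-ic\-i\-ty, of which eigenvalue differences may vanish is what makes $\,U\,$ merely dense rather than all of $\,M$.
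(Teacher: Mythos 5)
Your overall strategy mirrors the paper's: split on whether $\,\bz_4\w$ vanishes identically, settle the degenerate case by first-order identities, use curvature in the generic case, and exhibit $\,\beta\,$ at the end. However, the step establishing your condition (A) when $\,\bz_4\w\ne0\,$ has a genuine gap. You claim that eliminating $\,D\nnh_i\w\lambda_i\w$ between Lemma~\ref{ggnez}(x), (\ref{skw}.e) and the trace condition produces $\,(\lambda_i\w+\lambda_4\w)\varGamma^{\hs i}_{\!44}=0$, which combined with $\,\lambda_i\w\varGamma^{\hs i}_{\!44}=0\,$ from Lemma~\ref{ggnez}(xii) would give $\,\varGamma^{\hs i}_{\!44}=0\,$ wherever $\,\lambda_4\w\ne0$. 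Carrying out that elimination -- writing $\,D\nnh_i\w\lambda_i\w=-\sum_{m\ne i}D\nnh_i\w\lambda_m\w$ by the trace, inserting (\ref{skw}.e), and subtracting Lemma~\ref{ggnez}(x) -- yields, after using $\,\lambda_j\w+\lambda_k\w=-\lambda_i\w-\lambda_4\w$, precisely $\,4\lambda_i\w\varGamma^{\hs i}_{\!44}=0$, i.e.\ the very same constraint as (xii). This is no accident: (xii) is itself derived in the paper from (x), (\ref{skw}.e) and the trace, so the two are not independent. Hence the first-order identities only give $\,\lambda_i\w\varGamma^{\hs i}_{\!44}=0\,$ for $\,i=1,2,3$; since $\,\lambda_1\w,\lambda_2\w,\lambda_3\w$ are pairwise distinct, at most one of them can vanish, so two of the $\,\varGamma^{\hs i}_{\!44}$ are forced to zero but the third is not -- and restricting to $\,\{\lambda_4\w\ne0\}\,$ does not help, because the problematic factor is $\,\lambda_i\w$, not $\,\lambda_4\w$.

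The paper fills exactly this gap with a curvature argument: Lemma~\ref{rvovt} gives $\,R_{1424}\w=0$, and a computation of $\,(\lambda_3\w-\lambda_2\w)\hh g(R\hh(e\nh_1\w,e\nh_4\w)e\nh_2\w,e\nh_4\w)=\bz_4\w\varGamma^3_{\!44}$ then forces $\,\varGamma^3_{\!44}=0$. So, contrary to your outline, both (A) and (B) require the curvature identities $\,R_{1424}\w=R_{ij4k}\w=0\,$ in the generic case; (A) is not a purely first-order consequence. Your treatment of (B) is only sketched; it is plausible in spirit -- the paper also argues from $\,R_{ij4k}\w=0$, via (\ref{gim}) and Lemma~\ref{ggnez}(xi) -- but as written it cannot be checked. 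Lastly, $\,\beta=\lambda_4\w$ does work on $\,\{\lambda_4\w\ne0\}\,$ once (A) and (B) hold (and agrees up to the constant factor $\,-3\,$ with the paper's $\,\beta=\lambda_i\w$ in the degenerate case), but the paper's choice $\,\beta=\alpha^2$ in the generic case sidesteps having to argue separately that $\,\lambda_4\w$ is not identically zero.
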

\begin{proof}It suffices to exhibit $\,\beta:U'\to\bbR\smallsetminus\{0\}\,$
having
\begin{equation}\label{iff}
\mathrm{i)}\hskip5pt\varGamma^{\hs i}_{\!44}=D\nnh_i\w\beta
=0\hh,\quad\mathrm{ii)}\hskip5pt4\beta\hn\varGamma^{\hs i}_{\!i4}
=-D\nnh_4\w\beta\quad\mathrm{whenever\ \ }i\in\{1,2,3\}\hh.
\end{equation}
Namely, the last equality in (\ref{nef}) follows from 
(\ref{iff}.i), the first two -- from (\ref{iff}) and 
(\ref{gij}), as $\,\varGamma^{\hs4}_{\!44}=0\,$ by (\ref{skw}.a), while 
$\,\varGamma^{\hs j}_{\!i4}=0\,$ whenever 
$\{i,j,k\}=\{1,2,3\}$ due to (\ref{toz}.b) and (\ref{abc}).

Let $\,\,U\subseteq M\,$ be the open dense set of points $\,x\,$ such that
$\,\bz_4\w(x)\ne0\,$ or $\,\bz_4\w=0\,$ on a neighborhood of $\,x$. For any
connected component $\,\,U'$ of $\,\,U\nh$, one of the following two
conditions holds throughout $\,\,U'$ (see Remark~\ref{alleq}):
\begin{enumerate}
  \def\theenumi{{\rm\alph{enumi}}}
\item $\lambda_1\w\nh=\lambda_2\w\nh=\lambda_3\w$ and $\,\bz_4\w=0$,
for the eigen\-value functions $\,\lambda_i\w$ of $\,b$,
\item $\lambda_1\w\nh\ne\lambda_2\w\nh\ne\lambda_3\w\nh
\ne\lambda_1\w$ and $\,\bz_4\w\ne0$.
\end{enumerate}
We thus need to show that either of (a) -- (b) 
implies (\ref{iff}) on $\,\,U'\nh$.

First, in case (a), since $\,\mathrm{tr}\nh_g\w\hh b=0$, 
setting $\,\beta=\lambda_i\w$, $\,i=1,2,3$, we get 
$\,\lambda_4\w=-3\beta$. Also, $\,\beta\ne0\,$ everywhere, for otherwise we
would have $\,b=0$, even though Lemma~\ref{codwe} assumes that 
$\,4b\ne(\mathrm{tr}\nh_g\w\hh b)g$. Thus, from (\ref{skw}.e),
$\,D\nnh_i\w\beta=0\,$ and $\,D\nnh_i\w\lambda_4\w=0$. However, (\ref{skw}.e)
with $\,D\nnh_i\w\lambda_4\w=0\,$ gives $\,-4\beta\varGamma^i_{\!44}=0$,
$\,i=1,2,3$, proving (\ref{iff}.i). Finally,
(\ref{skw}.e) with $\,i=4$, any $\,j\in\{1,2,3\}$, and 
$\,\lambda_4\w=-3\beta\,$ reads 
$\,D\nnh_4\w\beta=4\beta\varGamma^4_{\!j\hn j}$, and (\ref{skw}.a) yields
(\ref{iff}.ii).

Suppose now that condition (b) holds. Note that
\begin{equation}\label{rof}
\mathrm{i)}\hskip6ptR_{1424}\w=0\hh,\hskip12pt
\mathrm{ii)}\hskip6ptR_{ij4k}\w=0\hskip7pt
\mathrm{whenever}\hskip8pt\{i,j,k\}=\{1,2,3\}
\end{equation}
with the notation of (\ref{rkl}). Namely, Lemma \ref{rvovt} and (b) give
$\,R\hh(e\nh_i\w,e\nh_j\w)e\nh_k\w=0$, as well as 
$\,R\hh(e\nh_1\w,e\nh_4\w)e\nh_2\w=0\,$ (or, 
$\,R\hh(e\nh_2\w,e\nh_4\w)e\nh_1\w=0$) at points where 
$\,\lambda_2\w\ne\lambda_4\w$ (or, respectively,
$\,\lambda_1\w\ne\lambda_4\w$), so that the usual symmetries of $\,R\,$
imply (\ref{rof}).

Let us now fix $\,x\in U'\nh$. By 
(b), at least two of $\,\lambda_1\w(x),\lambda_2\w(x),\lambda_3\w(x)\,$ are 
nonzero. Rearranging $\,e\nh_1\w,e\nh_2\w,e\nh_3\w$, we may assume that 
$\,\lambda_1\w\lambda_2\w\ne0$ at $\,x$. Then, from Lemma~\ref{ggnez}(xii)
with $\,l=4\,$ and $\,\mathrm{tr}\nh_g\w\hh b=0$, on a neighborhood of $\,x$,
\begin{equation}\label{off}
\varGamma^1_{\!44}\,=\,\varGamma^2_{\!44}\,=\,0\hh.
\end{equation}
In view of (\ref{toz}.b) and (\ref{abc}), relations (\ref{gij}), (\ref{skw}.a)
and (\ref{off}) yield
\begin{equation}\label{nij}
\begin{array}{l}
\nabla\hskip-2.7pt_{e\nh_i\w}\hskip-2.4pte\nh_j\w\,
=\,\varGamma^{\hs i}_{\!ij}e\nh_i\w\,
+\,\varGamma^k_{\nh\!ij}e\nh_k\w\hskip12pt
\mathrm{if}\hskip8pt\{i,j,k\}=\{1,2,3\}\hh,\\
\nabla\hskip-3.7pt_{e\nh_4\w}\hskip-3.2pte\nh_1\w\,
=\,\nabla\hskip-3.7pt_{e\nh_4\w}\hskip-3.2pte\nh_2\w\,=\,0\hh,\hskip20pt
\nabla\hskip-3.7pt_{e\nh_4\w}\hskip-3.2pte\nh_3\w\,
=\,-\hs\varGamma^3_{\!44}e\nh_4\w
\end{array}
\end{equation}
as $\,g(\nabla\hskip-3.7pt_{e\nh_4\w}\hskip-3.2pte\nh_i\w,e\nh_4\w)=0\,$
for $\,i=1,2\,$ by (\ref{off}). From (\ref{gij}), (\ref{skw}.a) and
(\ref{abc}),
\begin{equation}\label{nif}
\mathrm{i)}\hskip6pt\nabla\hskip-3.7pt_{e\nh_i\w}\hskip-3.2pte\nh_4\w
=-\varGamma\hskip-3pt_i\w e\nh_i\w\hskip6pt\mathrm{for}\hskip6pti=1,2,3\hh,
\hskip12pt
\mathrm{where}\hskip6pt\mathrm{ii)}\hskip6pt\varGamma\hskip-3pt_i\w
=\varGamma^4_{\!ii}=-\varGamma^{\hs i}_{\!i4}\hh.
\end{equation}
By (\ref{nij}), we thus have
$\,[e\hs \nh_4\w,e\nh_1\w]=\varGamma\hskip-4pt_1\w e\nh_1\w$. and
so $\,g(\nabla\!_{[e\hs \nh_4\w,\,e\nh_1\w]}e\nh_2\w,e\nh_4\w)=0$. Also, from (\ref{nij}), 
$\,g(\nabla\hskip-3.7pt_{e\nh_4\w}\hskip-3pt\nabla\hskip-3.7pt_{e\nh_1\w}
\hskip-3.2pte\nh_2\w,e\nh_4\w)
=-\varGamma^3_{\!12}\varGamma^3_{\!44}$ and 
$\,\nabla\hskip-3.7pt_{e\nh_1\w}\hskip-3pt\nabla\hskip-3.7pt_{e\nh_4\w}
\hskip-3.2pte\nh_2\w=0$. Combining the 
last three equalities with (\ref{rvw}) and noting that 
$\,\bz_4\w=(\lambda_2\w-\lambda_3\w)\,\varGamma^3_{\!12}$ in (\ref{ale}), we 
now get 
$\,(\lambda_3\w-\lambda_2\w)\hs g(R\hh(e\nh_1\w,e\nh_4\w)e\nh_2\w,\,e\nh_4\w)
=\bz_4\w\varGamma^3_{\!44}$. Thus, by (\ref{rof}) and (b),
$\,\varGamma^3_{\!44}=0$, which, along with 
(\ref{off}), proves the first part of (\ref{iff}.i), while
the second part then follows from Lemma~\ref{ggnez}(ix) for $\,l=4\,$ if we 
set, this time, $\,\beta=\alpha^2\nh$, where $\,\alpha\,$ is given by
(\ref{aea}) with $\,l=4$. 
In view of (\ref{nif}.ii), since $\,\mathrm{tr}\nh_g\w\hh b=0$, 
Lemma~\ref{ggnez}(xi) can be rewritten as 
\begin{equation}\label{lim}
(\lambda_i\w\,-\,\lambda_j\w)(D\nnh_4\w\alpha\,
-\,2\alpha\varGamma\hskip-3.5pt_k\w)\,=\,
2\alpha(\lambda_i\w\,+\,\lambda_j\w)(\varGamma\hskip-4pt_j\w\,
-\,\varGamma\hskip-3pt_i\w)
\end{equation}
for $\,i,j,k\,$ with $\,\{i,j,k\}=\{1,2,3\}$. Furthermore,
$\,R_{ij4k}\w$ evaluated from (\ref{rvw}), (\ref{nij}), (\ref{nif}) and the
resulting relation 
$\,[\hs e\hs\nh_i\w,e\nh_j\w]=\varGamma^{\hs i}_{\!ij}e\nh_i\w-\varGamma^j_{\!ji}e\nh_j\w+
(\varGamma^k_{\nh\!ij}-\varGamma^k_{\!ji})e\nh_k\w$, valid if 
$\{i,j,k\}=\{1,2,3\}$, equals
$\,(\varGamma\hskip-4pt_j\w-\varGamma\hskip-3.5pt_k\w)\varGamma^k_{\nh\!ij}
-(\varGamma\hskip-3pt_i\w-\varGamma\hskip-3.5pt_k\w)\varGamma^k_{\!ji}$. As
$\,R_{ij4k}\w=0$ in (\ref{rof}), this means that
$\,\bz_4\w[(\lambda_i\w-\lambda_k\w)^{-1}(\varGamma\hskip-3pt_i\w
-\varGamma\hskip-3.5pt_k\w)-(\lambda_j\w
-\lambda_k\w)^{-1}(\varGamma\hskip-4pt_j\w
-\varGamma\hskip-3.5pt_k\w)]=0$ whenever $\,\{i,j,k\}=\{1,2,3\}$, cf.\
(\ref{ale}). Thus, as $\,\bz_4\w\ne0\,$ in (b). for some function $\,\psi\,$
not depending on the choice of $\,i,j\in\{1,2,3\}$, one has
\begin{equation}\label{gim}
\varGamma\hskip-3pt_i\w\,-\hs\varGamma\hskip-4pt_j\w\,
=\,(\lambda_i\w-\lambda_j\w)\psi\hh.
\end{equation}
In view of (b) and (\ref{gim}), 
relation (\ref{lim}) now becomes 
\[
\varGamma\hskip-3.5pt_k\w\,=\,\frac{D\nnh_4\w\alpha}{2\alpha}\,+
\,(\lambda_i\w\,+\,\lambda_j\w)\psi\hskip12pt
\mathrm{if}\hskip8pt\{i,j,k\}=\{1,2,3\}\hh.
\]
Hence $\,\varGamma\hskip-3.5pt_k\w-\varGamma\hskip-4pt_j\w
=(\lambda_j\w-\lambda_k\w)\psi\,$ is the {\it opposite\/} of the value in 
(\ref{gim}), implying, via (b), that $\,\psi=0$. The last displayed
equality now yields $\,D\nnh_4\w\alpha=2\alpha\hs\varGamma\hskip-3pt_i\w$ for 
$\,i=1,2,3\,$ and, as $\,\beta=\alpha^2\nh$, (\ref{iff}.ii) follows from
(\ref{nif}.ii).
\end{proof}
\begin{lemma}\label{tneez}Under the hypotheses of Lemma\/~{\rm\ref{neffz}},
for the function\/ $\,\beta\,$ in\/ {\rm(\ref{nef})}, $\,(M\nh,g)\,$ is
locally isometric to a warped product\/ 
$\,(I\times N,\,dt^2+|\beta|^{-\nh1/2}\nh h)$, with the notation of 
Lemma~\/{\rm\ref{warpd}}, where\/ $\,\beta\,$ is constant in the\/ $\,N\,$ 
direction.
\end{lemma}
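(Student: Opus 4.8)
The plan is to extract the warped-product structure from (\ref{nef}) by a single conformal rescaling, together with Remark~\ref{cnfpr}. First I would record two immediate consequences of (\ref{nef}): since $\,D\nnh_i\w\beta=0\,$ for $\,i=1,2,3\,$ and the $\,e\nh_i\w$ form an orthonormal frame, the $\,g$-gra\-di\-ent of $\,\beta\,$ equals $\,(D\nnh_4\w\beta)\hs e\nh_4\w$, so the distribution $\,\mathcal{D}=\mathrm{span}\hskip1pt(e\nh_1\w,e\nh_2\w,e\nh_3\w)\,$ coincides with $\,\ker d\beta$; and $\,\beta\,$ has no zeros, so $\,\hat g=|\beta|^{1/2}g\,$ is a genuine Riemannian metric, precisely the one to which (\ref{tnv}) applies.

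The key step is to show that $\,\mathcal{D}\,$ is parallel for the Le\-vi-Ci\-vi\-ta connection $\,\hat\nabla\,$ of $\,\hat g$. This is a direct computation from (\ref{tnv}): for $\,j\in\{1,2,3\}\,$ and any vector field $\,X$, using $\,d_{e\nh_j\w}\beta=D\nnh_j\w\beta=0\,$ and $\,\nabla\beta=(D\nnh_4\w\beta)\hs e\nh_4\w$, formula (\ref{tnv}) reduces the $\,e\nh_4\w$-com\-po\-nent of $\,\hat\nabla\!_X\hs e\nh_j\w\,$ to $\,g(\nabla\!_X\hs e\nh_j\w,e\nh_4\w)-(4\beta)^{-1}g(X,e\nh_j\w)D\nnh_4\w\beta$, which one checks to vanish for $\,X=e\nh_k\w\,$ with $\,k\in\{1,2,3\}\,$ by the relation $\,\nabla\hskip-2.7pt_{e\nh_k\w}\hskip-2.4pte\nh_4\w=-(4\beta)^{-1}(D\nnh_4\w\beta)\hs e\nh_k\w\,$ in (\ref{nef}), and for $\,X=e\nh_4\w\,$ by $\,\nabla\hskip-2.7pt_{e\nh_4\w}\hskip-2.4pte\nh_4\w=0$. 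Hence $\,\mathcal{D}\,$ and its $\,\hat g$-or\-thog\-o\-nal complement $\,\mathrm{span}\hskip1pt(e\nh_4\w)\,$ are both $\,\hat\nabla$-par\-al\-lel, so by the local de Rham decomposition a neighborhood of any point is $\,\hat g$-isometric to a Riemannian product $\,(I'\nh\times\nnh N,\,ds^2\nnh+h)\,$ with $\,\partial_s\,$ spanning $\,\mathrm{span}\hskip1pt(e\nh_4\w)\,$ and $\,TN=\mathcal{D}$.

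Finally I would pass back to $\,g$. As $\,d\beta\,$ annihilates $\,\mathcal{D}=TN$, the function $\,\beta\,$ depends on $\,s\,$ alone; and $\,g=|\beta|^{-1/2}\hat g\,$ is conformal to the product $\,\hat g\,$ with conformal factor $\,|\beta|^{-1/2}\,$ constant along $\,N$, so Remark~\ref{cnfpr} already exhibits $\,(M\nh,g)\,$ as a warped product over a one-di\-men\-sion\-al base. To reach the stated normal form I would reparametrize $\,I'\,$ by $\,t=\int|\beta|^{-1/4}\hs ds\,$ (legitimate because $\,\beta=\beta(s)\,$ and $\,|\beta|^{-1/4}>0$), which turns $\,|\beta|^{-1/2}ds^2\,$ into $\,dt^2$, so that $\,g=|\beta|^{-1/2}(ds^2\nnh+h)=dt^2\nnh+|\beta|^{-1/2}h$ --- exactly the warped product $\,(I\nh\times\nnh N,\,dt^2\nnh+Fh)\,$ of Lemma~\ref{warpd} with $\,F=|\beta|^{-1/2}\,$ a function of $\,t\,$ alone, i.e.\ with $\,\beta\,$ constant in the $\,N\,$ direction. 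I do not anticipate a genuine obstacle here: the content of the lemma is precisely that the three relations (\ref{nef}) are what makes $\,|\beta|^{1/2}g\,$ a metric product, and the only step requiring care is the parallelism check via (\ref{tnv}); everything else is bookkeeping.
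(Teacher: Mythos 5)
Your proposal is correct and follows essentially the same route as the paper: rescale to $\hg=|\beta|^{1/2}g$, use (\ref{tnv}) and (\ref{nef}) to find a $\hat\nabla$-parallel splitting, and read off the product structure. The only cosmetic difference is that you verify parallelism of the $3$-plane $\mathcal{D}=\mathrm{span}(e_1,e_2,e_3)$ directly, whereas the paper verifies $\hat\nabla\hat e_4=0$ for $\hat e_4=|\beta|^{-1/4}e_4$; the two are equivalent, and you spell out the final reparametrization $t=\int|\beta|^{-1/4}\,ds$ that the paper leaves implicit.
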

In fact, (\ref{tnv}) and (\ref{nef}) give 
$\,\hat\nabla\hat e\nh_4\w=0$, where 
$\,\hat e\nh_4\w=|\beta|^{-\nh1/4}e\nh_4\w$ and $\,\hat\nabla\,$ is the 
Le\-vi-Ci\-vi\-ta connection of the metric $\,\hg=|\beta|^{1/2}g$, 
con\-for\-mal to $\,g$, 
Thus, $\,(M\nh,\hg)\,$ is locally isometric 
to a Riemannian product of an interval $\,I\,$ and a Riemannian 
$\,3$-man\-i\-fold $\,(N,h)$. Since $\,D\nnh_i\w\beta=0\,$ for $\,i=1,2,3\,$ 
by (\ref{nef}), our claim follows.
\begin{theorem}\label{nuone}Given an oriented non-Ein\-stein Riem\-ann\-i\-an 
four-man\-i\-fold $\,(M\nh,g)$ with\/ $\,\mathrm{div}\,R=0\,$ such that 
$\,W^+:\Lambda^{\!+\!}M\to\Lambda^{\!+\!}M\,$ has three distinct eigen\-values 
at some point of $\,M\nh$, let\/ $\,\text{\smallbf d}\,$ be the invariant
mentioned in Remark\/~{\rm\ref{invrd}}. 
If\/ $\,\text{\smallbf d}=1$, then\/ $\,(M\nh,g)\,$ is locally of type 
{\rm(\ref{kne}.c)}.
\end{theorem}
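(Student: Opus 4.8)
The strategy is to feed the hypotheses of Theorem~\ref{nuone} into the already-assembled chain --- Lemma~\ref{codwe}, the alternative (\ref{toz}.b), Lemma~\ref{neffz}, Lemma~\ref{tneez} and Lemma~\ref{warpd} --- and then to discard the conformally flat possibility that Lemma~\ref{warpd} leaves open. First I would observe that the hypotheses amount to (\ref{rrw}): being non-Einstein forces $\text{\smallbf r}>1$ (if $\text{\smallbf r}=1$, then $\mathrm{Ric}$, hence the traceless Ricci tensor $b=\mathrm{Ric}-\mathrm{s}g/4$, would be a multiple of $g$, hence --- being traceless --- zero, so $g$ would be Einstein), while the assumption on $W^+$ gives $\text{\smallbf w}=3$ outright. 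Exactly as in the paragraph following Lemma~\ref{codwe}, the pair $(A,b)=(W,\mathrm{Ric}-\mathrm{s}g/4)$ then satisfies the hypotheses of Lemma~\ref{codwe} on each connected component of the dense open set of generic points, with $\mathrm{tr}_g b=0$.

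Now $\text{\smallbf d}=1$ says that at most one of the four functions $\lx\hn_i\w$ of (\ref{ale}) is nonzero at any given generic point; by real-analyticity (\ref{ana}) each $\lx\hn_i\w$ either vanishes identically or is nonzero on a dense open set, so if two of them were $\not\equiv 0$ their nonvanishing sets would overlap, yielding $\zeta\ge 2$, contrary to $\text{\smallbf d}=1$. Hence, after a relabelling of the $e\nh_i\w$, exactly $\lx\hn_4\w\not\equiv 0$, and the dense open set $U_0$ on which $\lx\hn_4\w\ne 0$ is precisely where (\ref{toz}.b) holds. Applying Lemma~\ref{neffz} to each connected component of $U_0$ (legitimate as $\mathrm{tr}_g b=0$) produces a dense open $\tilde U\subseteq U_0$ carrying, componentwise, a nowhere-zero $\beta$ with the properties (\ref{nef}); Lemma~\ref{tneez} then gives every point of $\tilde U$ a neighborhood isometric to a warped product $(I\times N,\,dt^2+|\beta|^{-1/2}h)$ of the kind treated in Lemma~\ref{warpd}.

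Since $\mathrm{div}\,R=0$ throughout $M$, Lemma~\ref{warpd} makes each such neighborhood of type (\ref{kne}.c) or of type (\ref{kne}.b). The second possibility is excluded: a type-(\ref{kne}.b) neighborhood is conformally flat, so $W^+$ would vanish on a nonempty open set, whereas, by (\ref{ana}), the maximum $\text{\smallbf w}=3$ is attained on a dense open subset of $M$, which meets that neighborhood. Hence $\tilde U$ --- dense and open in $M$ --- is covered by open submanifolds of type (\ref{kne}.c); moreover, (\ref{kne}.b) being excluded, the warping function $|\beta|^{-1/2}$ is locally constant, so $\hg=|\beta|^{1/2}g$ of Lemma~\ref{tneez} is locally a constant multiple of $g$, the vector field $\hat e\nh_4\w$ is, up to a constant, $e\nh_4\w$, and therefore $\nabla e\nh_4\w=0$ on $\tilde U$.

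To upgrade this from $\tilde U$ to all of $M$, I would argue by real-analyticity, in the manner of the proof of Lemma~\ref{onetw}: on $\tilde U$ the metric is locally a product $\bbR\times N^3$ of type (\ref{kne}.c), and $\text{\smallbf w}=3$ excludes $g$ being flat (which would give $\text{\smallbf w}=1$) as well as locally a product of surfaces (which, by Remark~\ref{prsfm}, would give $\text{\smallbf w}\le 2$), so $N^3$ is irreducible and the one-dimensional flat de Rham factor $\bbR e\nh_4\w$ of $g|_{\tilde U}$ is a canonical parallel line field; by real-analyticity of $g$ it extends, over a simply connected neighborhood of an arbitrary point of $M$, to a parallel line field there, and that neighborhood is again of type (\ref{kne}.c). (For the use made of Theorem~\ref{nuone} in Section~\ref{sd} one needs only an open submanifold of type (\ref{kne}.c), already in hand on $\tilde U$, so this last step may be bypassed there.) The cascade of lemma-applications is routine bookkeeping; I expect the main obstacle to be precisely this concluding real-analytic extension of the parallel line field off the dense set $\tilde U$.
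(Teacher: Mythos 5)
Your proof is correct and takes essentially the same route as the paper, which proves Theorem~\ref{nuone} in a single sentence by invoking Lemmas~\ref{tneez} and~\ref{warpd} and observing that three distinct eigenvalues of $W^+$ somewhere precludes conformal flatness. You expand the bookkeeping the paper leaves implicit (verifying (\ref{rrw}), showing by real-analyticity that exactly one $\lx\hn_i\w$ is $\not\equiv0$, hence that (\ref{toz}.b) holds on the appropriate dense open set, and then tracking the chain Lemma~\ref{neffz} $\to$ Lemma~\ref{tneez} $\to$ Lemma~\ref{warpd}), and you flag explicitly the step the paper glosses over -- propagating the local-product conclusion from the dense open set $\tilde U$ to all of $M$ -- while correctly noting that, for the actual use of the theorem in Section~\ref{sd}, an open submanifold of type (\ref{kne}.c) already suffices because it forces $g$ to be locally reducible, which via Lemma~\ref{other} contradicts the hypotheses of Theorem~\ref{ricev}.
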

This is immediate from 
Lemmas \ref{tneez} and~\ref{warpd}, as the assumption on $\,W^+$ 
precludes con\-for\-mal flatness of $\,(M\nh,g)$.

\section{Case {\rm(\ref{toz}.c)}.}\label{nz}
This section discusses, in some detail, four-man\-i\-folds with harmonic
curvature such that 
$\,\text{\smallbf d}=0\,$ (notation of Section~\ref{sd}).
\begin{lemma}\label{aandb}Let\/ $\,A,b\,$ and\/ $\,(M\nh,g)\,$ satisfy the
hypotheses of Lemma\/~{\rm\ref{codwe}} and\/ {\rm(\ref{toz}.c)}, with\/ 
$\,e\nh_i\w,\varGamma^{\hs i}_{\!ij},\lambda_i\w,\sigma\nnh_{ij}\w$ as in
Lemma\/~{\rm\ref{codwe}(i)} and\/ {\rm(\ref{gij})}. Setting
\begin{equation}\label{fji}%
F\hskip-4pt_{ji}\w\nnh
=\nh\varGamma^{\hs i}_{\!ij}\hskip4pt\mathrm{and}\hskip3ptH\hskip-2.5pt_{ji}\w
\nh
=F\hskip-3.2pt_{kl}\w F\hskip-3.2pt_{li}\w\nh
+F\hskip-3.2pt_{lk}\w F\hskip-3.2pt_{ki}\w\nh
-F\hskip-3.2pt_{ki}\w F\hskip-3.2pt_{li}\w\,\mathrm{\ if\ 
}\,\{i,j,k,l\}=\{1,2,3,4\}\hh,
\end{equation}
one has the following relations, valid as long as\/
$\,\{i,j,k,l\}=\{1,2,3,4\}$.
\begin{equation}\label{nii}
\begin{array}{rl}
\mathrm{a)}&\nabla\hskip-3.7pt_{e\nh_i\w}\hskip-2.4pte\nh_i\w\,=\,
-\,F\hskip-4pt_{ji}\w e\nh_j\w\,-\,F\hskip-3.2pt_{ki}\w e\nh_k\w\,
-\,F\hskip-3.2pt_{li}\w e\nh_l\w\hh,\hskip28pt
\nabla\hskip-2.7pt_{e\nh_i\w}\hskip-2.4pte\nh_j\w\,
=\,F\hskip-4pt_{ji}\w e\nh_i\w\hh,\\
\mathrm{b)}&D\nnh_i\w\lambda_j\w\,
=\,(\lambda_i\w\,-\,\lambda_j\w)F\hskip-3.2pt_{ij}\w\hh,\\
\mathrm{c)}&D\nnh_k\w\sigma\nnh_{ij}\w\,
=\,(\sigma\nnh_{kj}\w\nh-\sigma\nnh_{ij}\w)F\hskip-3.2pt_{ki}\w\,+
\,(\sigma\nnh_{ki}\w\nh-\sigma\nnh_{ij}\w)F\hskip-3.2pt_{kj}\w\hh,\\
\mathrm{d)}&[\hs e\hs\nh_i\w,e\nh_j\w]\,=\,F\hskip-4pt_{ji}\w e\nh_i\w\,
-\,F\hskip-3.2pt_{ij}\w e\nh_j\w\hh,\\
\mathrm{e)}&D\nnh_i\w F\hskip-3.2pt_{ij}\w\,+\,D\nnh_j\w F\hskip-4pt_{ji}\w\,
+\,F\hskip-3.2pt_{ij}^{\hskip3.3pt2}\,+\,F\hskip-4pt_{ji}^{\hskip4.4pt2}\,
+\,F\hskip-3.2pt_{ki}\w F\hskip-3.2pt_{kj}\w\,
+\,F\hskip-3.2pt_{li}\w F\hskip-3.2pt_{lj}\w\,=\,-R_{ijij}\w\hh,\\
\mathrm{f)}&D\nnh_i\w F\hskip-4pt_{jk}\w\,-\,(F\hskip-4pt_{ji}\w\nnh-\nnh F\hskip-4pt_{jk}\w)F\hskip-3.2pt_{ik}\w\,
=\,R_{kijk}\w\hh.
\end{array}
\end{equation}
In the case where $\,\,\mathrm{tr}\nh_g\w\hh b\,$ is constant and 
$\,\{i,j,k,l\}=\{1,2,3,4\}$,
\begin{equation}\label{dil}%
D\nnh_k\w\lambda_k\w\,=\,(\lambda_l\w\,-\,\lambda_k\w)F\hskip-3.2pt_{kl}\w\,
+\,(\lambda_i\w\,-\,\lambda_k\w)F\hskip-3.2pt_{ki}\w\,+\,(\lambda_j\w\,
-\,\lambda_k\w)F\hskip-3.2pt_{kj}\w\hh.
\end{equation}
If $\,A=W\,$ and $\,b=\,\mathrm{Ric}\,-\,\mathrm{s}\hskip.7ptg/4\,$ then, 
whenever\/ $\,\{i,j,k,l\}=\{1,2,3,4\}$,
\begin{equation}\label{dif}%
\begin{array}{rl}
\mathrm{i)}&D\nnh_i\w F\hskip-4pt_{jk}\w\,=\,(F\hskip-4pt_{ji}\w\nnh-\nnh F\hskip-4pt_{jk}\w)F\hskip-3.2pt_{ik}\w\hh,\\
\mathrm{ii)}&(\sigma\nnh_{ki}\w\hs-\,\sigma\nnh_{li}\w)(D\nnh_k\w F\hskip-3.2pt_{lk}\w\,
-\,D\hn_l\w F\hskip-3.2pt_{kl}\w)\,=\,3\hs(H\hskip-2.5pt_{ji}\w\,
-\,H\nnh_{ij}\w)\hs\sigma\nnh_{ij}\w\hh,
\\
\mathrm{iii)}&(\lambda_k\w\,-\,\lambda_l\w)(D\nnh_k\w F\hskip-3.2pt_{lk}\w\,
-\,D\hn_l\w F\hskip-3.2pt_{kl}\w)\\
&\hskip25pt=\,\,(\lambda_k\w\,+\,\lambda_l\w
-\,2\lambda_i\w)H\hskip-2.5pt_{ji}\w\,
+\,\hs(\lambda_k\w\,+\,\lambda_l\w-\,2\lambda_j\w)H\nnh_{ij}\w\hh,\\
\mathrm{iv)}&D\nnh_i\w D\nnh_k\w F\hskip-3.2pt_{lk}\w\,
+\,2F\hskip-3.2pt_{ik}\w D\nnh_k\w F\hskip-3.2pt_{lk}\w\,
=\,(F\hskip-3.2pt_{li}\w\nh-F\hskip-3.2pt_{lk}\w)D\nnh_k\w F\hskip-3.2pt_{ik}\w\hh,\\
\mathrm{v)}&D\nnh_i\w D\nnh_k\w F\hskip-3.2pt_{kl}\w
+(F\hskip-3.2pt_{ik}\w\nnh+F\hskip-3.2pt_{il}\w)D\nnh_k\w F\hskip-3.2pt_{kl}\w
=F\hskip-3.2pt_{il}\w D\nnh_k\w F\hskip-3.2pt_{ki}\w
+(F\hskip-3.2pt_{ki}\w\nh-F\hskip-3.2pt_{kl}\w)H\hskip-2.5pt_{jl}\w\hh.
\end{array}
\end{equation}
\end{lemma}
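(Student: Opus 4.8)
The plan is to derive everything from two inputs already at hand: the ``connection'' identities of Lemma~\ref{codwe}(ii), specialized through~(\ref{toz}.c) and the elementary relation $\varGamma^{\hs j}_{\!ii}\nh=-F\hskip-4pt_{ji}\w$ (immediate from orthonormality of the $\,e\nh_i\w$ together with the definition $F\hskip-4pt_{ji}\w\nh=\varGamma^{\hs i}_{\!ij}$ in~(\ref{fji})), and, in the case $\,A=W$, the fact that $\,R\,$ is itself diagonalized, i.e.~(\ref{riz}). First I would dispose of the ``algebraic'' assertions. Under~(\ref{toz}.c) one has $\varGamma^k_{\nh\!ij}\nh=0$ for distinct $\,i,j,k\,$ while $\varGamma^{\hs j}_{\!ij}\nh=0$ by orthonormality, so~(\ref{gij}) collapses to~(\ref{nii}.a), and then~(\ref{nii}.d) is just $\,[e\nh_i\w,e\nh_j\w]=\nabla\hskip-2.7pt_{e\nh_i\w}\hskip-2.4pte\nh_j\w-\nabla\hskip-2.7pt_{e\nh_j\w}\hskip-2.4pte\nh_i\w$. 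Rewriting~(\ref{skw}.e) via $\varGamma^{\hs i}_{\!jj}\nh=-F\hskip-3.2pt_{ij}\w$ gives~(\ref{nii}.b), and then~(\ref{dil}) follows by differentiating $\,\mathrm{tr}\nh_g\w\hh b=\lambda_i\w\nh+\lambda_j\w\nh+\lambda_k\w\nh+\lambda_l\w$ along $\,e\nh_k\w$ and inserting~(\ref{nii}.b) for the three summands $D\nnh_k\w\lambda_m\w$ with $\,m\ne k$. For~(\ref{nii}.c) I would use $\sigma\nnh_{ij}\w=\sigma\nnh_{lk}\w$ from~(\ref{skw}.b) to rewrite $D\nnh_k\w\sigma\nnh_{ij}\w=D\nnh_k\w\sigma\nnh_{lk}\w$, apply~(\ref{skw}.f) to $\,\sigma\nnh_{lk}\w$ (now differentiated along one of its own index directions), substitute $\varGamma^k_{\!ii}\nh=-F\hskip-3.2pt_{ki}\w$ and $\varGamma^k_{\!jj}\nh=-F\hskip-3.2pt_{kj}\w$, and convert $\,\sigma\nnh_{li}\w,\sigma\nnh_{lj}\w$ back through~(\ref{skw}.b).

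Next,~(\ref{nii}.e) and~(\ref{nii}.f) are pure curvature computations: expand $\,R\hh(e\nh_i\w,e\nh_j\w)e\nh_i\w$ (resp.~$R\hh(e\nh_k\w,e\nh_i\w)e\nh_j\w$) by the definition~(\ref{rvw}), using~(\ref{nii}.a) for the covariant derivatives and~(\ref{nii}.d) for the brackets, and read off the $\,e\nh_j\w$- (resp.~$e\nh_k\w$-) component, which is $R_{ijij}\w$ (resp.~$R_{kijk}\w$) by~(\ref{rkl}). When $\,A=W$, relation~(\ref{dif}.i) is then immediate from~(\ref{nii}.f), since $R_{kijk}\w=0$ by~(\ref{riz}) as $\{k,i\}\ne\{j,k\}$. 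The second-order relations~(\ref{dif}.iv) and~(\ref{dif}.v) follow from~(\ref{dif}.i) alone, together with the commutation rule $\,D\nnh_i\w D\nnh_k\w f-D\nnh_k\w D\nnh_i\w f=F\hskip-3.2pt_{ki}\w D\nnh_i\w f-F\hskip-3.2pt_{ik}\w D\nnh_k\w f$ coming from~(\ref{nii}.d), applied to the scalar functions $\,f=F\hskip-3.2pt_{lk}\w$ and $\,f=F\hskip-3.2pt_{kl}\w$ respectively: one differentiates $\,D\nnh_i\w f\,$ by means of~(\ref{dif}.i), re-applies~(\ref{dif}.i) to the first-order $\,F$-derivatives thus produced, and collects. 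In the first case the cubic $\,F$-terms cancel outright, leaving~(\ref{dif}.iv); in the second, the surviving quadratic $\,F$-combination is precisely $\,(F\hskip-3.2pt_{ki}\w\nh-F\hskip-3.2pt_{kl}\w)H\nnh_{jl}\w$ by the definition of $\,H\,$ in~(\ref{fji}).

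The heart of the proof is~(\ref{dif}.ii) and~(\ref{dif}.iii), which produce the combination $\,D\nnh_k\w F\hskip-3.2pt_{lk}\w-D\hn_l\w F\hskip-3.2pt_{kl}\w$, a derivative of an $\,F\,$ along one of its own index directions and hence not supplied by~(\ref{dif}.i). For~(\ref{dif}.iii) I would apply $\,D\hn_l\w D\nnh_k\w-D\nnh_k\w D\hn_l\w=F\hskip-3.2pt_{kl}\w D\hn_l\w-F\hskip-3.2pt_{lk}\w D\nnh_k\w$ to the scalar $\,\lambda_k\w$, expanding the iterated derivatives through~(\ref{nii}.b),~(\ref{dil}) and, for the mixed $\,F$-derivatives $\,D\hn_l\w F\hskip-3.2pt_{ki}\w$ and $\,D\hn_l\w F\hskip-3.2pt_{kj}\w$, through~(\ref{dif}.i); isolating the term $\,(\lambda_k\w-\lambda_l\w)(D\nnh_k\w F\hskip-3.2pt_{lk}\w-D\hn_l\w F\hskip-3.2pt_{kl}\w)\,$ and verifying that the remainder is linear in the $\,\lambda$'s and quadratic in the $\,F$'s, with the $\,F$-parts assembling via~(\ref{fji}) into $\,H\nnh_{ji}\w$ and $\,H\nnh_{ij}\w$ carrying the coefficients $\,\lambda_k\w\nh+\lambda_l\w\nh-2\lambda_i\w$ and $\,\lambda_k\w\nh+\lambda_l\w\nh-2\lambda_j\w$, yields the identity. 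For~(\ref{dif}.ii) the same device is applied to $\,\sigma\nnh_{il}\w$ in place of $\,\lambda_k\w$: here $\,D\nnh_k\w\sigma\nnh_{il}\w\,$ is given by~(\ref{nii}.c) and contains $\,F\hskip-3.2pt_{kl}\w$, whereas $\,D\hn_l\w\sigma\nnh_{il}\w\,$ is given by~(\ref{skw}.f) and contains $\,F\hskip-3.2pt_{lk}\w$, so differentiating these cross-wise and commuting isolates $\,(\sigma\nnh_{ki}\w-\sigma\nnh_{li}\w)(D\nnh_k\w F\hskip-3.2pt_{lk}\w-D\hn_l\w F\hskip-3.2pt_{kl}\w)$; the remaining terms, reduced to $\,\sigma$'s and $\,F$'s by~(\ref{nii}.c),~(\ref{skw}.f) and~(\ref{dif}.i), collapse by means of the trace relation $\,\sigma\nnh_{ij}\w\nh+\sigma\nnh_{ik}\w\nh+\sigma\nnh_{il}\w=0\,$ of~(\ref{skw}.b), which is what introduces the factor $\,3\,$ in front of $\,\sigma\nnh_{ij}\w$. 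The only genuine difficulty in this last step is organizational: respecting the strict no-summation, all-distinct-indices convention while checking that the cubic-in-$F$ and the off-pattern $\,\sigma$-terms cancel and that exactly $\,3(H\nnh_{ji}\w-H\nnh_{ij}\w)\sigma\nnh_{ij}\w$ remains; the manifest invariance of each claimed identity under the simultaneous transposition $\,i\leftrightarrow j,\ k\leftrightarrow l\,$ can be used to roughly halve the bookkeeping.
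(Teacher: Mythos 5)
Your proposal is correct and follows essentially the same route as the paper: (\ref{nii}.a)--(d) by specializing Lemma~\ref{codwe}(ii) via~(\ref{toz}.c) and~(\ref{abc}), (\ref{nii}.e)--(f) by direct computation of $R_{ijij}\w$ and $R_{kijk}\w$, (\ref{dil}) by differentiating the constant trace, (\ref{dif}.i) from $R_{kijk}\w=0$, and (\ref{dif}.ii)--(v) by commuting directional derivatives applied, respectively, to $\sigma\nnh_{il}\w$, $\lambda_k\w$, $F\hskip-3.2pt_{lk}\w$, $F\hskip-3.2pt_{kl}\w$, exactly the functions the paper uses. Your appeal to~(\ref{riz}) where the paper cites Lemma~\ref{simdi}, and your explicit tracking of how the trace identity in~(\ref{skw}.b) yields the factor $3$ in~(\ref{dif}.ii), are only minor presentational differences.
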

\begin{proof}Assertions (\ref{nii}.a) -- (\ref{nii}.c) are obvious from 
(\ref{gij}) and (\ref{skw}.a) with (\ref{toz}.c) and (\ref{abc}) or,
respectively, 
(\ref{skw}.e) -- (\ref{skw}.f), while (\ref{nii}.d) is immediate from 
(\ref{nii}.a). Evaluating $\,R_{ijij}\w$ and $\,R_{kijk}\w$ from (\ref{rkl}), 
(\ref{rvw}) and (\ref{nii}.d), we easily obtain (\ref{nii}.e) -- (\ref{nii}.f).

If $\,\hs\mathrm{tr}\nh_g\w\hh b
=\lambda_i\w+\lambda_j\w+\lambda_k\w+\lambda_l\w$ 
is constant, (\ref{nii}.b) implies (\ref{dil}). 

The assumptions made in the line preceding (\ref{dif}) yield (\ref{dif}.i) as 
a consequence of (\ref{nii}.f), since Lemma \ref{simdi} applied to 
$\,b=\,\mathrm{Ric}\,-\,\mathrm{s}g/4\,$ gives $\,R_{kijk}=0$. Relations 
(\ref{dif}.ii) -- (\ref{dif}.iii), or (\ref{dif}.iv) -- (\ref{dif}.v), then 
follow in turn from the equalities 
$\,D\nnh_i\w D\nnh_j\w\sigma\nnh_{il}\w-D\nnh_j\w D\nnh_i\w\hh\sigma\nnh_{il}\w
=D\nnh_w\w\nh\sigma\nnh_{il}\w$ and $\,D\nnh_k\w D\hn_l\w\lambda_k\w-
D\hn_l\w D\nnh_k\w\lambda_k\w=D\nnh_w\w\lambda_k\w$, where 
$\,w=[e\hs \nh_k\w,e\hn_l\w]\,$ (or, respectively, 
$\,D\nnh_i\w D\nnh_k\w F\hskip-3.2pt_{lk}\w
-D\nnh_k\w D\nnh_i\w F\hskip-3.2pt_{lk}\w=D\nnh_w\w F\hskip-3.2pt_{lk}\w$ and 
$\,D\nnh_i\w D\nnh_k\w F\hskip-3.2pt_{kl}\w
-D\nnh_k\w D\nnh_i\w F\hskip-3.2pt_{kl}\w=D\nnh_w\w F\hskip-3.2pt_{kl}\w$,
where 
$\,w=[\hs e\hs\nh_i\w,e\nh_k\w]$), combined with (\ref{nii}.d), (\ref{nii}.b), 
(\ref{dif}.i) and (\ref{dil}).
\end{proof}
\begin{proof}[Proof of Theorem\/{\rm~\ref{ricev}(a)\hs-\hs(b)}]Due to
(\ref{tis}), $\,\text{\smallbf w}=3\,$ and $\,\text{\smallbf r}\in\{3,4\}$.
The assumptions of Lemma~\ref{codwe} thus hold for
$\,(A,b)=(W\hskip-2.3pt,\,\mathrm{Ric}-\mathrm{s}\hskip.7ptg/4)$.
Lemma~\ref{zotwo} now gives $\,\text{\smallbf d}\in\{0,1,2\}$, and so
$\,\hs\text{\smallbf d}\,=\,\nh0$, the cases
$\,\hs\text{\smallbf d}\,=\,\nh2\,$ and 
$\,\hs\text{\smallbf d}\,=\,\nh1\,$ being excluded by the argument in
Section~\ref{nt} and, respectively, Theorem~\ref{nuone}. Assertions (a) and
(b) of Theorem~\ref{ricev} are now immediate from (\ref{nii}.d) and the choice
of $\,e\nh_i\w$ in Lemma~\ref{codwe}(i).
\end{proof}
\begin{proof}[Proof of Theorem\/{\rm~\ref{algvr}}]The first displayed equation
is obvious from (\ref{skw}.b) and the definition of $\,\lambda_i\w$. Next,
adding $\,\lambda_l\w-\lambda_k\w$ times (\ref{dif}.ii) to (\ref{dif}.iii)
multiplied by $\,\sigma\nnh_{ki}\w\nh-\sigma\nnh_{li}\w$, we obtain
$\,0=2(H\nnh_{ij}\w Z_{klj}\w-H\hskip-2.5pt_{ji}\w Z_{kli}\w)$, where
$\,2Z_{klj}\w=3(\lambda_k\w-\lambda_l\w)\hs\sigma\nnh_{ij}\w+
(\lambda_k\w+\lambda_l\w-2\lambda_j\w)(\sigma\nnh_{ki}\w\nh
-\sigma\nnh_{li}\w)$ 
and
\[
\begin{array}{l}
2Z_{kli}\w=3(\lambda_k\w-\lambda_l\w)\hs\sigma\nnh_{ji}\w+
(\lambda_k\w+\lambda_l\w-2\lambda_i\w)(\sigma\nnh_{kj}\w\nh
-\sigma\nnh_{lj}\w)\\
\phantom{Z_{kli}\w}=\,3(\lambda_k\w-\lambda_l\w)\hs\sigma\nnh_{ij}\w+
(\lambda_k\w+\lambda_l\w-2\lambda_i\w)(\sigma\nnh_{li}\w\nh
-\sigma\nnh_{ki}\w).
\end{array}
\]
(By (\ref{skw}.b),
$\,(\sigma\nnh_{ji}\w,\sigma\nnh_{kj}\w,\sigma\nnh_{lj}\w)
=(\sigma\nnh_{ij}\w,\sigma\nnh_{li}\w,\sigma\nnh_{ki}\w)$.) Thus, as
$\,\sigma\nnh_{ij}\w=\sigma\nnh_{kl}\w$ and
$\,\sigma\nnh_{ki}\w=\sigma\nnh_{ik}\w$ in (\ref{skw}.b), the last displayed
line gives 
$\,2Z_{kli}\w=3(\lambda_k\w-\lambda_l\w)\hs\sigma\nnh_{kl}\w+
(\lambda_k\w+\lambda_l\w-2\lambda_i\w)\sigma\nnh_{li}\w\nh
+(2\lambda_i\w-\lambda_k\w-\lambda_l\w)\sigma\nnh_{ik}\w$, that is,
$\,2Z_{kli}\w
=2(\lambda_k\w-\lambda_l\w)\hs\sigma\nnh_{kl}\w
+(\lambda_k\w-\lambda_l\w)\hs\sigma\nnh_{kl}\w
+2(\lambda_l\w-\lambda_i\w)\sigma\nnh_{li}\w\nh
+(\lambda_k\w-\lambda_l\w)\sigma\nnh_{li}\w\nh
+2(\lambda_i\w-\lambda_k\w)\sigma\nnh_{ik}\w
+(\lambda_k\w-\lambda_l\w)\sigma\nnh_{ik}\w$. Due to (\ref{skw}.b), or
the definition of $\,Z\nnh_j\w$ in Theorem~\ref{algvr}, the second, fourth
and sixth (or, first, third and fifth) terms in the last six-term sum add up
to $\,0\,$ or, respectively, to $\,2Z\nnh_j\w$, if $\,(i,j,k,l)$, and hence
$\,(k,l,i,j)$, is an even permutation of $\,(1,2,3,4)$. Thus,
$\,Z_{kli}\w=Z\nnh_j\w$ while, the permutation $\,(k,l,j,i)\,$ being odd,
$\,Z_{klj}\w=-\nh Z_i\w$, and so the equality
$\,H\nnh_{ij}\w Z_{klj}\w=H\hskip-2.5pt_{ji}\w Z_{kli}\w$ obtained above
amounts to (\ref{fsi}). Furthermore,
$\,Z_i\w\nh+Z\nnh_j\w\nh+Z_k\w\nh+Z_l\w\nh=0\,$ if 
$\,(i,j,k,l)\,$ is an even permutation of $\,(1,2,3,4)$, since the term
$\,(\lambda_i\w-\lambda_j\w)\hh\sigma\nnh_{ij}\w$ in 
$\,Z_l\w\nh=(\lambda_i\w-\lambda_j\w)\hh\sigma\nnh_{ij}\w\nh
+(\lambda_j\w-\lambda_k\w)\hh\sigma\hskip-2.7pt_{jk}\w\nh
+(\lambda_k\w-\lambda_i\w)\hh\sigma\nnh_{ki}\w$ gets cancelled by 
$\,(\lambda_j\w-\lambda_i\w)\hh\sigma\nnh_{ji}\w$ in 
$\,Z_k\w$, as a consequence of (\ref{skw}.b) and evenness of the permutation 
$\,(j,i,l,k)$. Therefore,
\begin{equation}\label{zje}
Z_1\w\hs+\,Z_2\w\hs+\,Z_3\w\hs+\,Z_4\w\hs=\,0\,
=\,\lambda_1\w\hs+\,\lambda_2\w\hs+\,\lambda_3\w\hs+\,\lambda_4\w\hh,
\end{equation}
the second relation in (\ref{zje}) being a part of the
al\-read\-y-es\-tab\-lish\-ed first displayed equation in Theorem~\ref{algvr}.
From (\ref{fsi}) and (\ref{zje}) we get
\begin{equation}\label{zhe}
\left[\begin{matrix}
Z_1\w&Z_2\w&Z_3\w&Z_4\w\end{matrix}\right]\nnh\text{\medbf H}\,
=\nnh\left[\begin{matrix}0&0&0&0&0&0\end{matrix}\right]\nnh,
\end{equation}
where $\,\text{\medbf H}\,$ denotes the $\,4\times7\,$ matrix in (\ref{fsp}).
With $\,\lambda_4\w$ replaced by
$\,-\lambda_1\w\nh-\lambda_2\w\nh-\lambda_3\w$,
cf.\ (\ref{zje}), the definition of $\,Z\nnh_j\w$ gives
\begin{equation}\label{zoe}
\left[\begin{matrix}Z_1\w\cr
Z_2\w\cr
Z_3\w\cr\end{matrix}\right]
=\left[\begin{matrix}\sigma\nnh_{24}\w\nnh-\nh\sigma\nnh_{43}\w
&2\sigma\nnh_{24}\w\nnh-\nh\sigma\nnh_{32}\w\nnh-\nh\sigma\nnh_{43}\w
&\sigma\nnh_{24}\w\nnh+\nh\sigma\nnh_{32}\w\nnh-\nh2\sigma\nnh_{43}\w\cr
\sigma\nnh_{13}\w\nnh+\nh\sigma\nnh_{34}\w\nnh-\nh2\sigma\nnh_{41}\w
&\sigma\nnh_{34}\w\nnh-\nh\sigma\nnh_{41}\w
&2\sigma\nnh_{34}\w\nnh-\nh\sigma\nnh_{13}\w\nnh-\nh\sigma\nnh_{41}\w\cr
2\sigma\nnh_{14}\w\nnh-\nh\sigma\nnh_{21}\w\nnh-\nh\sigma\nnh_{42}\w
&\sigma\nnh_{21}\w\nnh+\nh\sigma\nnh_{14}\w\nnh-\nh2\sigma\nnh_{42}\w
&\sigma\nnh_{14}\w\nnh-\nh\sigma\nnh_{42}\w
\end{matrix}\right]
\left[\begin{matrix}
\lambda_1\w\cr
\lambda_2\w\cr
\lambda_3\w\end{matrix}\right]\nnh.
\end{equation}
Due to (\ref{skw}.b), the $\,3\times\nh3\,$ matrix appearing in (\ref{zoe})
equals
\[
\left[\begin{matrix}\sigma\nnh_{13}\w\nnh-\nh\sigma\nnh_{12}\w
&3\sigma\nnh_{13}\w&-\nnh3\sigma\nnh_{12}\w\cr
-\nnh3\sigma\nnh_{14}\w
&\hskip-4pt\sigma\nnh_{12}\w\nnh-\nh\sigma\nnh_{14}\w\hskip-4pt
&3\sigma\nnh_{12}\w\cr
3\sigma\nnh_{14}\w
&-\nnh3\sigma\nnh_{13}\w&\sigma\nnh_{14}\w\nnh-\nh\sigma\nnh_{13}\w
\end{matrix}\right]\nnh,
\]
and so it has the determinant
$\,-\nh8(\sigma\nnh_{13}\w\nnh-\nh\sigma\nnh_{12\w})
(\sigma\nnh_{12}\w\nnh-\nh\sigma\nnh_{14}\w)
(\sigma\nnh_{14}\w\nnh-\nh\sigma\nnh_{13}\w)$,
nonzero according to (\ref{skw}.a). while (\ref{tis}) and (\ref{zje}) give
$\,(\lambda_1\w,\lambda_2\w,\lambda_3\w)\ne(0,0,0)$. Thus, 
$\,(Z_1\w,Z_2\w,Z_3\w)\ne(0,0,0)\,$ in (\ref{zoe}), and (\ref{fsp}) follows
from (\ref{zhe}).
\end{proof}
\begin{remark}\label{lccon}
Let the tangent bundle $\,T\hn M\,$ of an $\,n$-di\-men\-sion\-al manifold
$\,M$ be trivialized by vector fields $\,e\nh_1\w,\dots,e\nh_n\w$ 
satisfying the Lie-brack\-et relations (\ref{lbr}).
Then, for the Le\-vi-Ci\-vi\-ta connection 
$\,\nabla\,$ of the metric $\,g\,$ on $\,M\,$ such that 
$\,e\nh_1\w,\dots,e\nh_n\w$ are $\,g$-or\-tho\-nor\-mal,
\begin{equation}\label{nie}
\nabla\hskip-2.7pt_{e\nh_i\w}\hskip-2.4pte\nh_j\w\,
=\,F\hskip-4pt_{ji}\w e\nh_i\w
\hskip9pt\mathrm{if}\hskip6pti\ne j\hh,\hskip32pt
\nabla\hskip-2.7pt_{e\nh_i\w}\hskip-2.4pte\nh_i\w\,=\,
-\sum_{k\ne i}\w F\hskip-3.2pt_{ki}\w e\nh_k\w\hh.
\end{equation}
In fact, the connection {\it defined\/} by (\ref{nie}) is tor\-sion-free and 
makes $\,g\,$ parallel.
\end{remark}
\begin{remark}\label{mltpl}
The assumptions of Remark \ref{lccon} will still hold if 
$\,e\nh_1\w,\dots,e\nh_n\w$ are replaced by 
$\,\phi\nh_1\w\nh e\nh_1\w,\dots,\phi\nh_n\w\nh e\nh_n\w$, for any functions 
$\,\phi\nh_i\w$ without zeros.

Locally, such $\,\phi\nh_i\w$ may then be chosen so that 
$\,\phi\nh_i\w\nh e\nh_i\w$ commute, as one sees setting 
$\,\phi\nh_i\w\nh e\nh_i\w\nh=\partial\nh_i\w$, where $\,\partial\nh_i\w$ 
are the coordinate vector fields for local coordinates $\,x^i\nh$, with each 
$\,x^i$ constant along the integrable distribution 
$\,\mathrm{Span}\hs\{e\nh_j\w:j\ne i\}$.
\end{remark}
\begin{remark}\label{prldi}
Let the objects appearing in Remark \ref{lccon} also have the property that 
$\,[\hs e\hs\nh_i\w,e\nh_k\w]=0\,$ for some fixed $\,m\in\{1,\dots,n-1\}\,$
and all $\,i,k$ with $\,i\le m<k$. Then the distributions 
$\,\mathcal{V}=\mathrm{Span}\hs\{e\nh_1\w,\dots,e\nh_m\w\}\,$ and 
$\,\mathcal{H}=\mathrm{Span}\hs\{e\nh_{m+1}\w,\dots,e\nh_n\w\}\,$ are 
$\,g$-par\-al\-lel. Namely, whenever $\,i\le m<k$, Remark \ref{lccon} with 
$\,F\hskip-3.2pt_{ik}\w=F\hskip-3.2pt_{ki}\w=0$ implies that $\,e\nh_k\w$ is 
$\,g$-par\-al\-lel along $\,e\nh_i\w$, and vice versa. Thus,
$\,\mathcal{V}\hs$ 
and $\,\mathcal{H}\,$ are $\,g$-par\-al\-lel along each other. Hence 
$\,\mathcal{V}=\hn\mathcal{H}^\perp$ is $\,g$-par\-al\-lel along 
$\,\mathcal{V}\hs$ as well. 
\end{remark}

\section{Proof of Theorem~\ref{ricev}({\rm c}): part one}\label{dz}
We will prove Theorem~\ref{ricev}(c) by assuming its negation which,
by (\ref{tis}), means that $\,\text{\smallbf r}=3$, and -- at the end of
Section~\ref{lc} -- obtaining a contradiction.

Throughout this and the next three sections $\,(M\nh,g)\,$ is a fixed oriented
Riem\-ann\-ian four-man\-i\-fold with $\,\mathrm{div}\,R=0$, belonging to
class \hbox{(D\hs0)} of Section \ref{sd} and having $\,\text{\smallbf r}=3$,
while $\,\,U\,$ denotes the set of all generic points (Section \ref{sd}) at
which $\,\sigma\nnh_{ij}\w\nh\sigma\nnh_{ik}\w\nh\sigma\nnh_{il}\w\ne0$, that
is, the eigen\-values of $\,W^+$ and $\,W^-$ are all nonzero. The indices
$\,i,j,k,l\,$ are always assumed to satisfy the condition
\begin{equation}\label{ijk}
\{i,j\}\,=\,\{1,2\}\hh,\hskip22pt\{k,l\}\,=\,\{3,4\}\hh.
\end{equation}
By (\ref{snz}) and (\ref{ana}), $\,\,U$ is an open dense subset of $\,M\nh$. 
We use the notation of Lemma \ref{aandb}, for $\,A=W\hs$ and 
$\,b=\hs\mathrm{Ric}-\mathrm{s}g/4$, cf.\ the lines following (\ref{rrw}), so
that, without loss of generality, on every connected component $\,\,U'$ of
$\,\,U\nh$, for some function $\,\lambda$,
\begin{equation}\label{loe}
\begin{array}{rl}
\mathrm{a)}&\lambda_1\w\,=\,\,\lambda_2\w\,=\,\,\lambda\,\,\ne\,\,\lambda_3\w\,
\ne\,\,\lambda_4\w\,\ne\,\,\lambda\hh,\\
\mathrm{b)}&\lambda_3\w\hs+\,\lambda_4\w\hs=\,-2\lambda
\end{array}
\end{equation}
everywhere in $\,\,U'\nnh$. Setting $\,\sigma\hs=\sigma\nnh_{1\nh2}\w$, we 
also define the function
\begin{equation}\label{gkl}
G\nh_k\w\hs=\,\,(\lambda_k\w-\lambda)^{-\nh1}D\nnh_k\w\lambda\hskip9pt
\mathrm{for}\hskip6ptk\in\{3,4\}\hh,
\end{equation}
and a metric $\,\hg\,$ on $\,\,U'$ by requiring 
$\,\hg$-or\-tho\-nor\-mal\-i\-ty of 
$\,\hat e\nh_1\w,\dots,\hat e\nh_4\w$, where
\begin{equation}\label{hat}
\hat e\nh_i\w=\sigma\hs^{-\nnh1\nh/\nh3}e\nh_i\w\hskip5pt\mathrm{if}\hskip5pti
=1,2\hskip5pt\mathrm{and}\hskip5pt\hat e\nh_k\w
=(\lambda_k\w\nnh-\lambda)^{-\nh1}e\nh_k\w\hskip5pt\mathrm{if}\hskip5ptk
=3,4\hh.
\end{equation}
Assuming (\ref{ijk}), we now obtain from (\ref{nii}.d), as in 
Remark \ref{mltpl},
\begin{equation}\label{sim}
\begin{array}{rl}
\mathrm{i)}&[\hat e\nh_i\w,\hat e\nh_j\w]=\hat F\hskip-4pt_{ji}\w\hat e\nh_i\w
-\hat F\hskip-3.2pt_{ij}\w\hat e\nh_j\w\hh,\hskip19pt
[\hat e\nh_k\w,\hat e\hn_l\w]=\hat F\hskip-3.2pt_{lk}\w\hat e\nh_k\w
-\hat F\hskip-3.2pt_{kl}\w\hat e\hn_l\w\hh,\\
\mathrm{ii)}&[\hat e\nh_i\w,\hat e\nh_k\w]=\hat F\hskip-3.2pt_{ki}\w\hat e\nh_i\w
-\hat F\hskip-3.2pt_{ik}\w\hat e\nh_k\w\hh,\hskip25pt\mathrm{where}\\
\mathrm{iii)}&\hat F\hskip-3.2pt_{ik}\w=\sigma\hs^{-\nnh1\nh/\nh3}(F\hskip-3.2pt_{ik}\w
-D\nnh_i\w\log|\lambda_k\w\nnh-\lambda|^{-\nh1}),\\
\mathrm{iv)}&\hat F\hskip-3.2pt_{ki}\w=(\lambda_k\w-\lambda)^{-\nh1}(F\hskip-3.2pt_{ki}\w\nh
-D\nnh_k\w\log|\sigma|\hh^{-\nh1\nh/\nh3}),\\
\mathrm{v)}&\hat F\hskip-3.2pt_{ij}\w=\sigma\hs^{-\nnh1\nh/\nh3}(F\hskip-3.2pt_{ij}\w
-D\nnh_i\w\log|\sigma|\hs^{-\nnh1\nh/\nh3})\hh,\\
\mathrm{vi)}&\hat F\hskip-3.2pt_{kl}\w=(\lambda_k\w-\lambda)^{-\nh1}(F\hskip-3.2pt_{kl}\w\nh
-D\nnh_k\w\log|\lambda_l\w\nnh-\lambda|^{-\nh1})\hh.
\end{array}
\end{equation}
\begin{lemma}\label{dilgk}
Under the hypotheses {\rm(\ref{ijk})} -- {\rm(\ref{sim})}, for\/ 
$\,G\nh_k\w,\hs \hg\,$ as above,
\begin{enumerate}
  \def\theenumi{{\rm\alph{enumi}}}
\item $D\nnh_i\w\lambda\,=\,D\nnh_i\w\nh G\nh_k\w\hs=\,0\,\,$ and 
$\,H\hskip-2.5pt_{ji}\w\nh
=\hs H\nnh_{ij}\w\nh=\hs F\hskip-3.2pt_{lk}\w\nh G\nh_k\w\nh
+\hs F\hskip-3.2pt_{kl}\w\nh G\nh_l\w\nh-\hs G\nh_k\w G\nh_l\w$,
\item $F\hskip-3.2pt_{ki}\w\hs=\,G\nh_k\w
=\,D\nnh_k\w\log|\sigma|\hh^{-\nh1\nh/\nh3}\,$ 
and $\,\,-2F\hskip-3.2pt_{kl}\w
=\,D\nnh_k\w\log|\sigma\hh^{-\nh1\nh/\nh3}\nh(\sigma\nnh_{il}\w\nh
-\sigma\hskip-2.7pt_{jl}\w)|$,
\item $D\nnh_k\w F\hskip-3.2pt_{lk}\w\nh=\hs D\hn_l\w F\hskip-3.2pt_{kl}\w$, 
$\,\,(F\hskip-3.2pt_{lk}\w\nh G\nh_k\w\nh
+\hs F\hskip-3.2pt_{kl}\w\nh G\nh_l\w\nh
-\hs G\nh_k\w G\nh_l\w)\hs\lambda=0$, and 
$\,\,H\nnh_{lj}\w\nh=F\hskip-3.2pt_{ik}\w\nh G\nh_k\w$,
\item $F\hskip-3.2pt_{ik}\w=\,D\nnh_i\w\log|\lambda_k\w\nnh-\lambda|^{-\nh1}\,$ and 
$\,\,H\nnh_{lj}\w\nh-\hh H\nnh_{kl}\w\nh
=\hs(F\hskip-3.2pt_{ik}\w\nh-F\hskip-3.2pt_{il}\w)(G\nh_k\w\nh-F\hskip-3.2pt_{kl}\w)$,
\item $\hat e\nh_k\w$ is $\,\hg$-par\-al\-lel along $\,\hat e\nh_i\w$, and 
vice versa, so that $\,[\hat e\nh_i\w,\hat e\nh_k\w]\,=\,0$,
\item $\hg\,$ is, locally, a Riem\-ann\-ian product of two surface metrics, 
with the factor distributions 
$\,\mathcal{V}=\mathrm{Span}\hs\{e\nh_1\w,e\nh_2\w\}\,$ and 
$\,\mathcal{H}=\mathrm{Span}\hs\{e\nh_3\w,e\nh_4\w\}$,
\item $\hat F\hskip-3.2pt_{kl}\w=(\lambda_l\w\nnh-\lambda)^{-\nh1}(F\hskip-3.2pt_{kl}\w\nh
-G\nh_k\w)\,$ 
and $\,D\nnh_i\w\hat F\hskip-3.2pt_{kl}\w=D\nnh_k\w\hat F\hskip-3.2pt_{ij}\w=0$,
\item $D\nnh_i\w\lambda_k\w$ is nonzero on a dense open set, and\/ 
$\,(D\nnh_k\w\lambda+4\lambda F\hskip-3.2pt_{kl}\w)G\nh_l\w\nh=0$.
\end{enumerate}
\end{lemma}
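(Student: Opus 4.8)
\smallskip
\noindent\emph{Proof outline.}
The plan is to derive (a)--(h) from the first- and second-order identities collected in Lemma~\ref{aandb} (applicable here because $A=W$ and $b=\mathrm{Ric}-\mathrm{s}g/4$, so that $\mathrm{tr}_g b=0$ is constant) together with the eigenvalue symmetries (\ref{skw}.a)--(\ref{skw}.b), invoking the product criteria of Remarks~\ref{lccon} and~\ref{prldi} only at the end. The fact that organizes everything is that, for $k\in\{3,4\}$ and $i,j\in\{1,2\}$, one has $F_{ki}=F_{kj}=G_k$ and likewise $F_{li}=F_{lj}=G_l$: indeed $F_{ki}=(\lambda_k-\lambda_i)^{-1}D_k\lambda_i$ by (\ref{skw}.e), while $\lambda_i=\lambda$ by (\ref{loe}.a), so this is precisely $G_k$ of (\ref{gkl}).

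First I would handle the first-order assertions. From (\ref{nii}.b) with $\lambda_1=\lambda_2$ one obtains $D_i\lambda=0$ for $i\in\{1,2\}$; (\ref{skw}.e) then also gives $D_i\lambda_k=-(\lambda_k-\lambda)F_{ik}$, which, combined with $D_i\lambda=0$, is the first identity of (d), namely $F_{ik}=D_i\log|\lambda_k-\lambda|^{-1}$; and substituting $D_iD_k\lambda=D_{[e_i,e_k]}\lambda$ (using the bracket relation (\ref{nii}.d)) into the $e_i$-derivative of (\ref{gkl}) yields $D_iG_k=0$. Feeding $F_{ki}=G_k$, $F_{li}=G_l$ and their $j$-analogues into the definition (\ref{fji}) of $H$ gives the asserted value of $H_{ji}$ and displays it as symmetric in $i\leftrightarrow j$, so $H_{ji}=H_{ij}$; this finishes (a). For (b) I would expand $D_k\sigma_{12}$ by (\ref{nii}.c) and use $\sigma_{ki}+\sigma_{kj}=-\sigma$ (from $\sigma_{kl}=\sigma_{ij}=\sigma$ and $\sigma_{ki}+\sigma_{kj}+\sigma_{kl}=0$ in (\ref{skw}.b)) to get $D_k\sigma=-3\sigma G_k$, i.e.\ $F_{ki}=G_k=D_k\log|\sigma|^{-1/3}$; the second identity of (b) is the parallel computation of $D_k(\sigma_{il}-\sigma_{jl})$ from (\ref{nii}.c) and (\ref{skw}.b). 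Finally the first identity of (g) drops out of (\ref{sim}.vi) once $D_k\log|\lambda_l-\lambda|^{-1}$ is rewritten through (\ref{skw}.e) and (\ref{gkl}).

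The second-order part begins with (c): feeding $H_{ji}=H_{ij}$ into (\ref{dif}.ii) and noting $\sigma_{ki}\ne\sigma_{li}$ (by (\ref{skw}.a)) forces $D_kF_{lk}-D_lF_{kl}=0$; inserting this into (\ref{dif}.iii), whose right-hand side has become $-4\lambda(H_{ji}+H_{ij})=-8\lambda H_{ji}$ because $\lambda_i=\lambda_j=\lambda$ and $\lambda_k+\lambda_l=-2\lambda$ (by (\ref{loe})), gives $\lambda H_{ji}=0$ --- the second relation of (c). Expanding $H_{lj}$ with $F_{ki}=F_{kj}=G_k$ cancels its $F_{ij}$-terms and leaves $H_{lj}=F_{ik}G_k$. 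The last identity of (d) is the step I expect to fight the hardest: it is algebraic in $H_{lj}$, $H_{kl}$ and the $F$'s, but expanding $H_{kl}$ introduces the ``intra-$\mathcal{V}$'' coefficients $F_{ij},F_{ji}$ and the ``intra-$\mathcal{H}$'' coefficient $F_{kl}$, which have to be linked through further instances of (\ref{dif}.i) (for $D_iF_{kl}$, $D_iF_{jl}$, and so on) before the expression collapses to $(F_{ik}-F_{il})(G_k-F_{kl})$; this index bookkeeping, not any one conceptual difficulty, is the real obstacle.

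With (b) and (d) established, (e)--(h) are short. By (\ref{sim}.iii) and (d) we get $\hat F_{ik}=0$, by (\ref{sim}.iv) and (b) we get $\hat F_{ki}=0$, hence $[\hat e_i,\hat e_k]=0$ from (\ref{sim}.ii); Remark~\ref{lccon}, applied to the $\hat g$-orthonormal frame $\hat e_1,\dots,\hat e_4$ (which satisfies bracket relations of the type (\ref{lbr}) by (\ref{sim})), then shows $\hat e_k$ is $\hat g$-parallel along $\hat e_i$ and vice versa --- this is (e). Remark~\ref{prldi} with $m=2$ makes $\mathcal{V}$ and $\mathcal{H}$ $\hat g$-parallel, so $\hat g$ is locally a Riemannian product of the induced surface metrics on their leaves, giving (f); and $D_i\hat F_{kl}=D_k\hat F_{ij}=0$ in (g) is then immediate, since $\hat F_{ij}$ and $\hat F_{kl}$ are connection coefficients intrinsic to the $\mathcal{V}$- and $\mathcal{H}$-factors and hence constant in the complementary directions. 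Lastly, $(D_k\lambda+4\lambda F_{kl})G_l=0$ in (h) follows by combining $D_k\lambda=(\lambda_k-\lambda)G_k$, the relation (\ref{dil}) (available because $\mathrm{tr}_g b$ is constant), and the second identity of (c); while $D_i\lambda_k\ne0$ on a dense open set follows from real-analyticity (\ref{ana}): vanishing on an open set would force $D_i\lambda_k\equiv0$, hence $F_{ik}\equiv0$, and, combined with the product structure just obtained, this would contradict one of the standing hypotheses $\mathrm{div}\,R=0$, local irreducibility and $\text{\smallbf r}=3$.
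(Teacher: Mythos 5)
Your outline follows the paper's route for (a)--(g): the observation that $F_{ki}=F_{kj}=G_k$ (since $\lambda_1=\lambda_2$) drives the first-order identities, (\ref{dif}.ii)--(\ref{dif}.iii) together with $H_{ij}=H_{ji}$ yield (c), and $\hat F_{ik}=\hat F_{ki}=0$ plus Remarks~\ref{lccon} and~\ref{prldi} give the $\hg$-product structure. Two small notes: for $D_iG_k=0$ the paper just observes that $F_{kj}=G_k$ is $j$-independent and applies (\ref{dif}.i), avoiding your commutator detour; and the second identity of (d) is, in the paper, a direct algebraic consequence of (\ref{fji}), (b) and (c)---no further instances of (\ref{dif}.i)---so the step you expect to fight hardest is actually routine once (c) is in hand.

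The genuine gap is (h). For its first claim, the product structure in (f) is for $\hg$, and $\hg$ is \emph{not} conformal to $g$: the rescaling in (\ref{hat}) uses $\sigma^{-1/3}$ on $\mathcal V$ but $(\lambda_k-\lambda)^{-1}$ on $\mathcal H$. So "combined with the product structure just obtained" does not by itself contradict anything. What the paper does is show that $D_i\lambda_k\equiv0$ forces $D_i[(\lambda_k-\lambda)\sigma^{-1/3}]=0$, hence $[\sigma^{-1/3}e_i,\sigma^{-1/3}e_k]=0$, so that the \emph{conformal} metric $\sigma^{2/3}g$ is itself locally a product of surface metrics; Remark~\ref{prsfm} then gives $\text{\smallbf w}\le2$, which is the specific hypothesis violated (not local irreducibility or $\text{\smallbf r}=3$, and certainly not "one of the standing hypotheses" generically). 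Your sketch omits this bridge from "$\hg$ is a product" to "a conformal multiple of $g$ is a product." For the second claim, $(D_k\lambda+4\lambda F_{kl})G_l=0$ does not drop out of $D_k\lambda=(\lambda_k-\lambda)G_k$, (\ref{dil}), and the second identity of (c): those relations carry no information isolating that particular combination. The paper's argument rewrites $F_{kl},F_{lk}$ through $\hat F_{kl},\hat F_{lk}$ via part (g) and (\ref{loe}.b), substitutes into the second identity of (c), and then---using the already-established first claim of (h) together with real-analyticity (\ref{ana})---rules out the possibility $\lambda\hat F_{lk}G_k\ne\lambda\hat F_{kl}G_l$, after which a final substitution yields the identity. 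That dichotomy, and its dependence on the first part of (h), is missing from your proposal and cannot be recovered from the ingredients you list.
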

\begin{proof}By (\ref{nii}.b) and (\ref{loe}), 
$\,D\nnh_i\w\lambda=D\nnh_i\w\lambda_j\w=(\lambda-\lambda)F\hskip-3.2pt_{ij}\w
=0\,$ and $\,D\nnh_k\w\lambda=D\nnh_k\w\lambda_i\w
=(\lambda_k\w-\lambda)F\hskip-3.2pt_{ki}\w$. Therefore $\,D\nnh_i\w\lambda=0$, 
while $\,F\hskip-3.2pt_{ki}\w=G\nh_k\w$ does not depend on $\,i\in\{1,2\}$, so 
that (\ref{dif}.i) yields $\,D\nnh_i\w\nh G\nh_k\w
=D\nnh_i\w F\hskip-3.2pt_{kj}\w=(G\nh_k\w\nh-G\nh_k\w)F\hskip-3.2pt_{ij}\w=0$. 
Combined with (\ref{fji}), this proves (a) and the first equality in (b). 
Since, by (\ref{skw}.b), $\,\sigma\nnh_{kj}\w+\sigma\nnh_{ki}\w
=-\hs\sigma\nnh_{kl}\w=-\hs\sigma\nnh_{ij}\w=-\hs\sigma$, (\ref{nii}.c) 
similarly gives $\,D\nnh_k\w\sigma=D\nnh_k\w\sigma\nnh_{ij}\w
=(\sigma\nnh_{kj}\w\nh-\sigma\nnh_{ij}\w)G\nh_k\w+
(\sigma\nnh_{ki}\w\nh-\sigma\nnh_{ij}\w)G\nh_k\w=-3\sigma G\nh_k\w$ as well as 
$\,\,D\nnh_k\w(\sigma\nnh_{il}\w\nh-\sigma\hskip-2.7pt_{jl}\w)
=(\sigma\hskip-2.7pt_{jl}\w\nh-\sigma\nnh_{il}\w)(G\nh_k\w\nh+2F\hskip-3.2pt_{kl}\w)$, and the 
remainder of (b) follows. Now (\ref{dif}.iii) reads 
$\,(\lambda_k\w\nnh-\lambda_l\w)(D\nnh_k\w F\hskip-3.2pt_{lk}\w\nh
-\hs D\hn_l\w F\hskip-3.2pt_{kl}\w)
=8\lambda(\nh G\nh_k\w G\nh_l\w\nh-\hs F\hskip-3.2pt_{lk}\w\nh G\nh_k\w\nh
-\hs F\hskip-3.2pt_{kl}\w\nh G\nh_l\w\hh)$, cf.\ (\ref{loe}). Thus, (c) is clear from 
(\ref{dif}.ii), (a) and (\ref{fji}). By (a), (\ref{loe}) and 
(\ref{nii}.b), $\,D\nnh_i\w(\lambda_k\w\nnh-\lambda)=D\nnh_i\w\lambda_k\w
=(\lambda-\lambda_k\w)F\hskip-3.2pt_{ik}\w$. This, along with (\ref{fji}), (b) 
and (c), implies (d). Next, (\ref{sim}.i) -- (\ref{sim}.ii) and
Remark \ref{prldi} 
lead to (e) -- (f), since
$\,\hat F\hskip-3.2pt_{ik}\w=\hat F\hskip-3.2pt_{ki}\w=0$ from 
(\ref{sim}.iii) -- (\ref{sim}.iv), (b) and (d).

Also, by (e), $\,\hat e\nh_i\w$ commutes with 
$\,[\hat e\nh_k\w,\hat e\hn_l\w]$ and 
$\,\hat e\nh_k\w$ with $\,[\hat e\nh_i\w,\hat e\nh_j\w]$, which in view of 
(\ref{sim}.i) yields $\,D\nnh_i\w\hat F\hskip-3.2pt_{lk}\w=D\nnh_k\w\hat F\hskip-3.2pt_{ij}\w=0$, 
and consequently (g): from (\ref{loe}), (\ref{nii}.b) and (\ref{gkl}), we get 
$\,(\lambda_l\w\nnh-\lambda)\hs D\nnh_k\w\log|\lambda_l\w\nnh-\lambda|^{-\nh1}
=D\nnh_k\w(\lambda-\lambda_l\w)=(\lambda_k\w\nnh-\lambda)G\nh_k\w
-(\lambda_k\w\nnh-\lambda_l\w)F\hskip-3.2pt_{kl}\w$, and thus 
$\,(\lambda_k\w\nnh-\lambda)(\lambda_l\w\nnh-\lambda)\hat F\hskip-3.2pt_{kl}\w$ equals 
$\,(\lambda_k\w\nnh-\lambda)(F\hskip-3.2pt_{kl}\w-G\nh_k\w)\,$ according to 
(\ref{sim}.vi),  while $\,\lambda_k\w\nnh\ne\lambda\,$ (see (\ref{loe}.a)). 

To prove the first claim in (h), we may suppose that, on the contrary, 
$\,D\nnh_i\w\lambda_k\w=0\,$ for both $\,k=3,4\,$ and both 
$\,i=1,2\,$ since, according to (\ref{loe}.b) and (a), 
$\,D\nnh_i\w\lambda_3\w=0\,$ if and only if $\,D\nnh_i\w\lambda_4\w=0$. In view 
of (a) and (d), $\,F\hskip-3.2pt_{ik}\w\nh=0$, so that (\ref{skw}.b) and 
(\ref{nii}.c) give 
$\,D\nnh_i\w\hh\sigma=D\nnh_i\w\hh\sigma\nnh_{ij}\w=D\nnh_i\w\hh\sigma\nnh_{kl}\w=0$. 
Thus, from (a), 
$\,D\nnh_i\w[(\lambda_k\w\nnh-\lambda)\sigma\hs^{-\nnh1\nh/\nh3}]=0\,$ and, as 
$\,\sigma\hs^{-\nnh1\nh/\nh3}e\nh_k\w
=(\lambda_k\w\nnh-\lambda)\sigma\hs^{-\nnh1\nh/\nh3}\hat e\nh_k\w$, the 
relation $\,[\hat e\nh_i\w,\hat e\nh_k\w]=0\,$ in (e) yields 
$\,[\hat e\nh_i\w,\sigma\hs^{-\nnh1\nh/\nh3}e\nh_k\w]=0$, that is, 
$\,[\sigma\hs^{-\nnh1\nh/\nh3}e\nh_i\w,\sigma\hs^{-\nnh1\nh/\nh3}e\nh_k\w]=0$. 
Applying Remarks \ref{mltpl} and \ref{prldi} to the vector fields 
$\,\sigma\hs^{-\nnh1\nh/\nh3}e\nh_i\w,\sigma\hs^{-\nnh1\nh/\nh3}e\nh_k\w$ and 
to the metric $\,\sigma\hs^{2\nh/\nh3}g\,$ making them or\-tho\-nor\-mal, 
we see that $\,g\,$ is, locally, con\-for\-mal to a Riem\-ann\-ian product of 
two surface metrics. Therefore, by Remark \ref{prsfm}, 
$\,\text{\smallbf w}\in\{1,2\}$, contradicting the definition of class 
\hbox{(D\hs0)} in Section \ref{sd}, which includes the requirement that 
$\,\text{\smallbf w}=3$.

By (\ref{loe}.b), $\,\lambda_l\w\nnh-\lambda=-\hh(\lambda_k\w\nnh+3\lambda)$, 
and so (g) implies that
\begin{equation}\label{fkl}
\begin{array}{rl}
\mathrm{i)}&F\hskip-3.2pt_{lk}\w\nh=\hs G\nh_l\w\nh+\hs(\lambda_k\w\nnh
-\lambda)\hat F\hskip-3.2pt_{lk}\w\hh,
\hskip15ptF\hskip-3.2pt_{kl}\w\nh=\hs G\nh_k\w\nh
-\hs(\lambda_k\w\nnh+3\lambda)\hat F\hskip-3.2pt_{kl}\w\hh,\\
\mathrm{ii)}&(\hat F\hskip-3.2pt_{lk}\w\nh G\nh_k\w\nh-\hs\hat F\hskip-3.2pt_{kl}\w\nh G\nh_l\w)\hh
\lambda\lambda_k\w\nh
=\hs(\hat F\hskip-3.2pt_{lk}\w\nh G\nh_k\w\nh+\hs3\hat F\hskip-3.2pt_{kl}\w\nh G\nh_l\w)\hh\lambda^2
-\hs\lambda G\nh_k\w\nh G\nh_l\w\hh,
\end{array}
\end{equation}
(\ref{fkl}.ii) being the result of using (\ref{fkl}.i) to rewrite the second 
equality in (c). Thus, 
$\,\lambda\hat F\hskip-3.2pt_{lk}\w\nh G\nh_k\w\nh=\hs\lambda\hat F\hskip-3.2pt_{kl}\w\nh G\nh_l\w$, 
or else, in an open set on which 
$\,\lambda\hat F\hskip-3.2pt_{lk}\w\nh G\nh_k\w\nh\ne\hs\lambda\hat F\hskip-3.2pt_{kl}\w\nh G\nh_l\w$, 
the formula for $\,\lambda_k\w$ arising from (\ref{fkl}.ii) would, by (a) and 
(g), show that $\,D\nnh_i\w\lambda_k\w=0\,$ for both $\,i=1,2$, 
contradicting the second part of (h). Since 
$\,\lambda\hat F\hskip-3.2pt_{lk}\w\nh G\nh_k\w\nh
=\hs\lambda\hat F\hskip-3.2pt_{kl}\w\nh G\nh_l\w$, (\ref{fkl}.ii) reads
\begin{equation}\label{flq}
4\lambda^2\hat F\hskip-3.2pt_{lk}\w\nh G\nh_k\w
=\,4\lambda^2\hat F\hskip-3.2pt_{kl}\w\nh G\nh_l\w
=\,\lambda G\nh_k\w G\nh_l\w\hh.
\end{equation}
By (\ref{fkl}), (\ref{flq}) and (\ref{gkl}), 
$\,4\lambda^2F\hskip-3.2pt_{kl}\w\nh G\nh_l\w\nh
=4\lambda^2\hh G\nh_k\w G\nh_l\w\nh-\hs4(\lambda_k\w\nnh
+3\lambda)\lambda^2\hat F\hskip-3.2pt_{kl}\w\nh G\nh_l\w\nh
=\hs(\lambda-\lambda_k\w)\lambda G\nh_k\w G\nh_l\w\nh=
\hs-\lambda G\nh_l\w D\nnh_k\w\lambda$. This proves the second equality in 
(h), as it obviously holds on any open set on which $\,\lambda=0$.
\end{proof}
\begin{remark}\label{difij}Assuming (\ref{ijk}), we have 
$\,F\hskip-3.2pt_{ki}\w\nh=G\nh_k\w$ and $\,\lambda=\lambda_i\w$ in view of 
Lemma \ref{dilgk}(b) and (\ref{loe}.a), while 
$\,\sigma=\sigma\nnh_{ij}\w\nh=\sigma\nnh_{kl}\w$ by (\ref{skw}.b), so that 
combining (\ref{nii}.e) with (\ref{jij}) we obtain 
$\,D\nnh_i\w F\hskip-3.2pt_{ij}\w\nh
+D\nnh_j\w F\hskip-4pt_{ji}\w\nh+F\hskip-3.2pt_{ij}^{\hskip3.3pt2}
+F\hskip-4pt_{ji}^{\hskip4.4pt2}+G\nh_k^{\hs2}\nh
+G\nh_l^{\hs2}\nh=-\hs\sigma-\lambda-\mathrm{s}/12\,$ along with 
$\,D\nnh_i\w F\hskip-3.2pt_{ik}\w\nh+D\nnh_k\w\nh G\nh_k\w\nh
+F\hskip-3.2pt_{ik}^{\hskip3.3pt2}
+G\nh_k^{\hs2}+F\hskip-4pt_{ji}\w F\hskip-4pt_{jk}\w\nh+G\nh_l\w F\hskip-3.2pt_{lk}\w\nh
=\hbox{$-\hs\sigma\nnh_{ik}\w\nh-(\lambda+\lambda_k\w\nh+\mathrm{s}/6)/2$}\,$
and, from (\ref{loe}.b), 
$\,D\nnh_k\w F\hskip-3.2pt_{kl}\w\nh+D\hn_l\w F\hskip-3.2pt_{lk}\w\nh+F\hskip-3.2pt_{kl}^{\hskip3.3pt2}
+F\hskip-3.2pt_{lk}^{\hskip3.3pt2}+F\hskip-3.2pt_{ik}\w F\hskip-3.2pt_{il}\w\nh+F\hskip-4pt_{jk}\w F\hskip-4pt_{jl}\w\nh
=-\hs\sigma+\lambda-\mathrm{s}/12$.
\end{remark}

\section{Proof of Theorem~\ref{ricev}({\rm c}): part two}\label{ce}
Throughout this section, again, $\,\{i,j\}=\{1,2\}\,$ and $\,\{k,l\}=\{3,4\}$. 
According to (\ref{loe}.b) and (\ref{skw}.b), there exist functions 
$\,\mu,\tau$, not depending on the choice of $\,i\in\{1,2\}\,$ and 
$\,k\in\{3,4\}$, such that, with $\,\sigma=\sigma\nnh_{ij}\w$,
\begin{equation}\label{lem}
\begin{array}{l}
\mathrm{a)}\hskip6pt\lambda_k\w=\,(\nnh-\nnh1\nh)^k\hn\mu-\lambda\hh,\hskip12pt
\mathrm{b)}\hskip6pt\sigma\nnh_{ik}\w\hs=\,(\nnh-\nnh1\nh)^{i+k}\tau\,
-\,\sigma\nnh/\nh2\hh,\\
\mathrm{c)}\hskip6pt\sigma\nnh_{ij}\w\nh=\sigma\nnh_{kl}\w\nh=\sigma\hh,
\hskip12pt
\mathrm{d)}\hskip6pt-2F\hskip-3.2pt_{kl}\w\nh=D\nnh_k\w\nnh\log|\mu|
+[\hs1-2(\nnh-\nnh1\nh)^k\lambda/\nnh\mu\hh]\hs G\nh_k\w\hh,
\end{array}
\end{equation}
where (\ref{lem}.d) follows from (\ref{lem}.a) and 
(\ref{gkl}), since 
$\,D\nnh_k\w\lambda_l\w\nh=(\lambda_k\w\nnh-\lambda_l\w)F\hskip-3.2pt_{kl}\w$, cf.\ 
(\ref{nii}.b). Note that $\,\mu\ne0\,$ (and (\ref{lem}.d) holds) on an open 
set which is nonempty (and therefore dense, by (\ref{ana})), or else 
(\ref{lem}.a) would give $\,\lambda_3\w\nh=\lambda_4\w$, contradicting 
(\ref{loe}.a). In view of Lemma \ref{dilgk}(g) and (\ref{lem}),
\begin{equation}\label{did}
2(\nnh-\nnh1\nh)^k\hat F\hskip-3.2pt_{kl}\w\hs=\,[D\nnh_k\w\nnh\log|\mu|\,+\,\{\hs3\hs
-\hs2(\nnh-\nnh1\nh)^k\lambda/\nnh\mu\hh\}\hs G\nh_k\w]/[\hs\mu
+\hs2(\nnh-\nnh1\nh)^k\lambda\hh]\hh.
\end{equation}
Using (\ref{skw}.b), Lemma \ref{dilgk}(b) and equations 
(\ref{nii}.b) -- (\ref{nii}.c), we see that
\begin{equation}\label{tdi}
\begin{array}{l}
\mathrm{a)}\hskip6pt2D\nnh_i\w\hh\sigma
=-3(F\hskip-3.2pt_{ik}\w\nh+F\hskip-3.2pt_{il}\w)\sigma
-2(\nnh-\nnh1\nh)^{i+k}(F\hskip-3.2pt_{ik}\w\nh-F\hskip-3.2pt_{il}\w)\tau\hh,\\
\mathrm{b)}\hskip6pt4D\nnh_i\w\tau
=-3(\nnh-\nnh1\nh)^{i+k}(F\hskip-3.2pt_{ik}\w\nh-F\hskip-3.2pt_{il}\w)\sigma
-2[4F\hskip-3.2pt_{ij}\w\nh+F\hskip-3.2pt_{ik}\w\nh+F\hskip-3.2pt_{il}\w]\tau
\hh,\\
\mathrm{c)}\hskip6ptD\nnh_k\w\sigma\nnh=\nnh-3G\nnh_k\w\sigma\hh,\hskip7pt
D\nnh_k\w\nh\tau\nnh=\nnh-\hh(2F\hskip-3.2pt_{kl}\w\nnh\nh
+\nnh G\nh_k\w)\tau\hh,\hskip7pt
D\nnh_k\w\lambda\nnh=\nnh[(\nnh-\nnh1\nh)^k\hn\mu\nnh-\nnh2\lambda]\hh 
G\nh_k\w\hh,
\end{array}
\end{equation}
the last equality in (\ref{tdi}.c) being due to (\ref{gkl}) and (\ref{lem}.a). 
As $\,D\nnh_i\w\lambda=0$, cf.\ Lemma \ref{dilgk}(a), from Lemma \ref{dilgk}(d) 
and (\ref{lem}.a) we get
\begin{equation}\label{fik}
\begin{array}{l}
F\hskip-3.2pt_{ik}\w\nh+F\hskip-3.2pt_{il}\w\hs=\,D\nnh_i\w\nnh\log|\mu^2\nh-4\lambda^2|\,=
\,2(\mu^2\nh-4\lambda^2)^{-\nnh1}\nh\mu\hs D\nnh_i\w\mu\hh,\phantom{_{j_j}}\\
(\nnh-\nnh1\nh)^{i+k}(F\hskip-3.2pt_{ik}\w\nh-F\hskip-3.2pt_{il}\w)\,
=\,D\nnh_i\w\nnh\log|[\mu+2(\nnh-\nnh1\nh)^i\lambda]/[\mu-2(\nnh-\nnh1\nh)^i\lambda]|
\phantom{^{1^1}}\\
\phantom{(\nnh-\nnh1\nh)^{i+k}(F\hskip-3.2pt_{ik}\w\nh-F\hskip-3.2pt_{il}\w)\,}=\,\,
-\hs4(\nnh-\nnh1\nh)^i(\mu^2\nh-4\lambda^2)^{-\nnh1}\nh\lambda\hs D\nnh_i\w\mu\hh.
\end{array}
\end{equation}
The functions $\,P\hskip-2.7pt_i\w,Q\nh_i\w$ given by 
$\,2P\hskip-2.7pt_i\w\nh=(\nnh-\nnh1\nh)^k(F\hskip-3.2pt_{ik}\w\nh-F\hskip-3.2pt_{il}\w)\,$ 
and $\,2Q\nh_i\w\nh=-(F\hskip-3.2pt_{ik}\w\nh+F\hskip-3.2pt_{il}\w)$ are 
clearly independent of the choice of $\,k,l\,$ with $\,\{k,l\}=\{3,4\}$. Also,
\begin{equation}\label{dst}
\begin{array}{rl}
\mathrm{i)}&
F\hskip-3.2pt_{ik}\w\hs=\,(\nnh-\nnh1\nh)^k\nnh P\hskip-2.7pt_i\w\hs-\,Q\nh_i\w\hh,
\hskip11pt-\hh F\hskip-3.2pt_{il}\w\hs=\,(\nnh-\nnh1\nh)^k\nnh P\hskip-2.7pt_i\w\hs
+\,Q\nh_i\w\hh,\\
\mathrm{ii)}&
D\nnh_i\w\hh\sigma=3Q\nh_i\w\sigma-2(\nnh-\nnh1\nh)^i\nnh P\hskip-2.7pt_i\w\tau\hh,\\
\mathrm{iii)}&D\nnh_i\w\tau=(Q\nh_i\w\nh-2F\hskip-3.2pt_{ij}\w)\tau
-3(\nnh-\nnh1\nh)^i\nnh P\hskip-2.7pt_i\w\sigma\nnh/\nh2\hh,\\
\mathrm{iv)}&
D\nnh_i\w\mu=2\lambda P\hskip-2.7pt_i\w\nh+\mu Q\nh_i\w\hh,\hskip12pt
\mathrm{v)}\hskip6pt
\mu P\hskip-2.7pt_i\w\nh+2\lambda Q\nh_i\w\nh=0.
\end{array}
\end{equation}
Namely, (\ref{dst}.i) is obvious, and (\ref{dst}.ii) -- (\ref{dst}.iii) follow 
from (\ref{tdi}). On the other hand, 
$\,(\nnh-\nnh1\nh)^k D\nnh_i\w\mu=D\nnh_i\w\lambda_k\w\nh
=(\lambda-\lambda_k\w)F\hskip-3.2pt_{ik}\w$ by (\ref{lem}.a), 
Lemma \ref{dilgk}(a), (\ref{nii}.b) and (\ref{loe}.a); simultaneously, 
(\ref{lem}.a) and (\ref{dst}.i) give 
$\,(\lambda-\lambda_k\w)F\hskip-3.2pt_{ik}\w\nh
=[2\lambda-(\nnh-\nnh1\nh)^k\hn\mu]
[(\nnh-\nnh1\nh)^k\nnh P\hskip-2.7pt_i\w\nh-Q\nh_i\w]$. Hence 
$\,D\nnh_i\w\mu
=[2\lambda-(\nnh-\nnh1\nh)^k\hn\mu][P\hskip-2.7pt_i\w\nh-(\nnh-\nnh1\nh)^kQ\nh_i\w]$, 
that is, $\,D\nnh_i\w\mu=2\lambda P\hskip-2.7pt_i\w\nh+\mu Q\nh_i\w  
-(\nnh-\nnh1\nh)^k(\mu P\hskip-2.7pt_i\w\nh+2\lambda Q\nh_i\w)$, which holds for both 
$\,k\in\{3,4\}$, thus implying (\ref{dst}.iv) and (\ref{dst}.v).
Next,
\begin{equation}\label{dip}
\begin{array}{l}
D\nnh_i\w\nh P\hskip-2.7pt_i\w\nh=2P\hskip-2.7pt_i\w Q\nh_i\w\nh-F\hskip-4pt_{ji}\w P\hskip-2.7pt_j\w\nh
-(\nnh-\nnh1\nh)^i\tau-\mu/2\\
\phantom{D\nnh_i\w\nh P\hskip-2.7pt_i\w\hs}
-\nnh(\nnh-\nnh1\nh)^k(D\nnh_k\w\nh G\nh_k\w\nh-D\hn_l\w\nh G\nh_l\w\nh+
G\nh_k^{\hs2}-G\nh_l^{\hs2}+G\nh_l\w F\hskip-3.2pt_{lk}\w\nh
-G\nh_k\w F\hskip-3.2pt_{kl}\w)/2
\hh,\phantom{_{j_j}}\\
D\nnh_i\w Q\nh_i\w\nh=P\nnh_i^{\,2}+Q\nh_i^{\hs2}-F\hskip-4pt_{ji}\w Q\nh_j\w\nh
-(\sigma-\mathrm{s}/6)/2\phantom{^{1^1}}\\
\phantom{D\nnh_i\w Q\nh_i\w}
+\hn(D\nnh_k\w\nh G\nh_k\w\nh
+D\hn_l\w\nh G\nh_l\w\nh+G\nh_k^{\hs2}+G\nh_l^{\hs2}+G\nh_l\w F\hskip-3.2pt_{lk}\w\nh
+G\nh_k\w F\hskip-3.2pt_{kl}\w)/2\hh.
\end{array}
\end{equation}
This is immediate from the definitions of $\,P\hskip-2.7pt_i\w$ and
$\,Q\nh_i\w$, 
(\ref{dst}.i), and the second conclusion of Remark \ref{difij}, the right-hand 
side of which is equal, by (\ref{lem}.a) and (\ref{lem}.b), to 
$\,-\hs(\nnh-\nnh1\nh)^{i+k}\tau+[\sigma-(\nnh-\nnh1\nh)^k\hn\mu
-\mathrm{s}/6]/2$. Furthermore,
\begin{equation}\label{gkz}
\begin{array}{l}
\lambda\hs\,\mathrm{\ is\ nonzero\ everywhere\ in\ some\ dense\ open\ set\
and,\nnh\ whenever}\\
\mathrm{it\ is\ constant,\ }F\hskip-3.2pt_{ki}\w\nh
=G\nh_k\w\nh=D\nnh_k\w\sigma=0\,\mathrm{\ for\ all\ 
}\,(i,k)\in\{1,2\}\nh\times\nnh\{3,4\}\hh.\\
\end{array}
\end{equation}
In fact, the second claim is obvious from (\ref{gkl}) and
Lemma \ref{dilgk}(b). 
As for the first, let $\,\lambda=0\,$ on a nonempty open set. On such a set, 
with (\ref{ijk}), the last equality in (\ref{fik}) gives 
$\,F\hskip-3.2pt_{ik}\w\nh=F\hskip-3.2pt_{il}\w$ or, in the notation of 
(\ref{dst}), $\,P\hskip-2.7pt_i\w\nh=P\hskip-2.7pt_j\w\nh=0$. Since 
$\,G\nh_k\w\nh=G\nh_l\w\nh=0\,$ by (\ref{gkz}), the first formula in 
(\ref{dip}) now reads $\,(\nnh-\nnh1\nh)^i\tau=-\mu/2$. This being the case 
for both $\,i\in\{1,2\}$, it follows that $\,\tau=\mu=0$. Consequently, as 
$\,\lambda=0$, (\ref{lem}.a) yields $\,\lambda_3\w\nh=\lambda_4\w\nh=0$, 
contradicting (\ref{loe}.a).

Subtracting from (\ref{dip}) its version obtained by switching $\,i,j\,$ and 
noting that $\,(\nnh-\nnh1\nh)^j\nh=-(\nnh-\nnh1\nh)^i\nh$, we get
\begin{equation}\label{dpi}
\begin{array}{l}
D\nnh_i\w\nh P\hskip-2.7pt_i\w\nh-2P\hskip-2.7pt_i\w Q\nh_i\w\nh
-F\hskip-3.2pt_{ij}\w P\hskip-2.7pt_i\w-\,(D\nnh_j\w P\hskip-2.7pt_j\w\nh-
2P\hskip-2.7pt_j\w\nh Q\nh_j\w\nh-F\hskip-4pt_{ji}\w P\hskip-2.7pt_j\w)\,
=\,-\nh2(\nnh-\nnh1\nh)^i\tau\hh,\phantom{_{j_j}}\\
D\nnh_i\w Q\nh_i\w\nh-P\nnh_i^{\,2}-Q\nh_i^{\hs2}
-F\hskip-3.2pt_{ij}\w Q\nh_i\w\,
=\,D\nnh_j\w Q\nh_j\w\nh\nh-P\nnh_j^{\,2}-Q\nh_j^{\hs2}
-F\hskip-4pt_{ji}\w Q\nh_j\w\hh.\phantom{^{1^1}}\\
\end{array}
\end{equation}
Since $\,\lambda_i\w\nh=\lambda_j\w\nh=\lambda\,$
and $\,F\hskip-3.2pt_{ki}\w\nh=F\hskip-3.2pt_{kj}\w\nh=G\nh_k\w$, cf.\ 
(\ref{loe}.a) and Lemma \ref{dilgk}(b), the second displayed equation in
Theorem~\ref{algvr} with $\,i\,$ and $\,l\,$ switched takes, by
(\ref{dst}.i) and (\ref{lem}.a) -- (\ref{lem}.c), the form
\begin{equation}\label{tfg}
\begin{array}{l}
[(2F\hskip-3.2pt_{kl}\w\hs-\,G\nh_k\w)P\hskip-2.7pt_i\w\hs
-\,(\nnh-\nnh1\nh)^k\nh Q\nh_i\w G\nh_k\w]
[4\lambda\tau+3(\nnh-\nnh1\nh)^i\mu\sigma]\\
\hskip110pt=\,\hs[(\nnh-\nnh1\nh)^k\nh P\hskip-2.7pt_i\w G\nh_k\w\hs
-\,Q\nh_i\w G\nh_k\w][4(\nnh-\nnh1\nh)^k\nh\lambda\tau\,-\,2\mu\tau]\hh.
\end{array}
\end{equation}

\section{Proof of Theorem~\ref{ricev}({\rm c}): part three}\label{mc}
We continue making the same assumptions as in Lemma \ref{dilgk}.
\begin{lemma}\label{gtgfz}
One has $\,G\nh_3\w G\nh_4\w\nh=0\,$ everywhere.
\end{lemma}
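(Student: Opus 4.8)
The plan is to argue by contradiction, locally. Suppose $G\nh_3\w G\nh_4\w\ne0$ at some point of $\,U$; then, by continuity and after intersecting a suitable neighbourhood with the dense open set $\{\lambda\ne0\}$ of (\ref{gkz}), I obtain a nonempty open set $\,V$ on which $\lambda\hs G\nh_3\w G\nh_4\w\ne0$. On such a $\,V$ I would show that $F\hskip-3.2pt_{ik}\w\equiv0$ for all $i\in\{1,2\}$, $k\in\{3,4\}$; since $F\hskip-3.2pt_{ik}\w=-(\lambda_k\w-\lambda)^{-1}D\nnh_i\w\lambda_k\w$ by Lemma~\ref{dilgk}(d) together with $D\nnh_i\w\lambda=0$ from Lemma~\ref{dilgk}(a), and $\lambda_k\w\ne\lambda$ by (\ref{loe}.a), this would give $D\nnh_i\w\lambda_3\w=D\nnh_i\w\lambda_4\w=0$ throughout the open set $\,V$, contradicting the first assertion of Lemma~\ref{dilgk}(h) that $D\nnh_i\w\lambda_k\w$ is nonzero on a dense open set.

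Two elementary facts, valid on all of $\,U$, are used along the way: $\tau\ne0$ everywhere, because $\tau(x)=0$ would make $\sigma\nnh_{ik}\w(x)=-\sigma(x)/2=\sigma\nnh_{il}\w(x)$ by (\ref{lem}.b), violating (\ref{skw}.a); and $2(-1)^k\lambda-\mu\ne0$ everywhere, because $2(-1)^k\lambda=\mu$ would force $\lambda_k\w=\lambda$ via (\ref{lem}.a), against (\ref{loe}.a).

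The heart of the argument is the collapse of the polynomial identity (\ref{tfg}) on $\,V$. Since $G\nh_l\w\ne0$ there, the relation $(D\nnh_k\w\lambda+4\lambda F\hskip-3.2pt_{kl}\w)G\nh_l\w=0$ of Lemma~\ref{dilgk}(h), combined with $D\nnh_k\w\lambda=(\lambda_k\w-\lambda)G\nh_k\w$ from (\ref{gkl}) and $\lambda_k\w=(-1)^k\mu-\lambda$ from (\ref{lem}.a), yields $F\hskip-3.2pt_{kl}\w=(2\lambda-(-1)^k\mu)G\nh_k\w/(4\lambda)$, hence $2F\hskip-3.2pt_{kl}\w-G\nh_k\w=-(-1)^k\mu\hs G\nh_k\w/(2\lambda)$. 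Substituting this into the first bracket on the left-hand side of (\ref{tfg}) turns it into $-(-1)^k G\nh_k\w(\mu P\hskip-2.7pt_i\w+2\lambda Q\nh_i\w)/(2\lambda)$, which vanishes by (\ref{dst}.v); so the entire left-hand side of (\ref{tfg}) is $0$ on $\,V$. On the right-hand side, (\ref{dst}.i) identifies $(-1)^k P\hskip-2.7pt_i\w G\nh_k\w-Q\nh_i\w G\nh_k\w$ with $F\hskip-3.2pt_{ik}\w G\nh_k\w$, so (\ref{tfg}) reduces to $2\tau(2(-1)^k\lambda-\mu)F\hskip-3.2pt_{ik}\w G\nh_k\w=0$. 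Since $\tau$, $2(-1)^k\lambda-\mu$ and $G\nh_k\w$ are all nonzero on $\,V$, this forces $F\hskip-3.2pt_{ik}\w=0$ on $\,V$, completing the contradiction described above. Hence $G\nh_3\w G\nh_4\w=0$ everywhere.

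I expect the only real difficulty to be the bookkeeping in the third paragraph: one must recognize that (\ref{tfg}), itself a specialization of the second displayed equation of Theorem~\ref{algvr}, degenerates completely on $\,V$ once the expression for $F\hskip-3.2pt_{kl}\w$ forced by Lemma~\ref{dilgk}(h) and the linear relation (\ref{dst}.v) between $P\hskip-2.7pt_i\w,Q\nh_i\w,\lambda,\mu$ are inserted, so that the only surviving content is $F\hskip-3.2pt_{ik}\w G\nh_k\w=0$, which then collides with the genericity of $D\nnh_i\w\lambda_k\w$.
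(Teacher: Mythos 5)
Your proof is correct, and it takes a genuinely different route from the paper's. The paper's proof of this lemma introduces the auxiliary functions $\,C_k\nh=|\lambda|^{-1/2}(\lambda-\lambda_k\w)/4\,$ together with $\,E_k\w,\Pi,L,S\,$ (see (\ref{lee}) -- (\ref{cki})), and derives a contradiction by showing $\,D\nnh_k\w\lambda=0$, against $\,\lambda\hh G\nh_3\w G\nh_4\w\ne0$. You instead go straight at the polynomial identity (\ref{tfg}): you combine the second assertion of Lemma~\ref{dilgk}(h), namely $\,(D\nnh_k\w\lambda+4\lambda F\hskip-3.2pt_{kl}\w)G\nh_l\w=0$, with $\,G\nh_l\w\ne0\,$ and (\ref{gkl}), (\ref{lem}.a) to solve for $\,F\hskip-3.2pt_{kl}\w$; after substitution the first bracket on the left of (\ref{tfg}) becomes a multiple of $\,\mu P\hskip-2.7pt_i\w+2\lambda Q\nh_i\w$, which vanishes by (\ref{dst}.v), so that (\ref{tfg}) collapses to $\,2\tau\hh(2(\nnh-\nnh1\nh)^k\lambda-\mu)\hh F\hskip-3.2pt_{ik}\w G\nh_k\w=0$; since $\,\tau,\hs2(\nnh-\nnh1\nh)^k\lambda-\mu,\hs G\nh_k\w$ are all nonzero on $\,V\nnh$, you get $\,F\hskip-3.2pt_{ik}\w=0\,$ for all $\,i\in\{1,2\}$, $\,k\in\{3,4\}$, hence $\,D\nnh_i\w\lambda_k\w=0\,$ by Lemma~\ref{dilgk}(d) and Lemma~\ref{dilgk}(a), which contradicts the first assertion of Lemma~\ref{dilgk}(h). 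All the auxiliary facts you invoke ($\,\tau\ne0\,$ from (\ref{lem}.b) and (\ref{skw}.a); $\,\mu\ne\pm2\lambda\,$ from (\ref{lem}.a) and (\ref{loe}.a); $\,\lambda\ne0\,$ on a dense open set from (\ref{gkz})) check out, and none of the ingredients you use depend on Lemma~\ref{gtgfz} itself, so there is no circularity. Conceptually this is the same kind of degeneration of (\ref{tfg}) that the paper only exploits later, in Lemma~\ref{occur}, to dispose of the second case of (\ref{eor}); your observation is that the identity already kills the $\,G\nh_3\w G\nh_4\w\ne0\,$ case, which is shorter and arguably more illuminating than the paper's computation with $\,C_k\w,E_k\w,\Pi,L,S$.
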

\begin{proof}Suppose that, on the contrary, neither $\,G\nh_3\w$ nor 
$\,G\nh_4\w$ is identically $\,0$. As $\,\lambda\ne0\,$ somewhere by 
(\ref{gkz}), 
using (\ref{ana}) we may restrict our discussion to a dense open 
set, at every point of which
\begin{equation}\label{lgn}
\lambda G\nh_3\w G\nh_4\w\,\ne\,\,0\hh.
\end{equation}
With (\ref{ijk}), for some functions $\,C_3\w,C_4\w,E_3\w,E_4\w,\Pi\,$ and 
$\,\varepsilon=\mathrm{sgn}\hh\lambda\in\{1,-\nnh1\}$,
\begin{equation}\label{lee}
\begin{array}{rlrl}
\mathrm{i)}&\hskip-3ptF\hskip-3.2pt_{kl}\w\nnh
=D\nnh_k\w\nnh\log|\lambda|\hh^{-\nh1\nh/\nh4}\nh,&\hskip-8pt
\mathrm{ii)}&\hskip-3ptD\nnh_k\w\nh C_l\w\nh=0\hh,\\
\mathrm{iii)}&\hskip-3pt\lambda=\varepsilon(C_3\w\nh+C_4\w)^2,&\hskip-8pt
\mathrm{iv)}&\hskip-3pt\varepsilon(C_3\w\nh+C_4\w)>0\hh,\\
\mathrm{v)}&\hskip-3pt\lambda-\lambda_k\w\nnh
=4\varepsilon(C_3\w\nh+C_4\w)C_k\w,&\hskip-8pt
\mathrm{vi)}&\hskip-3pt\lambda_k\w\nnh
=\hs\varepsilon(C_3\w\nh+C_4\w)(C_l\w\nh-3C_k\w)\hh,\\
\mathrm{vii)}&\hskip-3ptD\nnh_i\w(C_3\w\nh+C_4\w)=0\hh,&\hskip-8pt
\mathrm{viii)}&\hskip-3ptF\hskip-3.2pt_{ik}\w=\,-D\nnh_i\w\log|C_k\w|\hh,\\
\mathrm{ix)}&\hskip-3ptC_k\w\nh\nnh=E_k\w\nh+(\nnh-\nnh1\nh)^k\Pi\hh,&\hskip-8pt
\mathrm{x)}&\hskip-3ptD\nnh_k\w E_l\w\nh=D\nnh_i\w E_k\w\nh=D\nnh_k\w \Pi
=0\hh,\\
\mathrm{xi)}&\hskip-3ptD\nnh_i\w C_k\w\nnh=(\nnh-\nnh1\nh)^kD\nnh_i\w \Pi\hh,&\hskip-8pt
\mathrm{xii)}&\hskip-3pt\Pi\hskip7pt\mathrm{is\ nonconstant.}
\end{array}
\end{equation}
In fact, (\ref{lgn}) and the second claim in Lemma \ref{dilgk}(h) imply 
(\ref{lee}.i). Setting 
$\,C_k\w\nnh=|\lambda|^{-\nnh1/2}(\lambda-\lambda_k\w)/4\,$ and using 
(\ref{loe}.b) we obtain (\ref{lee}.ii) (since (\ref{nii}.b), (\ref{lee}.i) 
and (\ref{loe}.b) give $\,4\lambda D\nnh_k\w\lambda_l\w
=\hs(\lambda_l\w\nnh-\lambda_k\w)D\nnh_k\w\lambda
=\hs2(\lambda+\lambda_l\w)D\nnh_k\w\lambda$), as well as (\ref{lee}.iii) 
-- (\ref{lee}.v), while subtraction of (\ref{lee}.v) from (\ref{lee}.iii) 
yields (\ref{lee}.vi). Equality (\ref{lee}.vii) is a consequence of 
(\ref{lee}.iii) and Lemma \ref{dilgk}(a). Similarly, (\ref{lee}.v), 
(\ref{lee}.vii) and Lemma \ref{dilgk}(d) lead to (\ref{lee}.viii). For 
$\,\partial\nh_i\w,\partial\nh_k\w$ chosen as in Remark \ref{mltpl}, the 
relations $\,\partial\nh_k\w\nh C_l\w\nh=\partial\nh_i\w(C_k\w\nh+C_l\w)=0$, 
due to (\ref{lee}.ii) and (\ref{lee}.vii), show that the function 
$\,\partial\nh_i\w\nh C_k\w\nh=-\hs\partial\nh_i\w\nh C_l\w$ is constant along 
$\,\mathcal{H}=\mathrm{Span}\hs\{e\nh_3\w,e\nh_4\w\}$, and so 
$\,C_k\w$, for $\,k=3,4$, equals a function constant along $\,\mathcal{H}$ 
plus a function constant along 
$\,\mathcal{V}=\mathrm{Span}\hs\{e\nh_1\w,e\nh_2\w\}$. Combined with 
(\ref{lee}.vii), this proves the existence of $\,E_3\w,E_4\w$ and $\,\Pi\,$ 
satisfying (\ref{lee}.ix) -- (\ref{lee}.xi). Finally, if $\,\Pi\,$ were 
constant, (\ref{lee}.xi) and (\ref{lee}.vi) would give 
$\,D\nnh_i\w\nh C_k\w\nnh=D\nnh_i\w\lambda_k\w\nnh=0\,$ for both $\,i=1,2\,$ and 
both $\,k=3,4$, contradicting the first claim in Lemma \ref{dilgk}(h).

Due to (\ref{gkl}) and (\ref{lee}.ii) -- (\ref{lee}.v), Lemma \ref{dilgk}(b) 
gives $\,D\nnh_k\w\nh\log|\sigma|\hh^{-\nh1\nh/\nh3}\hskip-2.9pt
=D\nnh_k\w\nh\log|C_k\w|\hh^{-\nh1\nh/\nh2}\nh$, where 
$\,C_k\w\nh\ne0\,$ in view of (\ref{lee}.v) and (\ref{loe}.a). Thus, by 
(\ref{lee}.ii), 
$\,D\nnh_k\w(|C_3\w\nh C_4\w|\hh^{-\nh3\nh/\nh2}\nh\sigma)=0$, and so, for the 
function $\,L=|C_3\w\nh C_4\w|\hh^{-\nh3\nh/\nh2}\nh\sigma$,
\begin{equation}\label{dkl}
\sigma=|C_3\w\nh C_4\w|\hh^{3\nh/\nh2}\nh L\hskip7pt\mathrm{and}\hskip7pt
D\nnh_k\w L=0\hskip7pt\mathrm{whenever}\hskip7ptk\in\{3,4\}\hh.
\end{equation}
By (\ref{lee}.i), 
$\,2F\hskip-3.2pt_{kl}\w\nnh
=D\nnh_k\w\nnh\log|\lambda|\hh^{-\nh1\nh/\nh2}\nh$. Adding this to the second 
equality in Lemma \ref{dilgk}(b), one gets $\,D\nnh_k\w\hn S=0\,$ for 
$\,S=(\nnh-\nnh1\nh)^{i+k}\nh|\lambda|\hh^{-\nh1\nh/\nh2}
\sigma\hh^{-\nh1\nh/\nh3}\nh(\sigma\nnh_{ik}\w\nh-\sigma\hskip-2.7pt_{jk}\w)$, with 
(\ref{ijk}), and $\,k=3,4$, where, in view of (\ref{skw}.b), $\,S\,$ does not 
depend on the choice of $\,i,j,k,l\,$ satisfying (\ref{ijk}). As (\ref{skw}.b) 
with $\,\sigma=\sigma\nnh_{ij}\w$ also yields 
$\,\sigma\nnh_{ik}\w\nh-\sigma\hskip-2.7pt_{jk}\w\nh=2\sigma\nnh_{ik}\w\nh+\sigma$, the 
definition of $\,S\,$ amounts to
\begin{equation}\label{tsi}
2\sigma\nnh_{ik}\w\nh=-\sigma
+(\nnh-\nnh1\nh)^{i+k}\nh|\lambda|\hh^{1\nh/\nh2}\sigma\hh^{1\nh/\nh3}\nh S\,
\mathrm{\ with\ (\ref{ijk})\ and\ }\,D\nnh_k\w\hn S=0\hh.
\end{equation}
Next, (\ref{nii}.c) and (\ref{skw}.b) give  
$\,D\nnh_i\w\hh\sigma=D\nnh_i\w\hh\sigma\nnh_{kl}\w
=(\sigma\nnh_{il}\w\nh-\sigma)F\hskip-3.2pt_{ik}\w\nh+
(\sigma\nnh_{ik}\w\nh-\sigma)F\hskip-3.2pt_{il}\w$. Replacing 
$\,\sigma\nnh_{ik}\w,\sigma\nnh_{il}\w$ and $\,F\hskip-3.2pt_{ik}\w,F\hskip-3.2pt_{il}\w$ with the 
expressions provided by (\ref{tsi}) and (\ref{lee}.viii), we easily verify 
that $\,2D\nnh_i\w\hh\sigma
=(\nnh-\nnh1\nh)^{i+k}\nh|\lambda|\hh^{1\nh/\nh2}\sigma\hh^{1\nh/\nh3}\nh 
SD\nnh_i\w\log|C_k\w/C_l\w|+3\sigma D\nnh_i\w\log|C_k\w\nh C_l\w|$. At the same 
time, from (\ref{dkl}), 
$\,2D\nnh_i\w\hh\sigma=2|C_k\w\nh C_l\w|\hh^{3\nh/\nh2}\nh D\nnh_i\w\nh L
+3\sigma D\nnh_i\w\nnh\log|C_k\w\nh C_l\w|$. As 
$\,|\lambda|\hh^{1\nh/\nh2}\nh=\varepsilon(C_k\w\nh+C_l\w)\,$ by 
(\ref{lee}.iii) -- (\ref{lee}.iv), while 
$\,D\nnh_i\w\hskip-2.4pt\log|C_k\w/C_l\w|
=(\nnh-\nnh1\nh)^k\nh(C_k^{-\nnh1}\nnh+C_l^{-\nnh1})D\nnh_i\w\nh \Pi\nh$, cf.\ 
(\ref{lee}.xi), and $\,\sigma\hh^{1\nh/\nh3}\hskip-1.7pt
=\nh|C_3\w\nh C_4\w|\hh^{1\nh/\nh2}\nh L^{1\nh/\nh3}$ from (\ref{dkl}), 
equating the two expressions for $\,2D\nnh_i\w\hh\sigma\,$ one gets, from 
(\ref{lee}.iv),
\begin{equation}\label{cki}
(C_k^{-\nnh1}\nnh+C_l^{-\nnh1})^2\nh
=2|D\nnh_i\w\nh L|/|SL\nnh^{\nh1\nh/\nh3}\hs D\nnh_i\w\nh \Pi|
\end{equation}
on a nonempty open set $\,\,U''\nnh$, where (\ref{lee}.xii) allows us to
choose $\,i\in\{1,2\}$ so that $\,|D\nnh_i\w\nh \Pi|>0\,$ on $\,\,U''\nnh$.
Since, for $\,\partial\nh_i\w,\partial\nh_k\w$ as in Remark \ref{mltpl}, 
$\,|D\nnh_i\w\nh L|/|D\nnh_i\w\nh \Pi|
=|\partial\nh_i\w\nh L|/|\partial\nh_i\w\nh \Pi|$, and so $\,\partial\nh_k\w$ 
applied to the right-hand side of (\ref{cki}) yields $\,0\,$ in view of 
(\ref{tsi}), (\ref{dkl}) and (\ref{lee}.x), we have, from (\ref{lee}.ii), 
$\,0=D\nnh_k\w\nh(C_k^{-\nnh1}\nnh+C_l^{-\nnh1})
=-\hh C_k^{-\nnh2}D\nnh_k\w\nh C_k\w$. Thus, by (\ref{lee}.ii) -- 
(\ref{lee}.iii), $\,D\nnh_k\w\nh\lambda=0$. Combined with (\ref{gkl}) and 
(\ref{lgn}), this leads to a contradiction, proving that 
$\,G\nh_3\w G\nh_4\w\nh=0$.
\end{proof}
Assume (\ref{ijk}). In view of (\ref{nii}.a), for any function $\,\theta\,$ we 
have
\begin{equation}\label{dij}
(\nabla\nnh d\theta)(e\nh_i\w,e\nh_j\w)\,
=\,D\nnh_i\w\nh D\nnh_j\w\theta\,-\,F\hskip-4pt_{ji}\w\nh D\nnh_i\w\theta\hh.
\end{equation}
By Lemma \ref{dilgk}(d), $\,F\hskip-3.2pt_{ik}\w\nh=D\nnh_i\w\theta\nh_k\w$, where 
$\,\theta\nh_k\w\nh=\log|\lambda_k\w\nnh-\hn\lambda|^{-\nh1}\nh$. Now 
(\ref{dif}.i) reads $\,D\nnh_i\w\nh D\nnh_j\w\theta\nh_k\w\nh
=(F\hskip-4pt_{ji}\w-D\nnh_j\w\theta\nh_k\w)D\nnh_i\w\theta\nh_k\w$, and so, 
with $\,\theta=\theta\nh_k\w$, from (\ref{dij}), 
$\,(\nabla\nnh d\theta)(e\nh_i\w,e\nh_j\w)+(D\nnh_j\w\theta)D\nnh_i\w\theta
=0$, 
that is, $\,(\nabla\nnh de^\theta)(e\nh_i\w,e\nh_j\w)=0$. In other words, for 
$\,\psi\nh_k\w\nh=(\lambda_k\w\nnh-\lambda)^{-\nh1}\nh$, 
\begin{equation}\label{ndp}
(\nabla\nnh d\psi\nh_k\w)(e\nh_i\w,e\nh_j\w)\,
=\,(\nabla\nnh d\psi\nh_l\w)(e\nh_i\w,e\nh_j\w)\,
=\,0\hskip10pt\mathrm{if}\hskip6pt\{k,l\}=\{3,4\}\hh.
\end{equation}
When $\,k,l\,$ with $\,\{k,l\}=\{3,4\}\,$ are fixed, setting 
$\,\psi=\psi\nh_k\w$ we get $\,\psi\nh_l\w=-\hh(4\lambda+1/\psi)^{-\nh1}$ from 
(\ref{loe}.b).Now (\ref{dij}) applied to $\,\theta=\psi\nh_l\w$ and 
$\,\theta=\psi\nh_k\w$ yields 
$\,-\hh(4\lambda\psi+1)^3(\nabla\nnh d\psi\nh_l\w)(e\nh_i\w,e\nh_j\w)
=(4\lambda\psi+1)(\nabla\nnh d\psi)(e\nh_i\w,e\nh_j\w)
-8\lambda(D\nnh_j\w\nh\psi)D\nnh_i\w\psi$, with $\,\psi=\psi\nh_k\w$, as 
$\,D\nnh_i\w\lambda=0\,$ (see Lemma \ref{dilgk}(a)). In view of (\ref{ndp}), 
this gives $\,(D\nnh_j\w\nh\psi\nh_k\w)D\nnh_i\w\psi\nh_k\w\nh=0$, since 
$\,\lambda\ne0\,$ by (\ref{gkz}). Using (\ref{lem}.a) and, again, the 
relation $\,D\nnh_i\w\lambda=0\,$ in Lemma \ref{dilgk}(a), we see that
\begin{equation}\label{ldj}
(D\nnh_j\w\hh\mu)D\nnh_i\w\mu\hs=\,0\hskip8pt
\mathrm{if}\hskip6pt\{i,j\}=\{1,2\}\hh.
\end{equation}
From now on 
the symbols $\,\bullet\,$ stand for any indices for which the 
expression makes sense. For instance, $\,j,\bullet,\bullet\,$ in 
$\,H\hskip-2.5pt_{j\bullet\bullet}\w$ are mutually distinct.
\begin{lemma}\label{fjbuz}
One can fix\/ $\,i,j\in\{1,2\}\,$ and a nonempty open connected set\/ 
$\,\,U''\nnh$, so that, whenever\/ $\,\{k,l\}=\{3,4\}$,
\begin{enumerate}
  \def\theenumi{{\rm\alph{enumi}}}
\item $D\nnh_j\w\hh\mu=0\ne\hs D\nnh_i\w\mu\,\,$ everywhere,
\item $D\nnh_i\w\lambda\,=\,D\nnh_j\w\lambda\,=\,D\nnh_i\w\nh G\nh_k\w\hs
=\,D\nnh_j\w\nh G\nh_k\w\hs=\,G\nh_l\w G\nh_k\w\hs=\,0$,
\item $F\hskip-4pt_{j\bullet}\w\hs=\,0\,\,$ and 
$\,\,H\hskip-2.5pt_{j\bullet\bullet}\w\hs=H\nnh_{\bullet j\bullet}\w\hs=\,0$,
\item $D\nnh_j\w\lambda_k\w\hs=\,D\nnh_j\w\sigma\hs
=\,D\nnh_j\w\sigma\nnh_{\bullet\bullet}\w\hs=\,D\nnh_j\w F\hskip-3.2pt_{ij}\w\hs=\,0$.
\end{enumerate}
\end{lemma}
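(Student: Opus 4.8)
The plan is to produce the indices $i,j$ and the set $U''$ of part~(a) by a density-and-connectedness argument, to read off (b) from results already in hand, and then to obtain (c) and (d) by successive substitutions into the structure equations of Lemma~\ref{aandb}. First, by Lemma~\ref{dilgk}(h) the function $D\nnh_i\w\lambda_k\w$ is nonzero on a dense open subset of $M$; since $\lambda_k\w=(-1)^k\mu-\lambda$ by (\ref{lem}.a) and $D\nnh_i\w\lambda=0$ by Lemma~\ref{dilgk}(a), this means $D\nnh_i\w\mu=(-1)^kD\nnh_i\w\lambda_k\w$ is nonzero there for at least one $i\in\{1,2\}$ near each point. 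Combining this with $(D\nnh_1\w\mu)(D\nnh_2\w\mu)=0$ from (\ref{ldj}) and with real-analyticity (\ref{ana}), exactly one of $D\nnh_1\w\mu,D\nnh_2\w\mu$ is nonzero at each point of a dense open set, and on any connected component of the open set where the chosen one, say $D\nnh_i\w\mu$, is nonzero, (\ref{ldj}) forces $D\nnh_j\w\mu\equiv0$. Taking $U''$ to be such a component --- nonempty, and with $\mu\ne0$ on it since $\mu$ is nonzero on a dense open subset of $M$ --- gives~(a); and (b) is then immediate, $D\nnh_i\w\lambda=D\nnh_j\w\lambda=D\nnh_i\w\nh G\nh_k\w=D\nnh_j\w\nh G\nh_k\w=0$ being part of Lemma~\ref{dilgk}(a) and $G\nh_l\w G\nh_k\w=0$ being Lemma~\ref{gtgfz}.

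I would next show that every $F$-component carrying the index $j$ vanishes on $U''$. By Lemma~\ref{dilgk}(d), $F\hskip-3.2pt_{jk}\w=D\nnh_j\w\log|\lambda_k\w-\lambda|^{-1}=0$, because $D\nnh_j\w\lambda=0$ and $D\nnh_j\w\lambda_k\w=(-1)^kD\nnh_j\w\mu=0$; so $F\hskip-3.2pt_{jk}\w=F\hskip-3.2pt_{jl}\w=0$. Feeding $F\hskip-3.2pt_{jk}\w\equiv0$ into (\ref{dif}.i) gives $0=D\nnh_i\w F\hskip-3.2pt_{jk}\w=(F\hskip-4pt_{ji}\w-F\hskip-3.2pt_{jk}\w)F\hskip-3.2pt_{ik}\w=F\hskip-4pt_{ji}\w F\hskip-3.2pt_{ik}\w$, and since $F\hskip-3.2pt_{ik}\w=-(-1)^k(\lambda_k\w-\lambda)^{-1}D\nnh_i\w\mu$ (again by Lemma~\ref{dilgk}(d) and $D\nnh_i\w\lambda=0$) is nowhere zero on $U''$, this forces $F\hskip-4pt_{ji}\w=0$. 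Thus $F\hskip-4pt_{j\bullet}\w=0$, so by (\ref{nii}.a) the field $e\nh_j\w$ is $g$-parallel in every direction orthogonal to it.

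The remaining assertions follow by substitution. With $F\hskip-3.2pt_{jk}\w=F\hskip-3.2pt_{jl}\w=F\hskip-4pt_{ji}\w=0$, equation (\ref{tdi}.a) collapses to $D\nnh_j\w\sigma=0$ and (\ref{tdi}.b) to $4D\nnh_j\w\tau=-8F\hskip-4pt_{ji}\w\tau=0$; since $\sigma\nnh_{ij}\w=\sigma\nnh_{kl}\w=\sigma$, $\sigma\nnh_{jk}\w=(-1)^{j+k}\tau-\sigma/2$ and $\sigma\nnh_{ik}\w=(-1)^{i+k}\tau-\sigma/2$ by (\ref{skw}.b) and (\ref{lem}.b), every $D\nnh_j\w\sigma\nnh_{\bullet\bullet}\w$ vanishes, while $D\nnh_j\w\lambda_k\w=(-1)^kD\nnh_j\w\mu=0$. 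For the Weyl-type quadratic data, Lemma~\ref{dilgk}(a) identifies $H\hskip-2.5pt_{ji}\w=H\nnh_{ij}\w$ with $F\hskip-3.2pt_{lk}\w\nh G\nh_k\w+F\hskip-3.2pt_{kl}\w\nh G\nh_l\w-G\nh_k\w G\nh_l\w$, which by the second relation of Lemma~\ref{dilgk}(c) satisfies $H\nnh_{ij}\w\lambda=0$; since $\lambda\not\equiv0$ on $U''$ by (\ref{gkz}), $H\nnh_{ij}\w=H\hskip-2.5pt_{ji}\w=0$. The remaining $H$-functions indexed by $j$ I would clear by substituting $F\hskip-4pt_{j\bullet}\w=0$, $G\nh_k\w G\nh_l\w=0$ and the formula $H\nnh_{lj}\w=F\hskip-3.2pt_{ik}\w\nh G\nh_k\w$ of Lemma~\ref{dilgk}(c) into the relevant members of (\ref{dif}) and of Lemma~\ref{dilgk}(c)--(d), using (\ref{gkz}) to kill the surviving $G$-factors wherever $\lambda$ is locally constant, with a short case split according to which of $G\nh_3\w,G\nh_4\w$ vanishes on $U''$. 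Finally, toward $D\nnh_j\w F\hskip-3.2pt_{ij}\w=0$: every structure function other than $F\hskip-3.2pt_{ij}\w$ has already been shown $e\nh_j\w$-constant --- for instance $D\nnh_j\w F\hskip-3.2pt_{ik}\w=0$ on differentiating $D\nnh_i\w\lambda_k\w=(\lambda-\lambda_k\w)F\hskip-3.2pt_{ik}\w$ along $e\nh_j\w$ and using $[\hs e\hs\nh_j\w,e\nh_i\w]=F\hskip-3.2pt_{ij}\w e\nh_j\w$ with $D\nnh_j\w\lambda=D\nnh_j\w\lambda_k\w=0$, and $D\nnh_j\w F\hskip-3.2pt_{kl}\w=0$ from (\ref{dif}.i) with $F\hskip-3.2pt_{jl}\w=0$ --- so (\ref{dif}.iv), with the roles of its $k,l$ played by our $j,i$ and of its $i,j$ by the two elements of $\{3,4\}$, reduces to $D\nnh_a\w(D\nnh_j\w F\hskip-3.2pt_{ij}\w)+2G\nh_a\w\,D\nnh_j\w F\hskip-3.2pt_{ij}\w=0$ for $a\in\{3,4\}$; combining this with the second-order identity (\ref{dif}.v) and the vanishing of the $H$-functions just established should pin $D\nnh_j\w F\hskip-3.2pt_{ij}\w$ down to $0$.

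Everything except that last step is pure substitution; the genuine obstacle is excluding a nonzero $D\nnh_j\w F\hskip-3.2pt_{ij}\w$ (and, inseparably, clearing all the $H$-functions indexed by $j$). The difficulty is that $D\nnh_j\w F\hskip-3.2pt_{ij}\w$ appears in no first-order structure equation, and the naive commutator computation based on the expression for $D\nnh_i\w F\hskip-3.2pt_{ij}\w$ coming from (\ref{nii}.e) is circular, so one is forced to use the second-order equations (\ref{dif}.iv)--(\ref{dif}.v) together with the control one has over $\lambda,\mu$ and the $F\hskip-3.2pt_{ik}\w$ --- in particular their nonvanishing where needed --- on $U''$.
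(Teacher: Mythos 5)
Your handling of parts~(a),~(b) and of the vanishing $F\hskip-4pt_{j\bullet}\w=0$ in~(c) is essentially sound. In particular, deriving $F\hskip-4pt_{ji}\w=0$ from (\ref{dif}.i) with $F\hskip-4pt_{jk}\w\equiv0$ and $F\hskip-3.2pt_{ik}\w\ne0$ is a valid shortcut; the paper instead invokes (\ref{ndp}) with $\theta=\psi\nh_k\w$ and the Hessian identity (\ref{dij}), but both routes close. However, the end of the argument has a genuine gap that you yourself flag, and it is precisely where the paper's proof does something you missed.

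The paper obtains $D\nnh_j\w F\hskip-3.2pt_{ij}\w=0$ in one step from a first-order relation, not from the second-order equations (\ref{dif}.iv)--(\ref{dif}.v). Permuting indices in (\ref{dif}.ii) (send $k\to j,\ l\to i,\ i\to k,\ j\to l$) gives
\[
(\sigma\hskip-2.7pt_{jk}\w-\sigma\nnh_{ik}\w)(D\nnh_j\w F\hskip-3.2pt_{ij}\w-D\nnh_i\w F\hskip-4pt_{ji}\w)\,=\,3\hs(H\nnh_{lk}\w-H\nnh_{kl}\w)\hs\sigma\nnh_{kl}\w\hh.
\]
Once $F\hskip-4pt_{j\bullet}\w=0$, the right side vanishes: by (\ref{fji}), $H\nnh_{kl}\w=F\hskip-3.2pt_{ij}\w F\hskip-4pt_{jl}\w+F\hskip-4pt_{ji}\w F\hskip-3.2pt_{il}\w-F\hskip-3.2pt_{il}\w F\hskip-4pt_{jl}\w=0$ and likewise $H\nnh_{lk}\w=0$, since in every term one factor is some $F\hskip-4pt_{j\bullet}\w$. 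Also $D\nnh_i\w F\hskip-4pt_{ji}\w=D\nnh_i\w(0)=0$, and $\sigma\hskip-2.7pt_{jk}\w\ne\sigma\nnh_{ik}\w$ by (\ref{skw}.a)--(\ref{skw}.b); so $D\nnh_j\w F\hskip-3.2pt_{ij}\w=0$ follows immediately. Your plan to extract this from (\ref{dif}.iv), which only yields the ODE $D\nnh_a\w(D\nnh_j\w F\hskip-3.2pt_{ij}\w)+2G\nh_a\w\hs D\nnh_j\w F\hskip-3.2pt_{ij}\w=0$, does not by itself pin the function to $0$, and you do not complete the proposed merge with (\ref{dif}.v).

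There is a second, related issue: you have misidentified which $H$'s part~(c) asserts vanish. What follows ``obviously from (\ref{fji})'' once $F\hskip-4pt_{j\bullet}\w=0$ are the $H\nnh_{pq}\w$ with $j\notin\{p,q\}$ (every term of the definition then carries a factor $F\hskip-4pt_{j\bullet}\w$), and these are exactly the $H$'s the paper needs for the step above. Your argument that $H\nnh_{ij}\w=H\hskip-2.5pt_{ji}\w=0$ via Lemma~\ref{dilgk}(a),(c) and $\lambda\ne0$ is correct, but it is aimed at the wrong target; and your intended clearing of ``the remaining $H$-functions indexed by $j$'' would in fact fail for $H\nnh_{lj}\w=F\hskip-3.2pt_{ik}\w G\nh_k\w$ and $H\nnh_{kj}\w=F\hskip-3.2pt_{il}\w G\nh_l\w$, which are not zero in general (only one of $G\nh_3\w,G\nh_4\w$ vanishes, and $F\hskip-3.2pt_{ik}\w,F\hskip-3.2pt_{il}\w$ are nonzero on $U''$). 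So the $H$-claims and the $D\nnh_j\w F\hskip-3.2pt_{ij}\w$-claim need to be reconsidered together, via (\ref{dif}.ii), rather than the patchwork of second-order identities you describe.

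Finally, for the $D\nnh_j\w\sigma\nnh_{\bullet\bullet}\w=0$ part of~(d), your route through (\ref{tdi}) and (\ref{lem}.b) works, but it is more direct to read these off (\ref{nii}.c) with $F\hskip-4pt_{j\bullet}\w=0$, which is what the paper does.
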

\begin{proof}
Lemmas \ref{dilgk}(a) and \ref{gtgfz} imply (b) for $\,i,j\in\{1,2\}$. 
Thus, by real-an\-a\-lyt\-ic\-i\-ty, cf.\ (\ref{ana}), and 
(\ref{ldj}), 
$\,D\nnh_j\w\hh\mu=0\,$ everywhere for one choice of $\,j\,$ (but not for both, 
since that would, in view of (b) and (\ref{lem}.a), yield 
$\,D\nnh_i\w\lambda_k\w\nnh=0$ whenever $\,i\le2<k$, contradicting 
Lemma \ref{dilgk}(h)). Now (a) follows. From (a), (b), (\ref{lem}.a) and 
Lemma \ref{dilgk}(d) we in turn obtain 
$\,D\nnh_j\w\lambda_k\w\nh=F\hskip-4pt_{jk}\w\nh=0$, although, as we just saw, 
$\,D\nnh_i\w\lambda_k\w\nh\ne0$. (Here and below, $\,i,j\in\{1,2\}$ are 
fixed, so as to satisfy (a), and $\,\{k,l\}=\{3,4\}$.) Next, (\ref{ndp}) and 
(\ref{dij}), for 
$\,\theta=\psi\nh_k\w\nh=(\lambda_k\w\nnh-\lambda)^{-\nh1}\nh$, give 
$\,D\nnh_i\w\nh D\nnh_j\w\theta=F\hskip-4pt_{ji}\w\nh D\nnh_i\w\theta$, while 
$\,D\nnh_j\w\theta=0\ne D\nnh_i\w\theta\,$ by (b) with 
$\,D\nnh_j\w\lambda_k\w\nh=0\ne D\nnh_i\w\lambda_k\w$. Hence 
$\,F\hskip-4pt_{ji}\w\nh=0$, that is, $\,F\hskip-4pt_{j\bullet}\w\nh=0$, as required in 
(c), and the rest of (c) is obvious from (\ref{fji}). Also, combining (c) with 
the equality 
$\,(\sigma\hskip-2.7pt_{jk}\w\nh-\sigma\nnh_{ik}\w)(D\nnh_j\w F\hskip-3.2pt_{ij}\w\nh
-D\nnh_i\w F\hskip-4pt_{ji}\w)=3\hs(H\nnh_{lk}\w\nh-H\nnh_{kl}\w)\hs\sigma\nnh_{kl}\w$, which is 
a special case of (\ref{dif}.ii) or, respectively, with the expression for 
$\,D\nnh_j\w\sigma\nnh_{\bullet\bullet}\w$ resulting from (\ref{nii}.c), we 
obtain (d), since $\,\sigma\hskip-2.7pt_{jk}\w\nh\ne\sigma\nnh_{ik}\w$ and 
$\,\sigma=\sigma\nnh_{ij}\w\nh=\sigma\nnh_{kl}\w$ according to (\ref{skw}.a) 
and (\ref{skw}.b), while we already saw that $\,D\nnh_j\w\lambda_k\w\nh=0$.
\end{proof}

\section{Proof of Theorem~\ref{ricev}({\rm c}): conclusion}\label{lc}
Fixing $\,i,j\,$ with $\,\{i,j\}=\{1,2\}\,$ and $\,\,U''$ as in
Lemma \ref{fjbuz}, for 
$\,\mu,\sigma,\tau,P\hskip-2.7pt_i\w$, $\,Q\nh_i\w$ appearing in (\ref{lem})
-- 
(\ref{dst}), let us set $\,P\nh=P\hskip-2.7pt_i\w$ and $\,Q=Q\nh_i\w$. We have
\begin{equation}\label{mpt}
\begin{array}{rl}
\mathrm{i)}&\mu P\,+\,2\lambda\hh Q\,=\,0\hh,\\
\mathrm{ii)}&P\nh\nnh_j\w\hs=\,Q\nh_j\w\hs=\,F\hskip-4pt_{ji}\w\hs=
\,D\nnh_i\w\lambda\,=\,D\nnh_i\w\nh G\nh_k\w\hs=\,0\hh,\\
\mathrm{iii)}&D\nnh_i\w\nh P\,=\,2PQ\,+\,P\nh F\hskip-3.2pt_{ij}\w\hs
-\,2(\nnh-\nnh1\nh)^i\tau\hh,\\
\mathrm{iv)}&D\nnh_i\w\nh Q\,
=\,P^2\hs+\,Q^2\hs+\,Q\nh F\hskip-3.2pt_{ij}\w\hh.\\
\mathrm{v)}&2\lambda\hh(P^2-\,Q^2)\,=\,(\nnh-\nnh1\nh)^i\mu\tau\hh,\\
\mathrm{vi)}&2(\nnh-\nnh1\nh)^i(2F\hskip-3.2pt_{ij}\w\mu\,-5\lambda P)\hh\tau\,
=\,3\lambda Q\hh\sigma\hh.
\end{array}
\end{equation}
In fact, (\ref{dst}.v) amounts to (\ref{mpt}.i); the definitions of
$\,P\nh\nnh_j\w,Q\nh_j\w$ preceding (\ref{dst}) and
Lemma~\ref{fjbuz}(b)\hs--\hs(c), and (\ref{dst}.v) yield (\ref{mpt}.ii); also,
(\ref{mpt}.iii) -- (\ref{mpt}.iv) are trivial consequences of (\ref{dpi}) and
(\ref{mpt}.ii). Next, (\ref{mpt}.v) follows if one applies $\,D\nnh_i\w$ to
(\ref{mpt}.i), uses (\ref{dst}.iv), (\ref{mpt}.iii), (\ref{mpt}.ii),
(\ref{mpt}.iv), and simplifies, with the aid of (\ref{mpt}.i), the resulting
equality
\[
\,(F\hskip-3.2pt_{ij}\w\nh+3Q)(\mu P+2\lambda\hh Q)
+2[2\lambda\hh(P^2\nh-Q^2)-(\nnh-\nnh1\nh)^i\mu\tau]=0\hh.
\]
Finally, by 
(\ref{mpt}.ii) -- (\ref{mpt}.iv) and  (\ref{dst}.iii) -- (\ref{dst}.iv), 
$\,D\nnh_i\w$ applied to (\ref{mpt}.v) shows that the left-hand side of 
(\ref{mpt}.vi) coincides with
\[
3\lambda Q\hh\sigma
-2(F\hskip-3.2pt_{ij}\w\nh+Q)[2\lambda\hh(P^2\nh-Q^2)-(\nnh-\nnh1\nh)^i\mu\tau]
-3(\mu P+2\lambda\hh Q)\hh\sigma\nnh/\nh2\hh.
\]
Thus, (\ref{mpt}.v) and 
(\ref{mpt}.i) give (\ref{mpt}.vi). By (\ref{loe}.a) and 
Lemma \ref{dilgk}(b),
\begin{equation}\label{cnv}
\lambda_i\w\nh=\lambda_j\w\nh=\lambda\,\mathrm{\ and\ 
}\,F\hskip-3.2pt_{ki}\w\nh=F\hskip-3.2pt_{kj}\w\nh=G\nh_k\w\mathrm{,\ while\ 
}\,P\hskip-2.7pt_i\w\nh=P\,\mathrm{\ and\ }\,Q\nh_i\w\nh=Q\hh.
\end{equation}
Now, from (\ref{dst}.i) and (\ref{cnv}), along with (\ref{dif}.i) and 
(\ref{mpt}.iii) -- (\ref{mpt}.iv),
\begin{equation}\label{fpq}
\begin{array}{l}
F\hskip-3.2pt_{ik}\w\nh=(\nnh-\nnh1\nh)^k\nnh P-\,Q\hh,\hskip34pt
D\nnh_k\w F\hskip-3.2pt_{ij}\w\nh
=[(\nnh-\nnh1\nh)^k\nnh P\nh-\hh Q-\nh F\hskip-3.2pt_{ij}\w]\hh G\nh_k\w\hh,\\
D\nnh_i\w F\hskip-3.2pt_{ik}\w\nh=(\nnh-\nnh1\nh)^k[2PQ
+P\nh F\hskip-3.2pt_{ij}\w\nh-2(\nnh-\nnh1\nh)^i\tau]
-(P^2\nh+\,Q^2\nh+Q\nh F\hskip-3.2pt_{ij}\w)\hh,\\
D\nnh_i\w F\hskip-3.2pt_{kl}\w\nh=
[(\nnh-\nnh1\nh)^k\nnh P\nh+\hh Q](F\hskip-3.2pt_{kl}\w\nh-G\nh_k\w)\hh.
\end{array}
\end{equation}
By Lemma \ref{gtgfz}, restricting our discussion to an nonempty open subset 
of $\,\,U''\nnh$, we may fix $\,k,l\,$ with $\,\{k,l\}=\{3,4\}\,$ and 
$\,G\nh_l\w\nh=0\,$ everywhere. Then
\begin{equation}\label{eor}
\begin{array}{l}
\mathrm{either\ }\hs G\nh_k\w\nh\mathrm{\ vanishes\ identically\ on\ 
}\,U'\nnh,\mathrm{\ or\ }\hs G\nh_k\w\nh\ne\hs0\\
\mathrm{at\hs\ all\hs\ points\hs\ of\hs\ some\hs\ open\hs\ 
dense\hs\ subset\hs\ of\hs\ }\,\,U'\nnh,
\end{array}
\end{equation}
as a consequence of real-an\-a\-lyt\-ic\-i\-ty, cf.\ (\ref{ana}). Furthermore,
\begin{equation}\label{dfj}
\begin{array}{rl}
\mathrm{i)}&D\nnh_i\w F\hskip-3.2pt_{ij}\w\hs
=\,-\hs(F\hskip-3.2pt_{ij}^{\hskip3.3pt2}\,+\,G\nh_k^{\hs2}\hs
\,+\hs\sigma\,+\,\lambda\,+\,\mathrm{s}/12)\hh,\\
\mathrm{ii)}&P\nh F\hskip-3.2pt_{ij}\w\hs
=\,(\nnh-\nnh1\nh)^i\tau\,-\,\mu/2\hh,\\
\mathrm{iii)}&Q\nh F\hskip-3.2pt_{ij}\w\hs+\,(\sigma\hs-\mathrm{s}/6)/2\,
=\,G\nh_k\w F\hskip-3.2pt_{kl}\w\hh.
\end{array}
\end{equation}
Namely, (\ref{dfj}.i) is due to the first conclusion of Remark~\ref{difij} 
with $\,F\hskip-4pt_{ji}\w\nh=0$, cf.\ (\ref{mpt}.ii), and $\,G\nh_l\w\nh=0$. 
To prove (\ref{dfj}.ii) -- (\ref{dfj}.iii) we begin by observing that 
(\ref{dip}) and (\ref{mpt}.ii) -- (\ref{mpt}.iv) easily give
\[
\begin{array}{rl}
\mathrm{(a)}&D\nnh_k\w\nh G\nh_k\w\hs+\,G\nh_k^{\hs2}\,
+\,G\nh_k\w F\hskip-3.2pt_{kl}\w\,=
\,\,2\hs Q\nh F\hskip-3.2pt_{ij}\w\hs+\,\sigma\,-\,\mathrm{s}/6\hh,\\
\mathrm{(b)}&(\nnh-\nnh1\nh)^k(D\nnh_k\w\nh G\nh_k\w\hs+\,G\nh_k^{\hs2}\,
-\,G\nh_k\w F\hskip-3.2pt_{kl}\w)\,
=\,2\hh[P\nh F\hskip-3.2pt_{ij}\w\hs-\,(\nnh-\nnh1\nh)^i\tau]\,+\,\mu\hh,\\
\mathrm{(c)}&Q\nh F\hskip-3.2pt_{ij}\w\hs-\,G\nh_k\w F\hskip-3.2pt_{kl}\w\hs
+\,(\sigma\hs-\mathrm{s}/6)/2\,
=\,(\nnh-\nnh1\nh)^k\nh[P\nh F\hskip-3.2pt_{ij}\w\hs
-\,(\nnh-\nnh1\nh)^i\tau\,+\,\mu/2]\hh,
\end{array}
\]
(c) arising when one subtracts (b) multiplied by 
$\,(\nnh-\nnh1\nh)^k$ from (a). Next, applying (\ref{dif}.v) to the 
triple $\hs(k,i,j)\hs$ rather than $\hs(i,k,l)$, one obtains
$\,D\nnh_k\w D\nnh_i\w F\hskip-3.2pt_{ij}\w\nh
+(F\hskip-3.2pt_{ki}\w\nnh
+F\hskip-3.2pt_{kj}\w)D\nnh_i\w F\hskip-3.2pt_{ij}\w\nh
=F\hskip-3.2pt_{kj}\w D\nnh_i\w F\hskip-3.2pt_{ik}\w\nh
+(F\hskip-3.2pt_{ik}\w\nh-F\hskip-3.2pt_{ij}\w)H\nnh_{lj}\w$. Equivalently,
due to (\ref{cnv}), $\,D\nnh_k\w D\nnh_i\w F\hskip-3.2pt_{ij}\w\nh
=[(F\hskip-3.2pt_{ik}\w\nh-F\hskip-3.2pt_{ij}\w)F\hskip-3.2pt_{ik}\w\nh
+D\nnh_i\w F\hskip-3.2pt_{ik}\w\nh
-2D\nnh_i\w F\hskip-3.2pt_{ij}\w]\hh G\nh_k\w$, as 
(\ref{fji}) and (\ref{cnv}) give $\,H\nnh_{lj}\w\nh
=F\hskip-3.2pt_{ik}\w G\nh_k\w$. With $\,F\hskip-3.2pt_{ik}\w$, 
$\,D\nnh_i\w F\hskip-3.2pt_{ik}\w$, $\,D\nnh_i\w F\hskip-3.2pt_{ij}\w$
replaced by the expressions in (\ref{fpq}) and (\ref{dfj}.i), this shows that
$\,D\nnh_k\w D\nnh_i\w F\hskip-3.2pt_{ij}\w$ equals $\,2\hh G\nh_k\w$ times
\[
F\hskip-3.2pt_{ij}^{\hskip3.3pt2}+G\nh_k^{\hs2}\nh-(\nnh-\nnh1\nh)^{i+k}\tau
+\lambda+\hs\sigma+\mathrm{s}/12\hh. 
\]
Simultaneously, by (\ref{dfj}.i), $\,D\nnh_k\w D\nnh_i\w F\hskip-3.2pt_{ij}\w
=-D\nnh_k\w(F\hskip-3.2pt_{ij}^{\hskip3.3pt2}+G\nh_k^{\hs2}\nh
+\hs\sigma+\lambda+\mathrm{s}/12)\,$ which, evaluated via (\ref{fpq}), (a), 
(\ref{tdi}.c) and (\ref{cst}), equals $\,2\hh G\nh_k\w$ times
\[
F\hskip-3.2pt_{ij}^{\hskip3.3pt2}+G\nh_k^{\hs2}\nh
-Q\nh F\hskip-3.2pt_{ij}\w\nh+G\nh_k\w F\hskip-3.2pt_{kl}\w\nh
-(\nnh-\nnh1\nh)^k\nh[P\nh F\hskip-3.2pt_{ij}\w\nh+\mu/2]
+\lambda+(\sigma-\mathrm{s}/3)/2\hh.
\]
Equating the two displayed expression, one easily gets
\begin{equation}\label{pqf}
[Q\nh F\hskip-3.2pt_{ij}\w\nh-G\nh_k\w F\hskip-3.2pt_{kl}\w\nh
+(\sigma\hs-\mathrm{s}/6)/2+(\nnh-\nnh1\nh)^k\nh\{P\nh F\hskip-3.2pt_{ij}\w\nh
-(\nnh-\nnh1\nh)^i\tau+\mu/2\}]\hh G\nh_k\w\hs=\,0\hh.
\end{equation}
In the first case of (\ref{eor}), (a) -- (b) clearly yield (\ref{dfj}.ii) and 
(\ref{dfj}.iii). In the second case, the two equal sides of (c) are, by 
(\ref{pqf}), also each other's opposites, and so both vanish, proving 
(\ref{dfj}.ii) -- (\ref{dfj}.iii).
\begin{lemma}\label{nonzr}
Each of the following seven functions\hh{\rm:}
\[
(\nnh-\nnh1\nh)^k\nnh P\nh-Q\hh,\hskip13pt2(\nnh-\nnh1\nh)^i\tau+
3(\nnh-\nnh1\nh)^k\nh\sigma\hh,\hskip13pt\lambda\hh,\hskip13pt\mu\hh,
\hskip13ptP,\hskip13ptQ\hh,\hskip13ptD\nnh_i\w\mu
\]
is nonzero everywhere in some open dense subset of\/ $\,\,U'\nnh$.
\end{lemma}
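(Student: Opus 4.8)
The plan is to dispose of the seven functions one at a time, using throughout that by (\ref{ana}) the frame $\,e\nh_i\w$ and every function built from it is real-analytic on the connected set $\,U'\nnh$, so that for each of the seven it suffices to exhibit a single point of $\,U'$ at which it does not vanish; its zero set is then automatically nowhere dense in $\,U'\nnh$. Three of the functions need no new work: $\,\lambda\,$ is nonzero on a dense open set by (\ref{gkz}); $\,\mu\,$ is nonzero on a dense open set by the remark following (\ref{lem}) (were $\,\mu\equiv0\,$ on an open set, (\ref{lem}.a) would force $\,\lambda_3\w=\lambda_4\w$, against (\ref{loe}.a)); and $\,D\nnh_i\w\mu\ne0\,$ everywhere on $\,U''$ by Lemma~\ref{fjbuz}(a).

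For $\,(\nnh-\nnh1\nh)^k\nnh P-Q\,$ I would note that, by the first line of (\ref{fpq}), it equals $\,F\hskip-3.2pt_{ik}\w$. Since $\,D\nnh_i\w\lambda=0\,$ by Lemma~\ref{dilgk}(a) and $\,\lambda_k\w=(\nnh-\nnh1\nh)^k\mu-\lambda\,$ by (\ref{lem}.a), Lemma~\ref{dilgk}(d) gives $\,F\hskip-3.2pt_{ik}\w=D\nnh_i\w\log|\lambda_k\w-\lambda|^{-\nh1}=-(\lambda_k\w-\lambda)^{-\nh1}(\nnh-\nnh1\nh)^kD\nnh_i\w\mu$, which is nonzero on $\,U''$ because $\,\lambda_k\w\ne\lambda\,$ there by (\ref{loe}.a). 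The Weyl-component function $\,2(\nnh-\nnh1\nh)^i\tau+3(\nnh-\nnh1\nh)^k\sigma\,$ is then handled purely algebraically: by (\ref{lem}.b) -- (\ref{lem}.c), $\,\sigma\nnh_{il}\w=-(\nnh-\nnh1\nh)^{i+k}\tau-\sigma\nnh/\nh2\,$ (using $\,(\nnh-\nnh1\nh)^{i+l}=-(\nnh-\nnh1\nh)^{i+k}$ since $\,\{k,l\}=\{3,4\}$) and $\,\sigma\nnh_{ij}\w=\sigma$, so the genericity inequality $\,\sigma\nnh_{il}\w\ne\sigma\nnh_{ij}\w\,$ of (\ref{skw}.a) becomes $\,2(\nnh-\nnh1\nh)^{i+k}\tau+3\sigma\ne0$; multiplication by $\,(\nnh-\nnh1\nh)^k\,$ yields the assertion, at every point of $\,U'\nnh$.

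Finally, $\,P\,$ and $\,Q\,$ come out of the two identities (\ref{dst}.iv) and (\ref{mpt}.i) (the latter being (\ref{dst}.v) with $\,P=P\hskip-2.7pt_i\w$, $\,Q=Q\nh_i\w$). Multiplying $\,D\nnh_i\w\mu=2\lambda P+\mu Q\,$ by $\,2\lambda\,$ and substituting $\,2\lambda Q=-\mu P\,$ gives $\,2\lambda\hs D\nnh_i\w\mu=(4\lambda^2\nh-\mu^2)P$, where $\,4\lambda^2\nh-\mu^2=(\lambda_3\w-\lambda)(\lambda_4\w-\lambda)\ne0\,$ everywhere by (\ref{lem}.a) and (\ref{loe}.a). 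At a point of $\,U''$ with $\,\lambda\ne0\,$ the left-hand side is nonzero (as $\,D\nnh_i\w\mu\ne0$), whence $\,P\ne0\,$ there; and $\,2\lambda Q=-\mu P\,$ then forces $\,Q\ne0\,$ at any point where $\,\lambda\mu P\ne0$, such points existing since $\,\lambda$, $\,\mu$ and $\,P\,$ are each nonzero on a dense open set.

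The only step demanding any care is the sign bookkeeping with the exponents $\,(\nnh-\nnh1\nh)^i,(\nnh-\nnh1\nh)^k$ — notably, picking out the correct inequality among the three in (\ref{skw}.a) for the $\,\tau$-$\sigma$ function; there is no analytic difficulty, since every nonvanishing claim reduces either to a fact already recorded ((\ref{gkz}), Lemma~\ref{fjbuz}(a), the remark following (\ref{lem})) or to the genericity relations (\ref{skw}.a) and (\ref{loe}.a).
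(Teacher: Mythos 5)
Your proof is correct, and it handles three of the seven functions ($\lambda$, $\mu$, $D\nnh_i\w\mu$) and the last function ($2(\nnh-\nnh1\nh)^i\tau+3(\nnh-\nnh1\nh)^k\nh\sigma$) in essentially the same way as the paper — read off from (\ref{gkz}), the remark after (\ref{lem}), Lemma~\ref{fjbuz}(a), and the strictness in (\ref{skw}.a) combined with (\ref{lem}.b)--(\ref{lem}.c). Where you genuinely diverge is in the treatment of $\,(\nnh-\nnh1\nh)^k\nnh P-Q\,$ and $\,P$. The paper argues both by contradiction: if $\,P\equiv0\,$ on an open set, then (\ref{mpt}.iii) gives $\,\tau\equiv0\,$ and (\ref{dfj}.ii) then forces $\,\mu\equiv0$; and if $\,(\nnh-\nnh1\nh)^k\nnh P-Q\equiv0$, then (\ref{mpt}.i) yields $\,\mu=-2(\nnh-\nnh1\nh)^k\nh\lambda$, so $\,D\nnh_i\w\mu=0$, a contradiction. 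You instead produce closed-form identities exhibiting nonvanishing directly: $\,(\nnh-\nnh1\nh)^k\nnh P-Q=F\hskip-3.2pt_{ik}\w=-(\lambda_k\w\nnh-\lambda)^{-\nh1}(\nnh-\nnh1\nh)^kD\nnh_i\w\mu\,$ (from (\ref{fpq}), Lemma~\ref{dilgk}(d), (\ref{lem}.a), (\ref{loe}.a)), and $\,2\lambda D\nnh_i\w\mu=(4\lambda^2\nh-\mu^2)P\,$ with $\,4\lambda^2\nh-\mu^2\nh=(\lambda_3\w\nnh-\lambda)(\lambda_4\w\nnh-\lambda)\ne0\,$ (from (\ref{dst}.iv), (\ref{mpt}.i)). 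Both identities are correct, and this is a nice simplification: your route for $\,P\,$ bypasses (\ref{dfj}.ii) entirely, and your route for $\,(\nnh-\nnh1\nh)^k\nnh P-Q\,$ does not need $\,P\ne0\,$ as a prior step, so your order of argument is also less constrained. The one small wording gap is in the $\,(\nnh-\nnh1\nh)^k\nnh P-Q\,$ step, where the decisive fact is $\,D\nnh_i\w\mu\ne0\,$ together with $\,\lambda_k\w\ne\lambda$, not the latter alone — but you had already recorded $\,D\nnh_i\w\mu\ne0\,$ on $\,\,U''$, so the argument is complete.
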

\begin{proof}Due to real-an\-a\-lyt\-ic\-i\-ty, cf.\ (\ref{ana}), it suffices 
to show that none of the seven functions can vanish on a nonempty open subset 
$\,\,U''$ of $\,\,U'\nnh$. For $\,\lambda,D\nnh_i\w\mu$ (and, consequently, 
$\,\mu$) this is clear from (\ref{gkz}) and Lemma~\ref{fjbuz}(a). If $\,P$ 
were identically zero on $\,\,U''\nnh$, so would be $\,\tau$, and hence 
$\,\mu$, by (\ref{mpt}.iii) and and (\ref{dfj}.ii), contrary to what we just 
showed about $\,\mu$. Thus, $\,Q\ne0\,$ on $\,\,U''$ from (\ref{mpt}.i) with 
$\,\mu P\nh\ne0$. In view of (\ref{mpt}.i), vanishing of 
$\,(\nnh-\nnh1\nh)^k\nnh P\nh-Q\,$ on on $\,\,U''$ would give 
$\,2(\nnh-\nnh1\nh)^k\nnh\lambda P\nh=2\lambda Q=-\mu P$ on $\,\,U''$ and, 
as $\,P\nh\ne0$, the equality $\,\mu=-\nh2(\nnh-\nnh1\nh)^k\nnh\lambda\,$ 
would follow, even though $\,D\nnh_i\w\mu\ne0=D\nnh_i\w\lambda$, cf.\ 
(\ref{mpt}.ii). Finally, suppose that 
$\,2(\nnh-\nnh1\nh)^i\tau+3(\nnh-\nnh1\nh)^k\nnh\sigma=0\,$ on $\,\,U''\nnh$. 
Using, successively, (\ref{skw}.b), (\ref{lem}.c) and (\ref{lem}.b), we now 
get $\,\sigma\nnh_{ij}\w\nh-\sigma\nnh_{il}\w\nh
=\sigma\nnh_{ij}\w\nh+(\sigma\nnh_{ij}\w\nh+\sigma\nnh_{ik}\w)
=2\sigma+\sigma\nnh_{ik}\w\nh
=2\sigma+(\nnh-\nnh1\nh)^{i+k}\tau-\sigma\nnh/\nh2
=(\nnh-\nnh1\nh)^k\hn[2(\nnh-\nnh1\nh)^i\tau+3(\nnh-\nnh1\nh)^k\nh\sigma]/2
=0$, so that $\,\sigma\nnh_{ij}\w\nh=\sigma\nnh_{il}\w$, which contradicts 
(\ref{skw}.a).
\end{proof}
\begin{lemma}\label{occur}
The second case of\/ {\rm(\ref{eor})} cannot occur.
\end{lemma}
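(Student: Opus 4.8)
The plan is to argue by contradiction. Assume the second alternative in (\ref{eor}) holds, so $G\nh_k\w\ne0$ on a dense open subset of $U'\nnh$. By (\ref{ana}) and Lemma~\ref{nonzr} we may restrict the discussion to a nonempty open connected set $U''\subseteq U'$ on which $G\nh_k\w$ together with all seven functions listed in Lemma~\ref{nonzr} --- in particular $\lambda,\mu,\sigma,\tau,P,Q,D\nnh_i\w\mu$ and $(\nnh-\nnh1\nh)^k\nnh P-Q$ --- are nowhere zero, so that every division below is legitimate; by real-an\-a\-lyt\-ic\-i\-ty, cf.\ (\ref{ana}), any identity we obtain on a nonempty open subset of $U''$ will then hold throughout $U'\nnh$.

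The first step is to render the relevant first-order system explicit and algebraic. Using $\mathrm{tr}\nh_g\w\hh b=\mathrm{s}$ constant, cf.\ (\ref{cst}), together with (\ref{lem}.a), (\ref{dil}), (\ref{nii}.b), (\ref{gkl}) and (\ref{tdi}.c), one expresses $D\nnh_k\w\lambda,\,D\nnh_k\w\mu,\,D\nnh_k\w\sigma,\,D\nnh_k\w\tau,\,D\nnh_k\w F\hskip-3.2pt_{kl}\w$, and the analogous $D\nh_l\w$-derivatives (where $G\nh_l\w\nnh=0$), purely algebraically in the nine basic functions $\lambda,\mu,\sigma,\tau,P,Q,F\hskip-3.2pt_{ij}\w,G\nh_k\w,F\hskip-3.2pt_{kl}\w$; the $D\nnh_i\w$-derivatives of all of these are already supplied by (\ref{mpt}), (\ref{fpq}), (\ref{dst}.ii)\hs--\hs(\ref{dst}.iii) and (\ref{dfj}), while $D\nnh_j\w$ of every function in sight vanishes by Lemma~\ref{fjbuz}. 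Thus $U''$ becomes an integral manifold of an explicit closed differential system, and the contradiction we seek becomes a purely algebraic consequence of its integrability conditions.

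The core computation then pits these identities against one another. I would: (a) apply $D\nnh_i\w$ to the first-order relations (\ref{mpt}.i), (\ref{dfj}.ii)\hs--\hs(\ref{dfj}.iii) and to (\ref{tfg}), each time substituting from the system just built, obtaining new polynomial identities in $\lambda,\mu,\sigma,\tau,P,Q,F\hskip-3.2pt_{ij}\w,G\nh_k\w,F\hskip-3.2pt_{kl}\w$ --- for instance, $D\nnh_i\w$ applied to (\ref{dfj}.iii), combined with (\ref{dst}.ii), (\ref{dfj}.i)\hs--\hs(\ref{dfj}.ii), (\ref{fpq}) and (\ref{mpt}.i), already yields $P\hh G\nh_k^{\hs2}=(\nnh-\nnh1\nh)^i Q\tau-Q\mu/2+P\sigma/2-P\hs\mathrm{s}/12$; (b) apply $D\nnh_k\w$ to (\ref{mpt}.i), (\ref{dfj}.ii) and to the identity for $G\nh_k^{\hs2}$ just found, and apply the commutator $D\nnh_i\w D\nnh_k\w\theta-D\nnh_k\w D\nnh_i\w\theta=D\nnh_w\w\theta$, with $w=[\hs e\hs\nh_i\w,e\nh_k\w]=G\nh_k\w e\nh_i\w-F\hskip-3.2pt_{ik}\w e\nh_k\w$ by (\ref{nii}.d) and (\ref{cnv}), to $\theta=F\hskip-3.2pt_{ij}\w,\sigma,\tau$; (c) use (\ref{dif}.iv)\hs--\hs(\ref{dif}.v) of Lemma~\ref{aandb}, together with their $(k,i,l)$ and $(k,j,l)$ instances and the relation $H\nnh_{lj}\w\nnh=F\hskip-3.2pt_{ik}\w G\nh_k\w$ of Lemma~\ref{dilgk}(c), to eliminate the surviving second derivatives $D\nnh_i\w D\nnh_k\w F\hskip-3.2pt_{\bullet\bullet}\w$, exactly as (\ref{dfj}.ii)\hs--\hs(\ref{dfj}.iii) themselves were derived. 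Writing each intermediate relation as a polynomial in $G\nh_k\w$ with coefficients in the ring generated by $\lambda,\mu,\sigma,\tau,P,Q,F\hskip-3.2pt_{ij}\w,F\hskip-3.2pt_{kl}\w$, and reducing coefficient degrees at every stage via the already-established first-order relations, one peels off the nonzero factor $G\nh_k\w$ repeatedly until one reaches an identity forcing one of $\lambda,\mu,P,Q,\tau,D\nnh_i\w\mu,(\nnh-\nnh1\nh)^k\nnh P-Q$ or $2(\nnh-\nnh1\nh)^i\tau+3(\nnh-\nnh1\nh)^k\nnh\sigma$ to vanish on $U''$, contradicting Lemma~\ref{nonzr}, or forcing $\mu=-2(\nnh-\nnh1\nh)^k\nnh\lambda$, which is impossible since $D\nnh_i\w\lambda=0\ne D\nnh_i\w\mu$ by (\ref{mpt}.ii) and Lemma~\ref{fjbuz}(a).

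The hard part is purely bookkeeping: keeping the proliferating family of polynomial identities under control. The structural reason the argument must close is over-determination --- in this case $G\nh_k\w\ne0$ supplies a genuine $e\nh_k\w$-direction of variation in addition to the $e\nh_i\w$-direction already present, while $D\nnh_j\w$ and $D\nh_l\w$ add no new freedom, so the nine basic functions are subject to far more first-integral constraints than their few effective degrees of freedom can absorb. No geometric input beyond Lemma~\ref{aandb} (which already encodes the Co\-daz\-zi and har\-mon\-ic-cur\-va\-ture equations), the commutator identities, and the non-vanishing conclusions of Lemmas~\ref{nonzr} and~\ref{fjbuz} is needed.
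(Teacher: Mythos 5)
Your step~(a) is essentially the first half of the paper's proof, and the sample identity you extract is correct: it is equivalent, after using (\ref{dfj}.ii)--(\ref{dfj}.iii) once more, to the clean statement $\,G\nh_k\w=F\hskip-3.2pt_{kl}\w$. But you never make that observation, and the rest of the plan does not close the argument. The paper's proof has two short, concrete steps: (1) applying $\,D\nnh_i\w$ to (\ref{dfj}.iii) and simplifying via (\ref{dfj}.ii), (\ref{dfj}.iii), (\ref{mpt}.i) yields $\,(G\nh_k\w-F\hskip-3.2pt_{kl}\w)\hh P\hh G\nh_k\w=0$, whence $\,G\nh_k\w=F\hskip-3.2pt_{kl}\w$ since $\,P\hh G\nh_k\w\ne0$ by Lemma~\ref{nonzr}; (2) \emph{substituting} $\,F\hskip-3.2pt_{kl}\w=G\nh_k\w$ into the purely algebraic relation (\ref{tfg}) (with $P\hskip-2.7pt_i\w=P$, $Q\nh_i\w=Q$) produces $\,[(\nnh-\nnh1\nh)^k\nnh P-Q][2(\nnh-\nnh1\nh)^i\tau+3(\nnh-\nnh1\nh)^k\nh\sigma]\hh G\nh_k\w\mu=0$, which contradicts Lemma~\ref{nonzr} directly.

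You instead propose to \emph{differentiate} (\ref{tfg}) and to run a succession of further $\,D\nnh_k\w$-derivatives, commutators, and eliminations, and then assert that ``one peels off the nonzero factor $G\nh_k\w$ repeatedly until one reaches an identity'' forcing one of the excluded quantities to vanish. That final step is a schematic, not a demonstrated consequence; no terminating contradiction is actually produced, and appealing to ``over-determination'' does not substitute for exhibiting the contradictory identity. The gap is not in your setup (which correctly reproduces the first-order system and the key derivative) but in the closing: you fail to recognize that (\ref{dfj}.iii) already collapses to $\,G\nh_k\w=F\hskip-3.2pt_{kl}\w$, and you fail to see that feeding this into (\ref{tfg}) \emph{as an algebraic substitution, not a differential identity} is precisely the mechanism that delivers the contradiction. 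Without that, your argument remains an unfinished plan rather than a proof.
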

\begin{proof}Let us assume that, on the contrary, $\,G\nh_k\w\nh\ne0\,$ 
everywhere in a dense open set, and apply $\,D\nnh_i\w$ to (\ref{dfj}.iii), 
using (\ref{dfj}.i), (\ref{mpt}.iv), (\ref{dst}.ii) with (\ref{cnv}),
(\ref{cst}), (\ref{mpt}.ii) and (\ref{fpq}). We consequently get
\[
\begin{array}{l}
(\nnh-\nnh1\nh)^k\nh(G\nh_k\w\nh-F\hskip-3.2pt_{kl}\w)PG\nh_k\w\nh
+[Q\nh F\hskip-3.2pt_{ij}\w\nh-G\nh_k\w F\hskip-3.2pt_{kl}\w\nh
+(\sigma\hs-\mathrm{s}/6)/2]\hh Q\\
\hskip67pt+\,[P\nh F\hskip-3.2pt_{ij}\w\nh-(\nnh-\nnh1\nh)^i\tau+\mu/2]P\nh
-(\mu P\nh+2\lambda\hh Q)/2=0
\end{array}
\]
which, by (\ref{dfj}.iii), (\ref{dfj}.ii) and (\ref{mpt}.i), amounts to 
$\,(G\nh_k\w\nh-F\hskip-3.2pt_{kl}\w)PG\nh_k\w\nh=0$. As Lemma~\ref{nonzr} 
now gives $\,PG\nh_k\w\nh\ne0$, one has $\,G\nh_k\w\nh=F\hskip-3.2pt_{kl}\w$. 
Replacing, in (\ref{tfg}), the triple 
$\,(P\hskip-2.7pt_i\w\hh,\hs\,Q\nh_i\w\hh,\nh\,F\hskip-3.2pt_{kl}\w)\,$ with 
$\,(P\nh,Q,G\nh_k\w)$, we easily obtain the equality
$\,[(\nnh-\nnh1\nh)^k\nnh P\nh-Q]\hh[2(\nnh-\nnh1\nh)^i\tau
+3(\nnh-\nnh1\nh)^k\nh\sigma]\hh G\nh_k\w\hs\mu=0$. This contradicts 
Lemma~\ref{nonzr}.
\end{proof}
Lemma~\ref{occur} and the line preceding (\ref{eor}) give 
$\,G\nh_k\w\nh=G\nh_l\w\nh=0$, and so, by (\ref{tdi}.c), 
Lemma~\ref{fjbuz}(b), and the first claim in (\ref{gkz})
\begin{equation}\label{lcs}
\lambda\,\,\mathrm{\ is\ a\ nonzero\ constant,}
\end{equation}
if $\,\,U''$ is connected (or replaced by a connected component). Also,
\begin{equation}\label{pfi}
\begin{array}{rl}
\mathrm{a)}&\mu P\,+\,2\lambda\hh Q\,=\,0\hh,\\
\mathrm{b)}&P\nh F\hskip-3.2pt_{ij}\w\hs
=\,(\nnh-\nnh1\nh)^i\tau\,-\,\mu/2\hh,\\
\mathrm{c)}&Q\nh F\hskip-3.2pt_{ij}\w\hs=\,-\hh(\sigma\hs-\mathrm{s}/6)/2\hh,\\
\mathrm{d)}&2(\nnh-\nnh1\nh)^i(2F\hskip-3.2pt_{ij}\w\mu\,
-\hs5\lambda P)\hh\tau\,=\,3\lambda Q\hh\sigma\hh,\\
\mathrm{e)}&[(\nnh-\nnh1\nh)^i\tau\,-\,\mu/2]\hh\mu\,
=\,(\sigma\hs-\mathrm{s}/6)\lambda\hh,\\
\mathrm{f)}&(\nnh-\nnh1\nh)^i(F\hskip-3.2pt_{ij}\w\mu\,-\hs2\lambda P)\hh\tau\,
=\,(2\lambda\,+\,\hh\sigma\,-\,\mathrm{s}/6)\lambda\hh Q\hh,\\
\mathrm{g)}&(\nnh-\nnh1\nh)^i\nnh F\hskip-3.2pt_{ij}\w\mu\hh\tau\,
=\,(10\lambda\,+\,2\hh\sigma\,-\,5\hs\mathrm{s}/6)\lambda\hh Q\hh,\\
\mathrm{h)}&4(\nnh-\nnh1\nh)^i\nnh\lambda\tau\,
+\,(8\lambda\,+\,\hs\sigma\,-\,2\hs\mathrm{s}/3)\hh\mu\,=\,0\hh.
\end{array}
\end{equation}
In fact, (\ref{pfi}.a) -- (\ref{pfi}.d) are just certain parts of (\ref{mpt}) 
and (\ref{dfj}), with $\,G\nh_k\w\nh=0\,$ in (\ref{dfj}). Equality 
(\ref{pfi}.e) arises in turn due to (\ref{pfi}.a), if one adds (\ref{pfi}.b) 
multiplied by $\,\mu\,$ to (\ref{pfi}.c) times $\,2\lambda$. Let us 
now apply $\,D\nnh_i\w$ to (\ref{pfi}.e), using (\ref{lcs}), (\ref{cst}),  
(\ref{dst}) and (\ref{cnv}). The resulting relation   
$\,2(\nnh-\nnh1\nh)^i(F\hskip-3.2pt_{ij}\w\mu\,-\,\nh2\lambda P)\hh\tau\,
+\,(2\lambda\,+\,3\hh\sigma\nnh/\nh2)(\mu P+2\lambda\hh Q)\,
-\,2(2\lambda+\hh\sigma-\mathrm{s}/6)\lambda\hh Q\,
+\,2\{(\sigma\,-\,\mathrm{s}/6)\lambda-[(\nnh-\nnh1\nh)^i\tau\,
  -\,\mu/2]\mu\}\hh Q=0\,$ becomes (\ref{pfi}.f) when combined with
(\ref{pfi}.a) and 
(\ref{pfi}.e), while (\ref{pfi}.g) is just the side-by-side difference of 
(\ref{pfi}.f) multiplied by $\,5\,$ and (\ref{pfi}.d). Next, subtracting 
(\ref{pfi}.d) from $\,4$ times (\ref{pfi}.f) and cancelling the factor 
$\,\lambda$, as allowed due to (\ref{cnv}), we get 
$\,2(\nnh-\nnh1\nh)^i\nnh P\hn\tau
=(8\lambda+\hs\sigma-2\hs\mathrm{s}/3)\hh Q$. Multiplying this by $\,\mu\,$ 
and then replacing $\,\mu P$ with $\,-\nh2\lambda\hh Q$, cf.\ (\ref{pfi}.a), 
we see that $\,Q\,$ times the left-hand side of (\ref{pfi}.h) equals zero, and 
so (\ref{pfi}.h) follows, since $\,Q\ne0\,$ according to Lemma~\ref{nonzr}. 

Now (\ref{mpt}.vi) times $\,\mu$, with $\,\mu P\,$ replaced by
$\,-\nh2\lambda\hh Q\,$ as above, reads 
$\,4(\nnh-\nnh1\nh)^i\nh F\hskip-3.2pt_{ij}\w\mu^2\nh\tau
=[3\mu\sigma-20(\nnh-\nnh1\nh)^i\nh\lambda\tau]\lambda Q$, while (\ref{pfi}.g)
multiplied by $\,4\mu\,$ yields
$\,4(\nnh-\nnh1\nh)^i\nh F\hskip-3.2pt_{ij}\w\mu^2\nh\tau
=(10\lambda+2\hh\sigma-5\hs\mathrm{s}/6)\hh\mu\lambda\hh Q$. As $\,Q\ne0\,$ in
Lemma~\ref{nonzr}, equating the two right-hand sides, we easily get 
$\,\mu\sigma=20(\nnh-\nnh1\nh)^i\nh\lambda\tau
+(10\lambda-5\hs\mathrm{s}/6)\mu$. However, from 
(\ref{pfi}.h),
$\,\mu\sigma=-\nh4(\nnh-\nnh1\nh)^i\nnh\lambda\tau
-(8\lambda-2\hs\mathrm{s}/3)\hh\mu$. Equating, again, the two new right-hand
sides, we see that, due to (\ref{cst}) and (\ref{lcs}) (that is, constancy of
both $\,\lambda\,$ and $\,\mathrm{s}$), $\,\tau=c\hh\mu\,$ for 
some constant $\,c$, and so $\,\mu\sigma\,$ is a constant multiple of
$\,\mu\,$ as well. By Lemma~\ref{nonzr}, $\,\sigma\,$ must be constant.
We have four further equalities:
\[
\begin{array}{rl}
\mathrm{(i)}&2(\nnh-\nnh1\nh)^i\nnh P\hn\tau\,-\,3Q\sigma\,=\,0\hh,\\
\mathrm{(ii)}&\mu P\,+\,2\lambda\hh Q\,=\,0\hh,\\
\mathrm{(iii)}&4(\nnh-\nnh1\nh)^i\nh\lambda\tau\,+\,3\mu\hh\sigma\,
=\,0\hh,\\
\mathrm{(iv)}&(10\lambda\,+\,\hs\sigma\,-\,2\hs\mathrm{s}/3)\hh\mu^2\hs
+\,4(\sigma\,-\,\mathrm{s}/6)\lambda^2\hs=\,0\hh.
\end{array}
\]
Here (i) follows from constancy of $\,\sigma$, by (\ref{dst}.ii) and
(\ref{cnv}), while (ii) is nothing else than (\ref{mpt}.i), repeated here for
convenience, and (iii) amounts to vanishing of the determinant of
the system (i) -- (ii), due to nontriviality of its solution $\,(P\nh,Q)\,$ 
(see Lemma~\ref{nonzr}). Finally, (iv) is the result of subtracting 
(\ref{pfi}.e) multiplied by $\,4\lambda\,$ from (\ref{pfi}.h) multiplied by 
$\,\mu$.

Using (iii) to replace $\,4(\nnh-\nnh1\nh)^i\nh\lambda\tau\,$ in (\ref{pfi}.h) 
with $\,-3\mu\hh\sigma$, and then cancelling the factor $\,\mu$, cf.\ 
Lemma~\ref{nonzr}, we see that $\,\sigma=4\lambda-\mathrm{s}/3$, which allows 
us to rewrite (iv) as 
$\,(14\lambda-\mathrm{s})\mu^2\nh+2(8\lambda-\mathrm{s})\lambda^2\nh=0$
By (\ref{lcs}) and (\ref{cst}), this last equality implies that $\,\mu\,$ is
constant, which contradicts the assertion about $\,D\nnh_i\w\mu\,$ in
Lemma~\ref{nonzr}.

\section*{Appendix: The other curvature components in Theorem~\ref{cmwtw}}
Whenever $\,\{i,j\}=\{1,2\}\,$ and $\,\{k,l\}=\{3,4\}$, with 
$\,\bz_l\w$ as in\/ {\rm(\ref{ale})},
\[
\begin{array}{l}
R_{ijij}\w\,
=\,\displaystyle{\frac{D\nnh_i\w D\nnh_i\w\lambda_i\w+D\nnh_j\w D\nnh_j\w\lambda_i\w}{2\lambda_i\w}\,-\,3\hs\frac{(D\nnh_i\w\lambda_i\w)^2\nh+(D\nnh_j\w\lambda_i\w)^2}{4\lambda_i^2}}\,\\
\hskip145pt-\,\displaystyle{\frac{(D\nnh_k\w\lambda_i\w)^2\nh+(D\nh_l\w\lambda_i\w)^2\nh+\bz_k^2\nh+\bz_l^2}{\lambda_i^2\nh-\lambda_k^2}}\hh,\\
R_{ikij}\w\,
=\,\displaystyle{\frac{D\nnh_k\w D\nnh_j\w\lambda_i\w}{2\lambda_i\w}\,
+\,(\lambda_k\w-2\lambda_i\w)\frac{(D\nnh_k\w\lambda_i\w)D\nnh_j\w\lambda_i\w}{2\lambda_i^2(\lambda_i\w-\lambda_k\w)}\,
+\,\frac{(\lambda_i\w+3\lambda_k\w)\bz_l\w
D\nnh_i\w\lambda_i\w}{4\lambda_i^2(\lambda_i\w+\lambda_k\w)}\,
-\,\frac{D\nnh_i\w\bz_l\w}{2\lambda_i\w}}\hh,\\
R_{klij}\w\,
=\,D\nh_l\w\displaystyle{\frac{\bz_l\w}{2\lambda_i\w}}\,
-\,D\nnh_k\w\displaystyle{\frac{\bz_k\w}{2\lambda_i\w}}\hh,\qquad
R_{klik}\w\,=\,R_{klkl}\w\,=\,0\hh.
\end{array}
\]


\end{document}